\DeclareMathOperator\Mod{Mod}
\DeclareMathOperator\Iso{Iso}
\DeclareMathOperator\MOD{\!{Mod}}
\DeclareMathOperator\Act{\!{Act}}
\DeclareMathOperator\Idl{Idl}
\DeclareMathOperator\Ind{\!{Ind}}
\DeclareMathOperator\Sp{Sp}
\DeclareMathOperator\Eq{\!{Eq}}
\DeclareMathOperator\Ins{\!{Ins}}
\DeclareMathOperator\Inv{\!{Inv}}
\DeclareMathOperator\Sh{\!{Sh}}
\DeclareMathOperator\Desc{\!{Desc}}
\newcommand*\op{\mathrm{op}}
\DeclarePairedDelimiter\den{\llbracket}{\rrbracket}
\let\tri\triangle
\let\defn\textbf
\newcommand{\relmiddle}[1]{\nonscript\;\middle#1\nonscript\;}
\newcommand*\stag[1]{\phantomsection\tag{\textsc{\thesection.#1}}\label{seq:#1}}
\begin{document}

\title{Borel functors, interpretations, and strong conceptual completeness for $\@L_{\omega_1\omega}$}
\author{Ruiyuan Chen\thanks{Research partially supported by NSERC PGS D}}
\date{}
\maketitle

\begin{abstract}
We prove a strong conceptual completeness theorem (in the sense of Makkai) for the infinitary logic $\@L_{\omega_1\omega}$: every countable $\@L_{\omega_1\omega}$-theory can be canonically recovered from its standard Borel groupoid of countable models, up to a suitable syntactical notion of equivalence.  This implies that given two theories $(\@L, \@T)$ and $(\@L', \@T')$ (in possibly different languages $\@L, \@L'$), every Borel functor $\MOD(\@L', \@T') -> \MOD(\@L, \@T)$ between the respective groupoids of countable models is Borel naturally isomorphic to the functor induced by some $\@L'_{\omega_1\omega}$-interpretation of $\@T$ in $\@T'$.  This generalizes a recent result of Harrison-Trainor--Miller--Montalbán in the $\aleph_0$-categorical case.
\renewcommand{\thefootnote}{}
\footnote{2010 \emph{Mathematics Subject Classification}: Primary 03E15; Secondary 03G30, 03C15, 18C10.}
\footnote{\emph{Key words and phrases}: Strong conceptual completeness, infinitary logic, Borel functors, pretopos.}
\setcounter{footnote}{0}
\end{abstract}

\section{Introduction}

A ``strong conceptual completeness'' theorem for a logic, in the sense of Makkai \cite{Mscc}, is a strengthening of the usual completeness theorem which allows the syntax of a theory in that logic to be completely recovered from its semantics, up to a suitable notion of equivalence.  In this paper, we prove such a result for the infinitary logic $\@L_{\omega_1\omega}$.

Let $\@L$ be a countable first-order language and $\@T$ be a countable $\@L_{\omega_1\omega}$-theory.  An \defn{$(\@L_{\omega_1\omega}, \@T)$-imaginary sort} $A$ is a certain kind of syntactical name for a countable set $A^\@M$ uniformly definable from each countable model $\@M$ of $\@T$, which is built up from $\@L_{\omega_1\omega}$-formulas by taking (formal) countable disjoint unions and quotients by definable equivalence relations.  Given two imaginary sorts $A, B$, an \defn{$(\@L_{\omega_1\omega}, \@T)$-definable function} $f : A -> B$ is a syntactical name for a uniformly definable function $f^\@M : A^\@M -> B^\@M$ for each model $\@M$; formally, $f$ is given by a $\@T$-equivalence class of (families of) $\@L_{\omega_1\omega}$-formulas defining the graph of such a function.  The notion of \defn{definable relation} $R \subseteq A$ on an imaginary sort $A$ is defined similarly.  See \cref{sec:imag} for the precise definitions.
%Because the set $A^\@M$ is uniformly defined from $\@M$, every isomorphism $\@M \cong \@N$ between models induces a bijection $A^\@M \cong A^\@N$.

Let $\MOD(\@L, \@T)$ denote the \defn{standard Borel groupoid of countable models of $\@T$}, whose space of objects $\Mod(\@L, \@T)$ is the standard Borel space of models of $\@T$ whose underlying set is an initial segment of $\#N$, and whose morphisms are isomorphisms between models.

Before stating the strong conceptual completeness theorem for $\@L_{\omega_1\omega}$, we first state some of its consequences.  Given a countable $\@L_{\omega_1\omega}$-theory $\@T$, an \defn{$\@L'_{\omega_1\omega}$-interpretation} $F : (\@L, \@T) -> (\@L', \@T')$ of $\@T$ in another countable $\@L'_{\omega_1\omega}$-theory $\@T'$ (in possibly a different language $\@L'$) consists of:
\begin{itemize}
\item  an $(\@L'_{\omega_1\omega}, \@T')$-imaginary sort\footnote{Here $\#X$ is thought of as the $\@T$-imaginary sort which names the underlying set of a model; see \cref{list:pretopos-structure}.} $F(\#X)$;
\item  for each $n$-ary relation symbol $R \in \@L$, an $n$-ary definable relation $F(R) \subseteq F(\#X)^n$;
\item  for each $n$-ary function symbol $f \in \@L$, an $n$-ary definable function $F(f) : F(\#X)^n -> F(\#X)$;
\item  such that ``applying'' $F$ to the axioms in $\@T$ results in $\@L'$-sentences implied by $\@T'$.
\end{itemize}
Given such an interpretation $F$, every countable model $\@M = (M, R^\@M, f^\@M)_{R, f \in \@L'}$ of $\@T'$ gives rise to a countable model $F^*(\@M) = (F(\#X)^\@M, F(R)^\@M, F(f)^\@M)_{R, f \in \@L}$ of $\@T$; this yields a Borel functor
\begin{align*}
F^* : \MOD(\@L', \@T') -> \MOD(\@L, \@T)
\end{align*}
(after suitable coding to make the underlying set of $F^*(\@M)$ an initial segment of $\#N$).  Conversely,

\begin{theorem}
\label{thm:2interp-eso}
Every Borel functor $\MOD(\@L', \@T') -> \MOD(\@L, \@T)$ is Borel naturally isomorphic to $F^*$ for some interpretation $F : (\@L, \@T) -> (\@L', \@T')$.
\end{theorem}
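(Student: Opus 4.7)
The plan is to deduce this theorem from the main strong conceptual completeness theorem for $\mathcal{L}_{\omega_1\omega}$ that is the principal result of the paper. I expect that theorem to take the form of an equivalence between, on the one hand, the $\omega_1$-pretopos of $(\mathcal{L}_{\omega_1\omega}, \mathcal{T})$-imaginary sorts with definable functions, and on the other hand, a suitable category of ``Borel-functorial'' countable-set-valued constructions on $\MOD(\mathcal{L}, \mathcal{T})$ with Borel natural transformations. Given such a dictionary, the interpretation $F$ inducing a given Borel functor $G$ can be read off symbol-by-symbol from the composites encoding $G$.

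Concretely, given $G : \MOD(\mathcal{L}', \mathcal{T}') \to \MOD(\mathcal{L}, \mathcal{T})$, I would compose $G$ with the tautological Borel-functorial assignments on $\MOD(\mathcal{L}, \mathcal{T})$ that extract from a structure $\mathcal{M}$ its underlying set $M$, the interpretation $R^\mathcal{M}$ of each relation symbol $R \in \mathcal{L}$, and the graph of $f^\mathcal{M}$ for each function symbol $f \in \mathcal{L}$. Each such composite is a Borel-functorial countable-set-valued construction on $\MOD(\mathcal{L}', \mathcal{T}')$, so by the representability half of the main theorem it is Borel naturally isomorphic to the evaluation of some $(\mathcal{L}'_{\omega_1\omega}, \mathcal{T}')$-imaginary sort, together with a specified definable subset or definable function graph. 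These choices assemble into the interpretation data $F(\mathbb{X})$, $F(R)$, $F(f)$. That $F$ actually satisfies the axioms of $\mathcal{T}$ as $\mathcal{L}'_{\omega_1\omega}$-consequences of $\mathcal{T}'$ then follows from countable completeness of $\mathcal{L}_{\omega_1\omega}$: the assembled Borel natural isomorphism identifies $F^*(\mathcal{M}')$ with $G(\mathcal{M}')$ for each countable $\mathcal{M}' \models \mathcal{T}'$, and $G(\mathcal{M}') \models \mathcal{T}$ by hypothesis on $G$. The same assembled natural isomorphism witnesses $F^* \cong G$ as Borel functors.

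The main obstacle is of course establishing the underlying representability/reconstruction result, which this argument treats as a black box. I would expect its proof to require substantial descriptive-set-theoretic machinery, including a groupoid-equivariant Lopez--Escobar theorem that turns Borel-invariant subsets of model spaces into $\mathcal{L}_{\omega_1\omega}$-definable objects, together with a Scott-sentence-style analysis of how isomorphism orbits fit together, and to require the infinitary pretopos theory needed to organize quotients by definable equivalence relations and countable disjoint unions into a syntactic category. None of that machinery is needed for the reduction sketched above; once it is in place, this theorem is a bookkeeping corollary.
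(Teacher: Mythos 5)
Your reduction is correct and is essentially the paper's own route: \cref{thm:2interp-eso} is obtained in \cref{thm:2interp-equiv} as the composite of the universal property of $\-{\ang{\@L \mid \@T}}^B_{\omega_1}$ (an interpretation is a model of $\@T$ in $\-{\ang{\@L' \mid \@T'}}^B_{\omega_1}$, hence, via the equivalence $\den{-}$ of \cref{thm:borel-interp-equiv} applied to $\@T'$, a model of $\@T$ in $\Act^B_{\omega_1}(\MOD(\@L', \@T'))$) with the observation that a Borel functor $\MOD(\@L', \@T') -> \MOD(\@L, \@T)$ is exactly such a model, with satisfaction of $\@T$ checked fiberwise and transferred back syntactically by conservativity/completeness. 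The paper packages this as the unit-is-an-isomorphism half of a Stone-type relative pseudoadjunction (\cref{sec:interp,sec:stone}), the extra 2-categorical and coding work there being devoted precisely to the bookkeeping you defer --- making $F |-> F^*$ a well-defined Borel pseudofunctor and the assembled natural isomorphism Borel --- but the mathematical content of the deduction is the one you describe.
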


This generalizes the Borel version of the main result of Harrison-Trainor--Miller--Montalbán \cite[Theorem~9]{HMM}, which is the case where $\@T, \@T'$ are Scott sentences, i.e., they each have a single countable model up to isomorphism.

For an $(\@L_{\omega_1\omega}, \@T)$-imaginary sort $A$, let
\begin{align*}
\den{A} := \{(\@M, a) \mid \@M \in \Mod(\@L, \@T) \AND a \in A^\@M\}
\end{align*}
be the disjoint union of the countable sets $A^\@M$ defined by $A$ in all models $\@M \in \Mod(\@L, \@T)$.  There is a natural standard Borel structure on $\den{A}$, and we have the fiberwise countable Borel projection map $\pi : \den{A} -> \Mod(\@L, \@T)$.  Since the set $A^\@M$ is uniformly defined for each model $\@M$, an isomorphism between models $\@M \cong \@N$ induces a bijection $A^\@M \cong A^\@N$.  This gives a Borel action of the groupoid $\MOD(\@L, \@T)$ on $\den{A}$, turning $\den{A}$ into a \defn{fiberwise countable Borel $\MOD(\@L, \@T)$-space}, i.e., a standard Borel space $X$ equipped with a fiberwise countable Borel map $p : X -> \Mod(\@L, \@T)$ and a Borel action of $\MOD(\@L, \@T)$.  The core result of this paper is

\begin{theorem}
\label{thm:borel-interp-eso}
Every fiberwise countable Borel $\MOD(\@L, \@T)$-space is isomorphic to $\den{A}$ for some $(\@L_{\omega_1\omega}, \@T)$-imaginary sort $A$.
\end{theorem}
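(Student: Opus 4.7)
The plan is to realize $X$ as a quotient $\den{T}/E$, where $T$ is an $(\@L_{\omega_1\omega}, \@T)$-imaginary sort built as a countable disjoint union of definable subsets of iterated tuple sorts $\#X^k$ of the underlying set, and $E$ is an $(\@L_{\omega_1\omega}, \@T)$-definable equivalence relation. The key reduction is that once a $\MOD(\@L, \@T)$-equivariant Borel surjection $\pi : \den{T} \to X$ is produced, its fiberwise kernel $\{(\bar a, \bar b) \mid \pi(\bar a) = \pi(\bar b)\} \subseteq \den{T \times T}$ is automatically Borel and $\MOD(\@L, \@T)$-invariant. A suitably relativized infinitary López--Escobar/Vaught theorem---asserting that for any imaginary sort $S$, the $\MOD(\@L, \@T)$-invariant Borel subsets of $\den{S}$ coincide with the $(\@L_{\omega_1\omega}, \@T)$-definable ones---then forces this kernel to be definable, so that $A := T/E$ is the desired imaginary sort with $\den{A} \cong X$.

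The bulk of the work thus lies in constructing the equivariant Borel surjection $\pi$. I would use Lusin--Novikov uniformization to decompose $X$ into countably many Borel graphs of partial sections $s_n : Y_n \to X$ with $Y_n \subseteq \Mod(\@L, \@T)$. For each $\@M$ and $x \in X_\@M$, the stabilizer $\mathrm{Stab}(x) \leq \mathrm{Aut}(\@M)$ is a closed subgroup, and the fiberwise countability of $X$ forces its index in $\mathrm{Aut}(\@M)$ to be countable; by the standard Baire category argument that a closed subgroup of countable index in a Polish group is open, $\mathrm{Stab}(x)$ must contain the pointwise stabilizer of some finite tuple $\bar a \in M^k$, so that $x$ is canonically recoverable from $(\@M, \bar a)$. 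Choosing such $\bar a$ in a Borel-uniform manner---via another round of Lusin--Novikov applied to the countable-fibered relation ``$\bar a$ is a minimal-arity code for $x$''---and then further refining the partition so that $k$ is constant on each piece, yields equivariant Borel maps from definable subsets of $\den{\#X^k}$ onto the pieces, whose disjoint union is $\pi$.

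The main obstacle is the \emph{Borel-uniform} tuple-coding step. While each individual point $x$ has a finite-tuple code by the Baire argument, assembling these into a single Borel function $(\@M, x) \mapsto \bar a$ requires that the set of valid codes be fiberwise countable and compatible with the groupoid action. A preparatory Becker--Kechris step endowing $X$ with a compatible Polish topology along the $\MOD(\@L, \@T)$-action, so that stabilizers behave well against the basic neighborhoods of the identity in $\mathrm{Aut}(\@M)$ (which are precisely pointwise stabilizers of finite tuples), should make the uniformization tractable. Once this is in place, the remaining packaging into an imaginary sort, followed by the invocation of the equivariant infinitary López--Escobar to promote the Borel kernel to a definable equivalence, follows by formal properties of the pretopos of imaginary sorts.
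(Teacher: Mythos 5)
Your overall architecture is the paper's: produce a $\MOD(\@L, \@T)$-equivariant Borel surjection onto $X$ from a countable disjoint union of definable subsets of the sorts $\#X^k$, then use the relativized Lopez-Escobar/Vaught definability theorem (\cref{thm:lopez-escobar-vaught-borel}) to see that the kernel is a definable equivalence relation, and take the quotient sort. That reduction is sound, and the stabilizer observation is correct and is indeed the reason finite tuples suffice as codes: the orbit of $x$ lies in a countable fiber, so $\mathrm{Stab}(x)$ is a Borel subgroup of $\mathrm{Aut}(\@M)$ of countable index, hence non-meager, hence open by Pettis (it is only Borel, not closed, in the fragment topology, so Pettis rather than the closed-subgroup argument is what you need), hence contains the pointwise stabilizer of a finite tuple.

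The genuine gap is the middle step, and it is larger than making the choice of code Borel. First, a code $\vec{a}$ for $x$ (meaning $\mathrm{Stab}(\vec{a}) \le \mathrm{Stab}(x)$) does not determine $x$: distinct points of the same fiber can share all of their codes (e.g., two fixed points of $\mathrm{Aut}(\@M)$), so the map cannot send $(\@M, \vec{a})$ to ``the point coded by $\vec{a}$''; you must pair codes with your Lusin--Novikov sections $s_n$. But then well-definedness of the induced equivariant map fails: if $\@M, \@M' \in Y_n$ are isomorphic and $g$ is an isomorphism carrying the chosen code of $s_n(\@M)$ to the chosen code of $s_n(\@M')$, nothing forces $g \cdot s_n(\@M) = s_n(\@M')$. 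The stabilizer condition gives coherence only within a single $\mathrm{Aut}(\@M)$-orbit; across the (generally non-smooth) isomorphism relation on $Y_n$ you cannot repair this by choosing orbit representatives, and the natural domain of such a map --- the saturation of the coded section --- is a priori only analytic, not definable. What is actually needed is a countable cover of $X$ by Borel sections $U$ over \emph{definable} sets $\den{\phi(\vec{a})}$ satisfying the invariance condition $\den{\vec{a} \mapsto \vec{a}} \cdot U \subseteq U$, i.e., a single tuple of natural numbers coding the whole section uniformly and equivariantly; producing such sections is precisely the content of the Vaught-transform machinery (the sections $A^{\oast U}$ and $(A^{\oast U})^{\tri V}$ of \cref{thm:vaught-section}, assembled into a basis via \cref{thm:vaught-basis,thm:vaught-asttri}, fed through \cref{thm:borel-action-etale} and then the topological argument of \cref{sec:etale-interp-equiv}). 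Your ``preparatory Becker--Kechris step'' names the right tool, but as stated it is a placeholder for the core of the proof rather than a step of it. A smaller issue: the relation ``$\vec{a}$ codes $x$'' involves a universal quantifier over $\mathrm{Aut}(\@M)$ and is a priori only $\*\Pi^1_1$; to apply Lusin--Novikov one must first replace it by a category quantifier over $\mathrm{Stab}(\vec{a})$ (legitimate, since a non-meager Borel subgroup is the whole group) to see that it is Borel.
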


In other words, every Borel isomorphism-equivariant assignment of a countable set to every countable model of $\@T$ is named by some imaginary sort.

In order to place \cref{thm:2interp-eso,thm:borel-interp-eso} in their proper context, we organize the $(\@L_{\omega_1\omega}, \@T)$-imaginary sorts and definable functions into a category, the \defn{syntactic Boolean $\omega_1$-pretopos} of $\@T$, denoted
\begin{align*}
\-{\ang{\@L \mid \@T}}^B_{\omega_1}.
\end{align*}
The syntactic Boolean $\omega_1$-pretopos is the categorical ``Lindenbaum--Tarski algebra'' of an $\@L_{\omega_1\omega}$-theory: it ``remembers'' the logical structure of the theory, such as $\@T$-equivalence classes of formulas and implications between them, while ``forgetting'' irrelevant syntactic details.  See \cite{MR} or \cite[D]{Jeleph} for the general theory of syntactic pretoposes and related notions.

Let $\Act_{\omega_1}^B(\MOD(\@L, \@T))$ denote the category of fiberwise countable Borel $\MOD(\@L, \@T)$-spaces and Borel equivariant maps between them.  Given a definable function $f : A -> B$, taking the disjoint union of the functions $f^\@M : A^\@M -> B^\@M$ for every countable model $\@M \in \Mod(\@L, \@T)$ yields a Borel $\MOD(\@L, \@T)$-equivariant map $\den{f} : \den{A} -> \den{B}$.  We thus have a functor
\begin{align*}
\den{-} : \-{\ang{\@L \mid \@T}}^B_{\omega_1} --> \Act_{\omega_1}^B(\MOD(\@L, \@T)),
\end{align*}
which can be thought of as taking syntax to semantics for the $\@L_{\omega_1\omega}$-theory $\@T$.  Our main result is

\begin{theorem}
\label{thm:borel-interp-equiv}
The functor $\den{-} : \-{\ang{\@L \mid \@T}}^B_{\omega_1} --> \Act_{\omega_1}^B(\MOD(\@L, \@T))$ is an equivalence of categories.
\end{theorem}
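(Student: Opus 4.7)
The plan is to verify the two defining properties of an equivalence of categories: essential surjectivity and full faithfulness. Essential surjectivity is exactly \cref{thm:borel-interp-eso}, which I may assume. For faithfulness, suppose $f, g : A \to B$ are $(\@L_{\omega_1\omega}, \@T)$-definable functions with $\den{f} = \den{g}$, meaning $f^\@M = g^\@M$ in every countable $\@M \models \@T$. The $\@L_{\omega_1\omega}$-formulas defining the graphs of $f$ and $g$ therefore hold of the same tuples in every countable model of $\@T$; by the completeness theorem for countable $\@L_{\omega_1\omega}$-theories (Karp), these formulas are $\@T$-provably equivalent, so $f = g$ in the syntactic pretopos.

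For fullness, let $h : \den{A} \to \den{B}$ be a Borel equivariant map, and consider its graph $\Gamma_h$, which is a $\MOD(\@L,\@T)$-invariant Borel subset of the fiber product $\den{A} \times_{\Mod(\@L,\@T)} \den{B} = \den{A \times B}$. The problem then reduces to proving a ``Lopez-Escobar theorem for imaginary sorts'': for every imaginary sort $X$, the map $R \mapsto \den{R}$ is a bijection between $(\@L_{\omega_1\omega}, \@T)$-definable relations $R \subseteq X$ and $\MOD(\@L,\@T)$-invariant Borel subsets of $\den{X}$. Applying this to $\Gamma_h$ yields a definable relation $R \subseteq A \times B$ which, because $h$ is a total and single-valued map, is necessarily the graph of some definable function $f : A \to B$, giving $\den{f} = h$.

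I would prove this Lopez-Escobar statement by induction on the construction of $X$. The base case $X = \#X^n$ is a parametrized version of the classical Lopez-Escobar theorem: $S_\infty$-invariant Borel subsets of $\Mod(\@L, \@T) \times \#N^n$ correspond to $\@L_{\omega_1\omega}$-formulas in $n$ free variables modulo $\@T$-equivalence, which is a relatively straightforward relativization of the standard argument. For $X = \bigsqcup_i X_i$, an invariant Borel subset is the countable union of its restrictions to each summand $\den{X_i}$, handled by the inductive hypothesis. For a quotient $X = Y/E$, an invariant Borel subset of $\den{Y/E}$ pulls back along the projection $\den{Y} \to \den{Y/E}$ to an $\den{E}$-saturated invariant Borel subset of $\den{Y}$, which by induction is named by an $E$-invariant definable relation that then descends to $Y/E$. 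The main obstacle I anticipate is the quotient step, where one must justify that the pullback construction is compatible with the pretopos structure; this will rely on fiberwise countable uniformization (Luzin--Novikov) to choose Borel representatives for the quotient map $\den{Y} \to \den{Y/E}$, together with careful bookkeeping of how isomorphisms between countable models interact with the inductive imaginary-sort construction.
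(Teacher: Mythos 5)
Your proposal is correct and follows essentially the same route as the paper: essential surjectivity is \cref{thm:borel-interp-eso}, your ``parametrized Lopez-Escobar'' statement for $\den{\#X^n}$ is exactly \cref{thm:lopez-escobar-vaught-borel} (proved in \cref{sec:lopez-escobar} by relativizing Vaught's transform argument), and the passage to general imaginary sorts by pulling invariant Borel sets back along the quotient map $\den{\bigsqcup_i \alpha_i} -> \den{A}$ (which is already Borel by construction, so no Luzin--Novikov selection is needed there) is precisely how the paper establishes fullness on subobjects.  The only cosmetic difference is that the paper does not prove fullness directly via graphs; it instead invokes the standard categorical fact that a finite-limit-preserving functor is an equivalence iff it is conservative, full on subobjects, and essentially surjective, which is your graph-plus-completeness argument packaged abstractly.
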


We say that two theories $(\@L, \@T)$, $(\@L', \@T')$ (in possibly different languages) are \defn{Morita equivalent} if their syntactic Boolean $\omega_1$-pretoposes are equivalent categories; informally, this means that they are different presentations of the ``same'' theory.  Thus, by \cref{thm:borel-interp-equiv}, a theory can be recovered up to Morita equivalence from its standard Borel groupoid of countable models.

To see the connection of \cref{thm:borel-interp-equiv} with \cref{thm:borel-interp-eso}, as well as the sense in which it is a strong form of the completeness theorem, note that (by general category theory) the statement that $\den{-}$ is an equivalence may be broken into three parts:
\begin{enumerate}
\item[(i)]  $\den{-}$ is conservative, i.e., injective when restricted to the lattice of subobjects of each imaginary sort $A \in \-{\ang{\@L \mid \@T}}^B_{\omega_1}$.  This is equivalent to the Lopez-Escobar completeness theorem for $\@L_{\omega_1\omega}$ \cite{Lop}, provided that in the definitions above of imaginary sorts and definable functions, when we say e.g., that a formula $\phi$ defines the graph of a function in models of $\@T$, we actually mean that $\@T$ proves various $\@L_{\omega_1\omega}$-sentences which say ``$\phi$ is the graph of a function''.  (If we instead interpret these conditions semantically, then conservativity becomes vacuous.)
\item[(ii)]  $\den{-}$ is full on subobjects, i.e., surjective when restricted to subobject lattices.  This is equivalent to the Lopez-Escobar definability theorem for isomorphism-invariant Borel sets \cite{Lop}.
\item[(iii)]  $\den{-}$ is essentially surjective.  This is \cref{thm:borel-interp-eso}.
\end{enumerate}
We will explain the equivalences in (i) and (ii) when we prove \cref{thm:borel-interp-equiv} in \cref{sec:etale-interp-equiv,sec:borel-interp-equiv}.

\Cref{thm:borel-interp-equiv} is the essence of a strong conceptual completeness theorem for $\@L_{\omega_1\omega}$.  There is a large family of such theorems known for various kinds of logic.  Typically, these take the form of a ``Stone-type duality'' arising from a dualizing (or ``schizophrenic'') object equipped with two commuting kinds of structure; see \cite{PT} or \cite[VI~\S4]{Jstone} for the general theory of such dualities.  Here is a partial list of such dualities interpreted as strong conceptual completeness theorems:
\begin{itemize}

\item  The original Stone duality between Boolean algebras and compact Hausdorff zero-dimensional spaces arises from equipping the set $2 = \{0, 1\}$ with both kinds of structure.  When interpreted as a strong conceptual completeness theorem for (finitary) propositional logic, Stone duality says that the Lindenbaum--Tarski algebra $\ang{\@L \mid \@T}$ of a propositional theory $(\@L, \@T)$ may be recovered as the algebra of clopen sets of its space of models.

\item  A version of Łoś's theorem says that ultraproducts on the category of sets commute with the structure (finite intersection, finite union, etc.)\ used to interpret finitary first-order logic $\@L_{\omega\omega}$.  Makkai \cite{Multra} proved that for an $\@L_{\omega\omega}$-theory $\@T$, its syntactic Boolean pretopos $\-{\ang{\@L \mid \@T}}^B_\omega$ (defined similarly to $\-{\ang{\@L \mid \@T}}^B_{\omega_1}$ above but for $\@L_{\omega\omega}$) may be recovered as the category of ultraproduct-preserving actions of the category of models and elementary embeddings.

\item  Analogous results of Gabriel--Ulmer, Lawvere, Makkai and others (see \cite{ARlpac}, \cite{ALR}, \cite{Mexact}) apply to various well-behaved fragments of $\@L_{\omega\omega}$.

\item  In perhaps the closest relative to this paper, Awodey--Forssell \cite{AF} proved that for an $\@L_{\omega\omega}$-theory $\@T$, $\-{\ang{\@L \mid \@T}}^B_\omega$ may be recovered as certain continuous actions of the topological groupoid of models on subsets of a fixed set of large enough cardinality.

\end{itemize}
We will explain how to view \cref{thm:borel-interp-equiv} as a Stone-type duality theorem in \cref{sec:stone}.  One benefit of doing so is that general duality theory then automatically yields \cref{thm:2interp-eso}.  Indeed, the whole of \cref{thm:borel-interp-equiv} is equivalent to the following strengthening of \cref{thm:2interp-eso}: given two theories $(\@L, \@T), (\@L', \@T')$, the functor $F |-> F^*$ taking interpretations $F : (\@L, \@T) -> (\@L', \@T')$ and ``definable natural isomorphisms'' between them to Borel functors and Borel natural isomorphisms is an equivalence of groupoids.  (See \cref{thm:2interp-equiv} for a precise statement.)

The proof of \cref{thm:borel-interp-equiv} is by reduction to a continuous version of the result.  Recall that a \defn{countable fragment} $\@F$ of $\@L_{\omega_1\omega}$ is a countable set of $\@L_{\omega_1\omega}$-formulas containing atomic formulas and closed under subformulas, $\@L_{\omega\omega}$-logical operations, and variable substitutions.  Given a countable fragment $\@F$ containing a countable theory $\@T$, we define the notions of \defn{$(\@F, \@T)$-imaginary sort} and \defn{$(\@F, \@T)$-definable function} in the same way as the $(\@L_{\omega_1\omega}, \@T)$- versions above, except that the formulas involved must be \emph{countable disjunctions of formulas in $\@F$}.  The resulting category is called the \defn{syntactic $\omega_1$-pretopos}, denoted
\begin{align*}
\-{\ang{\@F \mid \@T}}_{\omega_1}.
\end{align*}
Let $\Mod(\@F, \@T)$ denote $\Mod(\@L, \@T)$ equipped with the Polish topology induced by the countable fragment $\@F$; see \cite[Ch.~11]{Gao}.  Let $\MOD(\@F, \@T)$ denote the \defn{Polish groupoid of countable models of $\@T$}, whose space of objects is $\Mod(\@F, \@T)$ and whose morphisms are isomorphisms with the usual pointwise convergence topology.
%See \cite{Lup} for the general theory of Polish groupoids and their actions.
We say that a topological space $X$ equipped with a continuous map $p : X -> \Mod(\@F, \@T)$ is \defn{countable étalé over $\Mod(\@F, \@T)$} if $X$ has a countable cover by open sets $U \subseteq X$ such that $p|U$ is an open embedding; a \defn{countable étalé $\MOD(\@F, \@T)$-space} is such a space equipped with a continuous action of $\MOD(\@F, \@T)$.  For an $(\@F, \@T)$-imaginary sort $A$, the space $\den{A}$ is countable étalé over $\Mod(\@F, \@T)$ in a canonical way; and we get a functor
\begin{align*}
\den{-} : \-{\ang{\@F \mid \@T}}_{\omega_1} --> \Act_{\omega_1}(\MOD(\@F, \@T)),
\end{align*}
where $\Act_{\omega_1}(\MOD(\@F, \@T))$ is the category of countable étalé $\MOD(\@F, \@T)$-spaces and continuous equivariant maps.  We now have the following continuous analog of \cref{thm:borel-interp-equiv}:

\begin{theorem}
\label{thm:etale-interp-equiv}
The functor $\den{-} : \-{\ang{\@F \mid \@T}}_{\omega_1} --> \Act_{\omega_1}(\MOD(\@F, \@T))$ is an equivalence of categories.
\end{theorem}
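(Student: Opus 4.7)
I would decompose the equivalence into the three standard criteria: conservativity on subobjects, fullness on subobjects, and essential surjectivity. The first two correspond to topological versions of the Lopez-Escobar completeness and definability theorems, while the third is the main new content. Together with the fact that morphisms in a (Boolean) pretopos are recoverable as functional graphs of subobjects, these three imply the full equivalence.

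For conservativity and fullness on subobjects, the subobjects of $\den{A}$ in $\Act_{\omega_1}(\MOD(\@F, \@T))$ are the open $\MOD(\@F, \@T)$-invariant subsets of $\den{A}$. First treating the basic case $A = \#X^n$, the open $\MOD$-invariant subsets of $\{(\@M, \vec{a}) \mid \vec{a} \in M^n\}$ are, by a topological Lopez-Escobar--Vaught argument adapted to the Polish groupoid $\MOD(\@F, \@T)$, in bijection with countable disjunctions of $\@F$-formulas up to $\@T$-provable equivalence. General imaginary sorts reduce to this case by pulling back along the canonical covering from a basic sort, since quotients and countable disjoint unions in $\-{\ang{\@F \mid \@T}}_{\omega_1}$ are defined so that $\den{-}$ preserves them.

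For essential surjectivity, let $p : X \to \Mod(\@F, \@T)$ be a countable étalé $\MOD(\@F, \@T)$-space. Choose a countable cover of $X$ by open sets $U_n$ with each $p|U_n$ a homeomorphism onto an open set $V_n \subseteq \Mod(\@F, \@T)$, refined so that $V_n = \{\@M : \@M \models \phi_n(\vec{a}_n)\}$ for some $\phi_n \in \@F$ and tuple $\vec{a}_n$. The $\MOD$-saturation of the local section $(p|U_n)^{-1}$, pushed into $\den{[\phi_n(\vec{x}_n)]}$ via $\@M \mapsto (\@M, \vec{a}_n)$, produces an open invariant set $Y_n$, which by the previous step is named by some formula $\psi_n$. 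The equivalence relation $\sim$ on $\bigsqcup_n Y_n$ identifying pairs mapping to the same element of $X$ is an open invariant relation, hence again named by a formula, and $A := \bigsqcup_n [\psi_n]/\sim$ is then an imaginary sort with $\den{A} \cong X$. The main obstacle is the topological Lopez-Escobar definability theorem itself and its application to $\sim$: one must show that \emph{every} open $\MOD(\@F, \@T)$-invariant subset of a space like $\den{[\phi(\vec{x})]}$ is named by a countable disjunction of $\@F$-formulas, and verify the $\@T$-provability of the equivalence relation axioms giving the pretopos structure. This should follow from a Vaught transform argument adapted to the Polish groupoid setting, but the bookkeeping --- particularly around open bisections, $\@T$-provability of the equivalence relation axioms for $\sim$, and persistence of formulas under fragment enlargement --- will require care.
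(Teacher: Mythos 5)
Your overall architecture is exactly the paper's: the same decomposition into conservativity, fullness on subobjects, and essential surjectivity; the same use of a topological Lopez-Escobar/Vaught definability theorem for invariant \emph{open} sets to handle the first two; and the same strategy for essential surjectivity of covering $X$ by open sections, naming saturations by formulas, and presenting $X$ as a named quotient of a named disjoint union. There is, however, one genuine gap in the essential surjectivity step, and it is not absorbed by the bookkeeping issues you flag at the end.

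The gap is the well-definedness of the map from $Y_n$ to $X$. A point of the saturation of the section $\@M \mapsto (\@M, \vec{a}_n)$ has the form $g \cdot (\@M, \vec{a}_n)$ with $\@M \in V_n$, and your intended map sends it to $g \cdot (p|U_n)^{-1}(\@M)$. If $g \cdot (\@M, \vec{a}_n) = h \cdot (\@M', \vec{a}_n)$, then $k := h^{-1} \cdot g : \@M \cong \@M'$ fixes $\vec{a}_n$, and single-valuedness requires $k \cdot (p|U_n)^{-1}(\@M) = (p|U_n)^{-1}(\@M')$, i.e., $\den{\vec{a}_n \mapsto \vec{a}_n} \cdot U_n \subseteq U_n$. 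This can fail: already for $X = \den{\#X}$ and $U = \{(\@M, 5) : \abs{M} \ge 6\}$, writing the base $\den{\abs{\#X} \ge 6}$ as $\den{\phi(0)}$ with the single parameter $0$ (using $\ne$ and existential quantifiers) makes the induced assignment multivalued, since an isomorphism fixing $0$ can move $5$. For this particular $U$ one can repair matters by choosing the tuple $(0, \dotsc, 5)$ instead, but for an abstract countable \'etal\'e $X$ an open section does not come with a canonical tuple pinning it down, and manufacturing one is precisely the content of the first Claim in the paper's proof: by continuity of the action at $1_{p(x)}$ one refines the cover so that each section $U$ sits over a basic $\den{\phi(\vec{a})}$ with $\vec{a}$ absorbing all parameters (those locating $U$ inside a larger section as well as those defining the base), after which $\den{\vec{a} \mapsto \vec{a}} \cdot U \subseteq U$ holds. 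Two smaller points: the continuity of the resulting equivariant maps onto the pieces of $X$ is also not automatic (the paper proves it by factoring each restriction to a fiber $\den{\alpha}_{\vec{b}}$ through an explicit permutation acting continuously), and your worry about ``persistence of formulas under fragment enlargement'' is misplaced for this theorem --- the fragment $\@F$ is fixed and only the open definability theorem is needed; enlargement of $\@F$ enters only in the Borel version.
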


The proof of \cref{thm:borel-interp-equiv} from \cref{thm:etale-interp-equiv} in \cref{sec:borel-interp-equiv} uses techniques from invariant descriptive set theory, in particular Vaught transforms and the Becker--Kechris method for topological realization of Borel actions (see \cite{BKpol} or \cite{Gao}), to show that every fiberwise countable Borel action can be realized as a countable étalé action by picking a large enough countable fragment $\@F$.  Along the way, we will prove the following more abstract result, which may be of independent interest:

\begin{theorem}
\label{thm:borel-action-etale}
Let $\!G$ be an open Polish groupoid and $X$ be a fiberwise countable Borel $\!G$-space.  Then there is a finer open Polish groupoid topology on $\!G$, such that letting $\!G'$ be the resulting Polish groupoid, $X$ is Borel isomorphic to a countable étalé $\!G'$-space.
\end{theorem}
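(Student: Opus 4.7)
The approach is to adapt the Becker--Kechris method of topological realization for Borel $G$-spaces to open Polish groupoids, with the extra requirement that the realized action be not merely continuous but étalé. The starting point is to apply the Lusin--Novikov uniformization theorem to the fiberwise countable Borel projection $p \colon X \to \mathrm{Obj}(\mathcal{G})$, yielding a decomposition $X = \bigsqcup_{n \in \mathbb{N}} B_n$ as a countable disjoint union of Borel partial sections (each $p|B_n$ injective). Since étalé-ness demands a cover of $X$ by open sets on which $p$ is an open embedding, the $B_n$ are natural candidates, and will be forced into the countable family generating the new topology on $X$.

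The mechanism for the refinement is the groupoid Vaught transform: for a Borel set $A \subseteq X$ and an open $U \subseteq \mathcal{G}$, form $A^{*U}$ and its dual $A^{\triangle U}$ using Baire-category quantifiers over the source-fibers of $U$. Openness of $\mathcal{G}$ (i.e.\ the source map being open) is what guarantees that these transforms preserve Borelness and satisfy the usual Boolean calculus. Starting from countable bases for $\mathcal{G}$, $\mathrm{Obj}(\mathcal{G})$, and $X$, adjoin the $B_n$ and their images $p(B_n)$, then close under Vaught transforms, countable Boolean operations, and the groupoid operations. The topologies generated by the resulting countable families make $\mathcal{G}$ into a finer open Polish groupoid $\mathcal{G}'$ and $X$ into a continuous $\mathcal{G}'$-space keeping the original standard Borel structures, by the usual Becker--Kechris argument adapted to the groupoid setting.

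Étalé-ness is built into the construction: by design each $B_n$ is open in the new topology on $X$ and each $p(B_n)$ is open in $\mathrm{Obj}(\mathcal{G}')$, and a short verification using the Vaught-transform identities shows that the restrictions to $B_n$ of the basic open sets of $X$ correspond under $p$ to a basis at $p(B_n)$, so $p|B_n$ is an open embedding. The main obstacle is the simultaneous bookkeeping required to refine the topology on $\mathcal{G}$ just enough to support all the Vaught transforms needed on $X$ while preserving the open Polish groupoid axioms --- in particular, openness of source, target, and composition. Once this coherent refinement is arranged, étalé-ness comes essentially for free from having included the sections $B_n$ among the generators.
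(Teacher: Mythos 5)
Your overall strategy---Lusin--Novikov to obtain a countable cover of $X$ by Borel partial sections $B_n$, then a Becker--Kechris-style refinement of the topologies via Vaught transforms, with the $B_n$ and their images $p(B_n)$ thrown into the generating countable Boolean algebras---is the right one and matches the paper's proof up to the final step. But that final step, where you claim étalé-ness ``comes essentially for free,'' contains a genuine gap. The new Polish topology on $X$ produced by the Becker--Kechris method is generated by the Vaught transforms $A^{\triangle U}$ for $A$ in the Boolean algebra and $U$ basic open in $G^1$; the sets $B_n$ themselves are \emph{not} open in this topology, and simply declaring them open would in general destroy continuity of the action (which is the whole reason for passing to Vaught transforms). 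So you cannot argue that $p|B_n$ is an open embedding. What is actually needed is that the new topology admits a \emph{basis} of open sets that are themselves Borel sections with open image, and this is not automatic: the Vaught transform $A^{\triangle U}$ of a section $A$ need not be a section, since two distinct points $x \ne y$ in the same $p$-fiber can be translated into $A$ by group elements $g, h \in U$ with different targets, so that both land in $A^{\triangle U}$.

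The missing content is precisely the technical heart of the paper's argument (\cref{thm:vaught-section,thm:vaught-asttri}). First, if $A$ is a Borel section then $A^{\oast U}$ is again a Borel section, and $(A^{\oast U})^{\triangle V}$ is a Borel section \emph{provided} $V \cdot V^{-1} \subseteq U^{-1} \cdot U$; this compatibility condition is exactly what prevents two elements of $V$ from translating two distinct points of a fiber into $A^{\oast U}$. Second, every basic open set $A^{\triangle W}$ decomposes as a countable union of sets $(A^{\oast U})^{\triangle V}$ with $U \cdot V \subseteq W$ and $V \cdot V^{-1} \subseteq U^{-1} \cdot U$; combined with $A^{\triangle W} = \bigcup_n (A \cap B_n)^{\triangle W}$ this yields the desired basis of open sections, whose images are open in the refined object space by the push-forward identity $p(A^{\triangle U}) = p(A)^{\triangle U}$ for fiberwise countable $p$. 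Neither fact is a short verification; both require continuity of multiplication and a careful choice of basic open subsets of $G^1$. Separately, the ``main obstacle'' you identify---refining the topology on $G^1$ while preserving the open-groupoid axioms---is dispatched more cleanly than by closing under groupoid operations: the paper refines only the object space $G^0$ (via Vaught transforms for the trivial action) and then takes $G^{\prime 1}$ to be $G^1$ with $\partial_1^{-1}(U)$ adjoined for each newly open $U \subseteq G^{\prime 0}$, which is an open Polish groupoid by the action-groupoid construction of Lupini.
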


As for \cref{thm:etale-interp-equiv}, we will give a direct proof in \cref{sec:etale-interp-equiv}.  The proof we give is analogous to that of the duality result of Awodey--Forssell \cite{AF} for $\@L_{\omega\omega}$-theories mentioned above.

However, as with Awodey--Forssell's result, in some sense the proper context for \cref{thm:etale-interp-equiv} is the theory of groupoid representations for toposes.  As such, we will sketch in \cref{sec:joyal-tierney} an alternative proof of (a generalization of) \cref{thm:etale-interp-equiv} using the Joyal--Tierney representation theorem \cite{JT}.  While this proof (together with its prerequisite definitions and lemmas) is admittedly much longer than the direct proof, it uses only straightforward variations of well-known concepts and arguments, thereby showing that \cref{thm:etale-interp-equiv} is in some sense a purely ``formal'' consequence of standard topos theory.

\bigskip

We have tried to organize this paper so as to minimize the amount of category theory needed in the earlier sections.  We begin with basic definitions involving groupoids and étalé spaces in \cref{sec:groupoid}, followed by the proof of \cref{thm:borel-action-etale} in \cref{sec:borel-action-etale}.   We then give in \cref{sec:imag,sec:isogpd,sec:den} the precise definitions of the syntactic (Boolean) $\omega_1$-pretopos, the groupoid of countable models, and the functor $\den{-}$.  Along the way, we introduce the notion of an ``$\omega_1$-coherent theory'', which generalizes that of a countable fragment.  In \cref{sec:lopez-escobar}, we present a version of Vaught's proof of Lopez-Escobar's (definability) theorem; this will be needed in what follows.  In \cref{sec:etale-interp-equiv}, we give the direct proof of \cref{thm:etale-interp-equiv}, which is then used (along with the proof of \cref{thm:borel-action-etale}) to prove \cref{thm:borel-interp-equiv} in \cref{sec:borel-interp-equiv}.

The rest of the paper involves more heavy-duty categorical notions.  In \cref{sec:interp}, we define the notion of an interpretation $F$ between theories and the induced functor $F^*$ between the groupoids of models; this defines a (contravariant) pseudofunctor from the $2$-category of theories to the $2$-category of standard Borel groupoids.  In \cref{sec:stone}, we explain how \cref{thm:borel-interp-equiv} may be viewed as a Stone-type duality, yielding \cref{thm:2interp-eso}; we also explain how this latter result may be viewed as a generalization of the main result of \cite{HMM}.  In \cref{sec:kloc,sec:lkpcat,sec:kcohthy}, we give some prerequisite definitions and lemmas, which are used in the proof of (a generalization of) \cref{thm:etale-interp-equiv} from the Joyal--Tierney theorem, in \cref{sec:joyal-tierney}.

\medskip
\textit{Acknowledgments.}  We would like to thank Alexander Kechris for providing some comments on a draft of this paper.

\section{(Quasi-)Polish spaces and groupoids}
\label{sec:groupoid}

In this section, we recall some basic definitions involving groupoids and their actions, étalé spaces, and quasi-Polish spaces.

For sets $X, Y, Z$ and functions $f : X -> Z$ and $g : Y -> Z$, the \defn{fiber product} or \defn{pullback} is
\begin{align*}
X \times_Z Y := \{(x, y) \mid f(x) = g(y)\} \subseteq X \times Y.
\end{align*}
The maps $f, g$ are hidden in the notation and will be explicitly specified if not clear from context.

A (small) \defn{groupoid} $\!G = (G^0, G^1)$ consists of a set of \emph{objects} $G^0$, a set of \emph{morphisms} $G^1$, \emph{source} and \emph{target} maps $\partial_1, \partial_0 : G^1 -> G^0$ (note the order; usually $g \in G^1$ with $(\partial_1(g), \partial_0(g)) = (x, y)$ is denoted $g : x -> y$), a \emph{unit} map $\iota : G^0 -> G^1$ (usually denoted $\iota(x) = 1_x$), a \emph{multiplication} map $\mu : G^1 \times_{G^0} G^1 -> G^1$ (usually denoted $\mu(h, g) = h \cdot g$; here $\times_{G^0}$ means fiber product with respect to $\partial_1$ on the left and $\partial_0$ on the right), and an \emph{inversion} map $\nu : G^1 -> G^1$, subject to the usual axioms ($\partial_0(h \cdot g) = \partial_0(h)$, $g^{-1} \cdot g = 1_{\partial_1(g)}$, etc.).

An \defn{action} of a groupoid $\!G$ on a set $X$ equipped with a map $p : X -> G^0$ is a map $a : G^1 \times_{G^0} X -> X$ (usually denoted $a(g, x) = g \cdot x$; here $\times_{G^0}$ means $\partial_1$ on the left and $p$ on the right) satisfying the usual axioms ($p(g \cdot x) = \partial_0(g)$, $1_{p(x)} \cdot x = x$, and $(h \cdot g) \cdot x = h \cdot (g \cdot x)$).  The set $X$ equipped with the map $p$ and the action is also called a \defn{$\!G$-set}.  A \defn{$\!G$-equivariant map} between two $\!G$-sets $p : X -> G^0$ and $q : Y -> G^0$ is a map $f : X -> Y$ such that $p = q \circ f$ and $f(g \cdot x) = g \cdot f(x)$.

The \defn{trivial action} of a groupoid $\!G$ on $G^0$ equipped with the identity $1_{G^0} : G^0 -> G^0$ is given by $g \cdot x := y$ for $g : x -> y$.  Note that for any $\!G$-set $p : X -> G^0$, $p$ is $\!G$-equivariant.

A \defn{topological groupoid} is a groupoid $\!G = (G^0, G^1)$ in which $G^0, G^1$ are topological spaces and the structure maps $\partial_1, \partial_0, \iota, \mu, \nu$ are continuous.  We will usually be concerned with topological groupoids $\!G$ which are \defn{open}, meaning that $\partial_1$ (equivalently $\partial_0, \mu$) is an open map.  A \defn{continuous action} of a topological groupoid $\!G$ is an action $a : G^1 \times_{G^0} X -> X$ on $p : X -> G^0$ such that $X$ is a topological space and $p, a$ are continuous.  In that case, $(X, p, a)$ is a \defn{continuous $\!G$-space}.

A continuous map $f : X -> Y$ between topological spaces is \defn{étalé} (or a \defn{local homeomorphism}) if $X$ has a cover by open sets $U \subseteq X$ such that $f|U : U -> Y$ is an open embedding ($U$ is then an \defn{open section over $f(U) \subseteq Y$}), and \defn{countable étalé} if such a cover can be taken to be countable.  A continuous action of a topological groupoid $\!G$ on $p : X -> G^0$ is \defn{(countable) étalé} if $p$ is (countable) étalé.  We denote the category of countable étalé $\!G$-spaces and continuous $\!G$-equivariant maps by
\begin{align*}
\Act_{\omega_1}(\!G).
\end{align*}

We will need the following standard facts about (countable) étalé maps:

\begin{lemma}
\label{thm:etale}
Let $X, Y, Z$ be topological spaces.
\begin{enumerate}
\item[(i)]  Étalé maps are open.
\item[(ii)]  (Countable) étalé maps are closed under composition.
\item[(iii)]  If $f : X -> Y$ is continuous, $g : Y -> Z$ is (countable) étalé, and $g \circ f$ is (countable) étalé, then $f$ is (countable) étalé.
\item[(iv)]  If $f : X -> Z$ is continuous and $g : Y -> Z$ is (countable) étalé, then the pullback of $g$ along $f$, i.e., the projection $p : X \times_Z Y -> X$, is (countable) étalé.
\item[(v)]  If $E \subseteq X \times X$ is an equivalence relation such that either (equivalently both) of the projections $p, q : E -> X$ is open, then the quotient map $h : X -> X/E$ is open.
\item[(vi)]  If $f : X -> Y$ is an open surjection, $g : Y -> Z$ is continuous, and $g \circ f$ is (countable) étalé, then $g$ is (countable) étalé.
\end{enumerate}
\end{lemma}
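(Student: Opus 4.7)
The plan is to verify each clause by a direct unfolding of the definition, reducing everything to the local structure provided by a cover of open sections.

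For (i), given any open $V \subseteq X$ and a cover $\{U_i\}$ of $X$ witnessing that $f$ is étalé, write $V = \bigcup_i (V \cap U_i)$, whence $f(V) = \bigcup_i f(V \cap U_i)$; each $f(V \cap U_i)$ is open since $f|U_i$ is an open embedding. For (ii), if $\{U_i\}$ covers $X$ with $f|U_i$ an open embedding into $Y$ and $\{V_j\}$ covers $Y$ with $g|V_j$ an open embedding, then $\{U_i \cap f^{-1}(V_j)\}_{i,j}$ covers $X$, and each restriction of $g \circ f$ to such a piece factors as a composition of open embeddings; countability is preserved under finite products of countable families. For (iv), cover $Y$ by open sections $V_i$ of $g$; then each pullback piece $X \times_Z V_i = f^{-1}(g(V_i)) \times_{g(V_i)} V_i$ is open in $X \times_Z Y$ and projects homeomorphically onto the open set $f^{-1}(g(V_i)) \subseteq X$, giving the required cover.

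For (iii), fix $x \in X$. Choose an open $W \ni x$ on which $g \circ f$ is an open embedding, and an open $V \ni f(x)$ on which $g$ is an open embedding, and set $W' := W \cap f^{-1}(V)$, which is open in $X$. On $W'$ we have the identity $f|W' = (g|V)^{-1} \circ (g \circ f)|W'$, so $f|W'$ is a composition of open embeddings with image $(g|V)^{-1}((g \circ f)(W')) \subseteq V$, which is open in $V$ and hence in $Y$; so $f|W'$ is an open embedding. Countability passes through since $(g\circ f)^{-1}$-cover sets and $g^{-1}$-cover sets each give at most countably many pieces.

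Clause (v) is the well-known fact about quotients by open equivalence relations: for $U \subseteq X$ open, $h^{-1}(h(U)) = p(q^{-1}(U))$ (where $p, q$ are the two projections $E \to X$), which is open because $q$ is continuous and $p$ is open; hence $h(U)$ is open in $X/E$ by definition of the quotient topology. The main subtlety to record here is the equivalence between openness of $p$ and $q$, which is immediate from the inversion symmetry of $E$.

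The subtlest case is (vi), and this is the step I expect to require the most care. Given $y \in Y$, pick any $x$ with $f(x) = y$ (using surjectivity), and choose an open $U \ni x$ on which $g \circ f$ is an open embedding. Then $f(U) \ni y$ is open in $Y$ since $f$ is open. Injectivity of $g|f(U)$ follows because if $y_i = f(u_i) \in f(U)$ with $g(y_1) = g(y_2)$, then $(g \circ f)(u_1) = (g \circ f)(u_2)$ forces $u_1 = u_2$. For openness, given $V \subseteq f(U)$ open, observe that $V = f(f^{-1}(V) \cap U)$ (using $V \subseteq f(U)$), so $g(V) = (g \circ f)(f^{-1}(V) \cap U)$ is open in $Z$. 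Thus $g|f(U)$ is a continuous open bijection onto the open set $(g \circ f)(U) \subseteq Z$, hence an open embedding. Surjectivity of $f$ ensures the images $f(U_i)$ of a countable cover $\{U_i\}$ of $X$ form a countable cover of $Y$, preserving countability.
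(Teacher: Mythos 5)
Your proof is correct and takes essentially the same route as the paper's: each clause is verified by directly unfolding the definition via a cover by open sections, with the same key computations ($f(V)=\bigcup_i f(U_i\cap V)$ in (i), refining covers by $f^{-1}(V_j)$ in (ii) and (iii), pulling back sections in (iv), saturating an open set via the projections of $E$ in (v), and pushing sections forward along the open surjection in (vi)). The only cosmetic difference is in (iii), where the paper shows every open $(g\circ f)$-section is already an open $f$-section while you instead refine by preimages of $g$-sections and compose with a local inverse of $g$; both amount to the same calculation.
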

\begin{proof}
Most of these are proved in standard references on sheaf theory (at least without the countability restrictions); see e.g., \cite[\S2.3]{Ten}.
% or \cite[C1.3]{Jeleph}.

(i):  If $f : X -> Y$ is étalé, with $X$ covered by open sections $U_i$, then for any open $V \subseteq X$, $f(V) = \bigcup_i f(U_i \cap V)$ which is open.

(ii):  If $f : X -> Y$ and $g : Y -> Z$ are étalé, with $X$ covered by open $f$-sections $U_i$ and $Y$ covered by open $g$-sections $V_j$, then $X$ is covered by open $(g \circ f)$-sections $U_i \cap g^{-1}(V_j)$.

(iii):  If $U \subseteq X$ is an open $(g \circ f)$-section, then $f(U) \subseteq Y$ is open, since if $Y = \bigcup_j V_j$ is a cover by open $g$-sections, then $f(U) = \bigcup_j (V_j \cap g^{-1}((g \circ f)(U \cap f^{-1}(V_j))))$.  Thus if $U \subseteq X$ is an open $(g \circ f)$-section, then $U$ is also an open $f$-section.

(iv):  If $V \subseteq Y$ is an open $g$-section, then $X \times_Z V \subseteq X \times_Z Y$ is an open $p$-section.

(v):  If $U \subseteq X$ is open, then $h^{-1}(h(U)) = [U]_E = p(E \cap (X \times U)) \subseteq X$ is open, whence $h(U) \subseteq X/E$ is open.

(vi):  If $U \subseteq X$ is an open $(g \circ f)$-section, then $f(U) \subseteq Y$ is an open $g$-section.
\end{proof}

For basic descriptive set theory, see \cite{Kcdst}.  A \defn{standard Borel groupoid} is a groupoid $\!G = (G^0, G^1)$ in which $G^0, G^1$ are standard Borel spaces and the structure maps $\partial_1, \partial_0, \iota, \mu, \nu$ are Borel.  A \defn{Borel action} of a standard Borel groupoid $\!G$ (or a \defn{Borel $\!G$-space}) is an action $a$ on $p : X -> G^0$ such that $X$ is a standard Borel space and $p, a$ are Borel maps.  The action is \defn{fiberwise countable} if $p$ is (and we call a Borel set $A \subseteq X$ a \defn{Borel section over $f(A)$} if $p|A$ is injective).  We denote the category of fiberwise countable Borel $\!G$-spaces and Borel $\!G$-equivariant maps by
\begin{align*}
\Act^B_{\omega_1}(\!G).
\end{align*}

We will be considering the following generalization of Polish spaces.  We say that a subset of $A \subseteq X$ of a topological space $X$ is $\*\Pi^0_2$ if it is a countable intersection $A = \bigcap_i (U_i \cup F_i)$ where $U_i \subseteq X$ is open and $F_i \subseteq X$ is closed.  Note that if every closed set in $X$ is $G_\delta$ (a countable intersection of open sets, e.g., if $X$ is metrizable), then a set if $\*\Pi^0_2$ iff it is $G_\delta$.  A \defn{quasi-Polish space} is a topological space which is homeomorphic to a $\*\Pi^0_2$ subset of a countable power of the Sierpiński space $\#S = \{0 < 1\}$ (with $\{0\}$ closed but not open).  Quasi-Polish spaces are closed under countable products, $\*\Pi^0_2$ subsets, and continuous open $T_0$ images.  A space is Polish iff it is quasi-Polish and regular.  Every quasi-Polish space can be made Polish by adjoining countably many closed sets to the topology, hence is in particular a standard Borel space.  For these and other basic facts about quasi-Polish spaces, see \cite{deB}.

The following is a version of a more abstract ``presentability'' result; see \cref{thm:etale-kloc}.

\begin{lemma}
\label{thm:etale-qpol}
Let $f : X -> Y$ be a countable étalé map with $Y$ quasi-Polish.  Then $X$ is quasi-Polish.
\end{lemma}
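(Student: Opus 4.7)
The plan is to exhibit $X$ as a continuous open surjective image of a quasi-Polish space and then verify that $X$ is $T_0$; the conclusion will then be immediate from the closure of quasi-Polish spaces under continuous open $T_0$ images, recalled just before the statement.

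To set this up, fix a countable cover of $X$ by open sections $U_i \subseteq X$, and set $V_i := f(U_i)$, which is open in $Y$ and hence quasi-Polish (as an open subset of a quasi-Polish space). The topological disjoint union $Z := \bigsqcup_i V_i$ is then quasi-Polish. Define $h : Z \to X$ by sending $(i, y) \in \{i\} \times V_i$ to the unique preimage $(f|U_i)^{-1}(y) \in U_i \subseteq X$. Since $h$ restricts to the homeomorphism $V_i \cong U_i$ on each summand, and the $U_i$ are open and cover $X$, the map $h$ is continuous, surjective, and open.

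To check the $T_0$ condition, pick distinct $x, x' \in X$ and some $U_i$ containing $x$: either $x' \notin U_i$, in which case $U_i$ itself separates them, or $x' \in U_i$, in which case they are separated by an open subset of $U_i$ since $U_i$ embeds homeomorphically into the $T_0$ space $Y$. Combining this with the previous paragraph, $X$ is a continuous open $T_0$ image of the quasi-Polish space $Z$, and therefore quasi-Polish. There is no real obstacle in the argument; the substantive point is simply that the étalé structure presents $X$ as a quotient of a disjoint union of open subsets of $Y$, which makes the closure properties of quasi-Polish spaces immediately applicable.
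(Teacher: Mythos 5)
Your argument is correct, but it takes a genuinely different route from the paper's. The paper's proof is self-contained modulo the definition of quasi-Polish space: it fixes a countable basis $\@A$ of open sections closed under binary intersections and exhibits $X$ explicitly, via $x \mapsto (f(x), \chi_A(x))_{A \in \@A}$, as a $\*\Pi^0_2$ subspace of $Y \times \#S^{\@A}$, checking by hand which tuples lie in the image. You instead present $X$ from above, as a continuous open surjective image of the quasi-Polish space $Z = \bigsqcup_i f(U_i)$, and then invoke de Brecht's theorem that continuous open $T_0$ images of quasi-Polish spaces are quasi-Polish. All your individual steps check out: $f(U_i)$ is open since $f|U_i$ is an open embedding; $h$ is continuous, open, and surjective because it is so on each summand and the summands are open; and the $T_0$ verification is right, since an open subset of $U_i$ is open in $X$ and each $U_i$ is homeomorphic to a subspace of the $T_0$ space $Y$. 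Two small remarks. First, you use that a countable disjoint union of quasi-Polish spaces is quasi-Polish, which is true but not among the closure properties recalled before the lemma; it does follow from the ones that are recalled, since $\bigsqcup_i V_i$ is homeomorphic to the open (hence $\*\Pi^0_2$) subset $\bigcup_i (\{i\} \times V_i)$ of the product $\#N \times Y$ of quasi-Polish spaces. Second, the trade-off is that your proof is shorter and more conceptual but leans on the open-image theorem, which is a nontrivial result of de Brecht, whereas the paper's embedding argument uses only the definition together with closure under countable products and $\*\Pi^0_2$ subsets; the explicit presentation by generators and relations is also what generalizes to the localic setting in \cref{thm:etale-kloc}, which is presumably why the paper argues that way.
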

\begin{proof}
Let $\@A$ be a countable basis of open sections in $X$, closed under binary intersections.  Consider
\begin{align*}
g : X &--> Y \times \#S^\@A \\
x &|--> (f(x), \chi_A(x))_{A \in \@A},
\end{align*}
where $\chi_A$ is the indicator function of $A$.  Clearly $g$ is a continuous embedding.  We claim that for $(y, i_A)_{A \in \@A} \in Y \times \#S^\@A$,
\begin{align*}
(y, i_A)_A \in g(X) \iff \underbrace{\exists A\, (i_A = 1)}_{(1)} \AND \forall A, B\, (\underbrace{(i_A \wedge i_B = i_{A \cap B})}_{(2)} \AND \underbrace{(A \subseteq B \implies i_A = \chi_{f(A)}(y) \wedge i_B}_{(3)})).
\end{align*}
$\Longrightarrow$ is straightforward.  For $\Longleftarrow$, given $(y, i_A)_A$ satisfying the right-hand side, by (1), let $A_0$ be such that $i_{A_0} = 1$; then (2) and (3) give that for all $A$, $i_A = 1$ iff $i_{A \cap A_0} = 1$ iff $y \in f(A \cap A_0)$, so in particular $y \in f(A_0)$, whence letting $x \in A_0$ be (unique) such that $f(x) = y$, it is easily verified that $(y, i_A)_A = g(x)$.  Clearly the right-hand side is $\*\Pi^0_2$ in $Y \times \#S^\@A$, so $X$ is quasi-Polish.
\end{proof}

A \defn{(quasi-)Polish groupoid} $\!G$ is a topological groupoid such that $G^0, G^1$ are (quasi-)Polish.  See Lupini \cite{Lup} for the basic theory of Polish groupoids (note that by \emph{Polish groupoid}, \cite{Lup} refers to a slight generalization of what we are calling \emph{open Polish groupoid}).  A \defn{(quasi-)Polish $\!G$-space} is a continuous $\!G$-space $p : X -> G^0$ such that $X$ is (quasi-)Polish.  By \cref{thm:etale-qpol}, for quasi-Polish $\!G$, every countable étalé $\!G$-space is quasi-Polish.

Let $\!G$ be an open Polish groupoid and $p : X -> G^0$ be a Borel $\!G$-space.  For a $\partial_1$-fiberwise open set $U \subseteq G^1$ and a Borel set $A \subseteq X$, the \defn{Vaught transforms} are defined by
\begin{align*}
A^{\tri U} &:= \{x \in X \mid \exists^* g \in \partial_1^{-1}(p(x)) \cap U\, (g \cdot x \in A)\}, \\
A^{*U} &:= \{x \in X \mid \forall^* g \in \partial_1^{-1}(p(x)) \cap U\, (g \cdot x \in A)\}.
\end{align*}
Here $\exists^*$ and $\forall^*$ denote Baire category quantifiers; see \cite[8.J]{Kcdst}.  We also put
\begin{align*}
A^{\oast U} &:= \{x \in X \mid \partial_1^{-1}(p(x)) \cap U \ne \emptyset \AND \forall^* g \in \partial_1^{-1}(p(x)) \cap U\, (g \cdot x \in A)\} \\
&= A^{*U} \cap p^{-1}(\partial_1(U)) \\
&= A^{*U} \cap A^{\tri U}
\end{align*}
(in \cite{Lup}, this is defined to be $A^{*U}$).  We will usually be interested in the case where $U \subseteq G^1$ is open, but it is convenient to have the more general notation available.  Here are some basic properties of the Vaught transforms:

\begin{lemma}
\label{thm:vaught}
\begin{enumerate}
\item[(a)]  $\neg A^{*U} = (\neg A)^{\tri U}$.
\item[(b)]  $A^{\tri U}$ preserves countable unions in each argument.
\item[(c)]  For any basis of open sets $V_i \subseteq G^1$, $A^{\tri U} = \bigcup_{V_i \subseteq U} A^{\oast V_i}$.  (It is enough to assume that the $V_i$ form a ``$\partial_1$-fiberwise weak basis for $U$'', i.e., every nonempty open subset of a $\partial_1$-fiber of $U$ contains a nonempty $\partial_1$-fiber of some $V_i \subseteq U$.)
\item[(d)]  For $U$ open, $(A^{*U})^{*V} = A^{*(U \cdot V)}$ (where $U \cdot V = \{g \cdot h \mid g \in U \AND h \in V \AND \partial_1(g) = \partial_0(h)\}$).
\item[(e)]  If $\partial_0^{-1}(p(A)) \cap U = \partial_0^{-1}(p(A)) \cap V$, then $A^{\tri U} = A^{\tri V}$.
\end{enumerate}
\end{lemma}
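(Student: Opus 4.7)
The plan is to verify each of (a)--(e) as a pointwise-in-$x$ statement about the Baire-category quantifiers $\exists^*, \forall^*$ in the source-fiber $\partial_1^{-1}(p(x))$, which is a Polish space since $\!G$ is Polish with $\partial_1$ continuous. The arguments parallel those for Vaught transforms in the Polish group setting (see \cite{BKpol} or \cite{Gao}), with groupoid bookkeeping replacing the fixed ambient group.

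Parts (a), (b), (c), and (e) are essentially formal once the definitions are unwound. Part (a) is the De Morgan duality $\neg \forall^* g\, \phi(g) \iff \exists^* g\, \neg\phi(g)$ applied fiberwise. Part (b) follows from countable additivity of meagerness: writing $A = \bigcup_n A_n$, the set $\{g : g \cdot x \in A\} \cap U$ is nonmeager in the fiber iff some $\{g : g \cdot x \in A_n\} \cap U$ is, with a symmetric argument for unions in the $U$-argument. Part (c) is the standard localization principle of Baire category --- a Baire-measurable subset of an open set is nonmeager iff it is comeager in some nonempty basic open subset --- applied to $\{g : g \cdot x \in A\}$, which has the Baire property as the Borel preimage of $A$ under the continuous fiber map $g \mapsto g \cdot x$; I use the given (weak) basis $V_i$. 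Part (e) is a direct set-theoretic check: $g \cdot x \in A$ forces $\partial_0(g) = p(g \cdot x) \in p(A)$, so under the hypothesis the two candidate subsets of the fiber are literally equal, making their nonmeagerness conditions tautologically equivalent.

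The substantive step, and the main obstacle, is part (d), which is a Fubini-type identity for Baire category. I will deduce it from the Kuratowski--Ulam theorem applied to the fiber product $U \times_{G^0} (V \cap \partial_1^{-1}(p(x)))$, together with a change of variables along the multiplication map $\mu : (g, h) \mapsto g \cdot h$. The critical input is that $\mu$ restricts to an open continuous surjection onto $(U \cdot V) \cap \partial_1^{-1}(p(x))$, which uses both the openness hypothesis on $\!G$ (making $\mu$ globally open) and the openness of $U$ (so that the restriction to the relevant fibered product remains an open surjection). Once this is established, the standard fact that meagerness is preserved and reflected by open surjections, combined with Kuratowski--Ulam, converts the two-step $\forall^*$ on the right-hand side of (d) into a single $\forall^*$ over $(U \cdot V) \cap \partial_1^{-1}(p(x))$, yielding the identity.
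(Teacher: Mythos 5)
Your proposal is correct and follows essentially the same route as the paper: (a)--(c) and (e) are dispatched as standard or trivial fiberwise Baire-category facts, and (d) is obtained by two applications of the Kuratowski--Ulam theorem for open maps, first to the projection of the fiber product $U \times_{G^0} (\partial_1^{-1}(p(x)) \cap V)$ and then to the restricted multiplication map onto $\partial_1^{-1}(p(x)) \cap (U \cdot V)$, exactly as in the paper. (One cosmetic point for (c): the fiber map $g \mapsto g \cdot x$ is only Borel, not continuous, since the action is merely Borel, but Borel preimages of Borel sets still have the Baire property, so the localization argument goes through unchanged.)
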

\begin{proof}
(a)--(c) are standard (see \cite[2.10.2]{Lup}).

(e) is trivial, amounting to $g \cdot x \in A \implies g \in \partial_0^{-1}(\partial_0(g)) = \partial_0^{-1}(p(g \cdot x)) \subseteq \partial_0^{-1}(p(A))$.

For (d), we have
\begin{align*}
x \in (A^{*U})^{*V}
&\iff \forall^* h \in \partial_1^{-1}(p(x)) \cap V\, \forall^* g \in \partial_1^{-1}(p(h \cdot x)) \cap U = \partial_1^{-1}(\partial_0(h)) \cap U\, (g \cdot h \cdot x \in A); \\
\intertext{applying the Kuratowski--Ulam theorem for open maps (see \cite[A.1]{MT}) to the projection $U \times_{G^0} (\partial_1^{-1}(p(x)) \cap V) -> \partial_1^{-1}(p(x)) \cap V$ (a pullback of $\partial_1 : U -> G^0$, hence open) yields}
&\iff \forall^* (g, h) \in U \times_{G^0} (\partial_1^{-1}(p(x)) \cap V)\, (g \cdot h \cdot x \in A); \\
\intertext{applying Kuratowski--Ulam to the multiplication $\mu : U \times_{G^0} (\partial_1^{-1}(p(x)) \cap V) -> \partial_1^{-1}(p(x))$ yields}
&\iff \forall^* k \in \partial_1^{-1}(p(x))\, \forall^* (g, h) \in \mu^{-1}(k)\, (k \cdot x \in A) \\
&\iff \forall^* k \in \partial_1^{-1}(p(x)) \cap (U \cdot V)\, (k \cdot x \in A) \\
&\iff x \in A^{*(U \cdot V)}.
\qedhere
\end{align*}
\end{proof}

\section{Étalé realizations of fiberwise countable Borel actions}
\label{sec:borel-action-etale}

In this section, we  prove \cref{thm:borel-action-etale}.

Let $\!G$ be an open Polish groupoid, $p : X -> G^0$ be a Borel $\!G$-space, and $\@U$ be a countable basis of open sets in $G^1$.  By the proof of \cite[4.1.1]{Lup} (the Becker--Kechris theorem for Polish groupoid actions), if we let $\@A$ be a countable Boolean algebra of Borel subsets of $X$ generating a Polish topology and closed under $A |-> A^{\tri U}$ for each $U \in \@U$, then
\begin{align*}
\@A^{\tri \@U} := \{A^{\tri U} \mid A \in \@A \AND U \in \@U\}
\end{align*}
generates a topology making $X$ into a Polish $\!G$-space.

\begin{lemma}
\label{thm:vaught-basis}
Under these hypotheses, $\@A^{\tri \@U}$ forms a basis for a topology.
\end{lemma}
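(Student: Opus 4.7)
My plan is to verify the basis criterion directly: given $x \in A_1^{\tri U_1} \cap A_2^{\tri U_2}$ with $A_i \in \@A$ and $U_i \in \@U$, I will produce $C \in \@A$ and $W \in \@U$ with $x \in C^{\tri W} \subseteq A_1^{\tri U_1} \cap A_2^{\tri U_2}$. The covering condition $\bigcup \@A^{\tri \@U} = X$ follows from $X \in \@A$ together with $X = X^{\tri G^1}$, taking $G^1 \in \@U$.

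To localize, I first apply property (c) of \cref{thm:vaught} to obtain $V_i \in \@U$ with $V_i \subseteq U_i$ and $x \in A_i^{\oast V_i}$. Picking witnesses $g_i \in V_i \cap \partial_1^{-1}(p(x))$, continuity of $\mu$ at each $(g_i, 1_{p(x)})$ and of $\iota$ at $p(x)$ let me shrink $V_i$ to $V_i' \in \@U$ still containing $g_i$, and choose $W \in \@U$ containing $1_{p(x)}$, such that both $V_i' \cdot W \subseteq V_i$ and $\partial_0(W) \subseteq \partial_1(V_i')$ hold. Shrinking $V_i$ to $V_i'$ preserves $x \in A_i^{\oast V_i'}$, because the defining comeager set on the fiber restricts to any open subset still meeting the fiber. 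The candidate is then $C := A_1^{\oast V_1'} \cap A_2^{\oast V_2'} \in \@A$, using that by property (a), $A^{\oast V} = A^{\tri V} \cap \neg(\neg A)^{\tri V}$, so $C$ is built from $A_1, A_2$ by Boolean operations and $\tri$-transforms by elements of $\@U$.

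The containment $C^{\tri W} \subseteq A_1^{\tri U_1} \cap A_2^{\tri U_2}$ follows from the $\tri$-analog $(A^{\tri V})^{\tri W} = A^{\tri (V \cdot W)}$ of property (d), proved by the same Kuratowski--Ulam argument on the open map $\mu$ (and formally derivable from (a) and (d)): since $C \subseteq A_i^{\tri V_i'}$ we get $C^{\tri W} \subseteq A_i^{\tri(V_i' \cdot W)} \subseteq A_i^{\tri V_i} \subseteq A_i^{\tri U_i}$ by monotonicity of $\tri$ in its open-set argument.

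The main obstacle is verifying $x \in C^{\tri W}$, for which I will establish the stronger statement $\forall^* k \in W \cap \partial_1^{-1}(p(x))$, $k \cdot x \in C$. Applied to $A_i$ and the pair $(V_i', W)$, property (d) gives
\[
\bigl(\forall^* k \in W \cap \partial_1^{-1}(p(x))\;\; k \cdot x \in A_i^{*V_i'}\bigr) \iff x \in A_i^{*(V_i' \cdot W)},
\]
and the right-hand side holds because $V_i' \cdot W \subseteq V_i$ makes $A_i^{*V_i} \subseteq A_i^{*(V_i' \cdot W)}$ (anti-monotonicity of $A^*$), while $x \in A_i^{*V_i}$ is part of $x \in A_i^{\oast V_i}$. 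The non-emptiness clause of $\oast$ at $k \cdot x$ is handled by $\partial_0(W) \subseteq \partial_1(V_i')$, so in fact $\forall^* k,\ k \cdot x \in A_i^{\oast V_i'}$. Intersecting the two comeager conditions for $i = 1, 2$ and noting that $W \cap \partial_1^{-1}(p(x)) \ni 1_{p(x)}$ is a non-empty open subset of the Polish fiber $\partial_1^{-1}(p(x))$, hence non-meager, ``$\forall^*$'' upgrades to ``$\exists^*$'' for $k \cdot x \in C$, yielding $x \in C^{\tri W}$ as required.
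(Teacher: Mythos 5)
Your proof is correct, and it uses the same basic toolkit as the paper's proof of \cref{thm:vaught-basis} --- localize via \cref{thm:vaught}(c) to $\oast$-transforms, shrink neighborhoods using continuity of $\mu$ and openness of $\partial_1$, then recombine using (d) --- but with a genuinely different choice of the witnessing basic set. The paper anchors at a witness $h_1 \in \partial_1^{-1}(p(x)) \cap V_1$ and transports only the second condition: it takes $A_3 = A_1 \cap A_2^{*U_4}$ with $U_4$ a small neighborhood of $h_2 \cdot h_1^{-1}$, and tests with $\tri U_3$ for $U_3$ a small neighborhood of $h_1$. You instead anchor at the unit $1_{p(x)}$, absorb \emph{both} membership conditions into the set $C = A_1^{\oast V_1'} \cap A_2^{\oast V_2'}$ (correctly placed in $\@A$ via $A^{\oast V} = A^{\tri V} \cap \neg\bigl((\neg A)^{\tri V}\bigr)$), and test with $\tri W$ for $W$ a small neighborhood of $1_{p(x)}$. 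Your version is more symmetric in $A_1, A_2$ and avoids explicit use of inverses; the paper's version keeps $A_1$ untouched, which makes the inclusion $A_3^{\tri U_3} \subseteq A_1^{\tri U_1}$ immediate, at the cost of the slightly fiddly choice of $U_4 \ni h_2 \cdot h_1^{-1}$ and the side condition $U_3 \subseteq \partial_0^{-1}(\partial_1(U_4))$ (the exact analogue of your $\partial_0(W) \subseteq \partial_1(V_i')$). All the steps you rely on check out: anti-monotonicity of $(-)^{*U}$ in $U$ and restriction of comeagerness to nonempty open subsets of the fiber, the identity $(A^{\tri V})^{\tri W} = A^{\tri(V \cdot W)}$ obtained from \cref{thm:vaught}(a) and (d), openness of $\partial_1(V_i')$ (from openness of the groupoid) to arrange $\partial_0(W) \subseteq \partial_1(V_i')$, and the upgrade from $\forall^*$ to $\exists^*$ on the nonempty open Polish set $\partial_1^{-1}(p(x)) \cap W \ni 1_{p(x)}$.
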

\begin{proof}
Clearly $X = X^{\tri G^1} = \bigcup_{U \in \@U} X^{\tri U}$ is covered by $\@A^{\tri \@U}$.  Let $x \in A_1^{\tri U_1} \cap A_2^{\tri U_2}$ where $A_1, A_2 \in \@A$ and $U_1, U_2 \in \@U$; we must find $A_3 \in \@A$ and $U_3 \in \@U$ such that $x \in A_3^{\tri U_3} \subseteq A_1^{\tri U_1} \cap A_2^{\tri U_2}$.  Let $V_1 \subseteq U_1$ and $V_2 \subseteq U_2$ be open so that
\begin{align*}
x \in A_1^{\oast V_1} \cap A_2^{\oast V_2}.
\end{align*}
Let $h_1 \in \partial_1^{-1}(p(x)) \cap V_1$ and $h_2 \in \partial_1^{-1}(p(x)) \cap V_2$.  We have $h_2 = (h_2 \cdot h_1^{-1}) \cdot h_1$, so by continuity of multiplication, there are open $V_3 \ni h_1$ and $V_4 \ni h_2 \cdot h_1^{-1}$ such that $V_4 \cdot V_3 \subseteq V_2$.  Let $U_3, U_4 \in \@U$ be such that $h_2 \cdot h_1^{-1} \in U_4 \subseteq V_4$ and $h_1 \in U_3 \subseteq V_1 \cap V_3 \cap \partial_0^{-1}(\partial_1(U_4))$; the latter is possible since $\partial_0(h_1) = \partial_1(h_2 \cdot h_1^{-1}) \in \partial_1(U_4)$.  Then $U_4 \cdot U_3 \subseteq V_4 \cdot V_3 \subseteq V_2$, so since $h_1 \in \partial_1^{-1}(p(x)) \cap U_3 \ne \emptyset$, from $x \in A_2^{\oast V_2}$ we get
\begin{align*}
x \in A_2^{\oast (U_4 \cdot U_3)} \subseteq (A_2^{*U_4})^{\oast U_3}.
\end{align*}
Since also $U_3 \subseteq V_1$, $\partial_1^{-1}(p(x)) \cap U_3 \ne \emptyset$, and $x \in A_1^{\oast V_1}$, we have $x \in A_1^{\oast U_3}$, so
\begin{align*}
x \in A_1^{\oast U_3} \cap (A_2^{*U_4})^{\oast U_3} = (A_1 \cap A_2^{*U_4})^{\oast U_3} \subseteq (A_1 \cap A_2^{*U_4})^{\tri U_3}.
\end{align*}
Now suppose $y \in (A_1 \cap A_2^{*U_4})^{\tri U_3}$.  Then $y \in A_1^{\tri U_3} \subseteq A_1^{\tri V_1} \subseteq A_1^{\tri U_1}$.  Let $W \subseteq U_3$ be open so that
\begin{align*}
y \in (A_2^{*U_4})^{\oast W}.
\end{align*}
Since $W \subseteq U_3 \subseteq \partial_0^{-1}(\partial_1(U_4))$, it is easily seen that $(A_2^{*U_4})^{\oast W} = A_2^{\oast (U_4 \cdot W)}$, whence
\begin{align*}
y \in A_2^{\oast (U_4 \cdot W)} \subseteq A_2^{\tri (U_4 \cdot W)} \subseteq A_2^{\tri (U_4 \cdot U_3)} \subseteq A_2^{\tri V_2} \subseteq A_2^{\tri U_2}.
\end{align*}
Thus $(A_1 \cap A_2^{*U_4})^{\tri U_3} \subseteq A_1^{\tri U_1} \cap A_2^{\tri U_2}$.  Put $A_3 := A_1 \cap A_2^{*U_4}$.
\end{proof}

\begin{lemma}
\label{thm:vaught-section}
Let $A \subseteq X$ be a Borel section (i.e., $p|A : A -> G^0$ is injective) and let $U \subseteq G^1$ be $\partial_1$-fiberwise open.  Then $A^{\oast U}$ is also a Borel section.  Furthermore, if $U, V \subseteq G^1$ are open with $V \cdot V^{-1} \subseteq U^{-1} \cdot U$, then $V^{-1} \cdot A^{\oast U}$ (and hence $(A^{\oast U})^{\tri V} \subseteq V^{-1} \cdot A^{\oast U}$) is also a Borel section.
\end{lemma}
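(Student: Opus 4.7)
The plan is to attack both claims by the same Baire-category template: given two candidate points $x_1, x_2$ in a common $p$-fiber, produce a single morphism $g$ such that $g \cdot x_1, g \cdot x_2 \in A$; then invoke $A$'s section property to force $g \cdot x_1 = g \cdot x_2$, and cancel $g$ using invertibility in the groupoid.

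For the first claim, suppose $x_1, x_2 \in A^{\oast U}$ with $p(x_1) = p(x_2)$. The common set $\partial_1^{-1}(p(x_1)) \cap U$ is nonempty by definition of $A^{\oast U}$, and open in the $\partial_1$-fiber over $p(x_1)$, which is a closed subspace of the Polish space $G^1$, hence Polish, hence Baire. The $\forall^*$-clauses in the definition of $A^{\oast U}$ say that $\{g : g \cdot x_i \in A\}$ is comeager there for $i = 1, 2$, so any $g$ in their intersection gives $g \cdot x_1, g \cdot x_2 \in A$ with $p(g \cdot x_1) = \partial_0(g) = p(g \cdot x_2)$; the section hypothesis yields $g \cdot x_1 = g \cdot x_2$, and $x_1 = x_2$.

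For the second claim, write $z_i = h_i^{-1} \cdot y_i$ with $h_i \in V$ and $y_i \in A^{\oast U}$, and assume $p(z_1) = p(z_2)$, i.e.\ $\partial_1(h_1) = \partial_1(h_2)$. Then $k := h_1 \cdot h_2^{-1}$ is defined and lies in $V \cdot V^{-1} \subseteq U^{-1} \cdot U$, so we may write $k = u_a^{-1} \cdot u_b$ with $u_a, u_b \in U$. Tracking sources and targets gives $u_a \in U \cap \partial_1^{-1}(p(y_1))$ together with $u_a = u_b \cdot h_2 h_1^{-1}$, so
\[
W := U \cap (U \cdot h_2 h_1^{-1}) \cap \partial_1^{-1}(p(y_1))
\]
is a nonempty open subset of the Baire space $\partial_1^{-1}(p(y_1))$. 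Right-multiplication by $k$ is a fiberwise homeomorphism carrying $W$ onto a nonempty open subset of $U \cap \partial_1^{-1}(p(y_2))$, so the $\forall^*$-conditions coming from $y_1, y_2 \in A^{\oast U}$ both restrict to comeager subsets of $W$. For any $g$ in their intersection, $g \cdot y_1$ and $(g \cdot k) \cdot y_2$ both lie in $A$ with common $p$-value $\partial_0(g)$, so the section hypothesis gives $g \cdot y_1 = g \cdot k \cdot y_2$; cancelling $g$ yields $y_1 = h_1 h_2^{-1} \cdot y_2$, which rearranges to $z_1 = h_1^{-1} \cdot y_1 = h_2^{-1} \cdot y_2 = z_2$. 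The parenthetical containment $(A^{\oast U})^{\tri V} \subseteq V^{-1} \cdot A^{\oast U}$ is immediate from unpacking the $\exists^*$ in $\tri V$: any $x \in (A^{\oast U})^{\tri V}$ produces some $h \in V$ with $h \cdot x \in A^{\oast U}$, whence $x = h^{-1} \cdot (h \cdot x) \in V^{-1} \cdot A^{\oast U}$.

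The main care point is the fibered Baire-category bookkeeping in the second part. The hypothesis $V \cdot V^{-1} \subseteq U^{-1} \cdot U$ is used exactly once, to supply the decomposition $k = u_a^{-1} u_b$ witnessing that $W$ is nonempty; and it is essential that right-multiplication by the \emph{fixed} element $k$ is a fiberwise homeomorphism, so that the a priori unrelated comeager conditions coming from $y_1$ and $y_2$ (which live over different basepoints) can be compared on the single Baire space $W$. Once these two ingredients are in hand, everything else is an unwinding of the Vaught-transform definitions together with the section hypothesis on $A$.
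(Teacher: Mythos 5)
Your proof is correct and follows essentially the same route as the paper's: both hinge on writing $h_1\cdot h_2^{-1}\in V\cdot V^{-1}$ as $u_a^{-1}\cdot u_b$ with $u_a,u_b\in U$ and using this to produce a common nonempty (fiberwise) open set on which the two comeager conditions from $y_1,y_2\in A^{\oast U}$ can be intersected, then invoking the section property of $A$ and cancelling. The only difference is presentational: the paper packages the second claim as a reduction to the first, applied to the $\partial_1$-fiberwise open set $(U\cdot h_1)\cap(U\cdot h_2)$ (which is exactly why the first claim is stated for fiberwise open $U$), whereas you inline the Baire-category bookkeeping directly via the right-translation homeomorphism by $k$.
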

\begin{proof}
Let $x, y \in A^{\oast U}$ with $p(x) = p(y)$.  Then
\begin{align*}
\forall^* g \in \partial_1^{-1}(p(x)) \cap U = \partial_1^{-1}(p(y)) \cap U\, (g \cdot x, g \cdot y \in A).
\end{align*}
Let $g \in \partial_1^{-1}(p(x)) \cap U$ such that $g \cdot x, g \cdot y \in A$.  Then $p(g \cdot x) = \partial_0(g) = p(g \cdot y)$, so since $A$ is a Borel section, $g \cdot x = g \cdot y$, whence $x = y$.  So $A^{\oast U}$ is a Borel section.

Now suppose $U, V$ are open, and let $x, y \in V^{-1} \cdot A^{\oast U}$ with $p(x) = p(y)$.  Let $g \in \partial_1^{-1}(p(x)) \cap V$ and $h \in \partial_1^{-1}(p(y)) \cap V$ with $g \cdot x, h \cdot y \in A^{\oast U}$.  Since $h \cdot g^{-1} \in V \cdot V^{-1} \subseteq U^{-1} \cdot U$, there are $k, l \in U$ such that $h \cdot g^{-1} = l^{-1} \cdot k$, i.e., $k \cdot g = l \cdot h$.  In particular, $k \cdot g = l \cdot h \in \partial_1^{-1}(p(x)) \cap (U \cdot g) \cap (U \cdot h) \ne \emptyset$, so that from $g \cdot x \in A^{\oast U}$ we get $x \in A^{\oast (U \cdot g)}$ and hence $x \in A^{\oast ((U \cdot g) \cap (U \cdot h))}$, and similarly $y \in A^{\oast ((U \cdot g) \cap (U \cdot h))}$.  By the first claim, $x = y$.
\end{proof}

\begin{lemma}
\label{thm:vaught-asttri}
For Borel $A \subseteq X$ and open $W \subseteq G^1$,
\begin{align*}
A^{\tri W} = \bigcup \{(A^{\oast U})^{\tri V} \mid U, V \in \@U \AND U \cdot V \subseteq W \AND V \cdot V^{-1} \subseteq U^{-1} \cdot U\}.
\end{align*}
\end{lemma}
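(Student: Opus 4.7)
The plan is to prove the two inclusions separately. The $\supseteq$ direction is essentially formal, using only the identities of \cref{thm:vaught}, while the $\subseteq$ direction requires the main work: taking an $x \in A^{\tri W}$ and building $U, V \in \@U$ satisfying the stated constraints by exploiting continuity of multiplication and openness of $\!G$.

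For $\supseteq$, fix $U, V \in \@U$ with $U \cdot V \subseteq W$ (the constraint $V \cdot V^{-1} \subseteq U^{-1}\cdot U$ is not needed here). From $A^{\oast U} \subseteq A^{\tri U}$ and monotonicity in the first argument (from \cref{thm:vaught}(b)), we have $(A^{\oast U})^{\tri V} \subseteq (A^{\tri U})^{\tri V}$. Combining \cref{thm:vaught}(a) and (d), for open $U, V$ one gets $(B^{\tri U})^{\tri V} = B^{\tri(U\cdot V)}$ by taking complements twice. Since $U \cdot V \subseteq W$, monotonicity of $(-)^{\tri(-)}$ in the second argument yields $A^{\tri(U\cdot V)} \subseteq A^{\tri W}$.

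For $\subseteq$, let $x \in A^{\tri W}$. By \cref{thm:vaught}(c) applied to $\@U$, there exists $U_0 \in \@U$ with $U_0 \subseteq W$ and $x \in A^{\oast U_0}$; in particular pick $g_0 \in \partial_1^{-1}(p(x)) \cap U_0$. Since $\mu(1_{\partial_0(g_0)}, g_0) = g_0 \in U_0$, continuity of $\mu$ provides open neighborhoods $U_1 \ni 1_{\partial_0(g_0)}$ and $V_1 \ni g_0$ with $U_1 \cdot V_1 \subseteq U_0$. Choose any basic $U \in \@U$ with $1_{\partial_0(g_0)} \in U \subseteq U_1$; then $U^{-1} \cdot U$ is an open neighborhood of $1_{\partial_0(g_0)}$. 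By continuity of $(h_1, h_2) \mapsto h_1 \cdot h_2^{-1}$ at $(g_0, g_0)$, together with openness of $\partial_1(U)$, we can pick a basic $V \in \@U$ with $g_0 \in V \subseteq V_1 \cap \partial_0^{-1}(\partial_1(U))$ and $V \cdot V^{-1} \subseteq U^{-1} \cdot U$. Then $U \cdot V \subseteq U_1 \cdot V_1 \subseteq U_0 \subseteq W$.

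It remains to verify $x \in (A^{\oast U})^{\tri V}$. Since $x \in A^{\oast U_0} \subseteq A^{*U_0}$ and $U\cdot V \subseteq U_0$, the usual restriction-of-comeagerness argument gives $x \in A^{*(U\cdot V)} = (A^{*U})^{*V}$ by \cref{thm:vaught}(d). Thus $\forall^* v \in V \cap \partial_1^{-1}(p(x))\,(v \cdot x \in A^{*U})$; the inclusion $V \subseteq \partial_0^{-1}(\partial_1(U))$ guarantees $U \cap \partial_1^{-1}(\partial_0(v)) \ne \emptyset$ for each such $v$, promoting $A^{*U}$ to $A^{\oast U}$. Since $\partial_1^{-1}(p(x))$ is a closed (hence Polish) subspace of $G^1$ and $V$ meets this fiber non-trivially at $g_0$, comeagerness in the open fiber $V \cap \partial_1^{-1}(p(x))$ implies non-meagerness, so $x \in (A^{\oast U})^{\tri V}$.

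The main delicate point is arranging $U \cdot V \subseteq W$, $V \cdot V^{-1} \subseteq U^{-1}\cdot U$, and $V \subseteq \partial_0^{-1}(\partial_1(U))$ simultaneously with $U, V$ both drawn from the fixed basis $\@U$; this forces the order of choices (first $U$, then $V$ shrunk sufficiently close to $g_0$) and is where openness of $\!G$ is essential. The rest of the argument is a bookkeeping exercise with the identities already established in \cref{thm:vaught}.
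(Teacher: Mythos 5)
Your proof is correct; every step checks out, including the final Baire-category bookkeeping (the fiber $\partial_1^{-1}(p(x)) \cap V$ is a nonempty open subset of a closed, hence Polish, subspace of $G^1$, so comeager there does imply nonmeager). The route is genuinely different from the paper's, though it rests on the same ingredients. The paper proves the identity as a single chain of global set equalities: it first writes $W = \bigcup\{U \cdot V \mid U, V \in \@U \AND U \cdot V \subseteq W\}$ with $U$ a neighborhood of a point $g \in W$ and $V$ a neighborhood of the unit $1_{\partial_1(g)}$, distributes $A^{\tri(-)}$ over this union via \cref{thm:vaught}(b), converts $A^{\tri (U \cdot V)}$ into $\bigcup_{U' \subseteq U} (A^{\oast U'})^{\tri V}$ via (a), (c), (d), and finally refines $V$ to satisfy $V \cdot V^{-1} \subseteq U^{-1} \cdot U$ using the observation (via (e)) that $(A^{\oast U})^{\tri V}$ depends only on $V \cap \partial_0^{-1}(\partial_1(U))$. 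You instead prove the two inclusions separately, doing the hard inclusion pointwise, and---notably---with the roles of the two factors swapped: your $U$ is a basic neighborhood of the unit $1_{\partial_0(g_0)}$ and your $V$ is a basic neighborhood of the witnessing arrow $g_0$, whereas in the paper $U$ contains the arrow and $V$ contains a unit. Both choices yield valid members of the union (the statement quantifies over all qualifying pairs), and both are equally compatible with the downstream use of this lemma alongside \cref{thm:vaught-section}. What the paper's equational approach buys is that all category-quantifier manipulations are absorbed into the already-established identities (a)--(e); what your approach buys is a more transparent picture of which pair $(U, V)$ witnesses membership of a given $x$, at the cost of redoing the restriction-of-comeagerness and comeager-implies-nonmeager arguments by hand.
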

\begin{proof}
For $g \in W$, since $g = g \cdot 1_{\partial_1(g)}$ and $\mu$ is continuous, there are $U, V \in \@U$ such that $U \cdot V \subseteq W$, $g \in U$, and $1_{\partial_1(g)} \in V$; thus
\begin{align*}
W = \bigcup \{U \cdot V \mid U, V \in \@U \AND U \cdot V \subseteq W\}.
\end{align*}
So
\begin{align*}
A^{\tri W}
&= A^{\tri \bigcup \{U \cdot V \mid U, V \in \@U \AND U \cdot V \subseteq W\}} \\
&= \bigcup \{A^{\tri (U \cdot V)} \mid U, V \in \@U \AND U \cdot V \subseteq W\} \\
&= \bigcup \{(A^{\tri U})^{\tri V} \mid U, V \in \@U \AND U \cdot V \subseteq W\} \\
&= \bigcup \{(\bigcup \{A^{\oast U'} \mid U \supseteq U' \in \@U\})^{\tri V} \mid U, V \in \@U \AND U \cdot V \subseteq W\} \\
&= \bigcup \{(A^{\oast U'})^{\tri V} \mid U', U, V \in \@U \AND U' \subseteq U \AND U \cdot V \subseteq W\} \\
&= \bigcup \{(A^{\oast U})^{\tri V} \mid U, V \in \@U \AND U \cdot V \subseteq W\}.
\end{align*}
Since $\partial_0^{-1}(p(A^{\oast U})) \cap V = \partial_0^{-1}(p(A^{\oast U})) \cap \partial_0^{-1}(\partial_1(U)) \cap V$, we have
\begin{align*}
(A^{\oast U})^{\tri V} = (A^{\oast U})^{\tri (\partial_0^{-1}(\partial_1(U)) \cap V)}.
\end{align*}
For any open $U, V \subseteq G^1$ with $V \subseteq \partial_0^{-1}(\partial_1(U))$, for each $g \in V$, there is some $h \in U$ with $\partial_0(g) = \partial_1(h)$, whence $g \cdot g^{-1} = 1_{\partial_0(g)} = 1_{\partial_1(h)} = h^{-1} \cdot h \in U^{-1} \cdot U$, whence by continuity of $\mu$ there are open $V_1 \ni g$ and $V_2 \ni g^{-1}$ such that $V_1 \cdot V_2 \subseteq U^{-1} \cdot U$, whence letting $V' \in \@U$ with $g \in V' \subseteq V \cap V_1 \cap V_2^{-1}$, we have $V' \cdot V^{\prime-1} \subseteq U^{-1} \cdot U$; thus
\begin{align*}
V = \bigcup \{V' \in \@U \mid V' \subseteq V \AND V' \cdot V^{\prime-1} \subseteq U^{-1} \cdot U\}.
\end{align*}
So from above we get
\begin{align*}
A^{\tri W}
&= \bigcup \{(A^{\oast U})^{\tri (\partial_0^{-1}(\partial_1(U)) \cap V)} \mid U, V \in \@U \AND U \cdot V \subseteq W\} \\
&= \bigcup \{(A^{\oast U})^{\tri \bigcup \{V' \in \@U \mid V' \subseteq \partial_0^{-1}(\partial_1(U)) \cap V \AND V' \cdot V^{\prime-1} \subseteq U^{-1} \cdot U\}} \mid U, V \in \@U \AND U \cdot V \subseteq W\} \\
&= \bigcup \{(A^{\oast U})^{\tri V'} \mid U, V, V' \in \@U \AND U \cdot V \subseteq W \AND V' \subseteq \partial_0^{-1}(\partial_1(U)) \cap V \AND V' \cdot V^{\prime-1} \subseteq U^{-1} \cdot U\} \\
&= \bigcup \{(A^{\oast U})^{\tri V} \mid U, V \in \@U \AND U \cdot V \subseteq W \AND V \subseteq \partial_0^{-1}(\partial_1(U)) \AND V \cdot V^{-1} \subseteq U^{-1} \cdot U\} \\
&= \bigcup \{(A^{\oast U})^{\tri (\partial_0^{-1}(\partial_1(U)) \cap V)} \mid U, V \in \@U \AND U \cdot V \subseteq W \AND V \cdot V^{-1} \subseteq U^{-1} \cdot U\} \\
&= \bigcup \{(A^{\oast U})^{\tri V} \mid U, V \in \@U \AND U \cdot V \subseteq W \AND V \cdot V^{-1} \subseteq U^{-1} \cdot U\}.
\qedhere
\end{align*}
\end{proof}

\begin{lemma}
\label{thm:vaught-push-pull}
Let $p : X -> G^0$ and $q : Y -> G^0$ be Borel $\!G$-spaces, $f : X -> Y$ be a $\!G$-equivariant map, and $U \subseteq G^1$ be $\partial_1$-fiberwise open.  Then for any $B \subseteq Y$, $f^{-1}(B^{\tri U}) = f^{-1}(B)^{\tri U}$.  If furthermore $f$ is fiberwise countable, then for any $A \subseteq X$, $f(A^{\tri U}) = f(A)^{\tri U}$.
\end{lemma}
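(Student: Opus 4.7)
The plan is to unwind the definition of the Vaught transform in each case, using $\!G$-equivariance to commute $f$ past the action, with the Baire category theorem doing the small amount of real work in the one nontrivial direction.

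First I would treat the pullback identity $f^{-1}(B^{\tri U}) = f^{-1}(B)^{\tri U}$. By $\!G$-equivariance we have $q \circ f = p$ and $g \cdot f(x) = f(g \cdot x)$ for every $x \in X$ and every $g \in G^1$ composable with $f(x)$. Substituting these two identities into the formula $\exists^* g \in \partial_1^{-1}(q(f(x))) \cap U\, (g \cdot f(x) \in B)$ defining $x \in f^{-1}(B^{\tri U})$ transforms it verbatim into the formula defining $x \in f^{-1}(B)^{\tri U}$; no appeal to fiberwise countability is required.

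Next I would prove the pushforward identity $f(A^{\tri U}) = f(A)^{\tri U}$ under the countability hypothesis. The inclusion $\subseteq$ is again immediate from equivariance: an $x \in A^{\tri U}$ with $f(x) = y$ witnesses that the set of $g$ with $g \cdot y \in f(A)$ contains the set of $g$ with $g \cdot x \in A$, and the latter is non-meager. For $\supseteq$, given $y \in f(A)^{\tri U}$, the set
\[
W := \{g \in \partial_1^{-1}(q(y)) \cap U \mid g \cdot y \in f(A)\}
\]
is non-meager in the Baire space $\partial_1^{-1}(q(y)) \cap U$ (which is open in the Polish fiber $\partial_1^{-1}(q(y))$, itself closed in $G^1$). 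For each $g \in W$, pick $x' \in A$ with $f(x') = g \cdot y$ and set $z := g^{-1} \cdot x'$; then $f(z) = y$ and $g \cdot z = x' \in A$, so writing $W_z := \{h \in \partial_1^{-1}(q(y)) \cap U \mid h \cdot z \in A\}$ we have $g \in W_z$ for some $z \in f^{-1}(y)$, hence $W \subseteq \bigcup_{z \in f^{-1}(y)} W_z$.

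This is where fiberwise countability enters: the indexing set is countable, so by the Baire category theorem some $W_z$ must be non-meager in $\partial_1^{-1}(q(y)) \cap U$; since $p(z) = q(f(z)) = q(y)$, this says exactly that $z \in A^{\tri U}$, and then $y = f(z) \in f(A^{\tri U})$. I expect the only real obstacle to be keeping the Baire-category bookkeeping straight across the three spaces $X$, $Y$, $G^1$; the conceptual point is simply that fiberwise countability lets one ``concentrate'' a non-meager set of group elements moving $y$ into $f(A)$ onto a single preimage of $y$, which is false without countability.
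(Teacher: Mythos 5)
Your proposal is correct and follows essentially the same route as the paper: the pullback identity by direct unwinding via equivariance, the forward inclusion of the pushforward identity by an easy equivariance argument, and the reverse inclusion by covering the non-meager set $\{g \mid g\cdot y \in f(A)\}$ with the countably many sets $\{g \mid g \cdot x \in A\}$ for $x \in f^{-1}(y)$ and concluding that one of them is non-meager. (The last step is really just the fact that meager sets form a $\sigma$-ideal rather than the Baire category theorem itself, but the argument is the paper's.)
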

\begin{proof}
The first claim is straightforward.
% We have
% \begin{align*}
% x \in f^{-1}(B^{\tri U})
% &\iff \exists^* g \in \partial_1^{-1}(q(f(x))) \cap U\, (g \cdot f(x) \in B) \\
% &\iff \exists^* g \in \partial_1^{-1}(p(x)) \cap U\, (g \cdot x \in f^{-1}(B)) \\
% &\iff x \in f^{-1}(B)^{\tri U},
% \end{align*}
% which proves the first claim.
For the second claim, we have $A \subseteq f^{-1}(f(A))$ whence $A^{\tri U} \subseteq f^{-1}(f(A))^{\tri U} = f^{-1}(f(A)^{\tri U})$ whence $f(A^{\tri U}) \subseteq f(A)^{\tri U}$ (regardless of fiberwise countability).  Conversely, if $f$ is fiberwise countable and $y \in f(A)^{\tri U}$, then
\begin{align*}
\{g \in \partial_1^{-1}(q(y)) \cap U \mid g \cdot y \in f(A)\} \subseteq \bigcup_{x \in f^{-1}(y)} \{g \in \partial_1^{-1}(p(x)) \cap U \mid g \cdot x \in A\}
\end{align*}
since given $g$ in the left-hand side we have $g \cdot y = f(x')$ for some $x' \in A$ whence we may take $x := g^{-1} \cdot x'$; since the left-hand side is non-meager and the union on the right is countable, some term in it is non-meager, i.e., there is $x \in f^{-1}(y)$ such that $x \in A^{\tri U}$, whence $y = f(x) \in f(A^{\tri U})$.
\end{proof}

\begin{lemma}
\label{thm:vaught-open}
Let $p : X -> G^0$ be a Polish $\!G$-space and $\@A$ be a basis of open sets in $X$.  Then $\@A^{\tri \@U}$ is also a basis of open sets in $X$.
\end{lemma}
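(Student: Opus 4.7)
The plan is to verify the two defining conditions of a basis directly: (i) each $A^{\tri U}$ with $A \in \@A$ and $U \in \@U$ is open in $X$, and (ii) for every open $W \subseteq X$ and $x \in W$, there exist $A \in \@A$ and $U \in \@U$ with $x \in A^{\tri U} \subseteq W$.

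For (i), I would argue that when $A$ is open, the Baire-category quantifier in the definition of $A^{\tri U}$ collapses to an ordinary existential quantifier. By continuity of the action, the fiber $\{g \in \partial_1^{-1}(p(x)) \cap U : g \cdot x \in A\}$ is open in the Polish space $\partial_1^{-1}(p(x)) \cap U$ (Polish because $\partial_1^{-1}(p(x))$ is closed in $G^1$, and open subsets of Polish spaces are Polish), hence non-meager iff nonempty. Thus $A^{\tri U}$ coincides with the image of the open set $\{(g, y) \in U \times_{G^0} X : g \cdot y \in A\}$ under the projection $\pi_X : U \times_{G^0} X \to X$. Since $\!G$ is open, $\partial_1|U$ is open, so its pullback $\pi_X$ along $p$ is open, and hence $A^{\tri U}$ is open.

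For (ii), I would use continuity of the action and of inversion at the unit $1_{p(x)}$. From $1_{p(x)} \cdot x = x \in W$ and continuity of the action at $(1_{p(x)}, x)$, pick open $U' \ni 1_{p(x)}$ in $G^1$ and $V_0 \ni x$ in $X$ with $U' \cdot V_0 \subseteq W$. From continuity of inversion $\nu$ at $1_{p(x)}$, pick open $U'' \ni 1_{p(x)}$ with $(U'')^{-1} \subseteq U'$. Now choose $U \in \@U$ with $1_{p(x)} \in U \subseteq U' \cap U''$ and $A \in \@A$ with $x \in A \subseteq V_0$. Then $x \in A^{\tri U}$, witnessed by $g = 1_{p(x)}$, and any $y \in A^{\tri U}$ has a witness $g \in U$ with $g \cdot y \in A \subseteq V_0$; then $g^{-1} \in U^{-1} \subseteq U'$ and $(g^{-1}, g \cdot y) \in U' \times_{G^0} V_0$, so $y = g^{-1} \cdot (g \cdot y) \in U' \cdot V_0 \subseteq W$.

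The only mildly subtle point is the reduction from $\exists^*$ to $\exists$ in (i); once this is in hand, both parts reduce to routine manipulation of the continuity of the groupoid structure maps near a unit arrow.
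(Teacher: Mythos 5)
Your proof is correct and takes essentially the same route as the paper's: continuity of the action and of inversion at the unit $1_{p(x)}$ produces $U \in \@U$ and $A \in \@A$ with $x \in A^{\tri U} \subseteq U^{-1} \cdot A \subseteq W$. The only (harmless) differences are that you additionally spell out openness of $A^{\tri U}$ via the collapse of $\exists^*$ to $\exists$ on open witness sets together with openness of $\partial_1$ — a standard fact the paper leaves implicit — and that you use this same collapse to get $x \in A^{\tri U}$ from the single witness $1_{p(x)}$, where the paper instead exhibits a nonempty open set of witnesses by choosing $V$ with $V \cdot x \subseteq A$.
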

\begin{proof}
Let $B \subseteq X$ be open and $x \in B$.  Since $1_{p(x)} \cdot x = x$ and the action is continuous, there are $1_{p(x)} \in U \in \@U$ and $x \in A \in \@A$ such that $U^{-1} \cdot A \subseteq B$, as well as $1_{p(x)} \in V \in \@U$ such that $V \cdot x \subseteq A$.  Then $(U \cap V) \cdot x \subseteq A$, whence $x \in A^{\tri U} \subseteq U^{-1} \cdot A \subseteq B$.
\end{proof}

\begin{proof}[Proof of \cref{thm:borel-action-etale}]
Let $\@A, \@B$ be countable Boolean algebras of Borel sets in $X, G^0$ respectively, such that
\begin{enumerate}
\item[(i)]  each generates a Polish topology and is closed under $(-)^{\tri U}$ for each $U \in \@U$ (where for $\@B$ this refers to the trivial action of $\!G$ on $G^0$);
\item[(ii)]  $\@A$ contains a countable cover of $X$ by Borel sections (which exists by Lusin--Novikov uniformization; see \cite[18.10]{Kcdst});
\item[(iii)]  $p(A) \in \@B$ for every $A \in \@A$;
\item[(iv)]  $\@B$ contains a countable basis of open sets in $G^0$;
\item[(v)]  $p^{-1}(B) \in \@A$ for every $B \in \@B$.
\end{enumerate}
It is clear that this can be achieved via a $\omega$-length procedure like in the proof of \cite[5.2.1]{BKpol}.  Let $X', G^{\prime0}$ be $X, G^0$ with the topologies generated by $\@A^{\tri \@U}, \@B^{\tri \@U}$ respectively.  By the proof of \cite[4.1.1]{Lup}, $X', G^{\prime0}$ are Polish $\!G$-spaces.  By \cref{thm:vaught-push-pull} and (v), $p : X' -> G^{\prime0}$ is continuous.  By \cref{thm:vaught-open} and (iv), the topology of $G^{\prime0}$ is finer than that of $G^0$.  Let $\!G' = (G^{\prime0}, G^{\prime1})$ be the \defn{action groupoid} of $G^{\prime0}$, where $G^{\prime1} = G^1 \times_{G^0} G^{\prime0}$ is $G^1$ with $\partial_1^{-1}(U)$ adjoined to its topology for each open $U \subseteq G^{\prime0}$, with composition, unit, and inversion in $\!G'$ as in $\!G$.  Then $\!G'$ is an open Polish groupoid (see \cite[2.7.1]{Lup}), and we have a continuous functor $\!G' -> \!G$, namely the identity, which when composed with the action $\!G \curvearrowright X'$ gives a continuous action $\!G' \curvearrowright X'$.  This action is countable étalé, since by \cref{thm:vaught-basis,thm:vaught-section,thm:vaught-asttri} and (ii) $X'$ has a countable basis of open sets of the form $A^{\tri U}$ which are Borel sections, and by \cref{thm:vaught-push-pull} and (iii) the image of each such set is open in $G^{\prime0}$.
\end{proof}

We end this section with a general question concerning Polish groupoids:

\begin{problem}
\label{prob:polish-functor-continuity}
Let $\!G, \!H$ be (open) Polish groupoids and $F : \!G -> \!H$ be a Borel functor.  Is there necessarily a finer (open) Polish groupoid topology on $\!G$ which renders $F$ continuous?
\end{problem}

To motivate this problem, recall that every Borel homomorphism between Polish \emph{groups} is automatically continuous (see e.g., \cite[9.10]{Kcdst}).  Naive form of this statement for Borel functors between Polish groupoids are false.  For example, there is a Polish groupoid $\!G$ and a Borel endofunctor $F : \!G -> \!G$ which is the identity (hence continuous) on objects but not continuous: take $2^{\aleph_0}$ many disjoint copies of a Polish group with a nontrivial automorphism, and apply that automorphism on a Borel but non-clopen set of objects.  \Cref{prob:polish-functor-continuity} is a weaker analog of automatic continuity for Borel functors, meant to exclude such trivial counterexamples.

If \cref{prob:polish-functor-continuity} has a positive solution, then that would imply \cref{thm:borel-action-etale}, since every fiberwise countable Borel action of $\!G$ can be encoded as a Borel functor $\!G -> \!S$ where $\!S$ is the disjoint union of the symmetric groups $S_0, S_1, \dotsc, S_\infty$; see \cref{thm:borel-action-cocycle}.
%On the other hand, we note that the proof of \cref{thm:borel-action-etale} cannot be easily generalized to solve \cref{prob:polish-functor-continuity}, since the Becker--Kechris method used in that proof to refine the topology of $\!G$ cannot for example change the topology of any indiscrete groupoid $\!I_X := (X, X^2)$ for a Polish space $X$.

\section{Imaginary sorts and definable functions}
\label{sec:imag}

In this section, we review $\@L_{\omega_1\omega}$ and the notions of countable fragments, $\omega_1$-coherent formulas, imaginary sorts, definable functions, and the syntactic $\omega_1$-pretopos of a theory.

Let $\@L$ be a first-order language.  For simplicity, we will only consider relational languages; functions may be coded via their graphs in the usual way.  Recall that the logic $\@L_{\omega_1\omega}$ is the extension of finitary first-order logic $\@L_{\omega\omega}$ with countably infinite conjunctions $\bigwedge$ and disjunctions $\bigvee$; see e.g., \cite[11.2]{Gao}.  By \defn{$\@L_{\omega_1\omega}$-formula}, we always mean a formula with finitely many free variables.

We adopt the following convention regarding formulas and free variables.  A \defn{formula-in-context} is a pair $(\vec{x}, \phi(\vec{x}))$ where $\vec{x}$ is a finite tuple of distinct variables and $\phi(\vec{x})$ is a formula with free variables among $\vec{x}$.  We identify formulas-in-context up to variable renaming, i.e., $(\vec{x}, \phi(\vec{x})) = (\vec{y}, \phi(\vec{y}))$.  \emph{By abuse of terminology, henceforth by ``formula'' we always mean ``formula-in-context''; we denote a formula-in-context $(\vec{x}, \phi(\vec{x}))$ simply by $\phi$.}

Given a formula $\phi$ with $n$ variables and an $\@L$-structure $\@M = (M, R^\@M)_{R \in \@L}$, we write $\phi^\@M \subseteq M^n$ for the \defn{interpretation of $\phi$ in $\@M$}.  For an $n$-tuple $\vec{a} \in M$, we write $\phi^\@M(\vec{a})$ or $\vec{a} \in \phi^\@M$ interchangeably.

There is a Gentzen-type proof system for $\@L_{\omega_1\omega}$, which can be found in \cite{Lop} or (in a slightly different presentation) \cite[D1.3]{Jeleph}.  By the Lopez-Escobar completeness theorem \cite{Lop}, this proof system is complete for a \emph{countable} theory $\@T$: if an $\@L_{\omega_1\omega}$-sentence $\phi$ is true in every countable model of $\@T$, then $\phi$ is provable from $\@T$.  In the following definitions, we will often refer to provability, while keeping in mind that this is equivalent to semantic implication in the case of countable theories by soundness and completeness.  In particular, when we say that two formulas are ``equivalent'' or ``$\@T$-equivalent'', we mean that they are provably so.

It is convenient to consider the following restriction of $\@L_{\omega_1\omega}$.  An \defn{$\omega_1$-coherent $\@L$-formula} is an $\@L_{\omega_1\omega}$-formula which uses only atomic formulas, finite $\wedge$ (including nullary $\top$), countable $\bigvee$, and $\exists$.  Note that every $\omega_1$-coherent formula $\phi$ is equivalent to one in the following \defn{normal form}:
\begin{align*}
\phi(\vec{x}) = \bigvee_i \exists \vec{y}_i\, (\phi_{i1}(\vec{x}, \vec{y}_i) \wedge \dotsb \wedge \phi_{ik_i}(\vec{x}, \vec{y}_i)),
\end{align*}
where the $\phi_{ij}$ are atomic.
An \defn{$\omega_1$-coherent $\@L$-axiom} is an $\@L_{\omega_1\omega}$-sentence of the form
\begin{align*}
\forall \vec{x}\, (\phi(\vec{x}) => \psi(\vec{x})),
\end{align*}
where $\phi, \psi$ are $\omega_1$-coherent $\@L$-formulas.  An \defn{$\omega_1$-coherent $\@L$-theory} is a set of $\omega_1$-coherent $\@L$-axioms.

An $\omega_1$-coherent $\@L$-theory $\@T$ is \defn{decidable} if there is an $\omega_1$-coherent $\@L$-formula $\phi(x, y)$ with two free variables which is $\@T$-equivalent to the formula $x \ne y$ (which is \emph{not} $\omega_1$-coherent).
% Note that the $\@T$-equivalence can be expressed without mentioning the formula $x \ne y$, by saying that $\@T$ must prove the following sentences (which are $\omega_1$-coherent axioms):
% \begin{align*}
% \begin{array}{r@{}r@{\;}l@{}l}
% \forall x\, (& \phi(x, x) &=> \bot &), \\
% \forall x, y\, (& \top &=> (x = y) \vee \phi(x, y) &).
% \end{array}
% \end{align*}
If such a formula $\phi(x, y)$ exists, we will generally denote it by $x \ne y$, it being understood that this refers to an $\omega_1$-coherent compound formula and not the (non-$\omega_1$-coherent) negated atomic formula.

A \defn{fragment} $\@F$ of $\@L_{\omega_1\omega}$ is a set of $\@L_{\omega_1\omega}$-formulas which contains all atomic formulas and is closed under subformulas, finitary first-order logical operations $\wedge, \vee, \neg, \exists, \forall$, and variable substitutions.  An \defn{$\@F$-theory} is an $\@L_{\omega_1\omega}$-theory $\@T$ such that $\@T \subseteq \@F$.
%Suppose $\@F$ is a fragment such that whenever it contains an infinite conjunction $\bigwedge_i \phi_i$, it also contains the equivalent formula $\neg \bigvee_i \neg \phi_i$.
The \defn{Morleyization} of a fragment $\@F$ is the $\omega_1$-coherent theory $\@T'$ in the language $\@L'$ consisting of $\@L$ together with a new relation symbol $R_\phi(\vec{x})$ for each $\@F$-formula $\phi(\vec{x})$, whose axioms consist of
\begin{align}
\stag{Mor}
\begin{array}{r@{}r@{\;}l@{}l}
\forall \vec{x}\, (& R_\phi(\vec{x}) &<=> \phi(\vec{x}) &) \qquad\text{for $\phi$ atomic}, \\
\forall \vec{x}\, (& R_{\bigvee_i \phi_i}(\vec{x}) &<=> \bigvee_i R_{\phi_i}(\vec{x}) &), \\
\forall \vec{x}\, (& R_{\exists y\, \phi(\vec{x}, y)}(\vec{x}) &<=> \exists y\, R_\phi(\vec{x}, y) &), \\
\forall \vec{x}\, (& R_{\phi}(\vec{x}) \wedge R_{\neg \phi}(\vec{x}) &=> \bot &), \\
\forall \vec{x}\, (& \top &=> R_\phi(\vec{x}) \vee R_{\neg \phi}(\vec{x}) &), \\
% \forall \vec{x}\, (& R_{\bigwedge_i \phi_i}(\vec{x}) &<=> R_{\neg \bigvee_i \neg \phi_i}(\vec{x}) &), \\
\forall \vec{x}\, (& R_{\bigwedge_i \phi_i}(\vec{x}) &=> R_{\phi_j}(\vec{x}) &), \\
\forall \vec{x}\, (& \top &=> R_{\bigwedge_i \phi_i}(\vec{x}) \vee \bigvee_i R_{\neg \phi_i}(\vec{x}) &), \\
\forall \vec{x}\, (& R_{\forall y\, \phi(\vec{x}, y)}(\vec{x}) &<=> R_{\neg \exists y\, \neg \phi(\vec{x}, y)}(\vec{x}) &)
\end{array}
\end{align}
whenever the formulas in the subscripts belong to $\@F$ (where the axioms with $<=>$ really abbreviate two $\omega_1$-coherent axioms, $<=$ and $=>$).
%(If $\@F$ does not satisfy the additional closure condition, we may define its Morleyization to be that of its closure under the condition.)
Note that if $\@F$ is countable, then so is $\@T'$.  The \defn{Morleyization} of an $\@F$-theory $\@T$ is defined in the same way, except that $\@T'$ also includes the axiom
\begin{align*}
R_\phi
\end{align*}
(which is a nullary relation symbol) for each axiom $\phi$ in $\@T$.  The Morleyization is a decidable theory, as witnessed by the formula $R_{\ne}(x, y)$.  For more on Morleyization, see \cite[\S2.6]{Hod} or \cite[D1.5.13]{Jeleph}.  An $\@F$-theory is ``equivalent'' to its Morleyization, in the following sense:

\begin{lemma}
\label{thm:morley}
Let $\@F$ be a fragment of $\@L_{\omega_1\omega}$, $\@T$ be an $\@F$-theory, and $(\@L', \@T')$ be its Morleyization.
\begin{enumerate}
\item[(i)]  $\@L_{\omega_1\omega}$-formulas modulo $\@T$-equivalence are in canonical bijection with $\@L'_{\omega_1\omega}$-formulas modulo $\@T'$-equivalence.
\item[(ii)]  Countable disjunctions of $\@F$-formulas modulo $\@T$-equivalence are in canonical bijection with $\omega_1$-coherent $\@L'$-formulas modulo $\@T'$-equivalence.
\item[(iii)]  Models of $\@T$ are in canonical bijection with models of $\@T'$.
\end{enumerate}
\end{lemma}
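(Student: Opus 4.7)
The plan is to prove the three parts in the order (iii), (i), (ii). The key technical observation, which underlies (i) and (ii), is that the Morleyization axioms (Mor) recursively force the new relation symbol $R_\phi$ to be $\@T'$-provably equivalent to $\phi$ itself for every $\phi \in \@F$.

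For (iii), given a model $\@M$ of $\@T$, I would define an $\@L'$-expansion $\@M'$ by setting $R_\phi^{\@M'} := \phi^\@M$ for each $\@F$-formula $\phi$, using the usual Tarski-style $\@L_{\omega_1\omega}$-semantics. A structural induction on $\phi$ shows that every axiom in (Mor) is satisfied; and since the nullary axiom $R_\phi$ for $\phi \in \@T$ holds in $\@M'$ iff $\phi$ holds in $\@M$, one gets $\@M' \models \@T'$. Conversely, given a model $\@N$ of $\@T'$, the same induction using (Mor) yields $R_\phi^\@N = \phi^{\@N|_\@L}$ for all $\phi \in \@F$, so $\@N|_\@L \models \@T$. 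The two operations $\@M \mapsto \@M'$ and $\@N \mapsto \@N|_\@L$ are mutually inverse.

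For (i), I would first prove by induction on the construction of $\phi \in \@F$ that $\@T' \vdash \forall \vec{x}\, (R_\phi(\vec{x}) \iff \phi(\vec{x}))$: the atomic base case is the first axiom of (Mor), and the inductive cases for $\bigvee$, $\exists$, $\bigwedge$, $\forall$ follow from the corresponding axioms, with the pair $R_\phi \wedge R_{\neg \phi} \to \bot$ and $\top \to R_\phi \vee R_{\neg \phi}$ jointly giving the $\neg$-step $R_{\neg \phi} \iff \neg R_\phi$. Maps in both directions between equivalence classes are then defined: from $\@L_{\omega_1\omega}$ to $\@L'_{\omega_1\omega}$ by inclusion, and back by replacing each atomic subformula $R_\psi(\vec{x})$ with $\psi(\vec{x})$; both composites are the identity on equivalence classes by the observation just proved. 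For (ii), I would first put any $\omega_1$-coherent $\@L'$-formula into its normal form $\bigvee_i \exists \vec{y}_i\, \bigwedge_j \alpha_{ij}$ with each $\alpha_{ij}$ atomic in $\@L'$, hence either atomic in $\@L$ or of the form $R_{\psi_{ij}}$ with $\psi_{ij} \in \@F$; the same atomic substitution then gives a $\@T'$-equivalent disjunction $\bigvee_i \exists \vec{y}_i\, \bigwedge_j \tilde{\alpha}_{ij}$, and since $\@F$ is closed under finite $\wedge$ and $\exists$, each inner formula lies in $\@F$. The inverse sends $\bigvee_i \phi_i$ to the $\omega_1$-coherent $\@L'$-formula $\bigvee_i R_{\phi_i}$.

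The main subtlety in both (i) and (ii) is injectivity of these maps: one must know that if two $\@L_{\omega_1\omega}$-formulas (resp.\ two countable disjunctions of $\@F$-formulas) are $\@T'$-provably equivalent, then they are already $\@T$-provably equivalent. I would handle this by a syntactic translation of proofs: given an $\@L'_{\omega_1\omega}$-derivation from $\@T'$, replace every occurrence of each $R_\psi$ by $\psi$ throughout; each axiom in (Mor) then becomes an $\@L_{\omega_1\omega}$-tautology, and each nullary axiom $R_\phi$ with $\phi \in \@T$ becomes the axiom $\phi \in \@T$, so the translated derivation is a valid $\@T$-proof. (Alternatively, when $\@T$ is countable one may invoke (iii) together with Lopez-Escobar completeness.) This translation is the one step requiring genuine care; the rest of the argument is essentially bookkeeping driven by the $\@T'$-provable equivalence $R_\phi \iff \phi$.
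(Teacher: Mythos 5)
Your proposal is correct and follows essentially the same route as the paper: the same key induction establishing $\@T' \vdash \forall\vec{x}\,(R_\phi(\vec{x}) \Leftrightarrow \phi(\vec{x}))$, the same substitution maps in both directions, the same proof-translation argument (replacing each $R_\psi$ by $\psi$ so that the Morleyization axioms become tautologies) for injectivity, the same normal-form argument using closure of $\@F$ under $\wedge,\exists$ for (ii), and the same expansion/reduct correspondence for (iii). Only the order of the parts differs, which is immaterial.
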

\begin{proof}
By an easy induction, $\@T' |- \forall \vec{x}\, (R_\phi(\vec{x}) <=> \phi(\vec{x}))$ for every $\phi \in \@F$.  Since $R_\phi \in \@T'$ for every $\phi \in \@T$, it follows that $\@T'$ proves every axiom in $\@T$.

For (i), if $\@T$ proves an $\@L_{\omega_1\omega}$-sentence $\phi$, then also $\@T' |- \phi$; thus if two $\@L_{\omega_1\omega}$-formulas $\phi, \psi$ are $\@T$-equivalent, then they are also $\@T'$-equivalent.  Conversely, for an $\@L'_{\omega_1\omega}$-formula $\psi$, an easy induction shows that $\@T' |- \forall \vec{x}\, (\psi <=> \psi')$ where $\psi'$ is $\psi$ with every $R_\phi$ replaced by $\phi$; thus every $\@L'_{\omega_1\omega}$-formula is $\@T'$-equivalent to an $\@L_{\omega_1\omega}$-formula.  Furthermore, for an $\@L'_{\omega_1\omega}$-sentence $\psi$ such that $\@T' |- \psi$, if we take the proof of $\psi$ from $\@T'$ (in the proof system in \cite{Lop} or \cite[D1.3]{Jeleph}) and replace every $R_\phi$ in it with $\phi$, we obtain a proof tree whose root node is $\psi'$ and whose leaves (i.e., axioms of $\@T'$) are all either tautologies (for one of the axioms \eqref{seq:Mor}) or axioms in $\@T$ (for $R_\phi \in \@T'$ where $\phi \in \@T$), whence $\@T |- \psi'$.  Thus if two $\@L'_{\omega_1\omega}$-formulas $\psi, \theta$ are $\@T'$-equivalent, then $\psi', \theta'$ are $\@T$-equivalent.

For (ii), a countable disjunction $\bigvee_i \phi_i$ of $\@F$-formulas $\phi_i$ is $\@T'$-equivalent to $\bigvee_i R_{\phi_i}$, which is a $\omega_1$-coherent $\@L'$-formula.  Conversely, an $\omega_1$-coherent $\@L'$-formula is equivalent to one in normal form $\psi(\vec{x}) = \bigvee_i \exists \vec{y}_i\, (\psi_{i1}(\vec{x}, \vec{y}_i) \wedge \dotsb \wedge \psi_{ik_i}(\vec{x}, \vec{y}_i))$, where the $\psi_{ij}$ are either equalities or some $R_{\phi_{ij}}$ which is $\@T'$-equivalent to $\phi_{ij}$; hence $\exists \vec{y}_i\, (\psi_{i1}(\vec{x}, \vec{y}_i) \wedge \dotsb \wedge \psi_{ik_i}(\vec{x}, \vec{y}_i))$ is $\@T'$-equivalent to an $\@F$-formula (since $\@F$ is closed under $\wedge, \exists$), and so $\psi$ is $\@T'$-equivalent to a countable disjunction of $\@F$-formulas.

For (iii), since $\@T' |- \@T$, the $\@L$-reduct of a model of $\@T'$ is a model of $\@T$; conversely, for a model $\@M$ of $\@T$, an easy induction shows that the unique $\@L'$-expansion of $\@M$ which satisfies $\@T'$ is given by $R_\phi^\@M := \phi^\@M$ for each $\phi \in \@F$.
\end{proof}

Let $(\@L, \@T)$ be an $\omega_1$-coherent theory.  An \defn{$\omega_1$-coherent $\@T$-imaginary sort} $A$ is a pair $A = ((\alpha_i)_{i \in I}, (\epsilon_{ij})_{i,j \in I})$ consisting of countable families of $\omega_1$-coherent formulas $\alpha_i(\vec{x}_i)$ (with possibly different numbers of free variables, say $n_i := \abs{\vec{x}_i}$) and $\epsilon_{ij}(\vec{x}_i, \vec{x}_j)$, such that $\@T$ proves the following sentences which say that ``$\bigsqcup_{i,j} \epsilon_{ij}$ is an equivalence relation on $\bigsqcup_i \alpha_i$'':
\begin{align}
\stag{Eqv}
\begin{array}{r@{}r@{\;}l@{}l}
\forall \vec{x}, \vec{y}\, (& \epsilon_{ij}(\vec{x}, \vec{y}) &=> \alpha_i(\vec{x}) \wedge \alpha_j(\vec{y}) &), \\
\forall \vec{x}\, (& \alpha_i(\vec{x}) &=> \epsilon_{ii}(\vec{x}, \vec{x}) &), \\
\forall \vec{x}, \vec{y}\, (& \epsilon_{ij}(\vec{x}, \vec{y}) &=> \epsilon_{ji}(\vec{y}, \vec{x}) &), \\
\forall \vec{x}, \vec{y}, \vec{z}\, (& \epsilon_{ij}(\vec{x}, \vec{y}) \wedge \epsilon_{jk}(\vec{y}, \vec{z}) &=> \epsilon_{ik}(\vec{x}, \vec{z}) &).
\end{array}
\end{align}
For $(\@L, \@T)$ countable, using the completeness theorem, this is easily seen to be equivalent to: in every countable model $\@M = (M, R)_{R \in \@L}$ of $\@T$, the set
\begin{align*}
\bigsqcup_{i,j} \epsilon_{ij}^\@M \subseteq \bigsqcup_{i,j} (M^{n_i} \times M^{n_j}) \cong (\bigsqcup_i M^{n_i})^2
\end{align*}
is an equivalence relation on
\begin{align*}
\bigsqcup_i \alpha_i^\@M \subseteq \bigsqcup_i M^{n_i}.
\end{align*}
The \defn{interpretation of $A$ in $\@M$} is the quotient set
\begin{align*}
A^\@M := (\bigsqcup_i \alpha_i^\@M)/(\bigsqcup_{i,j} \epsilon_{ij}^\@M).
\end{align*}
We will denote the imaginary sort $A = ((\alpha_i)_i, (\epsilon_{ij})_{i,j})$ itself suggestively by
\begin{align*}
A = (\bigsqcup_i \alpha_i)/(\bigsqcup_{i,j} \epsilon_{ij}).
\end{align*}

We identify a single formula $\alpha$ with the imaginary sort given by $\alpha$ quotiented by the equality relation (i.e., the imaginary sort $((\alpha), (\epsilon))$ where $\epsilon(\vec{x}, \vec{y}) = \alpha(\vec{x}) \wedge (\vec{x} = \vec{y})$).  Note that the notation $\phi^\@M$ means the same thing whether we regard $\phi$ as a formula or as an imaginary sort.  Likewise, for countably many formulas $\alpha_i$, we write $\bigsqcup_i \alpha_i$ for the corresponding imaginary sort where the equivalence relation is equality.

Let $A = (\bigsqcup_i \alpha_i)/(\bigsqcup_{i,j} \epsilon_{ij})$ and $B = (\bigsqcup_k \beta_k)/(\bigsqcup_{k,l} \eta_{kl})$ be two $\@T$-imaginary sorts, where $\alpha_i = \alpha_i(\vec{x}_i)$ and $\beta_k = \beta_k(\vec{y}_k)$, say.  An \defn{$\omega_1$-coherent $\@T$-definable function} $f : A -> B$ is a $\@T$-equivalence class $f = [(\phi_{ik})_{i,k}]$ of families of formulas $\phi_{ik}(\vec{x}_i, \vec{y}_k)$ such that $\@T$ proves the following sentences which say that ``$\bigsqcup_{i,k} \phi_{ik} \subseteq (\bigsqcup_i \alpha_i) \times (\bigsqcup_k \beta_k)$ is the lift of the graph of a function $A -> B$'':
\begin{align*}
\stag{Fun}
\begin{array}{r@{}r@{\;}l@{}l}
\forall \vec{x}, \vec{y}\, (& \phi_{ik}(\vec{x}, \vec{y}) &=> \alpha_i(\vec{x}) \wedge \beta_k(\vec{y}) &), \\
\forall \vec{x}, \vec{x}', \vec{y}\, (& \phi_{ik}(\vec{x}, \vec{y}) \wedge \epsilon_{ij}(\vec{x}, \vec{x}') &=> \phi_{jk}(\vec{x}', \vec{y}) &), \\
\forall \vec{x}, \vec{y}, \vec{y}'\, (& \phi_{ik}(\vec{x}, \vec{y}) \wedge \eta_{kl}(\vec{y}, \vec{y}') &=> \phi_{il}(\vec{x}, \vec{y}') &), \\
\forall \vec{x}, \vec{y}, \vec{y}'\, (& \phi_{ik}(\vec{x}, \vec{y}) \wedge \phi_{il}(\vec{x}, \vec{y}') &=> \eta_{kl}(\vec{y}, \vec{y}') &), \\
\forall \vec{x}\, (& \alpha_i(\vec{x}) &=> \bigvee_k \exists \vec{y}\, \phi_{ik}(\vec{x}, \vec{y}) &).
\end{array}
\end{align*}
Again by the completeness theorem, for $(\@L, \@T)$ countable this is equivalent to: in every countable model $\@M$ of $\@T$, $\bigsqcup_{i,k} \phi_{ik}^\@M$ is the lift of the graph of a function
\begin{align*}
f^\@M : A^\@M -> B^\@M,
\end{align*}
the \defn{interpretation of $f$ in $\@M$}.

The \defn{identity function} on $A = (\bigsqcup_i \alpha_i)/(\bigsqcup_{i,j} \epsilon_{ij})$ is
\begin{align*}
1_A := [(\epsilon_{ij})_{i,j}] : A -> A.
\end{align*}
Given also $B = (\bigsqcup_k \beta_k)/(\bigsqcup_{k,l} \eta_{kl})$ and $C = (\bigsqcup_m \gamma_m)/(\bigsqcup_{m,n} \xi_{m,n})$ and definable functions $f = [(\phi_{ik})_{i,k}] : A -> B$ and $g = [(\psi_{km})_{k,m}] : B -> C$, their \defn{composite} is $g \circ f := [(\theta_{im})_{i,m}] : A -> C$ where
\begin{align*}
\theta_{im}(\vec{x}, \vec{z}) := \bigvee_k \exists \vec{y}\, (\phi_{ik}(\vec{x}, \vec{y}) \wedge \psi_{km}(\vec{y}, \vec{z})).
\end{align*}
It is straightforward to verify (by explicitly writing down formal proofs) that $1_A$ and $g \circ f$ are definable functions and that composition is associative and unital.

The \defn{syntactic $\omega_1$-pretopos} of an $\omega_1$-coherent theory $(\@L, \@T)$ is the category of imaginary sorts and definable functions, denoted
\begin{align*}
\-{\ang{\@L \mid \@T}}_{\omega_1}.
\end{align*}
The categorical structure in the syntactic $\omega_1$-pretopos encodes the logical structure of the theory:

\begin{remark}
\label{list:pretopos-structure}
\leavevmode
\begin{itemize}

\item  There is an object $\#X \in \-{\ang{\@L \mid \@T}}_{\omega_1}$, the \defn{home sort}, given by the true formula $\top(x)$ in one variable (quotiented by the equality relation), whose interpretation in a model $\@M = (M, R^\@M)_{R \in \@L}$ is the underlying set $M$.

\item  The categorical \defn{product} $\#X^n$ of $n$ copies of the home sort $\#X$ is given by the true formula $\top(x_0, \dotsc, x_{n-1})$ in $n$ variables, with the $i$th projection $p_i : \#X^n -> \#X$ defined by the formula $\pi_i(x_0, \dotsc, x_{n-1}, y) = (x_i = y)$.

More generally, given two imaginary sorts $A = (\bigsqcup_i \alpha_i)/(\bigsqcup_{i,j} \epsilon_{ij})$ and $B = (\bigsqcup_k \beta_k)/(\bigsqcup_{k,l} \eta_{kl})$, their \defn{product} is $A \times B = (\bigsqcup_{i,k} \gamma_{ik})/(\bigsqcup_{i,k,j,l} \xi_{ikjl})$, where $\gamma_{ik}(\vec{x}, \vec{y}) = \alpha_i(\vec{x}) \wedge \beta_k(\vec{y})$ and $\xi_{ikjl}(\vec{x}, \vec{y}, \vec{z}, \vec{w}) = \epsilon_{ij}(\vec{x}, \vec{z}) \wedge \eta_{kl}(\vec{y}, \vec{w})$, so that $(A \times B)^\@M \cong A^\@M \times B^\@M$.

\item  Recall that a \defn{subobject} of $\#X^n$ is an equivalence class of monomorphisms $A `-> \#X^n$; as usual, we will abuse terminology and also refer to single monomorphisms as subobjects.  Every formula $\alpha$ with $n$ variables yields a subobject $\alpha `-> \#X^n$ given by the identity function $1_\alpha$ (as defined above, but regarded as a definable function $\alpha -> \#X^n$), with two such subobjects $\alpha, \beta$ being equal iff $\@T |- \alpha <=> \beta$.  Conversely, every subobject of $\#X^n$ is of this form; so there is an (order-preserving) bijection between subobjects of $\#X^n$ and $\@T$-equivalence classes of formulas with $n$ variables.

More generally, given an imaginary sort $A = (\bigsqcup_i \alpha_i)/(\bigsqcup_{i,j} \epsilon_{ij}) \in \-{\ang{\@L \mid \@T}}_{\omega_1}$, the \defn{subobjects} of $A$ are in bijection with ``subsorts'' or \defn{definable relations} on $A$, i.e., families of formulas $(\beta_i)_i$ such that $\@T |- \beta_i => \alpha_i$ and $\@T$ proves that ``$\bigsqcup_i \beta_i \subseteq \bigsqcup_i \alpha_i$ is $(\bigsqcup_{i,j} \epsilon_{ij})$-invariant''.

\item  Given two definable functions $f, g : A -> B$, say $A = (\bigsqcup_i \alpha_i)/(\bigsqcup_{i,j} \epsilon_{ij})$, $B = (\bigsqcup_k \beta_k)/(\bigsqcup_{k,l} \eta_{kl})$, $f = [(\phi_{ik})_{i,k}]$, and $g = [(\psi_{ik})_{i,k}]$, their \defn{equalizer} is the subsort $A' \subseteq A$ on which $f, g$ are equal, given by $A' = (\bigsqcup_i \alpha_i')/(\bigsqcup_{i,j} \epsilon_{ij}')$ where $\alpha_i'(\vec{x}) = \bigvee_k \exists \vec{y}\, (\phi_{ik}(\vec{x}, \vec{y}) \wedge \psi_{ik}(\vec{x}, \vec{y}))$ and $\epsilon_{ij}'(\vec{x}, \vec{x}') = \alpha_i'(\vec{x}) \wedge \alpha_i'(\vec{x}') \wedge \epsilon_{ij}(\vec{x}, \vec{x}')$.  In particular, the equalizer of the two projections $\#X \times \#X -> \#X$ is (equivalent to) the equality formula $x = y$.

\item  Intersection (pullback) of subobjects corresponds to taking conjunction of formulas.

\item  Union (join in subobject lattice) of subobjects corresponds to taking disjunction of formulas.

\item  For a subobject $A `-> \#X^n \times \#X$ corresponding to a formula $\phi(\vec{x}, y)$, the formula $\exists y\, \phi(\vec{x}, y)$ corresponds to the subobject of $\#X^n$ given by the image of the composite $A `-> \#X^n \times \#X -> \#X^n$, where the second map is the projection.

\end{itemize}
The proofs of these statements are straightforward syntactic calculations; see \cite[D1.4]{Jeleph}.
\end{remark}

There is an alternative, multi-step construction of the syntactic $\omega_1$-pretopos.  First, one defines the \defn{syntactic category} $\ang{\@L \mid \@T}_{\omega_1}$ in the same way as $\-{\ang{\@L \mid \@T}}_{\omega_1}$, except that instead of imaginary sorts, one considers only single $\omega_1$-coherent $\@L$-formulas (representing definable subsets); see \cite[8.1.3]{MR} or \cite[D1.4]{Jeleph}.  This category already has all of the structure encoding logical operations in the list above.  To form $\-{\ang{\@L \mid \@T}}_{\omega_1}$, one ``completes'' $\ang{\@L \mid \@T}_{\omega_1}$ by first freely adjoining countable disjoint unions of objects, and then freely adjoining quotients of equivalence relations.  This may be done either directly on the categorical level (see \cite[A1.4.5, A3.3.10]{Jeleph}), or syntactically, by considering multi-sorted $\omega_1$-coherent theories (see \cite[8.4.1]{MR}).  We have chosen to combine these steps, for the sake of brevity.

The notations $\ang{\@L \mid \@T}_{\omega_1}$ and $\-{\ang{\@L \mid \@T}}_{\omega_1}$ are meant to suggest that the syntactic category (resp., $\omega_1$-pretopos) is the category ``freely presented'' by $(\@L, \@T)$ under the categorical structures listed in \cref{list:pretopos-structure} (resp., plus countable disjoint unions and quotients of equivalence relations).  For the precise sense in which this is true, see \cite[D1.4.12]{Jeleph} or \cref{sec:interp} below.

Two $\omega_1$-coherent theories $(\@L, \@T), (\@L', \@T')$ are \defn{($\omega_1$-coherently) Morita equivalent} if their syntactic $\omega_1$-pretoposes are equivalent categories.  Intuitively, this means that the two theories have the same logical structure, modulo different presentations.

Sometimes, it is convenient to change the definition of \defn{imaginary sort} $A \in \-{\ang{\@L \mid \@T}}_{\omega_1}$ to a \emph{$\@T$-equivalence class} of pairs $((\alpha_i)_i, (\epsilon_{ij})_{i,j})$ of formulas.  Doing so results in a definition of $\-{\ang{\@L \mid \@T}}_{\omega_1}$ which is equivalent to the original one, since if two imaginary sorts $A, B$ (in the original sense) were $\@T$-equivalent, then the identity function $1_A : A -> A$ is also an isomorphism $A \cong B$.

For a fragment $\@F$ of $\@L_{\omega_1\omega}$ and an $\@F$-theory $\@T$, we define its \defn{syntactic $\omega_1$-pretopos} $\-{\ang{\@F \mid \@T}}_{\omega_1}$ to be that of its Morleyization.  By \cref{thm:morley}, we may equivalently define $\-{\ang{\@F \mid \@T}}_{\omega_1}$ in the same way as $\-{\ang{\@L \mid \@T}}_{\omega_1}$, but using only countable disjunctions of $\@F$-formulas in the definitions of both ``imaginary sort'' and ``definable function''; we call these \defn{$(\@F, \@T)$-imaginary sorts} (or simply \defn{$\@F$-imaginary sorts}) and \defn{$(\@F, \@T)$-definable functions}.  (If we quotient by $\@T$-equivalence in the definition of imaginary sort, the two definitions of $\-{\ang{\@F \mid \@T}}_{\omega_1}$ become isomorphic and not merely equivalent.)

For an $\@L_{\omega_1\omega}$-theory $\@T$, we define its \defn{syntactic Boolean $\omega_1$-pretopos} $\-{\ang{\@L \mid \@T}}^B_{\omega_1}$ to be that of $\@T$ regarded as a theory in the \emph{uncountable} fragment of all $\@L_{\omega_1\omega}$-formulas.  By \cref{thm:morley}, this is equivalent (or isomorphic if we consider sorts modulo $\@T$-equivalence) to taking the definition of $\-{\ang{\@L \mid \@T}}_{\omega_1}$ but allowing \emph{arbitrary} $\@L_{\omega_1\omega}$-formulas; we call the objects and morphisms \defn{$(\@L_{\omega_1\omega}, \@T)$-imaginary sorts} (or simply \defn{$\@L_{\omega_1\omega}$-imaginary sorts}) and \defn{$(\@L_{\omega_1\omega}, \@T)$-definable functions}.  Note that since every $\@L_{\omega_1\omega}$-formula is contained in a countable fragment, for $\@T$ countable, $\-{\ang{\@L \mid \@T}}^B_{\omega_1}$ is the direct limit of $\-{\ang{\@F \mid \@T}}_{\omega_1}$ as $\@F$ varies over all countable fragments containing $\@T$.  Two theories $(\@L, \@T)$ and $(\@L', \@T')$ are \defn{($\@L_{\omega_1\omega}$-)Morita equivalent} if their syntactic Boolean $\omega_1$-pretoposes are equivalent.

\section{The groupoid of models}
\label{sec:isogpd}

In this section, we define the space and groupoid of countable models of a theory.  We will first consider the general case of an $\omega_1$-coherent theory, and then specialize (via Morleyization) to the more familiar case of an $\@F$-theory in a countable fragment $\@F$.

Let $\@L$ be a countable relational language.  The \defn{space of countable $\@L$-structures} $\Mod(\@L)$ consists of countable $\@L$-structures $\@M = (M, R)_{R \in \@L}$ whose underlying set $M$ is an initial segment of $\#N$ (i.e., one of $0, 1, 2, \dotsc, \#N$, where as usual $n = \{0, \dotsc, n-1\}$ for $n \in \#N$), equipped with the topology generated by the subbasic open sets denoted by the following symbols:
\begin{align*}
\begin{aligned}
\den{\abs{\#X} \ge n} &:= \{\@M \in \Mod(\@L) \mid \abs{M} \ge n\} &&\text{for $n \in \#N$}, \\
\den{R(\vec{a})} &:= \{\@M \in \Mod(\@L) \mid \vec{a} \in M^n \AND R^\@M(\vec{a})\} &&\text{for $n$-ary $R \in \@L$ and $\vec{a} \in \#N^n$}
\end{aligned}
\end{align*}
(here ``$\#X$'' is thought of as the home sort).  We have a homeomorphism
\begin{align*}
\Mod(\@L) &\cong \left\{(x, y_R)_{R \in \@L} \relmiddle| \begin{aligned}
&\forall a \in \#N\, (x(a+1) = 1 \implies x(a) = 1) \AND \\
&\forall \text{$n$-ary $R \in \@L$},\, \vec{a} \in \#N^n\, (y_R(\vec{a}) = 1 \implies x(a_0) = \dotsb = x(a_{n-1}) = 1)
\end{aligned}\right\} \\
&\subseteq \#S^\#N \times \prod_{\text{$n$-ary $R$} \in \@L} \#S^{\#N^n}
\end{align*}
to a $\*\Pi^0_2$ subset of a countable power of $\#S$, whence $\Mod(\@L)$ is a quasi-Polish space.

For an $\@L_{\omega_1\omega}$-formula $\phi$ with $n$ variables and $\vec{a} \in \#N^n$, we define
\begin{align*}
\den{\phi(\vec{a})} &:= \{\@M \in \Mod(\@L) \mid \vec{a} \in M^n \AND \phi^\@M(\vec{a})\}.
\end{align*}
Note that the subbasic open set $\den{\abs{\#X} \ge n}$ above can also be written as $\den{\top(0, \dotsc, n-1)}$ (i.e., we consider the true formula $\top$ with $n$ variables).  Let us say that a \defn{basic formula} is a finite conjunction of atomic relations $R(\vec{x})$.  Thus, a countable basis of open sets in $\Mod(\@L)$ consists of $\den{\phi(\vec{a})}$ for basic formulas $\phi$.

By the usual induction on $\phi$ (see \cite[16.7]{Kcdst}), $\den{\phi(\vec{a})}$ is a Borel subset of $\Mod(\@L)$.  Moreover if $\phi$ is $\omega_1$-coherent, then it is easily seen that $\den{\phi(\vec{a})}$ is open.  It follows that for two $\omega_1$-coherent formulas $\phi, \psi$, $\den{\phi(\vec{a}) => \psi(\vec{a})}$ is the union of a closed set and an open set, and hence that for an $\omega_1$-coherent axiom $\phi$, $\den{\phi}$ is $\*\Pi^0_2$.  For a countable $\omega_1$-coherent $\@L$-theory $\@T$, put
\begin{align*}
\Mod(\@L, \@T) := \bigcap_{\phi \in \@T} \den{\phi} \subseteq \Mod(\@L).
\end{align*}
This is also a quasi-Polish space, the \defn{space of countable models of $\@T$}.  We will continue to denote $\den{\phi(\vec{a})} \cap \Mod(\@L, \@T) \subseteq \Mod(\@L)$ by $\den{\phi(\vec{a})}$; similarly with $\den{\abs{\#X} \ge n}$.

For a countable fragment $\@F$ of $\@L_{\omega_1\omega}$ and a countable $\@F$-theory $\@T$, we define the \defn{space of countable models of $\@T$ with topology induced by $\@F$} to be
\begin{align*}
\Mod(\@F, \@T) := \Mod(\@L', \@T')
\end{align*}
where $(\@L', \@T')$ is the Morleyization of $\@T$.  Using \cref{thm:morley}, it is easily seen that $\Mod(\@F, \@T)$ is equivalently the set of countable models of $\@T$ on initial segments of $\#N$, equipped with the topology generated by the sets
\begin{align*}
% \den{\abs{\#X} < n} := \{\@M \mid \abs{M} < n\} \quad\text{for $n \in \#N$}, &&
% \den{\abs{\#X} \ge n}, &&
\den{\phi(\vec{a})} \quad\text{for $\phi \in \@F$}.
\end{align*}
Since $\@F$ is closed under $\neg$, the topology is zero-dimensional, hence regular, hence Polish.  This is the usual definition of the topology induced by a countable fragment; see \cite[11.4]{Gao}.

For a countable $\@L_{\omega_1\omega}$-theory $\@T$, we define its standard Borel \defn{space of countable models} as
\begin{align*}
\Mod(\@L, \@T) := \Mod(\@F, \@T)
\end{align*}
for any countable fragment $\@F$ containing $\@T$.
Since for two countable fragments $\@F' \supseteq \@F \supseteq \@T$, the Polish topology induced by $\@F'$ is clearly finer than that induced by $\@F$, the standard Borel structure on $\Mod(\@L, \@T)$ does not depend on which countable fragment we take.  Moreover if $\@T$ happens to be $\omega_1$-coherent, the standard Borel structure on $\Mod(\@F, \@T)$ is induced by the topology on $\Mod(\@L, \@T)$.
%(since by \cref{thm:morley}, $\@L'$-atomic formulas $R_\phi(\vec{x})$ are $\@T'$-equivalent to $\@L_{\omega_1\omega}$-formulas).

We now turn to isomorphisms between models.  Let $\Iso(\@L)$ denote the set of triples $(\@N, g, \@M)$ where $\@M, \@N \in \Mod(\@L)$ and $g : \@M \cong \@N$ is an isomorphism.  Let
\begin{align*}
\begin{aligned}
\partial_0 : \Iso(\@L) &--> \Mod(\@L) &
(\@N, g, \@M) &|--> \@N, \\
\partial_1 : \Iso(\@L) &--> \Mod(\@L) &
(\@N, g, \@M) &|--> \@M, \\
\iota : \Mod(\@L) &--> \Iso(\@L) &
\@M &|--> (\@M, 1_\@M, \@M), \\
\mu : \Iso(\@L) \times_{\Mod(\@L)} \Iso(\@L) &--> \Iso(\@L) &
((\@P, h, \@N), (\@N, g, \@M)) &|--> (\@P, h \circ g, \@M), \\
\nu : \Iso(\@L) &--> \Iso(\@L) &
(\@N, g, \@M) &|--> (\@M, g^{-1}, \@N).
\end{aligned}
\end{align*}
Equipped with these maps, $\MOD(\@L) := (\Mod(\@L), \Iso(\@L))$ is a groupoid.  We usually denote its morphisms by $g$ instead of $(\@N, g, \@M)$ when $\@M, \@N$ are clear from context.  We equip $\Iso(\@L)$ with the topology generated by the subbasic open sets
\begin{gather*}
\partial_1^{-1}(U) \quad\text{for $U \subseteq \Mod(\@L)$ (subbasic) open}, \\
\den{a |-> b} := \{(\@N, g, \@M) \mid a, b \in M \AND g(a) = b\} \quad\text{for $a, b \in \#N$}.
\end{gather*}
It is easily verified that the maps $\partial_0, \partial_1, \iota, \mu, \nu$ are continuous,
% ; for example, for $\partial_0$, we have
% \begin{align*}
% \partial_0^{-1}([\abs{\#X} \ge n]) &= \partial_1^{-1}([\abs{\#X} \ge n]), \\
% \partial_0^{-1}([R(\vec{a})])
% &= \{(\@N, g, \@M) \mid \vec{a} \in N^n \AND R^\@N(\vec{a})\} \\
% &= \{(\@N, g, \@M) \mid \vec{a} \in M^n \AND R^\@M(g^{-1}(\vec{a}))\} \\
% &= \{(\@N, g, \@M) \mid \vec{a} \in M^n \AND \exists \vec{b}\, (g(\vec{b}) = \vec{a})\}
% \end{align*}
and that $\partial_1$ is open.  Thus, $\MOD(\@L)$ is an open quasi-Polish groupoid, the \defn{quasi-Polish groupoid of countable $\@L$-structures.}

We note that the space $\Iso(\@L)$ can alternatively be regarded as consisting of pairs $(g, \@M)$ where $\@M \in \Mod(\@L)$ is a countable structure and $g \in S_M$ (the symmetric group on $M$) is a permutation of its underlying set.  This definition of $\Iso(\@L)$ can be regarded as a subspace of $S_\infty \times \Mod(\@L)$ (consisting of the $(g, \@M) \in S_\infty \times \Mod(\@L)$ such that $g$ is the identity outside of $M$), with the subspace topology.

For a countable $\omega_1$-coherent $\@L$-theory $\@T$, we define the \defn{quasi-Polish groupoid of countable models of $\@T$}
\begin{align*}
\MOD(\@L, \@T) = (\Mod(\@L, \@T), \Iso(\@L, \@T)) \subseteq \MOD(\@L)
\end{align*}
to be the full subgroupoid on $\Mod(\@L, \@T) \subseteq \Mod(\@L)$; clearly it is also an open quasi-Polish groupoid.  For a countable theory $\@T$ in a countable fragment $\@F$ of $\@L_{\omega_1\omega}$, we define the \defn{Polish groupoid of countable models of $\@T$ with topology induced by $\@F$}
\begin{align*}
\MOD(\@F, \@T) = (\Mod(\@F, \@T), \Iso(\@F, \@T)) := \MOD(\@L', \@T')
\end{align*}
where $(\@L', \@T')$ is the Morleyization of $\@T$.  For a countable $\@L_{\omega_1\omega}$-theory $\@T$, we define the \defn{standard Borel groupoid of countable models of $\@T$}
\begin{align*}
\MOD(\@L, \@T) = (\Mod(\@L, \@T), \Iso(\@L, \@T)) := \MOD(\@F, \@T)
\end{align*}
for any countable fragment $\@F \supseteq \@T$; again, the Borel structure does not depend on the fragment $\@F$, and is consistent with the topology in case $\@T$ is $\omega_1$-coherent.

\section{Interpretations of imaginary sorts}
\label{sec:den}

In this section, we define the interpretation functor $\den{-}$ taking imaginary sorts to $\MOD(\@L, \@T)$-spaces.  As before, we begin with the general case of an $\omega_1$-coherent theory.

Let $\@L$ be a countable relational language and $\@T$ be a countable $\omega_1$-coherent $\@L$-theory.  For an imaginary sort $A \in \-{\ang{\@L \mid \@T}}_{\omega_1}$, we put
\begin{align*}
\den{A} := \{(\@M, a) \mid \@M \in \Mod(\@L, \@T) \AND a \in A^\@M\},
\end{align*}
the disjoint union the interpretations $A^\@M$ in all models $\@M \in \Mod(\@L, \@T)$, equipped with the projection $\pi : \den{A} -> \Mod(\@L, \@T)$; we may call $\den{A}$ simply the \defn{interpretation of $A$}.  We have the following alternative definition of $\den{A}$ which is uniform over all models, which will yield the topology on $\den{A}$.

For a single $\omega_1$-coherent $\@L$-formula $\alpha$ with $n$ variables, regarded as an imaginary sort, put
\begin{align*}
\den{\alpha} := \{(\@M, \vec{a}) \mid \@M \in \Mod(\@L, \@T) \AND \vec{a} \in M^n \AND \alpha^\@M(\vec{a})\}.
\end{align*}
There is an obvious countable étalé (over $\Mod(\@L, \@T)$) topology on $\den{\alpha}$, with a cover by disjoint open sections of the form
\begin{align*}
\den{\alpha}_{\vec{a}} := \{(\@M, \vec{a}) \mid \@M \in \Mod(\@L, \@T) \AND \vec{a} \in M^n \AND \alpha^\@M(\vec{a})\}, 
\end{align*}
each of which is a section over $\den{\alpha(\vec{a})} \subseteq \Mod(\@L, \@T)$.  Thus, a countable basis for $\den{\alpha}$ consists of sets of the form
\begin{align*}
\den{\alpha}_{\vec{a}} \cap \pi^{-1}(\den{\phi(\vec{b})})
\end{align*}
with $\den{\phi(\vec{b})} \subseteq \Mod(\@L, \@T)$ a basic open set, i.e., $\phi$ a basic formula.  Note that when $\alpha = \top$, so that $\alpha$ as an imaginary sort is a power $\#X^n$ of the home sort $\#X$,
\begin{align*}
\den{\#X^n} = \{(\@M, \vec{a}) \mid \@M \in \Mod(\@L, \@T) \AND \vec{a} \in M^n\};
\end{align*}
and for general $\alpha$ with $n$ variables, we have $\den{\alpha} \subseteq \den{\#X^n}$.

When $n = 0$, the notation $\den{\alpha}$ agrees with the notation $\den{\alpha(\vec{a})} \subseteq \Mod(\@L, \@T)$ defined earlier (for $\vec{a}$ the empty tuple).  For future use, we introduce the following common generalization of both notations: for a formula $\phi$ with $m+n$ variables and $\vec{a} \in \#N^m$, put
\begin{align*}
\den{\phi(\vec{a}, -)} := \{(\@M, \vec{b}) \mid \@M \in \Mod(\@L, \@T) \AND \vec{a} \in M^m \AND \vec{b} \in M^n \AND \phi^\@M(\vec{a}, \vec{b})\}.
\end{align*}
When $n = 0$, this reduces to $\den{\phi(\vec{a})}$; when $m = 0$, this reduces to $\den{\phi}$.

For countably many $\omega_1$-coherent formulas $\alpha_i$, we put
\begin{align*}
\den{\bigsqcup_i \alpha_i} := \bigsqcup_i \den{\alpha_i}
\end{align*}
with the disjoint union topology.  Finally, for an arbitrary imaginary sort $A = (\bigsqcup_i \alpha_i)/(\bigsqcup_{i,j} \epsilon_{ij})$, where $\alpha_i$ has $n_i$ variables, $\den{\bigsqcup_{i,j} \epsilon_{ij}} \subseteq \bigsqcup_{i,j} \den{\#X^{n_i+n_j}} \cong (\bigsqcup_i \den{\#X^{n_i}}) \times_{\Mod(\@L, \@T)} (\bigsqcup_j \den{\#X^{n_j}})$ is fiberwise (over $\Mod(\@L, \@T)$) an equivalence relation on $\den{\bigsqcup_i \alpha_i} \subseteq \bigsqcup_i \den{\#X^{n_i}}$ by \eqref{seq:Eqv} (and soundness); we define $\den{A}$ to be the corresponding quotient
\begin{align*}
\den{(\bigsqcup_i \alpha_i)/(\bigsqcup_{i,j} \epsilon_{ij})} := \den{\bigsqcup_i \alpha_i}/\den{\bigsqcup_{i,j} \epsilon_{ij}}
\end{align*}
with the quotient topology.  By \cref{thm:etale}(v,vi), the quotient of a countable étalé space by an étalé equivalence relation is countable étalé; a countable basis of open sections in $\den{\bigsqcup_i \alpha_i}/\den{\bigsqcup_{i,j} \epsilon_{ij}}$ is given by the images of basic open sections in $\den{\bigsqcup_i \alpha_i}$.

We let the groupoid $\MOD(\@L, \@T)$ act on $\den{A}$ in the obvious way, via application.  That is, for a single formula $\alpha$, we put
\begin{align*}
g \cdot (\@M, \vec{a}) := (\@N, g(\vec{a}))
\end{align*}
for $(\@M, \vec{a}) \in \den{\alpha}$ and $g : \@M \cong \@N$.  This action is continuous, since
\begin{align*}
g \cdot (\@M, \vec{a}) \in \den{\alpha}_{\vec{b}}
&\iff g(\vec{a}) = \vec{b} \AND \alpha^\@N(\vec{b}) \\
&\iff \exists \vec{a}'\, (g \in \bigcap_i \den{a'_i |-> b_i} \AND (\@M, \vec{a}) \in \den{\alpha}_{\vec{a}'}).
\end{align*}
For countably many formulas $\alpha_i$, equip $\den{\bigsqcup_i \alpha_i}$ with the disjoint union of the actions.  For a general imaginary sort $A = (\bigsqcup_i \alpha_i)/(\bigsqcup_{i,j} \epsilon_{ij})$, equip $\den{A}$ with the quotient action.  Thus for every imaginary sort $A \in \-{\ang{\@L \mid \@T}}_{\omega_1}$, we have defined a countable étalé $\MOD(\@L, \@T)$-space $\den{A}$.

For a definable function $f : A -> B$, we let
\begin{align*}
\den{f} : \den{A} -> \den{B}
\end{align*}
be given fiberwise by $f^\@M : A^\@M -> B^\@M$ for each $\@M$.  Again there is a uniform definition, which shows that $\den{f}$ is continuous.  Let $A = (\bigsqcup_i \alpha_i)/(\bigsqcup_{i,j} \epsilon_{ij})$, $B = (\bigsqcup_k \beta_k)/(\bigsqcup_{k,l} \eta_{kl})$, and $f = [(\phi_{ik})_{i,k}]$.  By \eqref{seq:Fun} (and soundness), the sub-countable étalé space $\den{\bigsqcup_{i,k} \phi_{ik}} \subseteq \den{\bigsqcup_i \alpha_i} \times_{\Mod(\@L, \@T)} \den{\bigsqcup_k \beta_k}$ is invariant with respect to the equivalence relation $\den{\bigsqcup_{i,j} \epsilon_{ij}} \times_{\Mod(\@L, \@T)} \den{\bigsqcup_{k,l} \eta_{kl}}$, and its image in the quotient $\den{A} \times_{\Mod(\@L, \@T)} \den{B}$ is fiberwise the graph of a function $\den{f} : \den{A} -> \den{B}$, which is continuous because its fiberwise graph is open.  It is clear that $\den{f}$ is $\MOD(\@L, \@T)$-equivariant, and that $\den{-}$ preserves identity and composition, so that we have defined a functor
\begin{align*}
\den{-} : \-{\ang{\@L \mid \@T}}_{\omega_1} &--> \Act_{\omega_1}(\MOD(\@L, \@T))
\end{align*}
(recall from \cref{sec:groupoid} that $\Act_{\omega_1}(\!G)$ denotes the category of countable étalé $\!G$-spaces).

For a countable fragment $\@F$ of $\@L_{\omega_1\omega}$ and a countable $\@F$-theory $\@T$, we define
\begin{align*}
\den{-} : \-{\ang{\@F \mid \@T}}_{\omega_1} &--> \Act_{\omega_1}(\MOD(\@F, \@T))
\end{align*}
by taking the Morleyization.  Note that since every countable étalé action is Borel, we have $\Act_{\omega_1}(\MOD(\@F, \@T)) \subseteq \Act^B_{\omega_1}(\MOD(\@F, \@T)) = \Act^B_{\omega_1}(\MOD(\@L, \@T))$.  For a countable $\@L_{\omega_1\omega}$-theory $\@T$, we define
\begin{align*}
\den{-} : \-{\ang{\@L \mid \@T}}^B_{\omega_1} &--> \Act^B_{\omega_1}(\MOD(\@L, \@T))
\end{align*}
by regarding $\-{\ang{\@L \mid \@T}}^B_{\omega_1}$ as the direct limit $\injlim_{\@F} \-{\ang{\@F \mid \@T}}_{\omega_1}$ over countable fragments $\@F \supseteq \@T$, i.e., $\den{A}$ for an imaginary sort $A$ is defined as above for any countable fragment $\@F$ containing all of the formulas in $A$, and similarly for definable functions.   Both of these definitions are the same as if we had repeated the definition in the $\omega_1$-coherent case, except that we do not have to re-check that the actions are continuous/Borel.

\section{The Lopez-Escobar theorem}
\label{sec:lopez-escobar}

In this section, we present what is essentially Vaught's proof \cite[3.1]{Vau} of Lopez-Escobar's theorem (see also \cite[16.9]{Kcdst} or \cite[11.3.5]{Gao}).  We do so for the sake of completeness, since we are working in a slightly more general context (we allow finite models), and since later we will need precise statements of some intermediate parts of the proof.

Let $\@L$ be a countable relational language and $\@T$ be a countable \emph{decidable} $\omega_1$-coherent $\@L$-theory.  Recall (\cref{sec:imag}) that this means that the formula $x \ne y$ is $\@T$-equivalent to an $\omega_1$-coherent formula, which by abuse of notation we also write as $x \ne y$.

Recall also the subbasic open sets $\den{a |-> b} \subseteq \Iso(\@L, \@T)$ from \cref{sec:isogpd}, consisting of isomorphisms taking $a$ to $b$.  We say that two tuples $\vec{a}, \vec{b} \in \#N^n$ have the same \defn{equality type}, written $\vec{a} \equiv \vec{b}$, if $a_i = a_j \iff b_i = b_j$ for all $i, j$.  For two such tuples, put $\den{\vec{a} |-> \vec{b}} := \bigcap_i \den{a_i |-> b_i}$.

Let $n \in \#N$, $\vec{a} \in \#N^n$, and $\vec{x}$ be an $n$-tuple of variables.  We introduce the following notational abbreviations for certain $\omega_1$-coherent formulas we will use repeatedly:
\begin{align*}
(\abs{\#X} \ge n) &:= \exists y_0, \dotsc, y_{n-1}\, \bigwedge_{i \ne j} (y_i \ne y_j), \\
(\vec{a} \in \#X^n) &:= (\abs{\#X} \ge \max_i (a_i+1)), \\
(\vec{x} \equiv \vec{a}) &:= \bigwedge_{a_i = a_j} (x_i = x_j) \wedge \bigwedge_{a_i \ne a_j} (x_i \ne x_j), \\
(S_\#X \cdot \vec{x} \ni \vec{a}) &:= (\vec{a} \in \#X^n) \wedge (\vec{x} \equiv \vec{a}).
\end{align*}
These have the expected interpretations in models $\@M \in \Mod(\@L, \@T)$: for $\vec{b} \in M^n$,
\begin{align*}
(\abs{\#X} \ge n)^\@M &\iff \abs{M} \ge n, \\
(\vec{a} \in \#X^n)^\@M &\iff \vec{a} \in M^n, \\
(\vec{b} \equiv \vec{a})^\@M &\iff \vec{b} \equiv \vec{a}, \\
(S_\#X \cdot \vec{b} \ni \vec{a})^\@M &\iff S_M \cdot \vec{b} \ni \vec{a}
\end{align*}
(where $S_M$ is the symmetric group on $M$).  In particular, note that $\den{\abs{\#X} \ge n} \subseteq \Mod(\@L, \@T)$ is as defined in \cref{sec:isogpd}.

\begin{lemma}
\label{lm:lopez-escobar-vaught-open}
Let $\vec{b} \in \#N^k$ and let $U \subseteq \den{\#X^n}$ be open.  Then there is an $\omega_1$-coherent formula $\phi$ with $k+n$ variables such that for all $\vec{a} \equiv \vec{b}$,
\begin{align*}
\den{\vec{a} |-> \vec{b}}^{-1} \cdot U = \den{\phi(\vec{a}, -)}.
\end{align*}
\end{lemma}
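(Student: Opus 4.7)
The plan is to express $U$ as a countable union of basic open sections of $\den{\#X^n}$ and to pull each section back by the transform $\den{\vec{a} \mapsto \vec{b}}^{-1} \cdot -$ by hand, packaging the result as an $\omega_1$-coherent formula.

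First I would choose a cover $U = \bigcup_i V_i$ by basic open sections
\[
V_i := \{(\@N, \vec{c}_i) \mid \vec{c}_i \in N^n \AND \vec{d}_i \in N^{m_i} \AND \psi_i^\@N(\vec{d}_i)\},
\]
where $\vec{c}_i \in \#N^n$, $\psi_i$ is a basic formula, and $\vec{d}_i \in \#N^{m_i}$ is the parameter tuple. Enlarging the context of $\psi_i$ with dummy free variables and padding $\vec{d}_i$ correspondingly, I may arrange without loss of generality that every entry of $\vec{b}$ and of $\vec{c}_i$ occurs somewhere in $\vec{d}_i$; then I write $\vec{d}_i = \tau_i(\vec{b}, \vec{c}_i, \vec{r}_i)$, where $\vec{r}_i$ enumerates the entries of $\vec{d}_i$ not appearing among $(\vec{b}, \vec{c}_i)$ and $\tau_i$ is a deterministic reindexing. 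I then propose
\[
\phi(\vec{x}, \vec{y}) := \bigvee_i \exists \vec{z}\, \bigl[(S_\#X \cdot (\vec{x}, \vec{y}, \vec{z}) \ni (\vec{b}, \vec{c}_i, \vec{r}_i)) \wedge \psi_i(\tau_i(\vec{x}, \vec{y}, \vec{z}))\bigr].
\]
Each disjunct is $\omega_1$-coherent: the $S_\#X$-abbreviation unfolds into a cardinality clause $(\abs{\#X} \ge \cdot)$ together with an equality-type clause, the latter being $\omega_1$-coherent thanks to the decidability of $\@T$, while $\psi_i \circ \tau_i$ is still basic. Closure under $\exists$ and countable $\bigvee$ keeps everything inside the fragment.

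The main verification is that $\den{\vec{a} \mapsto \vec{b}}^{-1} \cdot U = \den{\phi(\vec{a}, -)}$ for every $\vec{a} \equiv \vec{b}$. Here I would exploit that an isomorphism between countable $\@L$-structures on initial segments of $\#N$ is a permutation of a common underlying set $M = N$. The inclusion $(\subseteq)$ is direct: given an iso $g : \@M \to \@N$ with $g(\vec{a}) = \vec{b}$ and $(\@N, g(\vec{e})) \in V_i$, setting $\vec{s} := g^{-1}(\vec{r}_i) \in M^{|\vec{r}_i|}$ yields the $i$th disjunct of $\phi^\@M(\vec{a}, \vec{e})$ after transporting the clauses of $V_i$ through $g$, using only that $\psi_i$ is basic (hence isomorphism-invariant) and the definition of $\tau_i$. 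For $(\supseteq)$, from $\vec{s} \in M^{|\vec{r}_i|}$ witnessing the $i$th disjunct of $\phi^\@M(\vec{a}, \vec{e})$, the equality-type and cardinality data provided by the $S_\#X$-conjunct allow me to exhibit a permutation $g$ of $M$ sending $(\vec{a}, \vec{e}, \vec{s}) \mapsto (\vec{b}, \vec{c}_i, \vec{r}_i)$ and then let $\@N$ be $\@M$ with its relations transported along $g$; by construction $g : \@M \cong \@N$, $\@N \models \@T$, and $(\@N, g(\vec{e})) \in V_i$.

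The one genuine subtlety, and the reason this is not entirely routine, is the finite-model bookkeeping: to realize $\@N$ as a structure on an initial segment of $\#N$, the underlying set $M$ must be large enough to contain every entry of $(\vec{b}, \vec{c}_i, \vec{r}_i)$, so matching equality types alone does not suffice. This is precisely what the $(\abs{\#X} \ge \cdot)$ clause hidden inside $(S_\#X \cdot - \ni -)$ enforces, and the decidability of $\@T$ is essential for packaging both the cardinality and the equality-type constraints $\omega_1$-coherently.
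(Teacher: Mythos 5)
Your proposal is correct and follows essentially the same route as the paper's proof: reduce to (a countable union of) basic open sections, then capture the translate by an existentially quantified $\omega_1$-coherent formula whose $(S_\#X \cdot {-} \ni {-})$ clause packages both the equality-type constraint and the cardinality constraint, with decidability of $\@T$ used exactly where you say. The only cosmetic difference is that the paper keeps the full concatenated tuple $(\vec{b}, \vec{d}, \vec{f})$ inside a single $S_\#X$-clause and applies $\psi$ directly to the fresh existential variables, letting the equality type absorb any coincidences, rather than padding and reindexing via your $\tau_i$.
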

\begin{proof}
We may assume that $U$ is a basic open set, i.e., $U = \den{\#X^n}_{\vec{d}} \cap \pi^{-1}(\den{\psi(\vec{f})})$ for some basic formula $\psi$ (say with $l$ variables) and tuples $\vec{d} \in \#N^n$ and $\vec{f} \in \#N^l$.  So
\begin{align*}
U &= \{(\@M, \vec{d}) \mid \@M \in \Mod(\@L, \@T) \AND \vec{d} \in M^n \AND \psi^\@M(\vec{f})\}, \\
\den{\vec{a} |-> \vec{b}}^{-1} \cdot U
&= \{(\@M, \vec{c}) \in \den{\#X^n} \mid \exists g \in S_M\, (g(\vec{a}, \vec{c}) = (\vec{b}, \vec{d}) \AND \psi^\@M(g^{-1}(\vec{f})))\} \\
&= \{(\@M, \vec{c}) \in \den{\#X^n} \mid \exists \vec{e} \in M^l\, (S_M \cdot (\vec{a}, \vec{c}, \vec{e}) \ni (\vec{b}, \vec{d}, \vec{f}) \AND \psi^\@M(\vec{e}))\} \\
&= \den{\phi(\vec{a}, -)},
\end{align*}
where $\phi$ is the formula
\begin{align*}
\phi(x_0, \dotsc, x_{k+n-1}) &= \exists x_{k+n}, \dotsc, x_{k+n+l-1}\, ((S_\#X \cdot \vec{x} \ni (\vec{b}, \vec{d}, \vec{f})) \wedge \psi(x_{k+n}, \dotsc, x_{k+n+l-1})).
\qedhere
\end{align*}
\end{proof}

\begin{corollary}
\label{thm:lopez-escobar-vaught-open}
If $U \subseteq \den{\#X^n}$ is open and $\MOD(\@L, \@T)$-invariant, then there is an $\omega_1$-coherent formula $\phi$ with $n$ variables such that $U = \den{\phi}$.  \qed
\end{corollary}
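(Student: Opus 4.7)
The plan is to derive \cref{thm:lopez-escobar-vaught-open} as the special case $k = 0$ of \cref{lm:lopez-escobar-vaught-open}, using the $\MOD(\@L, \@T)$-invariance of $U$ to collapse the prefactor $\den{\vec{a} |-> \vec{b}}^{-1}$ to the identity.  Concretely, take $\vec{b}$ to be the empty $0$-tuple; then the only $\vec{a}$ with $\vec{a} \equiv \vec{b}$ is again the empty tuple, and the equality-type hypothesis is vacuous.  \Cref{lm:lopez-escobar-vaught-open} supplies an $\omega_1$-coherent formula $\phi$ in $k + n = n$ free variables with
\[
\den{\vec{a} |-> \vec{b}}^{-1} \cdot U \;=\; \den{\phi(\vec{a}, -)} \;=\; \den{\phi},
\]
where the second equality is just the special case $m = 0$ of the definition of $\den{\phi(\vec{a}, -)}$ from \cref{sec:den}.

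It remains to identify the left-hand side with $U$ itself.  By definition $\den{\vec{a} |-> \vec{b}} = \bigcap_i \den{a_i |-> b_i}$, and for empty tuples this is the empty intersection, namely all of $\Iso(\@L, \@T)$; closure under inversion then gives $\den{\vec{a} |-> \vec{b}}^{-1} = \Iso(\@L, \@T)$ as well.  Therefore $\den{\vec{a} |-> \vec{b}}^{-1} \cdot U = \Iso(\@L, \@T) \cdot U$ is the $\MOD(\@L, \@T)$-saturation of $U$, which by the invariance assumption equals $U$.  Combining, $U = \den{\phi}$ with $\phi$ an $n$-variable $\omega_1$-coherent formula, as required.

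All of the substance lies in \cref{lm:lopez-escobar-vaught-open}, whose proof performs the Vaught-style pullback computation by reducing to $U$ a basic open of the form $\den{\#X^n}_{\vec{d}} \cap \pi^{-1}(\den{\psi(\vec{f})})$ and writing down an explicit existential formula.  There is no real obstacle in the corollary itself; one need only remember that the reduction in the lemma's proof extends to arbitrary open $U$ by writing $U$ as a countable union of basic opens and closing the resulting $\omega_1$-coherent formulas under countable $\bigvee$.
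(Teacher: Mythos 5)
Your proposal is correct and is exactly the argument the paper intends (the corollary is stated with a \qed precisely because it is the $k=0$ instance of \cref{lm:lopez-escobar-vaught-open}, with the empty-tuple transform $\den{\vec{a} \mapsto \vec{b}}^{-1} \cdot U = \Iso(\@L,\@T)\cdot U$ collapsing to $U$ by invariance). Your closing remark about extending the lemma from basic opens to arbitrary opens via countable unions of $\omega_1$-coherent formulas is also the right reading of the "we may assume $U$ is basic" step.
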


Now replace $\@T$ above with the Morleyization of a countable theory $\@T$ in a countable fragment $\@F$, so that $\MOD(\@F, \@T)$ is an open Polish groupoid and so we may talk about Vaught transforms.

\begin{lemma}
\label{lm:lopez-escobar-vaught-borel}
Let $\vec{b} \in \#N^k$ and let $B \subseteq \den{\#X^n}$ be Borel.  Then there is an $\@L_{\omega_1\omega}$-formula $\phi$ with $k+n$ variables such that for all $\vec{a} \equiv \vec{b}$,
\begin{align*}
B^{\tri \den{\vec{a} |-> \vec{b}}} = \den{\phi(\vec{a}, -)}.
\end{align*}
\end{lemma}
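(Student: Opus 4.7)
The plan is to prove, by induction on Borel complexity, the following strengthening: for every Borel $B \subseteq \den{\#X^n}$ and every $\vec{b} \in \#N^k$, there exist $\@L_{\omega_1\omega}$-formulas $\phi$ and $\psi$ with $k+n$ variables, depending only on $B$ and $\vec{b}$, such that for all $\vec{a} \equiv \vec{b}$,
\[
B^{\tri \den{\vec{a} |-> \vec{b}}} = \den{\phi(\vec{a}, -)} \quad\text{and}\quad B^{*\den{\vec{a} |-> \vec{b}}} = \den{\psi(\vec{a}, -)}.
\]
Let $\Gamma$ denote the class of Borel sets satisfying this strengthening; it suffices to show that $\Gamma$ is a $\sigma$-algebra containing the open sets. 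Tracking both $\tri$ and $*$ simultaneously is essential, since neither alone is closed under all the relevant Boolean operations.

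Closure of $\Gamma$ under complements is immediate from \cref{thm:vaught}(a), which swaps $\tri$ and $*$ under negation. For closure under countable unions, the $\tri$ transform decomposes directly via \cref{thm:vaught}(b). The delicate case is the $*$ transform of a countable union: fixing a countable basis $\{V_j\}$ of basic open subsets of $U = \den{\vec{a} |-> \vec{b}}$, I would use de Morgan together with \cref{thm:vaught}(c) to write
\[
\Bigl(\bigcup_i B_i\Bigr)^{*U} = \neg \Bigl(\bigcap_i \neg B_i\Bigr)^{\tri U} = \neg \bigcup_j \Bigl[\, p^{-1}(\partial_1(V_j)) \cap \bigcap_i \neg B_i^{\tri V_j} \,\Bigr],
\]
using the identity $(\bigcap_i C_i)^{\oast V} = p^{-1}(\partial_1(V)) \cap \bigcap_i C_i^{*V}$ to absorb the outer intersection and $(\neg C)^{*V} = \neg C^{\tri V}$ (a restatement of \cref{thm:vaught}(a)) to re-express each factor. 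Each $B_i^{\tri V_j}$ is then $\@L_{\omega_1\omega}$-definable by the inductive hypothesis applied to $B_i \in \Gamma$, while each $p^{-1}(\partial_1(V_j))$ is an $\omega_1$-coherently definable open subset of $\den{\#X^n}$ (expressing that the source tuple of $V_j$ fits inside the universe).

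For the base case, let $B$ be open. Since $B$ is open, $\{g \in \partial_1^{-1}(p(x)) \cap U : g \cdot x \in B\}$ is an open subset of the Polish fiber, so ``non-meager'' reduces to ``non-empty'' and ``comeager'' reduces to ``dense''. The first observation yields $B^{\tri U} = \den{\vec{a} |-> \vec{b}}^{-1} \cdot B$, which is definable by \cref{lm:lopez-escobar-vaught-open}; the second yields $B^{*U} = \bigcap_j [\, \neg p^{-1}(\partial_1(V_j)) \cup B^{\tri V_j} \,]$ over the same countable basis, each term already handled. The main obstacle is the $*$-of-union decomposition above: because the $\tri$ transform does not commute with intersections, the naive dualization fails, and the induction only closes after invoking the $\oast$ factorization from \cref{thm:vaught}(c) and carefully tracking the extra $p^{-1}(\partial_1(V_j))$ factor, which is exactly what ensures the quantification only runs over fibers actually containing an isomorphism mapping $\vec{a} |-> \vec{b}$.
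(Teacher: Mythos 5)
Your proof is correct, and it is essentially the paper's argument with the bookkeeping dualized. The paper inducts on Borel complexity tracking only the $\tri$-transform, so countable unions are immediate from \cref{thm:vaught}(b) and all the work happens at complements, where $(\neg C)^{\tri \den{\vec{a} |-> \vec{b}}}$ is expanded via \cref{thm:vaught}(a,c) over the fiberwise weak basis $\den{\vec{c} |-> \vec{d}}$ with $\vec{a} \subseteq \vec{c} \equiv \vec{d} \supseteq \vec{b}$; you track both transforms, so complements are immediate from \cref{thm:vaught}(a) and the same expansion is instead performed at the $*$-transform of a union (and, in your base case, at $B^{*U}$ for open $B$). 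The two ``hard steps'' are literally the same computation read in opposite directions, so neither version buys anything substantive over the other beyond taste. The one point you leave implicit is how the union over your basis $\{V_j\}$ --- a family that depends on $\vec{a}$ --- yields a \emph{single} formula uniform over all $\vec{a} \equiv \vec{b}$: one must take the $V_j$ of the form $\den{\vec{c} |-> \vec{d}}$ with $\vec{a} \subseteq \vec{c} \equiv \vec{d} \supseteq \vec{b}$, apply the inductive hypothesis with the longer target tuple $\vec{d}$, and internalize the union over the source extensions $\vec{c} \supseteq \vec{a}$ as an existential quantifier guarded by $(S_\#X \cdot \vec{x} \ni \vec{d})$, exactly as in the displayed formula closing the paper's proof. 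Since your strengthened inductive hypothesis is stated with the right uniformity (over all target tuples and all $\vec{a} \equiv \vec{b}$), this is routine rather than a gap, but it is the step where the formula actually gets written down, and your write-up should say so.
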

\begin{proof}
By induction on the complexity of $B$.  For $B$ open, this is \cref{lm:lopez-escobar-vaught-open}.  For a countable union $B = \bigcup_i B_i$, let for each $i$ the corresponding formula for $B_i$ be $\phi_i$, then put $\phi := \bigvee_i \phi_i$.  For a complement $B = \neg C$, using the ``$\partial_1$-fiberwise weak basis for $\den{\vec{a} |-> \vec{b}}$'' (see \cref{thm:vaught}) consisting of $\den{\vec{c} |-> \vec{d}}$ for $\vec{a} \subseteq \vec{c} \equiv \vec{d} \supseteq \vec{b}$, we have
\begin{align*}
B^{\tri \den{\vec{a} |-> \vec{b}}}
&= \bigcup_{\vec{a} \subseteq \vec{c} \equiv \vec{d} \supseteq \vec{b}} B^{\oast \den{\vec{c} |-> \vec{d}}} \\
&= \bigcup_{\vec{a} \subseteq \vec{c} \equiv \vec{d} \supseteq \vec{b}} (B^{*\den{\vec{c} |-> \vec{d}}} \cap p^{-1}(\partial_1(\den{\vec{c} |-> \vec{d}}))) \\
&= \bigcup_{\vec{a} \subseteq \vec{c} \equiv \vec{d} \supseteq \vec{b}} (\neg C^{\tri \den{\vec{c} |-> \vec{d}}} \cap p^{-1}(\den{\vec{c} \in \#X^{\abs{\vec{c}}}} \cap \den{\vec{d} \in \#X^{\abs{\vec{d}}}}));
\end{align*}
let for each $\vec{d} \supseteq \vec{b}$ the formula $\psi_{\vec{d}}$ be such that $C^{\tri \den{\vec{c} |-> \vec{d}}} = \den{\psi_{\vec{d}}(\vec{c}, -)}$ for all $\vec{c} \equiv \vec{d}$, whence
\begin{align*}
B^{\tri \den{\vec{a} |-> \vec{b}}}
&= \bigcup_{\vec{a} \subseteq \vec{c} \equiv \vec{d} \supseteq \vec{b}} (\neg \den{\psi_{\vec{d}}(\vec{c}, -)} \cap p^{-1}(\den{\vec{c} \in \#X^{\abs{\vec{c}}}} \cap \den{\vec{d} \in \#X^{\abs{\vec{d}}}})) \\
&= \{(\@M, \vec{e}) \mid \vec{e} \in M^n \AND \exists \vec{a} \subseteq \vec{c} \equiv \vec{d} \supseteq \vec{b}\, (\vec{c}, \vec{d} \in M^{\abs{\vec{d}}} \AND \neg \psi_{\vec{d}}^\@M(\vec{c}, \vec{e}))\} \\
&= \den{\phi(\vec{a}, -)}
\end{align*}
where
\begin{align*}
\phi(x_0, \dotsc, x_{k-1}, y_0, \dotsc, y_{n-1})
&= \bigvee_{\vec{d} \supseteq \vec{b}} \exists x_k, \dotsc, x_{\abs{\vec{d}}-1}\, ((S_\#X \cdot \vec{x} \ni \vec{d}) \wedge \neg \psi_{\vec{d}}(\vec{x}, \vec{y})).
\qedhere
\end{align*}
\end{proof}

\begin{corollary}
\label{thm:lopez-escobar-vaught-borel}
If $B \subseteq \den{\#X^n}$ is Borel and $\MOD(\@L, \@T)$-invariant, then there is an $\@L_{\omega_1\omega}$-formula $\phi$ with $n$ variables such that $B = \den{\phi}$.  \qed
\end{corollary}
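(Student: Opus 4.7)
The plan is to obtain this corollary as a direct specialization of \cref{lm:lopez-escobar-vaught-borel}, exactly analogous to how \cref{thm:lopez-escobar-vaught-open} follows from \cref{lm:lopez-escobar-vaught-open}. I take $k = 0$ in the lemma, so that both $\vec{a}$ and $\vec{b}$ are the empty tuple and $\den{\vec{a} |-> \vec{b}} = \Iso(\@L, \@T)$. The lemma then produces an $\@L_{\omega_1\omega}$-formula $\phi$ with $n$ free variables such that
$$B^{\tri \Iso(\@L, \@T)} = \den{\phi}.$$

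It remains to verify that $\MOD(\@L, \@T)$-invariance of $B$ forces $B^{\tri \Iso(\@L, \@T)} = B$. Unfolding the Vaught transform, a point $x \in \den{\#X^n}$ lies in $B^{\tri \Iso(\@L, \@T)}$ iff $\exists^* g \in \partial_1^{-1}(p(x))$ with $g \cdot x \in B$. Invariance makes the property $g \cdot x \in B$ equivalent to $x \in B$ independently of $g$, so the condition collapses to: $x \in B$ and $\partial_1^{-1}(p(x))$ is non-meager in itself. Now $\partial_1^{-1}(p(x))$ is a closed subspace of the Polish space $\Iso(\@L, \@T)$ (hence itself Polish) and is non-empty since it contains $1_{p(x)}$; by the Baire category theorem it is therefore non-meager in itself. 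Hence $B^{\tri \Iso(\@L, \@T)} = B = \den{\phi}$.

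There is no substantive obstacle in this plan: the real work is encapsulated in \cref{lm:lopez-escobar-vaught-borel}, and the reduction uses only invariance of $B$ together with the standard Baire-category observation that a non-empty Polish space is non-meager in itself. A possible alternative would be to reprove the statement by induction on the Borel complexity of $B$, interleaving invariance at each step to bypass the Vaught transform entirely; but the above reduction avoids repeating the inductive bookkeeping already carried out in the lemma.
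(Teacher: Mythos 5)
Your proposal is correct and is exactly the argument the paper intends (the corollary is stated with an immediate \qed, just as \cref{thm:lopez-escobar-vaught-open} follows from \cref{lm:lopez-escobar-vaught-open}): specialize \cref{lm:lopez-escobar-vaught-borel} to $k=0$ and observe that invariance of $B$ collapses $B^{\tri \Iso(\@L,\@T)}$ to $B$, since the fiber $\partial_1^{-1}(p(x))$ is a nonempty Polish space (it contains $1_{p(x)}$) and hence non-meager in itself.
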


The usual statement of Lopez-Escobar's theorem is the case $n = 0$.

\section{Naming countable étalé actions}
\label{sec:etale-interp-equiv}

In this section, we give a direct proof of the following generalization of \cref{thm:etale-interp-equiv} (which follows from it via Morleyization).  The proof is analogous to that of a similar result of Awodey--Forssell \cite[\S1.4]{AF} for $\@L_{\omega\omega}$-theories.  Later in \cref{sec:joyal-tierney} we will give a more abstract proof of this result, by extracting it from the Joyal--Tierney representation theorem.

\begin{theorem}
\label{thm:etale-coherent-interp-equiv}
Let $\@L$ be a countable relational language and $\@T$ be a countable decidable $\omega_1$-coherent $\@L$-theory.  Then the interpretation functor
\begin{align*}
\den{-} : \-{\ang{\@L \mid \@T}}_{\omega_1} &--> \Act_{\omega_1}(\MOD(\@L, \@T))
\end{align*}
is an equivalence of categories.
\end{theorem}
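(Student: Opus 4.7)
The plan is to establish the equivalence by showing $\den{-}$ is fully faithful and essentially surjective. Faithfulness and fullness will be reduced to a bijection between subobjects in the two categories, which follows essentially from the Lopez--Escobar theorems of \cref{sec:lopez-escobar}. Essential surjectivity will require explicitly building an imaginary sort from the data of a cover of a countable étalé $\MOD(\@L, \@T)$-space by basic open sections.

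For the subobject correspondence, first note that subobjects of $\den{A}$ in $\Act_{\omega_1}(\MOD(\@L, \@T))$ are precisely the invariant open subsets of $\den{A}$, since open subsets of countable étalé spaces are countable étalé by \cref{thm:etale}. Under $\den{-}$, a definable relation on $A = (\bigsqcup_i \alpha_i)/(\bigsqcup_{i,j} \epsilon_{ij})$---i.e., a family of $\omega_1$-coherent formulas $\beta_i => \alpha_i$ whose disjoint union is $\bigsqcup_{i,j} \epsilon_{ij}$-invariant---is sent to the evident invariant open subset of $\den{A}$. Injectivity (conservativity) of this assignment is precisely the Lopez--Escobar completeness theorem for $\@T$; surjectivity follows from \cref{thm:lopez-escobar-vaught-open}, applied to the preimages in $\bigsqcup_i \den{\alpha_i} \subseteq \bigsqcup_i \den{\#X^{n_i}}$. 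Fullness on morphisms is then automatic: the graph of a continuous equivariant map $f : \den{A} -> \den{B}$ is an open invariant subset of $\den{A \times B} = \den{A} \times_{\Mod(\@L, \@T)} \den{B}$ (openness because the diagonal of the étalé space $\den{B}$ is open), hence equals $\den{R}$ for a definable relation $R \subseteq A \times B$; the function axioms \eqref{seq:Fun} hold automatically in every countable model, so by completeness $R$ is the graph of a definable function $A -> B$. Faithfulness on morphisms follows from faithfulness on subobjects applied to graphs.

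For essential surjectivity, given a countable étalé $\MOD(\@L, \@T)$-space $X$, cover $X$ by countably many open sections $U_i \subseteq X$ lying over basic open sets $V_i = \den{\psi_i(\vec{a}_i)} \subseteq \Mod(\@L, \@T)$, with $\psi_i$ a basic $\@L$-formula and $\vec{a}_i \in \#N^{n_i}$; let $s_i : V_i -> X$ denote the corresponding section. Using decidability of $\@T$, form the $\omega_1$-coherent formula $\alpha_i(\vec{x}) := (\vec{x} \equiv \vec{a}_i) \wedge \psi_i(\vec{x})$ in $n_i$ variables. The intended $\MOD(\@L, \@T)$-equivariant map $\phi_i : \den{\alpha_i} -> X$ sends $(\@N, \vec{b})$ to $g \cdot s_i(g^*\@N)$, where $g : g^*\@N -> \@N$ is the iso corresponding to a permutation of $N$ componentwise swapping $\vec{a}_i$ and $\vec{b}$ (well-defined because $\vec{b} \equiv \vec{a}_i$, with $g^*\@N$ the pullback structure making $g$ an iso). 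Different choices of $g$ may give different points in $X_\@N$, so one must quotient by the relation $\sim_i$ identifying pairs yielding the same image; this relation is $\MOD(\@L, \@T)$-invariant and open (openness from the fact that equality in the étalé space $X$ is open and each $\phi_i$ is locally continuous), so by \cref{thm:lopez-escobar-vaught-open} is of the form $\den{\epsilon_{ii}}$ for an $\omega_1$-coherent $\epsilon_{ii}$. Analogously define cross-relations $\epsilon_{ij}$ for $i \ne j$. The axioms \eqref{seq:Eqv} hold in every countable model, hence are provable from $\@T$ by completeness, so $A := (\bigsqcup_i \alpha_i)/(\bigsqcup_{i,j} \epsilon_{ij})$ is an imaginary sort, and the $\phi_i$'s assemble into a $\MOD(\@L, \@T)$-equivariant continuous bijection $\den{A} -> X$, which is a homeomorphism because it carries basic open sections to basic open sections.

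The main obstacle lies in the essential surjectivity step, particularly the verification that the identifications $\sim_i$ and their cross-analogs are open subsets of $\den{\alpha_i} \times_{\Mod(\@L, \@T)} \den{\alpha_j}$, since the Lopez--Escobar theorem is what then promotes them to $\omega_1$-coherent definable equivalence relations. The subtle point is that the iso $g$ used in defining $\phi_i$ cannot be chosen globally continuously in $(\@N, \vec{b})$; one works locally, using the canonical swap permutation in each coordinate neighborhood, and then verifies local independence modulo $\sim_i$. Decidability of $\@T$ enters crucially to ensure that $(\vec{x} \equiv \vec{a}_i)$ is $\omega_1$-coherent. Once these topological preliminaries are in place, \cref{thm:lopez-escobar-vaught-open} provides the syntactic definability of the quotients, and the remainder is a bookkeeping check that the resulting $A$ indeed realizes $X$ in $\Act_{\omega_1}(\MOD(\@L, \@T))$.
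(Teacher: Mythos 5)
Your architecture is the same as the paper's: full faithfulness is reduced to the subobject correspondence (conservativity from completeness, fullness on subobjects from \cref{thm:lopez-escobar-vaught-open}), and essential surjectivity is attacked by covering $X$ with open sections over basic open sets and using decidability to write down the formulas $\alpha_i$. The subobject part and your explicit graph argument for full faithfulness are correct (the paper instead quotes the abstract fact that a conservative, finite-limit-preserving functor which is full on subobjects and essentially surjective is an equivalence, but the content is the same).

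There is, however, a genuine gap at the crux of essential surjectivity, namely the definition of $\phi_i : \den{\alpha_i} -> X$. For $(\@N, \vec{b}) \in \den{\alpha_i}$ there are in general many $g \in S_N$ with $g(\vec{b}) = \vec{a}_i$, and the resulting points $g^{-1} \cdot s_i(g \cdot \@N)$ of the fiber over $\@N$ genuinely differ unless the section $U_i$ is invariant under the isotropy $\den{\vec{a}_i |-> \vec{a}_i}$; an arbitrary open section over $\den{\psi_i(\vec{a}_i)}$ has no reason to be so invariant, since that isotropy can act nontrivially on the fibers of $X$. This is a failure of $\phi_i$ to be single-valued --- or, if you fix one canonical swap $g$ per point, a failure of equivariance, since for $h : \@N \cong \@N'$ the comparison isomorphism $g' h g^{-1}$ lies in $\den{\vec{a}_i |-> \vec{a}_i}$ and need not preserve $U_i$ --- and not a failure of injectivity, so quotienting the domain by a relation $\sim_i$ ``identifying pairs yielding the same image'' cannot repair it, and ``verifying local independence modulo $\sim_i$'' is not an argument. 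The paper's proof devotes its first Claim to exactly this: using continuity of the action at the units $1_{p(x)}$, it refines the cover (enlarging $\vec{a}_i$ and strengthening $\psi_i$) until each section satisfies $\den{\vec{a}_i |-> \vec{a}_i} \cdot U_i \subseteq U_i$, and only then is $f(\@M, \vec{b}) = x \iff \exists g\, (g(\vec{b}) = \vec{a}_i \wedge g \cdot x \in U_i)$ a well-defined equivariant map. Your proof needs this refinement step. Two smaller points: your $\alpha_i$ should include the conjunct $(\vec{a}_i \in \#X^{n_i})$, since morphisms of $\MOD(\@L, \@T)$ are permutations of the underlying initial segment and $\vec{b}$ can only be carried to $\vec{a}_i$ in models containing $\vec{a}_i$; and once the maps are well defined there is no need to construct the $\epsilon_{ij}$ by hand and verify openness --- the kernel of the combined étalé surjection is an invariant sub-étalé subspace of $\den{(\bigsqcup_i \alpha_i)^2}$, so the already-established fullness on subobjects and completeness hand you the $\epsilon_{ij}$ and \eqref{seq:Eqv} directly.
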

\begin{proof}
We will prove that the functor is conservative, full on subobjects, and essentially surjective.  This is enough, since by standard category theory, a finite limit-preserving functor between categories with finite limits is an equivalence iff it is conservative, full on subobjects, and essentially surjective (see e.g., \cite[D3.5.6]{Jeleph}),
% Proof: let $F : \!C -> \!D$ be a lex functor which is conservative and full on subobjects; we show that it is full.  For $X, Y \in \!C$ and $g : F(X) -> F(Y)$, the graph of $g$, $F(X) --->{(1_{F(X)}, g)} F(X) \times F(Y) \cong F(X \times Y)$, is (since $F$ is full on subobjects) $F(S)$ for some subobject $S \subseteq X \times Y$.  The composite $S \subseteq X \times Y -> X$ is an isomorphism: it is monic, i.e., its kernel pair is the identity, because this holds after applying $F$ and $F$ is conservative; thus $S$ is a subobject $S \subseteq X$, which is all of $X$, again because this holds after applying $F$.  Letting $f : X -> Y$ be the composite $X \cong S \subseteq X \times Y -> Y$
and $\den{-}$ is easily seen to preserve finite limits (using the explicit constructions of finite products and equalizers of imaginary sorts in \cref{list:pretopos-structure}).

Conservative means that given an imaginary sort $A = (\bigsqcup_i \alpha_i)/(\bigsqcup_{i,j} \epsilon_{ij}) \in \-{\ang{\@L \mid \@T}}_{\omega_1}$ and a subsort (i.e., definable relation) $B \subseteq A$, if $\den{B} = \den{A}$, then $B$ and $A$ are provably equivalent.  Recall (\cref{list:pretopos-structure}) that $B$ is given by a family of formulas $(\beta_i)_i$ such that $\@T$ proves that ``$\bigsqcup_i \beta_i \subseteq \bigsqcup_i \alpha_i$ is $(\bigsqcup_{i,j} \epsilon_{ij})$-invariant''.  If $\den{B} = \den{A}$, then clearly $\den{\beta_i} = \den{\alpha_i}$ for every $i$, i.e., $\beta_i^\@M = \alpha_i^\@M$ for every countable model $\@M$ of $\@T$.  By the completeness theorem, $\@T |- \beta_i <=> \alpha_i$, as desired.

Full on subobjects means that given an imaginary sort $A = (\bigsqcup_i \alpha_i)/(\bigsqcup_{i,j} \epsilon_{ij}) \in \-{\ang{\@L \mid \@T}}_{\omega_1}$ and a sub-countable étalé $\MOD(\@L, \@T)$-space $Y \subseteq \den{A}$, there is a subsort $B \subseteq A$ such that $\den{B} = Y$.  Since $Y$ is étalé, $Y \subseteq \den{A}$ is open.  Let $q : \den{\bigsqcup_i \alpha_i} -> \den{A}$ be the quotient map.  Then for each $i$, $\den{\alpha_i} \cap q^{-1}(Y) \subseteq \den{\alpha_i}$ is open and $\MOD(\@L, \@T)$-invariant, hence by \cref{thm:lopez-escobar-vaught-open} equal to $\den{\beta_i}$ for some $\omega_1$-coherent formula $\beta_i$.  Since $\den{\beta_i} \subseteq \den{\alpha_i}$, by the completeness theorem (or conservativity as above applied to $\alpha_i \wedge \beta_i \subseteq \beta_i$), $\beta_i \subseteq \alpha_i$.  Similarly, since $q^{-1}(Y) = \bigsqcup_i \den{\beta_i} \subseteq \den{\bigsqcup_i \alpha_i}$ is $\den{\bigsqcup_{i,j} \epsilon_{ij}}$-invariant, $\@T$ proves that ``$\bigsqcup_i \beta_i \subseteq \bigsqcup_i \alpha_i$ is $(\bigsqcup_{i,j} \epsilon_{ij})$-invariant''.  So the desired subsort $B$ is given by $(\beta_i)_i$.

We now come to the heart of the proof: essentially surjective means that given a countable étalé $\MOD(\@L, \@T)$-space $p : X -> \Mod(\@L, \@T)$, there is an imaginary sort $A \in \-{\ang{\@L \mid \@T}}_{\omega_1}$ such that $X \cong \den{A}$.  Since $X$ is countable étalé, it has a countable basis $\@U$ of open sections $U \subseteq X$, which we may assume to be over basic open sets $\den{\phi(\vec{a})} \subseteq \Mod(\@L, \@T)$.  We claim that we may choose these so that
\begin{align*}
\den{\vec{a} |-> \vec{a}} \cdot U \subseteq U.  \tag{$*$}
\end{align*}

\begin{proof}[Proof of claim][(\textit{Claim})\openbox]
Let $V_i \subseteq X$ be any countable basis of open sections.  Let $x \in X$ and let $W \subseteq X$ be any open section containing $x$.  Since $1_{p(x)} \cdot x = x$, by continuity of the action, there is a basic open section $x \in V_i \subseteq W$ and a basic open set $1_{p(x)} \in \den{\vec{b} |-> \vec{b}'} \cap \partial_1^{-1}(\den{\psi(\vec{c})}) \subseteq \Iso(\@L, \@T)$ such that $(\den{\vec{b} |-> \vec{b}'} \cap \partial_1^{-1}(\den{\psi(\vec{c})})) \cdot V_i \subseteq W$.  That $1_{p(x)} \in \den{\vec{b} |-> \vec{b}'} \cap \partial_1^{-1}(\den{\psi(\vec{c})})$ means that $\vec{b} = \vec{b}'$ and that the model $p(x) \in \Mod(\@L, \@T)$ contains $\vec{b}$ and $\vec{c}$ and satisfies $\psi^{p(x)}(\vec{c})$.  Since $p(x) \in p(V_i)$, there is a basic open set $p(x) \in \den{\theta(\vec{d})} \subseteq p(V_i)$.  Putting $\vec{a} := (\vec{b}, \vec{c}, \vec{d})$ and $\phi(\vec{x}, \vec{y}, \vec{z}) := \psi(\vec{y}) \wedge \theta(\vec{z})$, we have $p(x) \in \den{\phi(\vec{a})} \subseteq \den{\theta(\vec{d})} \subseteq p(V_i)$.  Thus
\begin{align*}
U := V_i \cap p^{-1}(\den{\phi(\vec{a})})
\end{align*}
is an open section over $\den{\phi(\vec{a})}$ containing $x$, such that
\begin{align*}
\den{\vec{a} |-> \vec{a}} \cdot U
= (\den{\vec{a} |-> \vec{a}} \cap \partial_1^{-1}(\den{\phi(\vec{a})})) \cdot U
\subseteq (\den{\vec{b} |-> \vec{b}} \cap \partial_1^{-1}(\den{\psi(\vec{c})})) \cdot V_i \subseteq W.
\end{align*}
This implies $\den{\vec{a} |-> \vec{a}} \cdot U \subseteq U$, since $U \subseteq W$, $W$ is a section, and $\den{\vec{a} |-> \vec{a}} \cdot p(U) = \den{\vec{a} |-> \vec{a}} \cdot \den{\phi(\vec{a})} \subseteq \den{\phi(\vec{a})} = p(U)$.  So we may take $\@U$ to consist of all $U = V_i \cap p^{-1}(\den{\phi(\vec{a})})$ with $\phi$ a basic formula, $\den{\phi(\vec{a})} \subseteq p(V_i)$, and $\den{\vec{a} |-> \vec{a}} \cdot U \subseteq U$.
\end{proof}

Now having found such a basis $\@U$, fix some $U \in \@U$ and associated $\phi, \vec{a}$ satisfying ($*$), say with $\abs{\vec{a}} = n$.  Put
\begin{align*}
\alpha(\vec{x}) := (S_\#X \cdot \vec{x} \ni \vec{a}) \wedge \phi(\vec{x}).
\end{align*}
We claim that we have a $\MOD(\@L, \@T)$-equivariant continuous map $f : \den{\alpha} -> X$ given by
\begin{align*}
f(\@M, \vec{b}) &= x \iff p(x) = \@M \AND \exists g \in S_M\, (g(\vec{b}) = \vec{a} \AND g \cdot x \in U)
\end{align*}
whose image contains $U$.

\begin{proof}[Proof of claim][(\textit{Claim})\openbox]
First, we must check that $f$ so defined is a function.  For $(\@M, \vec{b}) \in \den{\alpha}$, by definition of $\alpha$, there is some isomorphism $g : \@M \cong \@N$ such that $g(\vec{b}) = \vec{a}$, whence $\@N \in \den{\phi(\vec{a})} = p(U)$; letting $y \in U \cap p^{-1}(\@N)$ be the unique element, $x := g^{-1} \cdot y$ is one value for $f(\@M, \vec{b})$.  If $x, x' \in p^{-1}(\@M)$ and there are two isomorphisms $g : \@M \cong \@N$ and $g' : \@M \cong \@N'$ with $g(\vec{b}) = \vec{a} = g'(\vec{b})$ and $g \cdot x, g' \cdot x' \in U$, then $g' \circ g^{-1} \in \den{\vec{a} |-> \vec{a}}$, whence (by ($*$)) $g' \cdot x = (g' \circ g^{-1}) \cdot (g \cdot x) \in U$ with $p(g' \cdot x) = \@N' = p(g' \cdot x')$, whence $g' \cdot x = g' \cdot x'$ since $U$ is a section, whence $x = x'$; thus $f(\@M, \vec{b})$ is unique.

It is straightforward that $f$ is $\MOD(\@L, \@T)$-equivariant.  Furthermore, for $x \in U$, clearly $f(p(x), \vec{a}) = x$ as witnessed by $1_{p(x)} \in S_{p(x)}$; hence the image of $f$ contains $U$.

% That $f$ is $\MOD(\@L, \@T)$-equivariant.  If $f(\@M, \vec{b}) = x$ as witnessed by $g : \@M \cong \@N$, then for $h : \@M \cong \@M'$, we have $(g \circ h^{-1})(h(\vec{b})) = \vec{a}$ and $(g \circ h^{-1}) \cdot (h \cdot x) = g \cdot x \in U$, whence $g \circ h^{-1} : \@M' \cong \@N$ witnesses that $f(h \cdot (\@M, \vec{b})) = f(\@M', h(\vec{b})) = h \cdot x$.

Finally, we must check that $f$ is continuous.  Since $\den{\alpha} = \bigsqcup_{\vec{b}} \den{\alpha}_{\vec{b}}$, it suffices to check that each $f|\den{\alpha}_{\vec{b}}$ is continuous.  For $\vec{b} \not\equiv \vec{a}$, we have $\den{\alpha}_{\vec{b}} = \emptyset$.  For $\vec{b} \equiv \vec{a}$, put $m := \max_i (a_i+1, b_i+1)$, and fix some $g \in S_\infty$ which is the identity on $\#N \setminus m$ such that $g(\vec{b}) = \vec{a}$.  The map
\begin{align*}
\den{\abs{\#X} \ge m} &--> \Iso(\@L, \@T) \\
\@M &|--> (g|M : \@M \cong g(\@M))
\end{align*}
is easily seen to be continuous, whence for any continuous $\MOD(\@L, \@T)$-space $q : Y -> \Mod(\@L, \@T)$, we have a continuous map (which we denote simply by $g$)
\begin{align*}
g : q^{-1}(\den{\abs{\#X} \ge m}) &--> Y \\
y &|--> (g|q(y)) \cdot y;
\end{align*}
similarly, we have a map $g^{-1} : q^{-1}(\den{\abs{\#X} \ge m}) -> Y$.  Then $f|\den{\alpha}_{\vec{b}}$ factors as the composite
\begin{align*}
\den{\alpha}_{\vec{b}} --->[\cong]{\pi} \den{\alpha(\vec{b})} --->[\cong]{g} \den{\alpha(\vec{a}) \wedge (\abs{\#X} \ge m)} --->[\cong]{p^{-1}} U \cap p^{-1}(\den{\#X \ge m}) --->{g^{-1}} X,
\end{align*}
which is continuous.
\end{proof}

We can now finish the proof of essential surjectivity via a standard covering argument.  For every basic open section $U \in \@U$, we have found a formula $\alpha_U$ and an equivariant continuous map $f_U : \den{\alpha_U} -> X$, defined as above, whose image contains $U$.  Combining these yields an equivariant continuous surjection $f : \den{\bigsqcup_{U \in \@U} \alpha_U} -> X$.  The kernel $\den{\bigsqcup_{U \in \@U} \alpha_U} \times_X \den{\bigsqcup_{U \in \@U} \alpha_U}$ of $f$ is a sub-countable étalé $\MOD(\@L, \@T)$-space of $\den{(\bigsqcup_{U \in \@U} \alpha_U)^2}$ (by \cref{thm:etale}(iv)), hence since (as shown above) $\den{-}$ is full on subobjects, is given by $\den{\bigsqcup_{U,V \in \@U} \epsilon_{UV}}$ for some family of formulas $\epsilon_{ij}$, such that $\@T$ proves that $\bigsqcup_{U,V} \epsilon_{UV}$ is an equivalence relation on $\bigsqcup_U \alpha_U$ by conservativity of $\den{-}$ (or completeness).  Since $f$ is an étalé surjection, the quotient of its kernel is $X$ (by \cref{thm:etale}(i,v,vi)), i.e., putting $A := (\bigsqcup_U \alpha_U)/(\bigsqcup_{U,V} \epsilon_{UV})$, we have $X \cong \den{A}$, as desired.
\end{proof}

\section{Naming fiberwise countable Borel actions}
\label{sec:borel-interp-equiv}

In this section, we prove \cref{thm:borel-interp-equiv} using \cref{thm:etale-interp-equiv} and (the proof of) \cref{thm:borel-action-etale}.

\begin{proof}[Proof of \cref{thm:borel-interp-equiv}]
As in the preceding section, we prove that $\den{-}$ is conservative, full on subobjects, and essentially surjective.  The proofs of conservativity and fullness on subobjects are the same as before, except that for the latter we use \cref{thm:lopez-escobar-vaught-borel} instead of \cref{thm:lopez-escobar-vaught-open}.  We now give the proof of essential surjectivity, i.e., of \cref{thm:borel-interp-eso}.

Let $\@F \supseteq \@T$ be any countable fragment, so that $\MOD(\@F, \@T)$ is an open Polish groupoid whose underlying standard Borel groupoid is $\MOD(\@L, \@T)$.  Let $\@U$ be the open basis for $\Iso(\@F, \@T)$ consisting of sets of the form
\begin{align*}
U = \partial_0^{-1}(C) \cap \den{\vec{a} |-> \vec{b}}
\end{align*}
where $C \subseteq \Mod(\@F, \@T)$ is basic open and $\vec{a} \equiv \vec{b} \in \#N^n$ (recall the definition of $\den{\vec{a} |-> \vec{b}}$ from \cref{sec:lopez-escobar}).  Note that for such $U$ and for any Borel $B \subseteq \Mod(\@F, \@T)$, we have
\begin{align*}
B^{\tri U} = \den{\phi(\vec{a})}  \tag{$*$}
\end{align*}
for some $\@L_{\omega_1\omega}$-formula $\phi$; this follows from \cref{lm:lopez-escobar-vaught-borel} (with $n = 0$) and the observation that $B^{\tri U} = (B \cap \partial_0^{-1}(C))^{\tri \den{\vec{a} |-> \vec{b}}}$.

Now let $p : X -> \Mod(\@L, \@T)$ be a fiberwise countable Borel $\MOD(\@L, \@T)$-space, equivalently a fiberwise countable Borel $\MOD(\@F, \@T)$-space.  We modify the proof of \cref{thm:borel-action-etale} in \cref{sec:borel-action-etale} for $\MOD(\@F, \@T)$ and $X$, using the above basis $\@U$ for $\Iso(\@F, \@T)$, by imposing further conditions on the countable Boolean algebras $\@A, \@B$ used in that proof, while simultaneously keeping track of a countable fragment $\@F' \supseteq \@F$, as follows:
\begin{enumerate}
\item[(vi)]  for each $B \in \@B$ and $U \in \@U$, there exist $\phi, \vec{a}$ so that ($*$) holds and such that $\phi \in \@F'$;
\item[(vii)]  $\@B$ contains each basic open set in $\Mod(\@F', \@T)$.
\end{enumerate}
Clearly these can be achieved by enlarging $\@A, \@B, \@F'$ $\omega$-many times.  The new topology on $\Mod(\@F, \@T)$ produced by \cref{thm:borel-action-etale} is then that of $\Mod(\@F', \@T)$ by ($*$) and \cref{thm:vaught-open}, so that we have turned $X$ into a countable étalé $\MOD(\@F', \@T)$-space.  By \cref{thm:etale-interp-equiv}, $X$ is isomorphic to $\den{A}$ for some $\@F'$-imaginary sort $A \in \-{\ang{\@F' \mid \@T}}_{\omega_1} \subseteq \-{\ang{\@L \mid \@T}}^B_{\omega_1}$.
\end{proof}

\section{Interpretations between theories}
\label{sec:interp}

In this section, we consider the ($2$-)functorial aspects of the passage from theories to their groupoids of models; in particular, we define the notion of an interpretation \emph{between theories}.  This will enable us, in the next section, to compare our results to the various known strong conceptual completeness theorems mentioned in the Introduction, as well as to the main result of \cite{HMM}.  We would like to warn the reader that these two sections involve some rather technical 2-categorical notions.
%As these two sections involve some rather technical (2-)categorical notions, and are not needed elsewhere in this paper, the reader may wish to skip them on first reading.

For the basic theory of $2$-categories, see \cite[B1.1]{Jeleph} or \cite[I~Ch.~7]{Bor}.  In every $2$-category we will consider, all $2$-cells will be invertible.

The following definitions make precise the idea that the syntactic $\omega_1$-pretopos $\-{\ang{\@L \mid \@T}}_{\omega_1}$ of an $\omega_1$-coherent theory $(\@L, \@T)$ is a category with algebraic structure ``presented by $(\@L, \@T)$''; more precisely, it is the ``free category with the structures listed in \cref{list:pretopos-structure}, generated by an object $\#X$, together with subobjects $R \subseteq \#X^n$ for each $n$-ary $R \in \@L$, and satisfying the relations in $\@T$''.

\begin{definition}
\label{defn:spretop}
An \defn{$\omega_1$-pretopos} is a category $\!C$ with the following three kinds of structure (see \cite[A1.3--4]{Jeleph}, \cite[\S3.4]{MR}, \cite{CLW}, \cite[II~Ch.~2]{Bor}):
\begin{itemize}

\item  Finite limits (equivalently, finite products and equalizers) exist.

\item  Every countable family of objects $X_i \in \!C$ has a coproduct $\bigsqcup_i X_i$ which is disjoint and pullback-stable.  \defn{Disjoint} means that the injections $X_i -> \bigsqcup_i X_i$ are monomorphisms and have pairwise empty intersections, i.e., the pullback $X_i \times_{\bigsqcup_k X_k} X_j$ is $X_i$ for $i = j$ and the initial object for $i \ne j$.  \defn{Pullback-stable} means that for every morphism $f : Y -> \bigsqcup_i X_i$, the pullback of the injections $X_i -> \bigsqcup_j X_j$ along $f$ exhibits $Y$ as the coproduct $\bigsqcup_i (X_i \times_{\bigsqcup_j X_j} Y)$.  Such a coproduct is also called a \defn{disjoint union}.
%(A category satisfying only this and the first condition is called \defn{$\omega_1$-lextensive}; see \cite{CLW}.)

\item  Every equivalence relation $E \subseteq X^2$ has a coequalizer (of the two projections) $E \rightrightarrows X -> X/E$ which is effective and pullback-stable.  An \defn{equivalence relation} $E \subseteq X^2$ is a subobject which is ``reflexive'', ``symmetric'', and ``transitive'', as expressed internally in $\!C$, i.e., $E$ contains the diagonal subobject $X \subseteq X^2$ and is invariant under the ``twist'' automorphism $X^2 -> X^2$, and the pullback of $E$ along the projection $X^3 -> X^2$ omitting the middle coordinate contains $E \times_X E \subseteq X^3$.  \defn{Effective} means that $E$ is (via the two projections) the kernel of $X -> X/E$, i.e., the pullback of $X -> X/E$ with itself.  \defn{Pullback-stable} means that the coequalizer diagram $E \rightrightarrows X -> X/E$ is still a coequalizer diagram after pullback along any morphism $f : Y -> X/E$.  The coequalizer $X/E$ is then also called a \defn{quotient}.
%(A category satisfying only this and the first condition is called \defn{(Barr-)exact} or \defn{effective regular}; see \cite[A1.3]{Jeleph} or \cite[II~\S2.6]{Bor}.)
% Proof of equivalence:
% - Assume regular + every equivalence relation is a kernel pair.  Then $E$ is the kernel of some $f : X -> Y$.  Factor $f$ through its image.  Then $E$ is also the kernel of $f : X -> \im f$, so we may assume $f : X -> Y$ is a regular epimorphism.  So $Y$ is the coequalizer of its kernel, i.e., of the two projections $E -> X$.  Let $g : Z -> Y$.  Then $X \times_Y Z -> Z$ is a regular epimorphism (since regular epimorphisms are pullback-stable) with kernel pair $E \times_Y Z$, which shows that the coequalizer $Z$ of $E$ is pullback-stable.
% - Conversely, assume every equivalence relation has an effective pullback-stable coequalizer; we prove regularity.  Let $f : X -> Y$ be a regular morphism, i.e., $f$ is the coequalizer of its kernel.  Let $g : Z -> Y$.  Then the pullback $X \times_Y Z -> Z$ of $f$ along $g$ is still a coequalizer, i.e., a regular epimorphism.  So regular epimorphisms are pullback-stable.

\end{itemize}
An \defn{$\omega_1$-coherent functor} $F : \!C -> \!D$ between two $\omega_1$-pretoposes is a functor preserving these operations.  By combining these operations, every $\omega_1$-pretopos also has the following (and they are also preserved by every $\omega_1$-coherent functor):
\begin{itemize}

\item  The \defn{image} $\im(f)$ of a morphism $f : X -> Y$ in $\!C$ is the quotient of the kernel of $f$, and yields a factorization of $f$ into a regular epimorphism $X ->> \im(f)$ followed by a monomorphism $\im(f) `-> Y$.  This factorization is pullback-stable (along morphisms $Z -> Y$).

\item  Given countably many subobjects $A_i \subseteq X$, their \defn{union} $\bigcup_i A_i \subseteq X$ is the image of the induced map from the disjoint union $\bigsqcup_i A_i -> X$.  Unions are pullback-stable.

% \item  Given two parallel morphisms $f, g : X -> Y$, their \defn{coequalizer} may be constructed by imitating the usual construction in the category of sets: first take the image $R \subseteq Y^2$ of $(f, g) : X -> Y^2$ (i.e., the relation ``$R(y, y') \iff \exists x \in X\, (f(x) = y \AND g(x) = y')$''), then form the reflexive symmetric transitive closure $E \subseteq Y^2$ of $R$ (which involves a countable union), and finally form the quotient $Y/E$; see \cite[1.4.19]{Jeleph} for details.
% 
% \item  By combining coequalizers with countable coproducts, we get arbitrary countable colimits.  Thus an $\omega_1$-coherent functor (between $\omega_1$-pretoposes) is just a functor preserving finite limits and countable colimits.

\end{itemize}
We denote the $2$-category of (small) $\omega_1$-pretoposes, $\omega_1$-coherent functors, and natural \emph{isomorphisms} by $\&{\omega_1PTop}$.  Thus, given two $\omega_1$-pretoposes $\!C, \!D$, the groupoid of $\omega_1$-coherent functors $\!C -> \!D$ and natural isomorphisms between them is denoted
\begin{align*}
\&{\omega_1PTop}(\!C, \!D).
\end{align*}
(We restrict to isomorphisms because we are only considering isomorphisms between models.)
\end{definition}

A typical example of an $\omega_1$-pretopos is the syntactic $\omega_1$-pretopos $\-{\ang{\@L \mid \@T}}_{\omega_1}$ of an $\omega_1$-coherent theory $(\@L, \@T)$.  A simpler example is the full subcategory
\begin{align*}
\!{Count} := \{0, 1, 2, \dotsc, \#N\} \subseteq \!{Set},
\end{align*}
which is a skeleton of the category of countable sets.

\begin{definition}
\label{defn:struct-pretop}
Let $\@L$ be a countable relational language and $\!C$ be an $\omega_1$-pretopos.  An \defn{$\@L$-structure in $\!C$}, $\@M = (M, R^\@M)_{R \in \@L}$, consists of an \defn{underlying object} $M \in \!C$ together with subobjects $R^\@M \subseteq M^n$ for each $n$-ary $R \in \@L$.  An \defn{isomorphism} between $\@L$-structures $f : \@M -> \@N$ is an isomorphism $f : M -> N$ in $\!C$ such that for each $n$-ary $R \in \@L$, $f^n : M^n -> N^n$ restricts to an isomorphism $R^\@M -> R^\@N$.  The groupoid of $\@L$-structures in $\!C$ and isomorphisms is denoted
\begin{align*}
\MOD_\!C(\@L).
\end{align*}
Let $\@M$ be an $\@L$-structure.  For each $\omega_1$-coherent $\@L$-formula $\phi$ with $n$ variables, we define its \defn{interpretation in $\@M$}, $\phi^\@M \subseteq M^n$, by induction on $\phi$ in the expected manner (see \cite[D1.2]{Jeleph}):
\begin{itemize}
\item  For $\phi(\vec{x}) = R(\vec{x})$, $\phi^\@M := R^\@M$.
\item  For $\phi(\vec{x}) = (x_i = x_j)$, $\phi^\@M := M^n$ if $i = j$, otherwise $\phi^\@M := M^{n-1} \subseteq M^n$ is the diagonal which duplicates the $i$th coordinate into the $j$th.
\item  For $\phi = \psi \wedge \theta$, $\phi^\@M$ is the intersection (i.e., pullback) of $\psi^\@M, \theta^\@M \subseteq M^n$.  For $\phi = \top$, $\phi^\@M := M^n$.
\item  For $\phi = \bigvee_i \psi_i$, $\phi^\@M$ is the union $\bigcup_i \psi_i^\@M$.
\item  For $\phi(\vec{x}) = \exists y\, \psi(\vec{x}, y)$, $\phi^\@M$ is the image of the composite $\psi^\@M \subseteq M^{n+1} -> M^n$ (where the second map is the projection onto the first $n$ coordinates).
\end{itemize}
By the usual inductions, interpretations are sound with respect to provability (see \cite[D1.3.2]{Jeleph}), and isomorphisms preserve interpretations of formulas (see \cite[D1.2.9]{Jeleph}).
An $\omega_1$-coherent axiom $\forall \vec{x}\, (\phi(\vec{x}) => \psi(\vec{x}))$ (where $n = \abs{\vec{x}}$) is \defn{satisfied by $\@M$} if $\phi^\@M \subseteq \psi^\@M$ (as subobjects of $M^n$).  For an $\omega_1$-coherent $\@L$-theory $\@T$, we say that $\@M$ is a \defn{model of $\@T$} if $\@M$ satisfies every axiom in $\@T$.  The groupoid of models of $\@T$ in $\!C$ and isomorphisms is the full subgroupoid
\begin{align*}
\MOD_\!C(\@L, \@T) \subseteq \MOD_\!C(\@L).
\end{align*}
Given a model $\@M$ of $\@T$ in $\!C$, we may also interpret imaginary sorts and definable functions in $\@M$, exactly as expected:
\begin{itemize}
\item  For an imaginary sort $A = (\bigsqcup_i \alpha_i)/(\bigsqcup_{i,j} \epsilon_{ij})$, $\bigsqcup_{i,j} \epsilon_{ij}^\@M \subseteq (\bigsqcup_i \alpha_i^\@M)^2$ is an equivalence relation (by soundness and \eqref{seq:Eqv}); the quotient object is $A^\@M$.
\item  For a definable function $f = [(\phi_{ik})_{i,k}] : A -> B$, $f^\@M : A^\@M -> B^\@M$ is the unique morphism whose graph, when pulled back along $(\bigsqcup_i \alpha_i^\@M) \times (\bigsqcup_k \beta_k^\@M) -> A^\@M \times B^\@M$, is $\bigsqcup_{i,k} \phi_{ik}$.
\end{itemize}
\end{definition}

When $\!C = \!{Set}$, $\MOD_\!{Set}(\@L, \@T)$ is the usual groupoid of set-theoretic models of $\@T$.
When $\!C = \!{Count}$, we recover $\MOD(\@L, \@T)$ as defined before (\cref{sec:imag}):
\begin{align*}
\MOD_\!{Count}(\@L, \@T) \cong \MOD(\@L, \@T).
\end{align*}

The general notion of model of $\@T$ in $\!C$ formalizes that of ``an object in $\!C$ equipped with subobjects for each $R \in \@L$ which satisfy the relations in $\@T$''.  By comparing the definition of $\phi^\@M$ with \cref{list:pretopos-structure}, we see that we have a model $\@X$ of $(\@L, \@T)$ in $\-{\ang{\@L \mid \@T}}_{\omega_1}$, called the \defn{universal model}, with underlying object $\#X$ and
\begin{align*}
\phi^\@X = \phi \subseteq \#X^n
\end{align*}
for all $\omega_1$-coherent $\@L$-formulas $\phi$ with $n$ variables.

Let $F : \!C -> \!D$ be an $\omega_1$-coherent functor between $\omega_1$-pretoposes.  Since the definitions of ``model of $\@T$ in $\!C$'' and ``isomorphism between models'' use only the categorical structure found in an arbitrary $\omega_1$-pretopos, which is preserved by an $\omega_1$-coherent functor, we get a functor
\begin{align*}
F_* : \MOD_\!C(\@L, \@T) &--> \MOD_\!D(\@L, \@T) \\
(M, R^\@M)_{R \in \@L} &|--> (F(M), F(R^\@M))_{R \in \@L}.
\end{align*}
We are finally ready to state the universal property of the syntactic $\omega_1$-pretopos:

\begin{proposition}
\label{thm:synptop-free}
For any $\omega_1$-coherent theory $(\@L, \@T)$, the syntactic $\omega_1$-pretopos $\-{\ang{\@L \mid \@T}}_{\omega_1}$ is the free $\omega_1$-pretopos containing a model $\@X$ of $\@T$: for any other $\omega_1$-pretopos $\!C$, we have an equivalence of groupoids
\begin{align*}
\&{\omega_1PTop}(\-{\ang{\@L \mid \@T}}_{\omega_1}, \!C) &--> \MOD_\!C(\@L, \@T) \\
F &|--> F_*(\@X)
\end{align*}
between $\omega_1$-coherent functors $\-{\ang{\@L \mid \@T}}_{\omega_1} -> \!C$ and models of $\@T$ in $\!C$.
\end{proposition}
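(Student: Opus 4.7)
The plan is to exhibit an inverse (up to natural isomorphism) to the evaluation functor $F \mapsto F_*(\@X)$, following the standard pattern for universal properties of syntactic categories (see \cite[D1.4.12]{Jeleph}). The content is essentially that the construction in \cref{defn:struct-pretop} of the interpretation $\phi^\@M$ and $A^\@M$ uses \emph{only} the operations of an $\omega_1$-pretopos, applied to the atomic data $(M, R^\@M)_{R \in \@L}$, and that it is uniquely determined by this data.

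First, I would verify essential surjectivity. Given a model $\@M \in \MOD_\!C(\@L, \@T)$, I would define $F_\@M : \-{\ang{\@L \mid \@T}}_{\omega_1} \to \!C$ by taking $F_\@M(A) := A^\@M$ on imaginary sorts and $F_\@M(f) := f^\@M$ on definable functions, where $(-)^\@M$ is the interpretation operation of \cref{defn:struct-pretop}. The soundness of interpretation (valid in any $\omega_1$-pretopos) guarantees that $\epsilon_{ij}^\@M$ really is an equivalence relation and $\phi_{ik}^\@M$ the graph of a function, so $A^\@M$ and $f^\@M$ are well-defined; functoriality and preservation of the $\omega_1$-pretopos structure (finite limits, countable disjoint unions, quotients of equivalence relations) follow from a routine inspection of the explicit constructions recalled in \cref{list:pretopos-structure}, because each such construction in $\-{\ang{\@L \mid \@T}}_{\omega_1}$ is built by the \emph{same} $\omega_1$-pretopos operations applied to the defining formulas. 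By definition $(F_\@M)_*(\@X) = \@M$, giving essential surjectivity on the nose.

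Next, I would treat fullness and faithfulness on 2-cells simultaneously, by showing that for any two $\omega_1$-coherent functors $F, G : \-{\ang{\@L \mid \@T}}_{\omega_1} \to \!C$, restriction to $\#X$ (and the subobjects $R \subseteq \#X^n$) induces a bijection between natural isomorphisms $F \Rightarrow G$ and isomorphisms $F_*(\@X) \cong G_*(\@X)$ in $\MOD_\!C(\@L, \@T)$. For faithfulness, given $\alpha, \beta : F \Rightarrow G$ with $\alpha_\#X = \beta_\#X$, the components at products $\#X^n$ agree by naturality at the projections, hence at every subobject $\phi \hookrightarrow \#X^n$, hence at disjoint unions and quotients by compatibility with the universal maps; since every imaginary sort is built this way, $\alpha = \beta$. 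For fullness, starting from an isomorphism $h : F_*(\@X) \to G_*(\@X)$ of $\@L$-structures in $\!C$, I would define $\alpha_A : F(A) \to G(A)$ for an imaginary sort $A = (\bigsqcup_i \alpha_i)/(\bigsqcup_{i,j}\epsilon_{ij})$ by inducing it from $h^{n_i}$ on each $\alpha_i \subseteq \#X^{n_i}$ (which restricts well because $h$ preserves $R^\@M$, and therefore by induction preserves all interpretations $\phi^\@M$), then passing to disjoint unions and quotients. Uniqueness of these induced maps (by the universal properties of union and quotient) makes $\alpha$ a natural transformation, and applying the same construction to $h^{-1}$ produces an inverse.

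The hard part is purely bookkeeping: checking that the induced $\alpha_A$ is independent of the presentation of $A$ and compatible with every definable function, and that $F_\@M$ really preserves all the $\omega_1$-pretopos structure on the nose (as opposed to up to coherent isomorphism). Both reduce to the observation that the formulas $\phi \wedge \psi$, $\bigvee_i \phi_i$, $\exists y\, \phi$, equalizer/quotient/coproduct presentations used in \cref{list:pretopos-structure} interpret in $\@M$ precisely as the corresponding $\omega_1$-pretopos operations applied to the component interpretations; this is a straightforward induction on formula complexity which I would carry out once at the level of $\omega_1$-coherent formulas and then extend to imaginary sorts. Since this universal property is well established in the literature (\cite[D1.4.12, D3.1.12]{Jeleph}), I would mostly cite these references and highlight only the points where countable (as opposed to finite) disjunctions require the additional assumption that $\!C$ have countable disjoint unions.
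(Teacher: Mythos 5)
Your proposal is correct and follows essentially the same route as the paper: the paper's proof simply states that the inverse equivalence sends a model $\@M$ to the functor taking imaginary sorts and definable functions to their interpretations in $\@M$, and defers the verification to \cite[D1.4.7, D1.4.12]{Jeleph}, exactly the references and the strategy you use. Your write-up just spells out more of the routine checks (essential surjectivity via $A \mapsto A^\@M$, and full faithfulness on $2$-cells by propagating a component at the home sort $\#X$ through products, subobjects, disjoint unions, and quotients) that the paper leaves to the citation.
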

\begin{proof}
An inverse equivalence takes a model $\@M$ to the functor $\-{\ang{\@L \mid \@T}}_{\omega_1} -> \!C$ which takes imaginary sorts and definable functions to their interpretations in $\@M$.
For details, see \cite[D1.4.7, D1.4.12]{Jeleph} (which deals with finitary logic, but generalizes straightforwardly).
\end{proof}

We define an \defn{($\omega_1$-coherent) interpretation} $F : (\@L, \@T) -> (\@L', \@T')$ between two $\omega_1$-coherent theories to mean an $\omega_1$-coherent functor between their syntactic $\omega_1$-pretoposes $F : \-{\ang{\@L \mid \@T}}_{\omega_1} -> \-{\ang{\@L' \mid \@T'}}_{\omega_1}$.  By \cref{thm:synptop-free}, an interpretation is equivalently a model of $\@T$ in $\-{\ang{\@L' \mid \@T'}}_{\omega_1}$, which can be rephrased in more familiar terms:
\begin{itemize}
\item  an imaginary sort $F(\#X) \in \-{\ang{\@L' \mid \@T'}}_{\omega_1}$;
\item  for each $n$-ary $R \in \@L$, a subsort $F(R) \subseteq F(\#X)^n$;
\item  such that for each axiom $\forall \vec{x}\, (\phi(\vec{x}) => \psi(\vec{x}))$ in $\@T$, the corresponding inclusion of subsorts $F(\phi) \subseteq F(\psi)$ is $\@T'$-provable (where $F(\phi), F(\psi)$ are defined by induction in the obvious way).
\end{itemize}

We have analogous notions for $\@L_{\omega_1\omega}$-theories.  An $\omega_1$-pretopos $\!C$ is \defn{Boolean} if for every object $X \in \!C$, the lattice of subobjects of $X$ is a Boolean algebra.  Clearly, an $\omega_1$-coherent functor automatically preserves complements of subobjects when they exist.  We denote the 2-category of Boolean $\omega_1$-pretoposes (a full sub-2-category of $\&{\omega_1PTop}$) by
\begin{align*}
\&{B\omega_1PTop}.
\end{align*}
Given an $\@L$-structure $\@M$ in a Boolean $\omega_1$-pretopos $\!C$, we may define the interpretation $\phi^\@M$ not just for $\omega_1$-coherent $\@L$-formulas $\phi$, but for all $\@L_{\omega_1\omega}$-formulas $\phi$; thus, we may speak of $\@M$ being a \defn{model of $\@T$} for an arbitrary $\@L_{\omega_1\omega}$-theory $\@T$, meaning that $\phi^\@M = 1$ (the terminal object) for each $\phi \in \@T$.
% We continue to denote the groupoid of models in $\!C$ by
% \begin{align*}
% \MOD_\!C(\@L, \@T).
% \end{align*}
The \defn{universal model} $\@X \in \MOD_{\-{\ang{\@L \mid \@T}}^B_{\omega_1}}(\@L, \@T)$ is defined in the same way as before.  An easy generalization of \cref{thm:morley} shows

\begin{lemma}
\label{thm:morley-catmod}
Let $\@T$ be an $\@L_{\omega_1\omega}$-theory and $(\@L', \@T')$ be its Morleyization in the (uncountable) fragment of all $\@L_{\omega_1\omega}$-formulas.  Then for any Boolean $\omega_1$-pretopos $\!C$, we have an isomorphism of groupoids $\MOD_\!C(\@L', \@T') \cong \MOD_\!C(\@L, \@T)$, which takes a model of $\@T'$ to its $\@L$-reduct.  \qed
\end{lemma}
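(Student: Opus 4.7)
The plan is to construct mutually inverse functors, essentially generalizing the proof of \cref{thm:morley}(iii) from $\!C = \!{Count}$ to an arbitrary Boolean $\omega_1$-pretopos. The forward direction sends $\@N \in \MOD_\!C(\@L', \@T')$ to its $\@L$-reduct $\@N|\@L$; the reverse direction sends $\@M \in \MOD_\!C(\@L, \@T)$ to the expansion $\@M'$ defined by $R_\phi^{\@M'} := \phi^\@M$ for every $\@L_{\omega_1\omega}$-formula $\phi$. Both are evidently functorial with respect to isomorphisms, since isomorphisms preserve interpretations of all $\@L_{\omega_1\omega}$-formulas by the usual induction.

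The key technical input, which is where the Boolean hypothesis is essential, is the observation that for any $\@N \in \MOD_\!C(\@L', \@T')$ one has $R_\phi^\@N = \phi^{\@N|\@L}$ for every $\@L_{\omega_1\omega}$-formula $\phi$. I would prove this by induction on $\phi$: the atomic, $\bigvee$, and $\exists$ clauses of \eqref{seq:Mor} directly match the recursive definition of $\phi^{\@N|\@L}$ in \cref{defn:struct-pretop}, and for the $\neg, \bigwedge, \forall$ clauses the Booleanness of $\!C$ ensures that $R_\phi^\@N$ and $R_{\neg\phi}^\@N$ are complementary subobjects (by the two axioms involving them), and more generally that $\bigwedge_i$ and $\forall$ can be interpreted using the usual De Morgan/quantifier-duality identities combined with $\bigvee, \exists$. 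From this observation, the forward functor lands in $\MOD_\!C(\@L, \@T)$: for each $\phi \in \@T$, the axiom $R_\phi \in \@T'$ gives $\phi^{\@N|\@L} = R_\phi^\@N = 1$. For the reverse direction, one checks directly that $\@M'$ satisfies \eqref{seq:Mor} by unfolding the recursive definition of $\phi^\@M$ (again using Booleanness for the $\neg, \bigwedge, \forall$ axioms), and that $R_\phi^{\@M'} = \phi^\@M = 1$ for $\phi \in \@T$.

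Finally, the two functors are mutually inverse on the nose, not merely up to isomorphism: $(\@M')|\@L = \@M$ since the expansion only adds new relations, and $(\@N|\@L)' = \@N$ because the key observation forces $R_\phi^{(\@N|\@L)'} = \phi^{\@N|\@L} = R_\phi^\@N$. Consequently the bijection on objects upgrades to an isomorphism of groupoids. The main point requiring care is the inductive step above; this is precisely the categorical shadow of the use of the law of excluded middle in \cref{thm:morley}(iii), and it is the only reason the lemma requires $\!C$ to be Boolean rather than an arbitrary $\omega_1$-pretopos.
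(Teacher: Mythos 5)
Your proof is correct and is precisely the argument the paper intends: the lemma is stated with no proof beyond the preceding remark that it is an easy generalization of \cref{thm:morley}, and your reduct/expansion functors together with the inductive identity $R_\phi^\@N = \phi^{\@N|\@L}$ are the categorical transcription of the ``easy induction'' underlying \cref{thm:morley}(iii). You also correctly isolate Booleanness of $\!C$ as exactly what is needed for the $\neg$, $\bigwedge$, $\forall$ clauses (both to define the expansion and to show the two relevant axioms of \eqref{seq:Mor} force $R_{\neg\phi}$ to be the complement of $R_\phi$).
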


The universal property of the syntactic Boolean $\omega_1$-pretopos is analogous to \cref{thm:synptop-free}:

\begin{proposition}
\label{thm:synbptop-free}
For an $\@L_{\omega_1\omega}$-theory $\@T$, the syntactic Boolean $\omega_1$-pretopos $\-{\ang{\@L \mid \@T}}^B_{\omega_1}$ is the free Boolean $\omega_1$-pretopos containing a model $\@X$ of $\@T$: for any other Boolean $\omega_1$-pretopos $\!C$, we have an equivalence of groupoids
\begin{align*}
\&{\omega_1PTop}(\-{\ang{\@L \mid \@T}}^B_{\omega_1}, \!C) &--> \MOD_\!C(\@L, \@T) \\
F &|--> F_*(\@X).
\end{align*}
% between $\omega_1$-coherent functors $\-{\ang{\@L \mid \@T}}^B_{\omega_1} -> \!C$ and models of $\@T$ in $\!C$.
\end{proposition}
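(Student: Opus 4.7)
The plan is to reduce this Boolean statement to the $\omega_1$-coherent version already proved as \cref{thm:synptop-free}, via Morleyization. Let $(\@L', \@T')$ denote the Morleyization of $\@T$ in the uncountable fragment of all $\@L_{\omega_1\omega}$-formulas, so that by the definition of the syntactic Boolean $\omega_1$-pretopos given at the end of \cref{sec:imag}, $\-{\ang{\@L \mid \@T}}^B_{\omega_1}$ is literally $\-{\ang{\@L' \mid \@T'}}_{\omega_1}$. A preliminary point is to verify that this category is genuinely Boolean: the axioms \eqref{seq:Mor} force $R_{\neg\phi}$ to be the Boolean complement of $R_\phi$ in the subobject lattice of $\#X^n$, and since subobjects of any imaginary sort are generated by such atomic relations via $\omega_1$-pretopos operations (which preserve complementation once it exists on generators), complements exist on every subobject lattice.

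Given a Boolean $\omega_1$-pretopos $\!C$, I then apply \cref{thm:synptop-free} to the $\omega_1$-coherent theory $(\@L', \@T')$ to obtain an equivalence of groupoids
\begin{align*}
\&{\omega_1PTop}(\-{\ang{\@L \mid \@T}}^B_{\omega_1}, \!C) \;=\; \&{\omega_1PTop}(\-{\ang{\@L' \mid \@T'}}_{\omega_1}, \!C) \;\longrightarrow\; \MOD_\!C(\@L', \@T'),
\end{align*}
sending $F \mapsto F_*(\@X')$ for $\@X'$ the universal model of $(\@L', \@T')$. Composing with the isomorphism $\MOD_\!C(\@L', \@T') \cong \MOD_\!C(\@L, \@T)$ from \cref{thm:morley-catmod}, which simply takes an $\@L'$-structure to its $\@L$-reduct, yields the desired equivalence $\&{\omega_1PTop}(\-{\ang{\@L \mid \@T}}^B_{\omega_1}, \!C) \simeq \MOD_\!C(\@L, \@T)$.

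It remains to identify what this composite does on objects: I need to confirm that it sends $F$ to $F_*(\@X)$, where $\@X$ is the universal model of $(\@L, \@T)$ in $\-{\ang{\@L \mid \@T}}^B_{\omega_1}$. This is just a compatibility check: by construction $\@X'$ has underlying object $\#X$ with $R_\phi^{\@X'} = R_\phi \subseteq \#X^n$ for every $\phi \in \@F'$ (in particular for every $R \in \@L$), so its $\@L$-reduct is by definition $\@X$; and the pushforward functor $F_*$ plainly commutes with the reduct along the language inclusion $\@L \subseteq \@L'$ (both simply apply $F$ to the underlying object and to each named subobject). This compatibility is the one step requiring care, but it is essentially unpacking definitions; I do not anticipate a genuine mathematical obstacle, since all substantive content is carried by \cref{thm:synptop-free,thm:morley-catmod}, and this proposition is in effect their concatenation.
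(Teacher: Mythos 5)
Your proposal is correct and follows essentially the same route as the paper: the paper's proof is precisely the combination of \cref{thm:synptop-free}, \cref{thm:morley-catmod}, and the definition of $\-{\ang{\@L \mid \@T}}^B_{\omega_1}$ as the syntactic $\omega_1$-pretopos of the Morleyization in the fragment of all $\@L_{\omega_1\omega}$-formulas. Your extra remarks (checking Booleanness via the axioms \eqref{seq:Mor} and verifying that the composite sends $F$ to $F_*(\@X)$) are sensible unpackings of details the paper leaves implicit, not a different argument.
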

\begin{proof}
By \cref{thm:synptop-free}, \cref{thm:morley-catmod}, and the definition $\-{\ang{\@L \mid \@T}}^B_{\omega_1} = \-{\ang{\@L' \mid \@T'}}_{\omega_1}$ (for $(\@L', \@T')$ the Morleyization of $\@T$ in the fragment of all $\@L_{\omega_1\omega}$-formulas).
Alternatively, we may directly define the inverse as in the proof of \cref{thm:synptop-free} (i.e., \cite[D1.4.7]{Jeleph}).
\end{proof}

For an $\@L_{\omega_1\omega}$-theory $\@T$ and an $\@L'_{\omega_1\omega}$-theory $\@T'$, an \defn{($\@L'_{\omega_1\omega}$-)interpretation} $F : (\@L, \@T) -> (\@L', \@T')$ is an $\omega_1$-coherent functor $\-{\ang{\@L \mid \@T}}^B_{\omega_1} -> \-{\ang{\@L' \mid \@T'}}^B_{\omega_1}$, or equivalently a model of $\@T$ in $\-{\ang{\@L' \mid \@T'}}^B_{\omega_1}$; this may be spelled out explicitly as with $\omega_1$-coherent interpretations above.  Let
\begin{align*}
\&{\omega_1\omega Thy}_{\omega_1}
\end{align*}
denote the $2$-category of countable $\@L_{\omega_1\omega}$-theories, interpretations, and natural isomorphisms.  Thus $\&{\omega_1\omega Thy}_{\omega_1}$ is equivalent, via $\-{\ang{-}}^B_{\omega_1}$, to a full sub-$2$-category of $\&{B\omega_1PTop}$.

Given an interpretation $F : (\@L, \@T) -> (\@L', \@T')$, precomposition with $F$ yields a functor
\begin{align*}
F^* : \&{\omega_1PTop}(\-{\ang{\@L' \mid \@T'}}^B_{\omega_1}, \!C) --> \&{\omega_1PTop}(\-{\ang{\@L \mid \@T}}^B_{\omega_1}, \!C)
\end{align*}
for any Boolean $\omega_1$-pretopos $\!C$.  By \cref{thm:synbptop-free}, this is equivalently a functor
\begin{align*}
F^* : \MOD_\!C(\@L', \@T') --> \MOD_\!C(\@L, \@T);
\end{align*}
in other words, $F$ gives a uniform way of defining a model of $\@T$ from a model of $\@T'$.
However, this latter functor is canonically defined only up to isomorphism: given a model $\@M$ of $\@T'$ in $\!C$, turning it into an $\omega_1$-coherent functor $\-{\ang{\@L' \mid \@T'}}^B_{\omega_1} -> \!C$ (i.e., computing the inverse image of $\@M$ under \cref{thm:synbptop-free}) requires a choice of representatives for the disjoint unions and quotients involved in the interpretation of $\@T'$-imaginary sorts in $\@M$.

We now specialize to the case $\!C = \!{Count}$, so that $\MOD_\!C(\@L, \@T) = \MOD(\@L, \@T)$.  We will show that in this case, the functor $F^* : \MOD(\@L', \@T') -> \MOD(\@L, \@T)$ above can always be taken to be Borel; moreover, the assignment $F |-> F^*$ can be made (pseudo)functorial ``in a Borel way''.  This is conceptually straightforward, although the details (which involve coding functions) are quite messy.

We regard $\!{Count}$ as a standard Borel category by equipping its space of morphisms $\bigsqcup_{M, N \in \{0, 1, \dotsc, \#N\}} N^M$ with the obvious standard Borel structure.  Recall that $\!{Count}$ is also a Boolean $\omega_1$-pretopos.  The following lemma says that the Boolean $\omega_1$-pretopos operations may be taken to be Borel:

\begin{lemma}
\label{thm:count-pretop-borel}
There are Borel coding maps implementing the Boolean $\omega_1$-pretopos operations on $\!{Count}$, i.e., which
\begin{enumerate}
\item[(a)]  given two objects $M, N \in \!{Count}$, yields an object $P \in \!{Count}$ together with a bijection $(p, q) : P \cong M \times N$;
\item[(b)]  given two objects $M, N \in \!{Count}$ and two morphisms $f, g : M -> N$, yields an object $E \in \!{Count}$ together with an injection $h : E -> M$ whose image is the equalizer $\{m \in M \mid f(m) = g(m)\}$;
\item[(c)]  given three objects $L, M, N$ and morphisms $f : M -> L$ and $g : N -> L$, yields an object $P$ together with a bijection $(p, q) : P -> M \times_L N$;
\item[(d)]  given objects $(N_i)_{i \in \#N}$, yields an object $S$ together with injections $j_i : N_i -> S$ forming a bijection $\bigsqcup_i N_i \cong S$;
\item[(e)]  given objects $M, N$ and maps $p, q : M -> N$ such that $(p, q) : M -> N^2$ is injective with image an equivalence relation on $N$, yields an object $Q$ together with a surjection $r : N -> Q$ exhibiting $Q$ as the quotient;
\item[(f)]  given objects $M, N$ and a map $f : M -> N$, yields an object $I$, a surjection $g : M -> I$, and an injection $h : I -> N$, such that $f = h \circ g$;
\item[(g)]  given an object $M$, objects $(N_i)_{i \in \#N}$, and injections $f_i : N_i -> M$, yields an object $U$ and an injection $g : U -> M$ with image the union of the images of the $f_i$;
\item[(h)]  given objects $M, N$ and an injection $f : M -> N$, yields an object $C$ and an injection $g : C -> N$ whose image is the complement of that of $f$.
\end{enumerate}
\end{lemma}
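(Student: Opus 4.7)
The plan is to verify each of (a)--(h) by an explicit construction, and then to observe that each construction is manifestly Borel because every step involves only standard coding operations on initial segments of $\mathbb{N}$ and Borel manipulations of sequences/functions between them. Fix once and for all a Borel bijection $\pi : \mathbb{N} \times \mathbb{N} \to \mathbb{N}$ (e.g., the Cantor pairing). Note the basic fact that for any Borel assignment $(x,n) \mapsto A_x \subseteq \mathbb{N}$ (say, parametrized by $x$ in a standard Borel space), the cardinality $|A_x| \in \{0,1,\dotsc,\mathbb{N}\}$ is a Borel function of $x$, and the ``canonical enumeration'' $|A_x| \to A_x$ obtained by listing $A_x$ in increasing order is Borel in $x$. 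This single observation will handle essentially every case.

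For concreteness, in (a) we compute the cardinality $|M \times N| = |M| \cdot |N|$, set $P := |M \times N|$, and let $(p,q) : P \cong M \times N$ be the canonical enumeration of the Borel subset $M \times N \subseteq \mathbb{N}^2$. For (b), the equalizer $\{m \in M \mid f(m) = g(m)\} \subseteq M$ is a Borel function of $(M,N,f,g)$; its canonical enumeration gives $E$ and $h : E \hookrightarrow M$. For (c), the pullback $\{(m,n) \in M \times N \mid f(m) = g(n)\} \subseteq \mathbb{N}^2$ is Borel; enumerate it canonically. For (f), (g), (h) one similarly exhibits the image, union, or complement as a Borel subset of $M$ or $N$ and enumerates.

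The two cases that require a little bookkeeping are (d) and (e). For (d), given a Borel family $(N_i)_{i \in \mathbb{N}}$, the set $S' := \{\pi(i,n) \mid i \in \mathbb{N},\, n \in N_i\} \subseteq \mathbb{N}$ is Borel in the input; its canonical enumeration yields $S$ and a bijection $S \cong S'$, from which the injections $j_i : N_i \to S$ are read off by composition with $n \mapsto \pi(i,n)$. For (e), pick the least representative of each equivalence class: the set $T := \{n \in N \mid \forall n' < n\ (p^{-1}(n'),q^{-1}(n')) \text{ are not related to } n\} \subseteq N$ of class representatives is Borel in the input, and its canonical enumeration gives $Q$ and the quotient map $r : N \to Q$ defined by sending $n \in N$ to the index of the least $\sim$-equivalent representative.

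There is no genuine obstacle; the only point requiring care is ensuring that the constructions work uniformly whether the input objects are finite or $\mathbb{N}$ itself, which is why we pass everything through the canonical-enumeration device. Borel measurability of each coding map then reduces to the Borel measurability of: cardinality of a Borel subset of $\mathbb{N}$, canonical enumeration of a Borel subset, and finitary/countable Boolean operations on Borel sets --- all of which are standard.
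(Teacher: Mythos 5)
Your constructions are correct; the paper's own proof of this lemma is the single line ``All straightforward,'' and your canonical-enumeration device (increasing enumeration of a Borel subset of $\mathbb{N}$, whose cardinality and enumeration map are Borel functions of the input data) is precisely the standard argument being alluded to. The only blemish is the notation $(p^{-1}(n'), q^{-1}(n'))$ in case (e), where what you mean is the Borel condition $\exists m \in M\,(p(m) = n' \wedge q(m) = n)$ for membership of $(n', n)$ in the image equivalence relation; this does not affect correctness.
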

\begin{proof}
All straightforward.
\end{proof}

We now prove that for $\!C = \!{Count}$, the functor in \cref{thm:synbptop-free} is an equivalence ``in a Borel way'' (which doesn't literally make sense, as $\&{\omega_1PTop}(\-{\ang{\@L \mid \@T}}^B_{\omega_1}, \!{Count})$ is not standard Borel):

\begin{lemma}
\label{thm:isogpd-retract}
Let $\@L$ be a countable relational language, $\@T$ be a countable $\@L_{\omega_1\omega}$-theory.  Let
\begin{align*}
L : \&{\omega_1PTop}(\-{\ang{\@L \mid \@T}}^B_{\omega_1}, \!{Count}) &--> \MOD(\@L, \@T)
\end{align*}
be the functor $F |-> F^*(\@X)$ from \cref{thm:synbptop-free}.  There is a functor
\begin{align*}
K : \MOD(\@L, \@T) &--> \&{\omega_1PTop}(\-{\ang{\@L \mid \@T}}^B_{\omega_1}, \!{Count})
\end{align*}
and a natural isomorphism
\begin{align*}
\zeta : K \circ L &--> 1_{\&{\omega_1PTop}(\-{\ang{\@L \mid \@T}}^B_{\omega_1}, \!{Count})}
\end{align*}
such that
\begin{itemize}
\item[(i)]  $L \circ K = 1$ (on the nose) and $\zeta_{K(\@M)} = 1_{K(\@M)}$, so that $K, L$ form an adjoint equivalence;
\item[(ii)]  for each imaginary sort $A \in \-{\ang{\@L \mid \@T}}^B_{\omega_1}$, the functor $K(-)(A) : \MOD(\@L, \@T) -> \!{Count}$ is Borel;
\item[(iii)]  for each definable function $f : A -> B$, the natural transformation $K(-)(f) : K(-)(A) -> K(-)(B)$ is Borel;
\item[(iv)]  for each imaginary sort $A \in \-{\ang{\@L \mid \@T}}^B_{\omega_1}$, there is a countable subcategory $\!S_A \subseteq \-{\ang{\@L \mid \@T}}^B_{\omega_1}$ such that the morphism $\zeta_{F,A} : K(L(F))(A) -> F(A)$ (in $\!{Count}$), as $F \in \&{\omega_1PTop}(\-{\ang{\@L \mid \@T}}^B_{\omega_1}, \!{Count})$ varies, depends only on $F|\!S_A$ and in a Borel way.
\end{itemize}
\end{lemma}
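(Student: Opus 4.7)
The plan is to build $K$ by imitating the canonical inverse equivalence underlying \cref{thm:synbptop-free} (as in \cite[D1.4.7]{Jeleph}), but using the Borel coding maps of \cref{thm:count-pretop-borel} as once-and-for-all choices of representatives for the pretopos operations performed in $\!{Count}$. This fixes a single concrete countable set (with specific structural morphisms) as the value of $K(\@M)$ at each imaginary sort, depending Borel-ly on $\@M$; the canonical equivalence-up-to-isomorphism of \cref{thm:synbptop-free} then supplies the natural isomorphism $\zeta$.

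Concretely, I would set $K(\@M)(\#X) := M$ and $K(\@M)(R) := R^\@M \subseteq M^n$ for each $n$-ary $R \in \@L$; both are Borel in $\@M$ by definition of $\MOD(\@L, \@T)$. Then, by recursion on $\@L_{\omega_1\omega}$-formula complexity in the style of \cref{defn:struct-pretop} (with added clauses for negation and infinite conjunction via \cref{thm:count-pretop-borel}(h) and duality), I assign a specific subobject $K(\@M)(\phi) \hookrightarrow K(\@M)(\#X)^n$ to every formula $\phi$ with $n$ variables, using \cref{thm:count-pretop-borel}(a)--(c), (f)--(h) at each step. For an imaginary sort $A = (\bigsqcup_i \alpha_i)/(\bigsqcup_{i,j} \epsilon_{ij})$, $K(\@M)(A)$ is then obtained by forming the disjoint union $\bigsqcup_i K(\@M)(\alpha_i)$ via (d) and quotienting by $\bigsqcup_{i,j} K(\@M)(\epsilon_{ij})$ via (e). For a definable function $f = [(\phi_{ik})] : A \to B$, $K(\@M)(f)$ is extracted from the interpretation of $\bigsqcup_{i,k} \phi_{ik}$, which after quotients becomes the graph of a function.

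That $K(\@M)$ is an $\omega_1$-coherent functor and that $L(K(\@M)) = \@M$ on the nose are routine checks: the Borel coding maps satisfy the universal properties of the corresponding pretopos operations, and composition in $\-{\ang{\@L \mid \@T}}^B_{\omega_1}$ is defined by the formula $\theta_{im} = \bigvee_k \exists \vec{y}\,(\phi_{ik} \wedge \psi_{km})$, whose interpretation agrees with set-theoretic composition through the chosen codings. The action of $K$ on an isomorphism $g : \@M \cong \@N$ is defined by a parallel recursion starting from $K(g)_\#X := g$, producing a natural isomorphism whose components $K(g)_A : K(\@M)(A) \to K(\@N)(A)$ are Borel in $(\@M, g, \@N)$; this yields (ii) and (iii).

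Finally, to define $\zeta$: for any $F \in \&{\omega_1PTop}(\-{\ang{\@L \mid \@T}}^B_{\omega_1}, \!{Count})$ and imaginary sort $A$, both $K(L(F))(A)$ and $F(A)$ come canonically equipped with compatible regular epimorphisms from a disjoint union of subobjects of powers of $F(\#X)$ — the former uses the fixed Borel coding, the latter whatever representatives $F$ chose. The unique isomorphism between these two presentations commuting with the coprojections and quotient maps (provided by the universal property of effective quotients of disjoint sums in any $\omega_1$-pretopos) defines $\zeta_{F, A}$, and naturality in $A$ follows from uniqueness. When $F = K(\@M)$ the two presentations coincide by construction, giving $\zeta_{K(\@M)} = 1$ and hence (i). For (iv), the recursive construction of $\zeta_{F, A}$ touches only $F$'s values on the countably many subformulas and structural morphisms (coprojections, quotient maps, atomic inclusions) participating in the definition of $A$, so $\!S_A$ may be taken to be the full subcategory generated by these, and Borel dependence on $F|\!S_A$ is then automatic. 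The main obstacle throughout is not any single nontrivial step but the bookkeeping burden of verifying strict (as opposed to merely pseudo-) functoriality of $K$ and strict naturality of $\zeta$, which both hinge on the determinism of the coding maps in \cref{thm:count-pretop-borel}.
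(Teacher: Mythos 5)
Your proposal follows essentially the same route as the paper's own proof: define $K$ by interpreting sorts and definable functions via the fixed Borel coding maps of \cref{thm:count-pretop-borel}, check $L\circ K=1$ on the nose, build $\zeta_{F,A}$ by recursion on the structure of $A$ from the comparison isomorphisms supplied by $F$ preserving the pretopos operations, and take $\!S_A$ to be the countable collection of (co)limit data appearing in that recursion. The construction and all the verification points match; the only cosmetic difference is that the paper codes $K(\@M)(R)$ as the object $\abs{R^\@M}\in\!{Count}$ equipped with a chosen monomorphism into $K(\@M)(\#X^n)$ rather than literally as the subset $R^\@M\subseteq M^n$, which your ``once-and-for-all choices of representatives'' phrasing already accounts for.
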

\begin{proof}
Recall from the proof of \cref{thm:synptop-free} (i.e., \cite[D1.4.7]{Jeleph}) that an inverse equivalence of $L$ is defined by sending a model $\@M \in \MOD(\@L, \@T)$ to the functor $\-{\ang{\@L \mid \@T}}^B_{\omega_1} -> \!{Count}$ which takes imaginary sorts and definable functions to their interpretations in $\@M$.  This is how we will define $K$, except that when interpreting, we use the Borel $\omega_1$-pretopos operations from \cref{thm:count-pretop-borel}.

Put $K(\@M)(\#X) := M$.  For each $n$, using \cref{thm:count-pretop-borel}(a) (repeatedly), let $K(\@M)(\#X^n)$ be an $n$th power (in $\!{Count}$) of $M$ equipped with projections to $M$ which depend in a Borel way on $\@M$.  For $n$-ary $R \in \@L$, let $K(\@M)(R) := \abs{R^\@M}$, equipped with a monomorphism $K(\@M)(R) `-> K(\@M)(\#X^n)$ which is Borel in $\@M$ and such that the image of $K(\@M)(R) `-> K(\@M)(\#X^n) \cong M^n$ is $R^\@M \subseteq M^n$ (for example, the unique order-preserving such monomorphism).

Next, for each $\@L_{\omega_1\omega}$-formula $\alpha$ with $n$ variables, define $K(\@M)(\alpha)$ equipped with a monomorphism $K(\@M)(\alpha) `-> K(\@M)(\#X^n)$ to be the interpretation $\alpha^\@M$ as in \cref{defn:struct-pretop}, but using \cref{thm:count-pretop-borel}(a--c,f--h) for the products, pullbacks, unions, etc., in that definition; then both $K(\@M)(\alpha)$ and the monomorphism to $K(\@M)(\#X^n)$ are Borel in $\@M$.  Similarly define $K(\@M)(A) := A^\@M$ using \cref{thm:count-pretop-borel}(d,e) for $A = (\bigsqcup_i \alpha_i)/(\bigsqcup_{i,j} \epsilon_{ij})$, equipped with maps $K(\@M)(\alpha_i) -> K(\@M)(A)$ which are Borel in $\@M$; and define $K(\@M)(f) := f^\@M : K(\@M)(A) -> K(\@M)(B)$, Borel in $\@M$, for a definable function $f : A -> B$.  For an isomorphism $g : \@M \cong \@N$ in $\MOD(\@L, \@T)$, $K(g)(A) : K(\@M)(A) \cong K(\@N)(A)$ is defined by induction on the structure of $A$ in the obvious way.  This completes the definition of $K$; (ii) and (iii) are immediate.

From the definition of $K(\@M)(R)$, we have $L(K(\@M)) = \@M$; likewise, $L(K(g)) = K(g)(\#X) = g$, which proves the first part of (i).

As in \cite[D1.4.7]{Jeleph}, we define $\zeta_{F,A} : K(L(F))(A) -> F(A)$ by induction on the structure of $A$ in the obvious way.  That is, $\zeta_{F,\#X} : K(L(F))(\#X) = F(\#X) -> F(\#X)$ is the identity; $\zeta_{F,\#X^n} : K(L(F))(\#X^n) = F(\#X)^n -> F(\#X^n)$ is the comparison isomorphism, using that $F$ preserves finite products; for $n$-ary $R \in \@L$, $\zeta_{F,R} : K(L(F))(R) -> F(R)$ is such that
\begin{equation*}
\begin{tikzcd}
K(L(F))(R) \dar[hook] \rar["\zeta_{F,R}"] & F(R) \dar[hook] \\
K(L(F))(\#X^n) = F(\#X)^n \rar["\zeta_{F,\#X^n}","\cong"'] & F(\#X^n)
\end{tikzcd}
\end{equation*}
commutes; $\zeta_{F,\alpha} : K(L(F))(\alpha) -> F(\alpha)$ is defined by induction on $\alpha$, using that $F$ preserves the $\omega_1$-pretopos operations used to interpret $\alpha$; and for $A = (\bigsqcup_i \alpha_i)/(\bigsqcup_{i,j} \epsilon_{ij})$, $\zeta_{F,A} : K(L(F))(A) = (\bigsqcup_i K(L(F))(\alpha_i))/(\bigsqcup_{i,j} K(L(F))(\epsilon_{ij})) -> F(A)$ is the comparison isomorphism, using that $F$ preserves countable $\bigsqcup$ and quotients.  When $F = K(\@M)$, it is easily verified by induction that $\zeta_{K(\@M),A}$ is the identity morphism for all $A$ (e.g., when $A = \#X^n$, the comparison $K(\@M)(\#X)^n -> K(\@M)(\#X^n)$ is the identity, since $K(\@M)(\#X^n)$ is by definition $K(\@M)(\#X)^n$); this proves (i).

Finally, for (iv), we take $\!S_A$ to consist of the various limits and colimits used to define the comparison isomorphisms in the definition $\zeta_{F,A}$.  That is, for $A = \#X$, we take $\!S_\#X = \{\#X\}$; for $A = \#X^n$, we take $\!S_{\#X^n}$ to be $\#X$, $\#X^n$, and the projections $\#X^n -> \#X$; for $n$-ary $R \in \@L$, we take $\!S_R$ to be $\!S_{\#X^n}$ together with the inclusion $R `-> \#X^n$; for $\alpha = \phi \wedge \psi$ with $n$ variables, we take $\!S_\alpha$ to be $\!S_\phi$, $\!S_\psi$ (which contain the inclusions $\phi `-> \#X^n$ and $\psi `-> \#X^n$) together with the inclusions $\alpha `-> \phi$ and $\alpha `-> \psi$; etc.  It is straightforward to check that this works.
\end{proof}

Recall that a \defn{pseudofunctor} $F : \&C -> \&D$ between two $2$-categories $\&C, \&D$ consists of an object $F(X) \in \&D$ for each object $X \in \&C$, a morphism $F(f) : F(X) -> F(Y)$ for each morphism $f : X -> Y$ in $\&C$, and a $2$-cell $F(\alpha) : F(f) -> F(g)$ for each $2$-cell $\alpha : f -> g$ in $\&C$, together with unnamed (but specified) isomorphisms $F(g) \circ F(f) \cong F(g \circ f)$ and $1_{F(X)} \cong F(1_X)$ for $f : X -> Y$ and $g : Y -> Z$ in $\&C$, which are required to obey certain coherence conditions.  A \defn{pseudonatural transformation} $\tau : F -> G$ between two pseudofunctors $F, G : \&C -> \&D$ consists of a morphism $\tau_X : F(X) -> G(X)$ for each object $X \in \&C$ and an invertible $2$-cell $\tau_f : G(f) \circ \tau_X -> \tau_Y \circ F(f)$ for each morphism $f : X -> Y$ in $\&C$, subject to certain coherence conditions.  A \defn{modification} $\Theta : \sigma -> \tau$ between two pseudonatural transformations $\sigma, \tau : F -> G$ consists of a $2$-cell $\Theta_X : \sigma_X -> \tau_X$ for each object $X \in \&C$, subject to certain conditions.  See \cite[1.1.2]{Jeleph} or \cite[II~7.5.1--3]{Bor} for details.

Let $\&{Gpd}$ denote the $2$-category of small groupoids, functors, and natural isomorphisms, and $\&{BorGpd}$ denote the $2$-category of standard Borel groupoids, Borel functors, and Borel natural isomorphisms; we have a forgetful $2$-functor $\&{BorGpd} -> \&{Gpd}$.  From above, we have a $2$-functor
\begin{align*}
\&{\omega_1PTop}(\-{\ang{-}}^B_{\omega_1}, \!{Count}) : \&{\omega_1\omega Thy}_{\omega_1}^\op &--> \&{Gpd} \\
(\@L, \@T) &|--> \&{\omega_1PTop}(\-{\ang{\@L \mid \@T}}^B_{\omega_1}, \!{Count}) \\
(F : (\@L, \@T) -> (\@L', \@T')) &|--> F^* : \&{\omega_1PTop}(\-{\ang{\@L' \mid \@T'}}^B_{\omega_1}, \!{Count}) -> \&{\omega_1PTop}(\-{\ang{\@L \mid \@T}}^B_{\omega_1}, \!{Count}).
\end{align*}

\begin{proposition}
\label{thm:isogpd-psfunc}
The assignment $(\@L, \@T) |-> \MOD(\@L, \@T)$ extends to a pseudofunctor
\begin{align*}
\MOD : \&{\omega_1\omega Thy}_{\omega_1}^\op --> \&{BorGpd}
\end{align*}
which is a ``lifting of $\&{\omega_1PTop}(\-{\ang{-}}^B_{\omega_1}, \!{Count})$'', in that there are pseudonatural transformations $K, L$ as in the following diagram (given componentwise by $K, L$ from \cref{thm:isogpd-retract}) which form an adjoint equivalence between $\&{\omega_1PTop}(\-{\ang{-}}^B_{\omega_1}, \!{Count})$ and the composite $\&{\omega_1\omega Thy}_{\omega_1}^\op --->{\MOD} \&{BorGpd} -> \&{Gpd}$.
\begin{equation*}
\begin{tikzcd}
&[5em] \&{BorGpd} \dar \\
\&{\omega_1\omega Thy}_{\omega_1}^\op \urar["\MOD",""{coordinate,pos=.65,name=us}] \rar["{\&{\omega_1PTop}(\-{\ang{-}}^B_{\omega_1}, \!{Count})}"',""{coordinate,pos=.65,name=ds}] & \&{Gpd}
\ar[draw=none,to path={(us)--(ds)\tikztonodes},"\Down K \;\; \Up L"{anchor=center}]
\end{tikzcd}
\end{equation*}
\end{proposition}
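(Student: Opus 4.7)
The plan is to define $\MOD$ by transporting the strict $2$-functor $\&{\omega_1PTop}(\-{\ang{-}}^B_{\omega_1}, \!{Count})$ across the equivalence supplied by \cref{thm:isogpd-retract}, and then to check Borelness using properties (ii)--(iv) of that lemma.

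Concretely, for an interpretation $F : (\@L, \@T) \to (\@L', \@T')$, i.e., an $\omega_1$-coherent functor $F : \-{\ang{\@L \mid \@T}}^B_{\omega_1} \to \-{\ang{\@L' \mid \@T'}}^B_{\omega_1}$, I would define
\[
\MOD(F)(\@M') := L\bigl(K(\@M') \circ F\bigr) \quad\text{for } \@M' \in \MOD(\@L', \@T'),
\]
and similarly on morphisms by $g \mapsto L(K(g) \circ F)$, where on the right $K(g) \circ F$ is a natural isomorphism between $\omega_1$-coherent functors $\-{\ang{\@L \mid \@T}}^B_{\omega_1} \to \!{Count}$, and $L$ sends it to its $\#X$-component. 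Borelness of $\MOD(F)$ then reduces to Borelness of $K$: the underlying set of $\MOD(F)(\@M')$ is $K(\@M')(F(\#X))$, and its interpretation of an $n$-ary $R \in \@L$ is the image of $K(\@M')(F(R)) \hookrightarrow K(\@M')(F(\#X)^n)$; since $F(\#X)$ and $F(R)$ are fixed objects/morphisms of $\-{\ang{\@L' \mid \@T'}}^B_{\omega_1}$, parts (ii)--(iii) of \cref{thm:isogpd-retract} give the required Borelness in $\@M'$, and similarly for morphisms.

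For the pseudofunctor structure, the strict identity $\MOD(1) = 1$ is immediate from $L \circ K = 1$ in \cref{thm:isogpd-retract}(i). For composition $F' \circ F$, one has $\MOD(F)(\MOD(F')(\@M'')) = L(K(L(K(\@M'') \circ F')) \circ F)$ while $\MOD(F' \circ F)(\@M'') = L(K(\@M'') \circ F' \circ F)$; the comparison $2$-cell is obtained by applying $L(- \circ F)$ to the natural isomorphism $\zeta_{K(\@M'') \circ F'} : K(L(K(\@M'') \circ F')) \xrightarrow{\cong} K(\@M'') \circ F'$ from \cref{thm:isogpd-retract}. This comparison is Borel in $\@M''$ by property (iv), since only finitely many components of $\zeta$ at objects and morphisms in a countable subcategory $\!S_{F(A)} \subseteq \-{\ang{\@L' \mid \@T'}}^B_{\omega_1}$ are needed to read off its value at any given $A \in \-{\ang{\@L \mid \@T}}^B_{\omega_1}$. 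The coherence pentagons and triangles for these comparison $2$-cells reduce to naturality of $\zeta$ and follow from (i) on the nose wherever an identity is invoked. Natural isomorphisms $\alpha : F \Rightarrow F'$ between interpretations are sent to $\MOD(\alpha)_{\@M'} := L(K(\@M') \circ \alpha)$, Borel by the same argument.

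Having built $\MOD$, the pseudonatural transformations $K, L$ are defined componentwise by $K_{(\@L, \@T)} = K$ and $L_{(\@L, \@T)} = L$ from \cref{thm:isogpd-retract}. Pseudonaturality squares for $L$, say, at an interpretation $F : (\@L, \@T) \to (\@L', \@T')$ compare $L_{(\@L, \@T)}(G \circ F) = L(G \circ F)$ with $\MOD(F)(L_{(\@L', \@T')}(G)) = L(K(L(G)) \circ F)$; the filler is $L(\zeta_G \circ F)$, which is an isomorphism in $\MOD(\@L, \@T)$, and the coherence laws again reduce to naturality of $\zeta$. For $K$, the corresponding square at $\@M' \in \MOD(\@L', \@T')$ is strictly commutative because $L \circ K = 1$ and $\zeta_{K(\@M')} = 1$ by (i), so the structure $2$-cells may be taken to be identities. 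Finally, that $(K, L)$ is an adjoint equivalence componentwise is exactly (i), and the triangle identities for the adjunction hold strictly on one side and up to $\zeta$ on the other.

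The main technical obstacle is simply bookkeeping: verifying the coherence pentagons for $\MOD$ and the hexagons for pseudonaturality of $K, L$ without getting lost in the Morleyization-style subscripts. All of this is routine diagram chasing once one notes that every comparison $2$-cell in sight is either the identity (because $L \circ K = 1$) or an instance of the single natural isomorphism $\zeta$, and that Borelness of every such isomorphism is controlled uniformly by \cref{thm:isogpd-retract}(iv).
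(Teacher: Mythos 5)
Your construction of $\MOD$ coincides with the paper's: $\MOD(F)(\@M') = L_{\@T}(K_{\@T'}(\@M')\circ F)$, Borelness from \cref{thm:isogpd-retract}(ii,iii), the comparison $2$-cell for composition given by whiskering $\zeta_{\@T'}$, and its Borelness controlled via \cref{thm:isogpd-retract}(iv).  The treatment of $L$ and of natural isomorphisms between interpretations also matches.

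There is, however, one genuine error: the claim that the pseudonaturality squares for $K$ commute strictly ``because $L\circ K = 1$ and $\zeta_{K(\@M')} = 1$''.  At $\@M' \in \MOD(\@L', \@T')$ the two sides of that square are the functors $K_{\@T'}(\@M')\circ F$ and $K_{\@T}\bigl(L_{\@T}(K_{\@T'}(\@M')\circ F)\bigr)$, and these agree on the nose only if $K_{\@T'}(\@M')\circ F$ lies in the image of $K_{\@T}$, which it does not in general: its value on $\#X^n$ is $K_{\@T'}(\@M')(F(\#X^n))$, whereas $K_{\@T}(\@N)(\#X^n)$ is by construction the particular $n$-th power of $N$ chosen by the Borel coding of \cref{thm:count-pretop-borel}, and there is no reason for these to coincide as objects of $\!{Count}$.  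The identity $\zeta_{K(\@M)} = 1$ from \cref{thm:isogpd-retract}(i) applies only to functors literally of the form $K(\@M)$, so it cannot be invoked here; if the structure cells of $K$ were taken to be identities, $K$ would not even be a well-defined pseudonatural transformation.  The fix is the one your setup already supplies: take the structure $2$-cell of $K$ at $F$ to be $\zeta_{\@T}^{-1}\circ F^*\circ K_{\@T'}$ (componentwise $\zeta^{-1}_{\@T,\,K_{\@T'}(\@M')\circ F}$), which is Borel by \cref{thm:isogpd-retract}(ii--iv), and check coherence and the triangle identities using that $\zeta$ is the counit of the adjoint equivalence --- this is exactly what the paper does.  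A minor further correction: \cref{thm:isogpd-retract}(iv) gives dependence of $\zeta_{F,A}$ on the restriction of $F$ to a \emph{countable} subcategory $\!S_A$, not on finitely many components.
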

\begin{proof}
For two theories $(\@L, \@T), (\@L', \@T')$, we define $\MOD$ on the hom-category between them by
\begin{align*}
\&{\omega_1\omega Thy}_{\omega_1}((\@L, \@T), (\@L', \@T')) &--> \&{BorGpd}(\MOD(\@L', \@T'), \MOD(\@L, \@T)) \\
(F : (\@L, \@T) -> (\@L', \@T')) &|--> L_{\@T} \circ F^* \circ K_{\@T'},
\end{align*}
where $K, L$ are as in \cref{thm:isogpd-retract}.  To check that this lands in $\&{BorGpd}$: for a model $\@M \in \MOD(\@L', \@T')$, the model $\@N := L_{\@T}(F^*(K_{\@T'}(\@M))) = (F^*(K_{\@T'}(\@M)))_*(\@X) \in \MOD(\@L, \@T)$ has underlying set $N = F^*(K_{\@T'}(\@M))(\#X) = K_{\@T'}(\@M)(F(\#X))$, and for each $R \in \@L$, $R^\@N = F^*(K_{\@T'}(\@M))(R) = K_{\@T'}(\@M)(F(R))$; these are Borel in $\@M$ by \cref{thm:isogpd-retract}(ii).  Similarly, for an isomorphism of models $g : \@M \cong \@M'$ in $\MOD(\@L', \@T')$, the isomorphism $L_{\@T}(F^*(K_{\@T'}(g))) : \@N \cong \@N'$ (where $\@N' := L_{\@T}(F^*(K_{\@T'}(\@M')))$) is given by $K_{\@T'}(g)(\#X) : N \cong N'$, which is Borel in $g$ by \cref{thm:isogpd-retract}(ii).  Thus $L_{\@T} \circ F^* \circ K_{\@T'}$ is a Borel functor.  Furthermore, for a natural transformation between two interpretations $f : F -> G : (\@L, \@T) -> (\@L', \@T')$, the natural transformation $L_{\@T} \circ f^* \circ K_{\@T'} : L_{\@T} \circ F^* \circ K_{\@T'} -> L_{\@T} \circ G^* \circ K_{\@T'}$ is Borel: its component at $\@M \in \MOD(\@L', \@T')$ is $L_{\@T}(f^*_{K_{\@T'}(\@M)}) = K_{\@T'}(\@M)(f_\#X) : K_{\@T'}(\@M)(F(\#X)) -> K_{\@T'}(\@M)(G(\#X))$, which is Borel in $\@M$ by \cref{thm:isogpd-retract}(iii).  Thus the above definition of $\MOD$ on each hom-category lands in $\&{BorGpd}$.

The isomorphism $1_{\MOD(\@L, \@T)} \cong \MOD(1_{(\@L, \@T)}) = L_\@T \circ K_\@T$ is the identity, using \cref{thm:isogpd-retract}(i).  For interpretations $F : (\@L, \@T) -> (\@L', \@T')$ and $G : (\@L', \@T') -> (\@L'', \@T'')$, the isomorphism
\begin{align*}
\MOD(F) \circ \MOD(G) = L_\@T \circ F^* \circ K_{\@T'} \circ L_{\@T'} \circ G^* \circ K_{\@T''} \cong L_\@T \circ F^* \circ G^* \circ K_{\@T''} = \MOD(G \circ F)
\end{align*}
is given by $L_\@T \circ F^* \circ \zeta_{\@T'} \circ G^* \circ K_{\@T''}$, where $\zeta_{\@T'} : K_{\@T'} \circ L_{\@T'} -> 1$ is from \cref{thm:isogpd-retract}.  The coherence conditions are straightforward (the one corresponding to unitality uses the triangle identities, \cref{thm:isogpd-retract}(i)).  To complete the definition of $\MOD$, we need only verify that the natural isomorphism $\MOD(F) \circ \MOD(G) \cong \MOD(G \circ F)$ is Borel.  For a model $\@M \in \MOD(\@L'', \@T'')$, the component of the isomorphism at $\@M$ is $L_\@T(F^*(\zeta_{\@T',G^*(K_{\@T''}(\@M))})) = \zeta_{\@T',K_{\@T''}(\@M) \circ G,F(\#X)}$; letting $\!S_{F(\#X)} \subseteq \-{\ang{\@L' \mid \@T'}}^B_{\omega_1}$ be the countable subcategory given by \cref{thm:isogpd-retract}(iv), $\zeta_{\@T',K_{\@T''}(\@M) \circ G,F(\#X)}$ is Borel in $K_{\@T''}(\@M) \circ G|\!S_{F(\#X)}$, which is Borel in $\@M$ by \cref{thm:isogpd-retract}(ii,iii), as desired.

The components of $K, L$ on objects are given by \cref{thm:isogpd-retract}.  On a morphism $F : (\@L, \@T) -> (\@L', \@T')$, $K_F : F^* \circ K_{\@T'} -> K_\@T \circ \MOD(F) = K_\@T \circ L_\@T \circ F^* \circ K_{\@T'}$ is given by $\zeta_\@T^{-1} \circ F^* \circ K_{\@T'}$, while $L_F : \MOD(F) \circ L_{\@T'} = L_\@T \circ F^* \circ K_{\@T'} \circ L_{\@T'} -> L_\@T \circ F^*$ is given by $L_\@T \circ F^* \circ \zeta_{\@T'}$.  The coherence conditions are again straightforward (again using \cref{thm:isogpd-retract}(i) for units).  We have $L \circ K = 1$ by \cref{thm:isogpd-retract}(i) (both parts), as well as a modification $\zeta : K \circ L -> 1$ given componentwise by \cref{thm:isogpd-retract}; by \cref{thm:isogpd-retract}(i), these make $K, L$ into an adjoint equivalence.
\end{proof}

Henceforth we will denote $\MOD(F) : \MOD(\@L', \@T') -> \MOD(\@L, \@T)$ also by $F^*$, whenever there is no risk of confusion with $F^* : \&{\omega_1PTop}(\-{\ang{\@L' \mid \@T'}}^B_{\omega_1}, \!{Count}) -> \&{\omega_1PTop}(\-{\ang{\@L \mid \@T}}^B_{\omega_1}, \!{Count})$.

\section{Stone duality}
\label{sec:stone}

In this section, we explain how \cref{thm:borel-interp-equiv} may be viewed as one half of a Stone-type duality, yielding a ``strong conceptual completeness'' theorem for $\@L_{\omega_1\omega}$.  We then use this viewpoint to deduce the Borel version of the main result of \cite{HMM}.  This section depends on the previous section, and like it, involves some tedious $2$-categorical technicalities.

First, we briefly recall the abstract setup of the original Stone duality between the categories $\!{Bool}$ of Boolean algebras and $\!{KZHaus}$ of compact Hausdorff zero-dimensional spaces; our point of view here can be found in e.g., \cite[VI~\S4]{Jstone}.  We have a \defn{dualizing object}, the set $2 = \{0, 1\}$, which is both a Boolean algebra and a compact Hausdorff zero-dimensional space; and these two types of structure commute, meaning that the Boolean operations (e.g., $\wedge : 2 \times 2 -> 2$) are continuous.  As a consequence, for every other $A \in \!{Bool}$ and $X \in \!{KZHaus}$, the set of Boolean homomorphisms $\!{Bool}(A, 2)$ inherits the pointwise $\!{KZHaus}$-topology from $2$, and the set of continuous maps $\!{KZHaus}(X, 2)$ inherits the pointwise Boolean structure from $2$; and we have natural bijections
\begin{align*}
\!{Bool}(A, \!{KZHaus}(X, 2)) \cong (\!{Bool}, \!{KZHaus})(A \times X, 2) \cong \!{KZHaus}(X, \!{Bool}(A, 2))
\end{align*}
where $(\!{Bool}, \!{KZHaus})(A \times X, 2)$ denotes the set of  \defn{bihomomorphisms} $A \times X -> 2$, i.e., maps which are continuous for each fixed $a \in A$ and Boolean homomorphisms for each fixed $x \in X$.  This yields a contravariant adjunction between the functors
\begin{align*}
\!{Bool}(-, 2) : \!{Bool}^\op &--> \!{KZHaus}, &
\!{KZHaus}(-, 2) : \!{KZHaus}^\op &--> \!{Bool},
\end{align*}
whose adjunction units are the ``evaluation'' maps
\begin{align*}
A &--> \!{KZHaus}(\!{Bool}(A, 2), 2) &
X &--> \!{Bool}(\!{KZHaus}(X, 2), 2) \\
a &|--> (x |-> x(a)) &
x &|--> (a |-> a(x)).
\end{align*}
The Stone duality theorem states that these maps are isomorphisms, i.e., the adjunction is an adjoint equivalence $\!{Bool}^\op \cong \!{KZHaus}$.

In the case where $A = \ang{\@L \mid \@T}$ is the Lindenbaum--Tarski algebra of a finitary propositional theory $(\@L, \@T)$, $\!{Bool}(A, 2)$ is the space $\mathrm{Mod}(\@L, \@T)$ of models of $\@T$; and the half of Stone duality asserting that the unit at $A$ is an isomorphism is the completeness theorem for the theory $\@T$, plus the ``definability'' theorem that every clopen set of models is named by some formula.  The significance of the dualizing object $2$ is that the syntax of propositional logic (i.e., propositional formulas) is to be interpreted as elements of $2$.

In first-order logic, the syntax is assigned values of sets, functions, and relations; thus, the dualizing object for a first-order analog of Stone duality is naturally taken to be some variant of the category $\!{Set}$.  We listed several such (half-)duality theorems in the Introduction, notably those of Makkai \cite{Multra,Mscc} who introduced the term \defn{strong conceptual completeness} for this kind of logical interpretation of duality theorems.  Here, our goal is to interpret \cref{thm:borel-interp-equiv} as such a (half-)duality.  We take the dualizing object to be $\!{Count}$ (from \cref{sec:interp}, equivalent to the category of countable sets), equipped with the structure of a Boolean $\omega_1$-pretopos as well as that of a standard Borel groupoid (by forgetting the non-isomorphisms); \cref{thm:count-pretop-borel} can be seen as showing that these two kinds of structure commute.  However, there are some technical difficulties in directly copying the setup of Stone duality.

Let $(\@L, \@T)$ be an $\@L_{\omega_1\omega}$-theory and $\!G = (G^0, G^1)$ be a standard Borel groupoid.  We would like to say, on the basis of the two commuting structures on $\!{Count}$, that we have a $2$-adjunction
\begin{align*}
\text{``}\,\&{\omega_1PTop}(\-{\ang{\@L \mid \@T}}^B_{\omega_1}, \&{BorGpd}(\!G, \!{Count})) \cong \&{BorGpd}(\!G, \&{\omega_1PTop}(\-{\ang{\@L \mid \@T}}^B_{\omega_1}, \!{Count}))\,\text{''}
\end{align*}
as in Stone duality.  It is easily seen that $\&{BorGpd}(\!G, \!{Count})$ is a Boolean $\omega_1$-pretopos (with the pointwise operations from $\!{Count}$).  The problem is that the groupoid $\&{\omega_1PTop}(\-{\ang{\@L \mid \@T}}^B_{\omega_1}, \!{Count})$ is not standard Borel.  Instead, we must replace it with the equivalent standard Borel groupoid $\MOD(\@L, \@T)$, using \cref{thm:isogpd-psfunc}.

We proceed as follows.  Forgetting for now the Borel structure on $\!{Count}$, we have an isomorphism
\begin{align*}
\&{\omega_1PTop}(\-{\ang{\@L \mid \@T}}^B_{\omega_1}, \&{Gpd}(\!G, \!{Count}))
\cong (\&{\omega_1PTop}, \&{Gpd})(\-{\ang{\@L \mid \@T}}^B_{\omega_1} \times \!G, \!{Count})
\cong \&{Gpd}(\!G, \&{\omega_1PTop}(\-{\ang{\@L \mid \@T}}^B_{\omega_1}, \!{Count}))
\end{align*}
as in Stone duality, where the middle denotes the category of ``bihomomorphisms'' $\-{\ang{\@L \mid \@T}}^B_{\omega_1} \times \!G -> \!{Count}$, i.e., functors which are $\omega_1$-coherent for each fixed object $x \in G^0$; this isomorphism is clearly (strictly) natural in $(\@L, \@T)$ and $\!G$.  Composing this isomorphism with the postcomposition functors
\begin{align*}
K_* &: \&{Gpd}(\!G, \MOD(\@L, \@T)) --> \&{Gpd}(\!G, \&{\omega_1PTop}(\-{\ang{\@L \mid \@T}}^B_{\omega_1}, \!{Count})), \\
L_* &: \&{Gpd}(\!G, \&{\omega_1PTop}(\-{\ang{\@L \mid \@T}}^B_{\omega_1}, \!{Count})) --> \&{Gpd}(\!G, \MOD(\@L, \@T))
\end{align*}
induced by the functors $K, L$ from \cref{thm:isogpd-retract} yields an adjoint equivalence consisting of
\begin{align*}
&\begin{aligned}
\Phi : \&{Gpd}(\!G, \MOD(\@L, \@T)) &--> \&{\omega_1PTop}(\-{\ang{\@L \mid \@T}}^B_{\omega_1}, \&{Gpd}(\!G, \!{Count})) \\
F &|--> (A |-> (x |-> K(F(x))(A))),
\end{aligned} \\
&\begin{aligned}
\Psi : \&{\omega_1PTop}(\-{\ang{\@L \mid \@T}}^B_{\omega_1}, \&{Gpd}(\!G, \!{Count})) &--> \&{Gpd}(\!G, \MOD(\@L, \@T)) \\
G &|--> (x |-> L(G(-)(x))),
\end{aligned}
\end{align*}
such that $\Psi \circ \Phi = 1$, and a natural isomorphism $\xi : \Phi \circ \Psi -> 1$ satisfying the triangle identities (induced by $\zeta : K \circ L -> 1$ from \cref{thm:isogpd-retract}), given by
\begin{align*}
\xi_{G,A,x} = \zeta_{G(-)(x),A} : \Phi(\Psi(G))(A)(x) = K(L(G(-)(x)))(A) --> G(A)(x).
\end{align*}
Since $K, L$ are pseudonatural and $\zeta$ is a modification by \cref{thm:isogpd-psfunc}, this adjoint equivalence remains natural in $\!G$ and pseudonatural $(\@L, \@T)$.

\begin{lemma}
\label{thm:borel-duality-adjoint-cocycle}
$\Phi, \Psi, \xi$ restrict to a pseudonatural adjoint equivalence
\begin{align*}
\&{\omega_1PTop}(\-{\ang{\@L \mid \@T}}^B_{\omega_1}, \&{BorGpd}(\!G, \!{Count})) \cong \&{BorGpd}(\!G, \MOD(\@L, \@T)).
\end{align*}
\end{lemma}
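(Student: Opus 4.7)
The plan is to check that each of $\Phi$, $\Psi$, and $\xi$, which has already been established as a pseudonatural adjoint equivalence at the level of $\&{Gpd}$, sends Borel data to Borel data. Once this is done, the equations $\Psi \circ \Phi = 1$, the triangle identities, and pseudonaturality in $(\@L, \@T)$ together with strict naturality in $\!G$ all follow automatically from the unrestricted statement, because restricting to a full sub-$2$-category does not change equalities or coherences between the remaining $2$-cells. The three verifications needed are all direct applications of the Borel tracking recorded in \cref{thm:isogpd-retract,thm:isogpd-psfunc}.

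First I would show that $\Phi$ restricts. Given a Borel functor $F : \!G -> \MOD(\@L, \@T)$ and an imaginary sort $A \in \-{\ang{\@L \mid \@T}}^B_{\omega_1}$, the functor $\Phi(F)(A) : \!G -> \!{Count}$ is the composite of $F$ with $K(-)(A)$, which is Borel by \cref{thm:isogpd-retract}(ii); for a definable function $f : A -> B$, the component of the natural transformation $\Phi(F)(f)$ at $x \in G^0$ is $K(F(x))(f)$, Borel in $x$ by \cref{thm:isogpd-retract}(iii). Hence $\Phi(F)$ lands in $\&{BorGpd}(\!G, \!{Count})$ sort-by-sort and arrow-by-arrow, and the analogous analysis for a natural isomorphism between two Borel functors $F, F'$ shows that $\Phi$ takes a Borel natural isomorphism to a pointwise Borel natural isomorphism.

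Next I would show that $\Psi$ restricts. For $G \in \&{\omega_1PTop}(\-{\ang{\@L \mid \@T}}^B_{\omega_1}, \&{BorGpd}(\!G, \!{Count}))$ and $x \in G^0$, $\Psi(G)(x) = L(G(-)(x))$ is the model whose underlying set is $G(\#X)(x)$ and whose interpretation of each relation symbol $R$ is $G(R)(x)$; for a morphism $g : x -> y$ in $\!G$, the induced isomorphism of models is $G(\#X)(g)$. Since each of $G(\#X)$ and $G(R)$ is by hypothesis a Borel functor $\!G -> \!{Count}$, the functor $\Psi(G) : \!G -> \MOD(\@L, \@T)$ is Borel, and the same applies to a modification in the source. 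For $\xi$, by \cref{thm:isogpd-retract}(iv) there is for each $A$ a countable subcategory $\!S_A \subseteq \-{\ang{\@L \mid \@T}}^B_{\omega_1}$ such that $\zeta_{H,A}$ depends only on $H|\!S_A$ and does so in a Borel way; applying this with $H = G(-)(x)$ makes $\xi_{G,A,x} = \zeta_{G(-)(x),A}$ depend Borel-ly on the values of the countably many Borel functors and natural transformations $G|\!S_A$ evaluated at $x$, which is Borel in $x$ by hypothesis. So each $\xi_{G,A}$ is a Borel natural isomorphism, i.e., $\xi_G$ is a modification in $\&{BorGpd}(\!G, \!{Count})$.

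I do not expect a genuine obstacle: everything is arranged so that \cref{thm:isogpd-retract}(iv) does the real work of reducing the Borelness of each component of $\xi$ to the pointwise Borelness of finitely many pieces of input. Without the countable-subcategory statement of (iv) one would be forced to track the Borelness of $G$ on all of $\-{\ang{\@L \mid \@T}}^B_{\omega_1}$ at once, which is awkward to formulate; but with (iv) in hand the present lemma is essentially a matter of composition of Borel maps. The only care needed is to make sure each of the three verifications is checked both on the $2$-cells (natural isomorphisms and modifications) as well as on $1$-cells, but in each case this amounts to repeating the argument with the relevant Borelness clause of \cref{thm:isogpd-retract}.
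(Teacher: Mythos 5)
Your verifications that $\Phi$, $\Psi$, and $\xi$ send Borel data to Borel data match the paper's proof essentially step for step, including the use of \cref{thm:isogpd-retract}(ii,iii) for $\Phi$ and $\Psi$ and of the countable subcategories $\!S_A$ from \cref{thm:isogpd-retract}(iv) for $\xi$. However, there is a genuine gap in your opening claim that ``pseudonaturality in $(\@L, \@T)$ \ldots\ follow[s] automatically from the unrestricted statement, because restricting to a full sub-$2$-category does not change equalities or coherences between the remaining $2$-cells.'' The $2$-category $\&{BorGpd}$ is \emph{not} $2$-full in $\&{Gpd}$: a natural isomorphism between two Borel functors need not have Borel components, so the pseudonaturality $2$-cells $\Phi_H$ and $\Psi_H$ attached to an interpretation $H : (\@L, \@T) -> (\@L', \@T')$ do not automatically live in the restricted setting and must be checked to be Borel. (Your reasoning would be valid for the coherence \emph{equations} among $2$-cells already known to restrict, such as $\Psi \circ \Phi = 1$ and the triangle identities, but not for membership of the pseudonaturality data itself.)

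The paper closes exactly this gap in the last paragraph of its proof: it computes the components explicitly as $\Psi_{H,G,x} = \zeta_{\@T',G(-)(x),H(\#X)}$ and $\Phi_{H,F,A,x} = \zeta_{\@T,K_{\@T'}(F(x)) \circ H,A}^{-1}$, and shows each is Borel in $x$ using \cref{thm:isogpd-retract}(ii--iv) --- in particular, the same countable-subcategory device (iv) that you invoked for $\xi$ is needed again here, applied to $G(-)(x)|\!S_{H(\#X)}$ and to $K_{\@T'}(F(x)) \circ H|\!S_A$ respectively. Your proposal as written omits this verification entirely, so it does not establish that the restricted equivalence is pseudonatural in $(\@L, \@T)$ as a transformation valued in $\&{BorGpd}$.
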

\begin{proof}
First, fix $\!G, \@T$.  To check that $\Psi$ restricts, let $G, G' \in \&{\omega_1PTop}(\-{\ang{\@L \mid \@T}}^B_{\omega_1}, \&{BorGpd}(\!G, \!{Count}))$ and $\gamma : G -> G'$ with $\gamma_A$ Borel for each $A$; we must check that $\Psi(G), \Psi(G'), \Psi(\gamma)$ are Borel.  $\Psi(G)(x) = L(G(-)(x))$ is the model with underlying set $G(\#X)(x)$ and with $R^{\Psi(G)(x)} = $ the image of $G(R)(x) `-> G(\#X^n)(x)$ for $n$-ary $R \in \@L$; thus $\Psi(G)(x)$ is Borel in $x$.  For a morphism $g : x -> y$ in $\!G$, we have $\Psi(G)(x) = G(\#X)(g)$, which is Borel in $g$.  Thus, $\Psi(G)$ is Borel; similarly, $\Psi(G')$ is Borel.  And $\Psi(\gamma)(x) = \gamma_{\#X,x}$; so $\Psi(\gamma)$ is Borel.

To check that $\Phi$ restricts, let $F, F' \in \&{BorGpd}(\!G, \MOD(\@L, \@T))$ and $\phi : F -> F'$ be Borel; we must check that $\Phi(F), \Phi(F'), \Phi(\phi)$ are pointwise Borel.  For $A \in \-{\ang{\@L \mid \@T}}^B_{\omega_1}$, we have $\Phi(F)(A)(x) = K(F(x))(A)$ which is Borel in $x$ (and similarly when $x$ is replaced by $g : x -> y$) by \cref{thm:isogpd-retract}(ii) and Borelness of $F$; similarly, for a definable function $f : A -> B$, $\Phi(F)(f)(x) = K(F(x))(f)$ is Borel in $x$ by \cref{thm:isogpd-retract}(iii).  Similarly, $\Phi(\phi)(A)(x) = K(\phi_x)(A)$ which is Borel in $x$ by \cref{thm:isogpd-retract}(ii) and Borelness of $\phi$.

To check that $\xi$ restricts, let $G \in \&{\omega_1PTop}(\-{\ang{\@L \mid \@T}}^B_{\omega_1}, \&{BorGpd}(\!G, \!{Count}))$; then $\xi_{G,A,x} = \zeta_{G(-)(x),A}$ is Borel in $G(-)(x)|\!S_A$ which is Borel in $x$, where $\!S_A \subseteq \-{\ang{\@L \mid \@T}}^B_{\omega_1}$ is given by \cref{thm:isogpd-retract}(iv).

Finally, we must check that the pseudonaturality isomorphisms for $\Phi, \Psi$ as $(\@L, \@T)$ varies are Borel (there is nothing to check as $\!G$ varies, since $\Phi, \Psi$ are natural in $\!G$).  Let $H : (\@L, \@T) -> (\@L', \@T')$ be an interpretation.  From the proof of \cref{thm:isogpd-psfunc}, it is easily seen that the pseudonaturality isomorphism $\Psi_H : \MOD(H)_* \circ \Psi_{\@T'} -> \Psi_\@T \circ H^*$ (induced by that for $L$) is given by
\begin{align*}
% \Psi_{H,G,x} : L_\@T(G(H(-))(x)) = L_\@T(H^*(G(-)(x))) --> L_\@T(H^*(K_{\@T'}(L_{\@T'}(G(-)(x))))) = \MOD(H)(L_{\@T'}(G(-)(x)))
\Psi_{H,G,x} = \zeta_{\@T',G(-)(x),H(\#X)} : K_{\@T'}(L_{\@T'}(G(-)(x)))(H(\#X)) --> G(H(\#X))(x)
\end{align*}
which is Borel in $x$ using \cref{thm:isogpd-retract}(iv) as above, while the pseudonaturality isomorphism $\Phi_H : H^* \circ \Phi_{\@T'} -> \Phi_\@T \circ \MOD(H)_*$ (induced by that for $K$) is given by
\begin{align*}
% \Phi_{H,F,A,x} : H^*(\Phi_{\@T'}(F))(A)(x) = \Phi_{\@T'}(F)(H(A))(x)
% = K_{\@T'}(F(x))(H(A))
% = H^*(K_{\@T'}(F(x)))(A)
% --> K_\@T(L_\@T(H^*(K_{\@T'}(F(x)))))(A)
% = \Phi_\@T(L_\@T \circ H^* \circ K_{\@T'} \circ F)(A)(x)
% = \Phi_\@T(\MOD(H)_*(F))(A)(x)
\Phi_{H,F,A,x} = \zeta_{\@T,K_{\@T'}(F(x)) \circ H,A}^{-1} : K_{\@T'}(F(x))(H(A)) --> K_\@T(L_\@T(K_{\@T'}(F(x)) \circ H))(A)
\end{align*}
which is Borel in $x$ using \cref{thm:isogpd-retract}(ii--iv).
\end{proof}

Thus, in place of the adjunction in Stone duality, we have a ``relative pseudoadjunction'' between the pseudofunctors
\begin{align*}
\MOD : \&{\omega_1\omega Thy}_{\omega_1}^\op &--> \&{BorGpd}, &
\&{BorGpd}(-, \!{Count}) : \&{BorGpd}^\op &--> \&{B\omega_1PTop};
\end{align*}
``relative'' means that $\MOD$ is not defined on all of $\&{B\omega_1PTop}$, but only on $\&{\omega_1\omega Thy}_{\omega_1}$ (which, recall, is equivalent to a full sub-$2$-category of the former).  See \cite{LMV} for basic facts on pseudoadjunctions (also called biadjunctions), and \cite{Ulmer} for relative adjunctions.  We still have one adjunction unit, namely the transpose across \cref{thm:borel-duality-adjoint-cocycle} of the identity $\MOD(\@L, \@T) -> \MOD(\@L, \@T)$:
\begin{align*}
\eta_\@T = \Phi(1_{\MOD(\@L, \@T)}) : \-{\ang{\@L \mid \@T}}^B_{\omega_1} &--> \&{BorGpd}(\MOD(\@L, \@T), \!{Count}) \\
A &|--> K_\@T(-)(A).
\end{align*}
We next verify that this unit functor is none other than the interpretation functor $\den{-}$ from \cref{thm:borel-interp-equiv}, under the following standard identification.

A Borel functor $F : \!G -> \!{Count}$ determines a fiberwise countable Borel $\!G$-space, namely
\begin{align*}
\Sigma(F) := \{(x, a) \mid x \in G^0 \AND a \in F(x)\} \subseteq G^0 \times \#N,
\end{align*}
with the projection $p : \Sigma(F) -> G^0$ and the obvious action of $\!G$: $g \cdot (x, a) := (y, F(g)(a))$ for $g : x -> y$.  Given another Borel functor $G : \!G -> \!{Count}$ and a Borel natural transformation $f : F -> G$, we have the $\!G$-equivariant map $\Sigma(f) : \Sigma(F) -> \Sigma(G)$ given fiberwise by the components of $f$.  Thus, we have a functor
\begin{align*}
\Sigma = \Sigma_\!G : \&{BorGpd}(\!G, \!{Count}) --> \Act^B_{\omega_1}(\!G).
\end{align*}
Note that $\Act^B_{\omega_1}(-)$ is contravariantly pseudofunctorial: given a Borel functor $F : \!G -> \!H$, we may pull back $\!H$-spaces $p : X -> H^0$ along $F$ to obtain $\!G$-spaces $F^*(X) = G^0 \times_{H^0} X$ (temporarily denote this by $G^0 \times_{H^0}^F X$);
% , with action
% \begin{align*}
% g \cdot (y, x) := (z, F(g) \cdot x)
% \end{align*}
% for $(y, z) \in F^*(X)$ and $g : y -> z$ in $\!G$;
and given a Borel natural isomorphism $\theta : F \cong F' : \!G -> \!H$ and a fiberwise countable Borel $\!H$-space $p : X -> H^0$, we have a Borel $\!G$-equivariant isomorphism
\begin{align*}
\theta^*(X) : G^0 \times_{H^0}^F X &\cong G^0 \times_{H^0}^{F'} X \\
(y, x) &|-> (y, \theta_y \cdot x).
\end{align*}

\begin{lemma}
\label{thm:borel-action-cocycle}
$\Sigma : \&{BorGpd}(-, \!{Count}) -> \Act^B_{\omega_1}(-)$ is a pseudonatural equivalence between pseudofunctors $\&{BorGpd}^\op -> \&{B\omega_1PTop}$.
\end{lemma}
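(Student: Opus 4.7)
The plan is to prove three things: (i) each $\Sigma_\!G$ is an equivalence of categories, (ii) each $\Sigma_\!G$ is an $\omega_1$-coherent functor between Boolean $\omega_1$-pretoposes, and (iii) the family $\Sigma$ assembles into a pseudonatural transformation.

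For (i), I would construct a quasi-inverse $T_\!G : \Act^B_{\omega_1}(\!G) -> \&{BorGpd}(\!G, \!{Count})$ as follows. Given a fiberwise countable Borel $\!G$-space $p : X -> G^0$, Lusin--Novikov uniformization produces a Borel partition $X = \bigsqcup_{i \in \#N} S_i$ into Borel sections, which yields a uniform Borel enumeration of each fiber $p^{-1}(x)$ as an initial segment of $\#N$. Set $T_\!G(X)(x) := \abs{p^{-1}(x)} \in \!{Count}$, and for $g : x -> y$ in $\!G$ let $T_\!G(X)(g) : p^{-1}(x) -> p^{-1}(y)$ be the bijection induced by the $\!G$-action read off under the chosen enumerations; this depends in a Borel way on $g$. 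The natural isomorphism $\Sigma_\!G \circ T_\!G \cong 1$ is the tautological bijection between $\{(x, i) \mid i < \abs{p^{-1}(x)}\}$ and $X$ via the sections, and $T_\!G \circ \Sigma_\!G \cong 1$ is the identity once each $F(x) \in \!{Count}$ is given its canonical enumeration. Fully faithfulness of $\Sigma_\!G$ then follows formally: a Borel $\!G$-equivariant map $\Sigma_\!G(F) -> \Sigma_\!G(G)$ is precisely a Borel-in-$x$ family of functions $F(x) -> G(x)$ commuting with the $\!G$-action, i.e., a Borel natural transformation $F -> G$.

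For (ii), the Boolean $\omega_1$-pretopos structure on $\&{BorGpd}(\!G, \!{Count})$ is defined pointwise from that on $\!{Count}$ using the Borel coding operations of \cref{thm:count-pretop-borel}; the structure on $\Act^B_{\omega_1}(\!G)$ is constructed fiberwise over $G^0$, with finite limits as fiber products, countable coproducts as disjoint unions, quotients by Borel equivariant equivalence relations (still fiberwise countable by Lusin--Novikov), and Boolean complements taken inside a fiberwise countable ambient space. In each case $\Sigma_\!G$ is visibly compatible via a canonical Borel equivariant isomorphism (e.g., $\Sigma_\!G(\bigsqcup_i F_i) \cong \bigsqcup_i \Sigma_\!G(F_i)$), so $\Sigma_\!G$ is $\omega_1$-coherent. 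The same bookkeeping simultaneously shows that $\Act^B_{\omega_1}(\!G)$ is a Boolean $\omega_1$-pretopos and that pullback along a Borel functor $F : \!G -> \!H$ preserves this structure, so that $\Act^B_{\omega_1}$ is indeed a pseudofunctor $\&{BorGpd}^\op -> \&{B\omega_1PTop}$.

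For (iii), given a Borel functor $F : \!G -> \!H$, I would define the pseudonaturality $2$-cell by its component at $H \in \&{BorGpd}(\!H, \!{Count})$, namely the canonical Borel $\!G$-equivariant isomorphism $F^*(\Sigma_\!H(H)) = G^0 \times_{H^0} \Sigma_\!H(H) \cong \Sigma_\!G(H \circ F)$ sending $(x, (F(x), a))$ to $(x, a)$. Naturality in $H$ and the pseudonaturality coherences for $1_\!G$ and for composites $F' \circ F$ are immediate from the explicit formula. The main obstacle is not conceptual but notational: the verifications for (ii) and (iii) require unwinding the construction of each of the several $\omega_1$-pretopos operations on both sides and exhibiting the canonical Borel comparison, in essentially the same spirit as the routine descriptive-set-theoretic calculations already done in \cref{thm:count-pretop-borel,thm:isogpd-psfunc}, but carried out separately for each kind of structure.
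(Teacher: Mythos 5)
Your proposal is correct and follows essentially the same route as the paper: essential surjectivity of each $\Sigma_\!G$ via a Lusin--Novikov Borel enumeration of the fibers, full faithfulness read off directly from the definition of $\Sigma_\!G(f)$, and the pseudonaturality $2$-cell given by the same explicit formula $(x,(y,a)) \mapsto (x,a)$ with the coherences checked by inspection. The only difference is that you spell out the $\omega_1$-coherence of $\Sigma_\!G$ and the pretopos structure on $\Act^B_{\omega_1}(\!G)$ by hand, which the paper leaves implicit (and which in any case transfers automatically across the equivalence, since the Boolean $\omega_1$-pretopos structure is determined by universal properties).
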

\begin{proof}
First, we check that for fixed $\!G$, $\Sigma_\!G$ is an equivalence.  Faithfulness and fullness are clear from the definition of $\Sigma_\!G(f)$.  For essential surjectivity, given an arbitrary fiberwise countable Borel $\!G$-space $p : X -> G^0$, we may use Lusin--Novikov to enumerate each fiber $p^{-1}(x)$ in a Borel way, yielding bijections $e_x : p^{-1}(x) \cong \abs{p^{-1}(x)}$ such that $e_x$ and $\abs{p^{-1}(x)}$ are Borel in $x$; then defining $F : \!G -> \!{Count}$ by $F(x) := \abs{p^{-1}(x)}$ and $F(g) := e_y \circ g \circ e_x^{-1}$ for $g : x -> y$ in $\!G$, we clearly have $X \cong \Sigma(F)$.  (This was noted at the end of \cref{sec:borel-action-etale}.)

For a Borel functor $F : \!G -> \!H$, the isomorphism $\Sigma_F : F^* \circ \Sigma_\!H \cong \Sigma_\!G \circ F^*$ is given by
\begin{align*}
\Sigma_{F,G} : F^*(\Sigma_\!H(G)) = G^0 \times_{H^0}^F \Sigma_\!H(G) &--> \Sigma_\!G(G \circ F) = \Sigma_\!G(F^*(G)) \\
(x, (y, a)) &|--> (x, a)
\end{align*}
for $G : \!H -> \!{Count}$.  It is straightforward to check that this works.
\end{proof}

\begin{lemma}
\label{thm:borel-duality-unit-cocycle}
For a countable $\@L_{\omega_1\omega}$-theory $(\@L, \@T)$, we have $\Sigma_{\MOD(\@L, \@T)} \circ \eta_\@T \cong \den{-} : \-{\ang{\@L \mid \@T}}^B_{\omega_1} -> \Act^B_{\omega_1}(\MOD(\@L, \@T))$.
\end{lemma}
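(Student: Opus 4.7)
The plan is to construct a natural isomorphism $\theta : \Sigma_{\MOD(\@L, \@T)} \circ \eta_\@T \cong \den{-}$ of functors $\-{\ang{\@L \mid \@T}}^B_{\omega_1} --> \Act^B_{\omega_1}(\MOD(\@L, \@T))$ by recursion on the structure of an imaginary sort $A$. For each $A$ and each model $\@M \in \Mod(\@L, \@T)$, I would first define a bijection $\theta_{A,\@M} : K_\@T(\@M)(A) --> A^\@M$, and then verify that the induced fiberwise map $\Sigma(K_\@T(-)(A)) --> \den{A}$ is a Borel $\MOD(\@L, \@T)$-equivariant isomorphism, natural in $A$. The guiding observation is that $K_\@T(\@M)(-)$ and $A |-> A^\@M$ are computed by the \emph{same} inductive recipe on imaginary sorts, using merely different (but canonically Borel-isomorphic) implementations of the Boolean $\omega_1$-pretopos operations on $\!{Count}$.

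For the base cases, when $A = \#X$ both $K_\@T(\@M)(\#X)$ and $\#X^\@M$ are the underlying set $M$ of $\@M$, and $\theta_{\#X,\@M}$ is the identity. When $A = \#X^n$, $K_\@T(\@M)(\#X^n)$ is the $n$-fold power in $\!{Count}$ chosen via \cref{thm:count-pretop-borel}(a), with projections to $M$ inducing a Borel-in-$\@M$ bijection to $M^n = (\#X^n)^\@M$ by \cref{thm:isogpd-retract}(ii); take $\theta_{\#X^n,\@M}$ to be this map. For an $n$-ary $R \in \@L$, $K_\@T(\@M)(R)$ comes with a Borel-in-$\@M$ monomorphism into $K_\@T(\@M)(\#X^n)$ whose image transported by $\theta_{\#X^n,\@M}$ is exactly $R^\@M$, yielding $\theta_{R,\@M}$ by restriction.

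For the inductive step at the level of an $\@L_{\omega_1\omega}$-formula $\phi$ viewed as a subobject of $\#X^n$, note that both $K_\@T(-)(\phi)$ and $\den{\phi}$ arise by applying the Boolean $\omega_1$-pretopos operations ($\wedge, \bigvee, \exists, \neg$, etc.)\ to the subformulas: the first uses the Borel operations from \cref{thm:count-pretop-borel}, the second uses the evident set-theoretic operations in each fiber of $\den{\#X^n}$. Since any two implementations of these operations in $\!{Count}$ are canonically and Borel-ly isomorphic, one obtains $\theta_{\phi,\@M}$ from the $\theta_{\psi,\@M}$ at subformulas $\psi$ by composing with these comparisons. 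For a general imaginary sort $A = (\bigsqcup_i \alpha_i)/(\bigsqcup_{i,j} \epsilon_{ij})$, the $\theta_{\alpha_i,\@M}$ assemble into $\theta_{A,\@M}$ via the canonical Borel comparison maps for coproducts and quotients from \cref{thm:count-pretop-borel}(d,e).

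Naturality in $A$ reduces to checking that for each definable function $f : A --> B$ the square built from the components of $\theta$ together with $K_\@T(\@M)(f)$ and $f^\@M$ commutes; this holds because both functors interpret $f$ from the same defining graph formulas, so the canonical Borel comparisons assembled into $\theta$ intertwine them. Equivariance under an isomorphism $g : \@M \cong \@N$ follows by an analogous induction: $K_\@T(g)(A)$ is defined by the same recursion on $A$ as the action of $g$ on $\den{A}$, and the two coincide through $\theta$ since they agree at $A = \#X$. Borelness in $\@M$ of every comparison map is immediate at each stage from \cref{thm:count-pretop-borel} and \cref{thm:isogpd-retract}(ii,iii). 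The main obstacle is purely organizational: bookkeeping the stack of canonical Borel comparisons carefully enough that naturality in $A$ and equivariance hold strictly rather than merely up to further isomorphism. Since all analytic content has already been packaged into \cref{thm:count-pretop-borel} and \cref{thm:isogpd-retract}, no new work beyond this bookkeeping is needed.
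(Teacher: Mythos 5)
Your argument is correct in outline, but it takes a genuinely different---and much more laborious---route than the paper. The paper's proof observes that both $\Sigma_{\MOD(\@L, \@T)} \circ \eta_\@T$ and $\den{-}$ are $\omega_1$-coherent functors from $\-{\ang{\@L \mid \@T}}^B_{\omega_1}$ into the Boolean $\omega_1$-pretopos $\Act^B_{\omega_1}(\MOD(\@L, \@T))$ (the former via \cref{thm:borel-action-cocycle} and the definition of $\eta_\@T$, the latter by inspection of \cref{sec:den}), and then invokes the freeness of the syntactic Boolean $\omega_1$-pretopos (\cref{thm:synbptop-free}): two $\omega_1$-coherent functors out of $\-{\ang{\@L \mid \@T}}^B_{\omega_1}$ are naturally isomorphic as soon as the models of $\@T$ to which they send the universal model $\@X$ are isomorphic. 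This collapses the entire comparison to the generating data, namely the computation $\Sigma_{\MOD(\@L, \@T)}(K_\@T(-)(\#X)) = \{(\@M, a) \mid a \in M\} = \den{\#X}$ together with the identification of the subobject $\Sigma_{\MOD(\@L, \@T)}(\eta_\@T(R))$ with $\den{R} \subseteq \den{\#X^n}$, which is immediate from the definition of $K_\@T$ in \cref{thm:isogpd-retract}. Your induction on the structure of imaginary sorts is essentially a hand-unrolling of the proof of that universal property---it is the same recursion that builds $\zeta$ in \cref{thm:isogpd-retract}---so all the bookkeeping you flag at the end (naturality against arbitrary definable functions via their graphs, equivariance, Borelness of the stacked comparison maps) is genuine work that the universal property has already packaged. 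Both proofs are valid: the paper's buys a two-line argument at the cost of having established \cref{thm:synbptop-free} and the $\omega_1$-coherence of both functors, while yours is self-contained at the level of this lemma but duplicates that setup and must be carried out carefully to ensure the comparison isomorphisms commute on the nose rather than up to further isomorphism.
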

\begin{proof}
By \cref{thm:borel-action-cocycle} and the definition of $\eta_\@T$, $\Sigma_{\MOD(\@L, \@T)} \circ \eta_\@T$ is an $\omega_1$-coherent functor; from the definition of $\den{-}$, it is easy to check that $\den{-}$ is also an $\omega_1$-coherent functor.  Thus, by \cref{thm:synbptop-free}, it suffices to check that the models of $\@T$ in $\Act^B_{\omega_1}(\MOD(\@L, \@T))$ corresponding to $\Sigma_{\MOD(\@L, \@T)} \circ \eta_\@T$ and $\den{-}$ are isomorphic.  The former has underlying object
\begin{align*}
\Sigma_{\MOD(\@L, \@T)}(\eta_\@T(\#X)) = \Sigma_{\MOD(\@L, \@T)}(K_\@T(-)(\#X)) = \{(\@M, a) \mid \@M \in \Mod(\@L, \@T) \AND a \in M\} = \den{\#X};
\end{align*}
and the interpretation of $n$-ary $R \in \@L$ is (the image of) $\Sigma_{\MOD(\@L, \@T)}(\eta_\@T(R)) `-> \Sigma_{\MOD(\@L, \@T)}(\eta_\@T(\#X^n)) \cong \Sigma_{\MOD(\@L, \@T)}(\eta_\@T(\#X))^n$, which from the definition of $K_\@T$ (\cref{thm:isogpd-retract}) is easily seen to be $\den{R} \subseteq \den{\#X^n}$.  Thus, the two models are the same.
\end{proof}

Combining \cref{thm:borel-duality-adjoint-cocycle,thm:borel-action-cocycle,thm:borel-duality-unit-cocycle,thm:borel-interp-equiv}, we get

\begin{proposition}
\label{thm:borel-duality}
We have a contravariant relative pseudoadjunction
\begin{align*}
\&{\omega_1PTop}(\-{\ang{\@L \mid \@T}}^B_{\omega_1}, \Act^B_{\omega_1}(\!G)) \cong \&{BorGpd}(\!G, \MOD(\@L, \@T))
\end{align*}
between the pseudofunctors
\begin{align*}
\MOD : \&{\omega_1\omega Thy}_{\omega_1}^\op &--> \&{BorGpd}, &
\Act^B_{\omega_1} : \&{BorGpd}^\op &--> \&{B\omega_1PTop};
\end{align*}
and the adjunction unit $\-{\ang{\@L \mid \@T}}^B_{\omega_1} -> \Act^B_{\omega_1}(\MOD(\@L, \@T))$ is (isomorphic to $\den{-}$, and hence) an equivalence for every countable $\@L_{\omega_1\omega}$-theory $(\@L, \@T)$.  \qed
\end{proposition}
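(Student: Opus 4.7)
The plan is to assemble the claimed pseudoadjunction from the pieces already established in this section, and then read off the unit. First I would take the pseudonatural adjoint equivalence
\[
\Phi,\Psi : \&{\omega_1PTop}(\-{\ang{\@L \mid \@T}}^B_{\omega_1}, \&{BorGpd}(\!G, \!{Count})) \simeq \&{BorGpd}(\!G, \MOD(\@L, \@T))
\]
from \cref{thm:borel-duality-adjoint-cocycle}, and post-compose on the left with the equivalence $(\Sigma_\!G)_* : \&{\omega_1PTop}(\-{\ang{\@L \mid \@T}}^B_{\omega_1},\&{BorGpd}(\!G,\!{Count})) \to \&{\omega_1PTop}(\-{\ang{\@L \mid \@T}}^B_{\omega_1},\Act^B_{\omega_1}(\!G))$ induced by $\Sigma_\!G$, which is a pseudonatural equivalence by \cref{thm:borel-action-cocycle} (and post-composition with an equivalence of Boolean $\omega_1$-pretoposes yields an equivalence on hom-groupoids since $\omega_1$-coherent functors compose and natural isomorphisms transport). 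Pseudonaturality in both $\!G$ and $(\@L, \@T)$ is inherited from pseudonaturality of $\Phi, \Psi$ (again by \cref{thm:borel-duality-adjoint-cocycle}) and of $\Sigma$ (by \cref{thm:borel-action-cocycle}), so the resulting equivalence is pseudonatural in both variables in the required sense.

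Next I would identify the adjunction unit. By the standard calculation for any (relative, pseudo-) adjunction, the unit at $(\@L,\@T)$ is obtained by transposing the identity on $\MOD(\@L,\@T)$; through the transported equivalence this is exactly $\Sigma_{\MOD(\@L,\@T)} \circ \eta_\@T$, where $\eta_\@T = \Phi(1_{\MOD(\@L,\@T)})$ is the unit of the pre-transport adjunction as displayed before \cref{thm:borel-duality-unit-cocycle}. By \cref{thm:borel-duality-unit-cocycle}, this composite is naturally isomorphic to $\den{-} : \-{\ang{\@L \mid \@T}}^B_{\omega_1} \to \Act^B_{\omega_1}(\MOD(\@L, \@T))$.

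Finally, I would invoke \cref{thm:borel-interp-equiv}, which asserts that $\den{-}$ is an equivalence of categories. Combined with the previous paragraph, this shows that the unit of the relative pseudoadjunction is an equivalence at every countable $\@L_{\omega_1\omega}$-theory, completing the proof. I do not expect any genuine obstacle here: the argument is entirely formal bookkeeping, combining four results already in hand. The only point that merits care is verifying that the pseudonaturality data for the transported equivalence genuinely extends across the $2$-categories $\&{\omega_1\omega Thy}_{\omega_1}^\op$ and $\&{BorGpd}^\op$ (rather than just giving componentwise equivalences), but this is immediate from the fact that $\Sigma$ is itself pseudonatural as a morphism between the two pseudofunctors $\&{BorGpd}(-,\!{Count})$ and $\Act^B_{\omega_1}(-)$ on $\&{BorGpd}^\op$, and that post-composition with a pseudonatural equivalence preserves pseudonaturality of the adjunction isomorphism in $(\@L,\@T)$.
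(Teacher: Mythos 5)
Your proposal is correct and is exactly the paper's intended argument: the proposition is stated with an immediate \qed precisely because it follows by combining \cref{thm:borel-duality-adjoint-cocycle,thm:borel-action-cocycle,thm:borel-duality-unit-cocycle,thm:borel-interp-equiv} in the way you describe, postcomposing the adjoint equivalence of \cref{thm:borel-duality-adjoint-cocycle} with the pseudonatural equivalence $\Sigma$ and identifying the transported unit with $\den{-}$. No gaps; your closing remark about checking that pseudonaturality survives the transport is the right (and only) point of care, and it is handled by $\Sigma$ being pseudonatural as stated in \cref{thm:borel-action-cocycle}.
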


This is our promised interpretation of \cref{thm:borel-interp-equiv} as a half-duality.  One consequence is the following reformulation of \cref{thm:borel-interp-equiv}, which contains \cref{thm:2interp-eso}:

\begin{corollary}
\label{thm:2interp-equiv}
For any two theories $(\@L, \@T), (\@L', \@T')$, the functor
\begin{align*}
\MOD_{\@T,\@T'} : \&{\omega_1\omega Thy}_{\omega_1}((\@L, \@T), (\@L', \@T') &--> \&{BorGpd}(\MOD(\@L', \@T'), \MOD(\@L, \@T)) \\
(F : (\@L, \@T) -> (\@L', \@T')) &|--> \MOD(F) = F^* : \MOD(\@L', \@T') -> \MOD(\@L, \@T)
\end{align*}
is an equivalence of groupoids.
\end{corollary}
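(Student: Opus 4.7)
The plan is to deduce \cref{thm:2interp-equiv} as a formal consequence of the pseudoadjunction in \cref{thm:borel-duality} together with the fact, from \cref{thm:borel-interp-equiv}, that its unit $\eta_{\@T'} \simeq \den{-}$ is an equivalence. First I would instantiate the pseudoadjunction at $\!G := \MOD(\@L', \@T')$ to obtain an equivalence of groupoids
\[
\Psi_\@T : \&{\omega_1PTop}(\-{\ang{\@L \mid \@T}}^B_{\omega_1}, \Act^B_{\omega_1}(\MOD(\@L', \@T'))) \longrightarrow \&{BorGpd}(\MOD(\@L', \@T'), \MOD(\@L, \@T)).
\]
Separately, since $\den{-}_{\@T'}$ is an equivalence of $\omega_1$-pretoposes, postcomposition with it gives an equivalence from $\&{\omega_1PTop}(\-{\ang{\@L \mid \@T}}^B_{\omega_1}, \-{\ang{\@L' \mid \@T'}}^B_{\omega_1})$ to $\&{\omega_1PTop}(\-{\ang{\@L \mid \@T}}^B_{\omega_1}, \Act^B_{\omega_1}(\MOD(\@L', \@T')))$; the former groupoid is, by definition of interpretation, just $\&{\omega_1\omega Thy}_{\omega_1}((\@L, \@T), (\@L', \@T'))$. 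Composing these two equivalences produces an equivalence from the domain of $\MOD_{\@T, \@T'}$ to its codomain, and the remaining task is to identify this composite with $\MOD_{\@T, \@T'}$ up to natural isomorphism.

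For the identification I would invoke pseudonaturality of the adjunction equivalence in the theory variable: for any interpretation $F : (\@L, \@T) \to (\@L', \@T')$, postcomposition with $\MOD(F)$ on the $\&{BorGpd}$ side should correspond under $\Phi$ to precomposition with $F$ on the $\&{\omega_1PTop}$ side. Chasing $1_{\MOD(\@L', \@T')}$ around this naturality square (with $\!G = \MOD(\@L', \@T')$) yields
\[
\Phi_\@T(\MOD(F)) \;\simeq\; \Phi_{\@T'}(1_{\MOD(\@L', \@T')}) \circ F \;=\; \eta_{\@T'} \circ F \;\simeq\; \den{-}_{\@T'} \circ F,
\]
which transposes via $\Psi_\@T$ to $\MOD(F) \simeq \Psi_\@T(\den{-}_{\@T'} \circ F)$, exactly the composite we wanted.

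The main obstacle is verifying this pseudonaturality explicitly. Unwinding \cref{thm:isogpd-psfunc}, one has $\MOD(F) = L_\@T \circ F^* \circ K_{\@T'}$, while by \cref{thm:borel-duality-adjoint-cocycle} the equivalences $\Phi, \Psi$ are assembled from the components $K, L$ of \cref{thm:isogpd-retract} together with the elementary ``bihom'' isomorphism between functors $\-{\ang{\@L \mid \@T}}^B_{\omega_1} \to \&{Gpd}(\!G, \!{Count})$ and functors $\!G \to \&{\omega_1PTop}(\-{\ang{\@L \mid \@T}}^B_{\omega_1}, \!{Count})$. A tedious but routine diagram chase then reduces the required commutativity to the triangle identities $L \circ K = 1$ and $\zeta_{K(\@M)} = 1$ supplied by \cref{thm:isogpd-retract}(i), together with \cref{thm:borel-duality-unit-cocycle} to replace $\eta_{\@T'}$ by $\den{-}_{\@T'}$. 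Once the identification is in place the corollary follows at once, since essential surjectivity, fullness, and faithfulness of $\MOD_{\@T, \@T'}$ are each inherited from the two component equivalences being composed.
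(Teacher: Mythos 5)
Your proposal is correct and follows essentially the same route as the paper: the paper likewise factors $\MOD_{\@T,\@T'}$ (up to isomorphism) as postcomposition with the unit $\den{-}_{\@T'}$ followed by the adjunction equivalence of \cref{thm:borel-duality} at $\!G = \MOD(\@L', \@T')$, citing the standard fact that a (relative pseudo)left adjoint is fully faithful when the unit is an equivalence. Your explicit naturality-square chase identifying $\Phi_\@T(\MOD(F))$ with $\eta_{\@T'} \circ F$ is just a spelled-out version of the identification the paper leaves as "easily seen."
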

\begin{proof}
This follows from a version (for relative pseudoadjunctions) of the standard fact that a left adjoint is full and faithful iff the unit is a natural isomorphism.  See e.g., \cite[1.3]{LMV}.

(In more detail, $\MOD_{\@T,\@T'}$ is easily seen to be isomorphic to the composite
\begin{align*}
\&{\omega_1PTop}(\-{\ang{\@L \mid \@T}}^B_{\omega_1}, \-{\ang{\@L' \mid \@T'}}^B_{\omega_1})
\cong \&{\omega_1PTop}(\-{\ang{\@L \mid \@T}}^B_{\omega_1}, \Act^B_{\omega_1}(\MOD(\@L', \@T')))
\cong \&{BorGpd}(\MOD(\@L', \@T'), \MOD(\@L, \@T))
\end{align*}
where the second equivalence is given by \cref{thm:borel-duality} and the first equivalence is induced by the adjunction unit (i.e., $\den{-}$) for $\@T'$.)
\end{proof}

Finally in this section, we explain how the Borel version of the main result of Harrison-Trainor--Miller--Montalbán \cite[Theorem~9]{HMM} can be viewed as a special case of \cref{thm:2interp-eso}.

Recall (see e.g., \cite[\S12.1]{Gao}) that for a countable $\@L$-structure $\@M$, the \defn{Scott sentence} of $\@M$ is an $\@L_{\omega_1\omega}$-sentence $\sigma_\@M$ whose countable models are precisely the isomorphic copies of $\@M$.

Let $\@M \in \MOD(\@L)$ and $\@N \in \MOD(\@L')$ be countable structures (on initial segments of $\#N$) in possibly different languages $\@L, \@L'$.  According to \cite{HMM}, an \defn{interpretation} $\@I$ of $\@M$ in $\@N$ consists of:
\begin{enumerate}
\item[(i)]  a subset $\@Dom_\@M^\@N \subseteq N^{<\omega}$, \defn{definable (without parameters)} in $\@N$, i.e., for each $n$ we have $N^n \cap \@Dom_\@M^\@N = \phi^\@N$ for some $\@L'_{\omega_1\omega}$-formula with $n$ variables;
\item[(ii)]  a definable equivalence relation $\sim$ on $\@Dom_\@M^\@N$;
\item[(iii)]  for each $n$-ary $R \in \@L$, a $\sim$-invariant definable subset $R^\@I \subseteq (\@Dom_\@M^\@N)^n$;
\item[(iv)]  an isomorphism of $\@L$-structures $g_\@I : (\@Dom_\@M^\@N/{\sim}, R^\@I/{\sim})_{R \in \@L} \cong \@M$.
\end{enumerate}

We may rephrase this in our terminology as follows.  By completeness and the defining property of $\sigma_\@N$, a definable subset $S \subseteq N^n$ is defined by a unique $\@L'_{\omega_1\omega}$-formula modulo $\sigma_\@N$-equivalence.  Thus, definable subsets of $N^n$ are in bijection with subobjects (i.e., subsorts) of $\#X^n$ in $\-{\ang{\@L' \mid \sigma_\@N}}^B_{\omega_1}$, and similarly for definable subsets of $N^{<\omega}$, etc.  Furthermore, the conditions on the definable sets $\sim$ and $R^\@I$ imposed by (ii--iv) above are equivalent to the corresponding syntactic conditions on the defining formulas being $\sigma_\@N$-provable.  Using this, it is easily seen that an interpretation $\@I$ of $\@M$ in $\@N$ is equivalently given by
\begin{enumerate}
\item[(i$'$)]  a subsort $D_\@I \subseteq \bigsqcup_{n \in \#N} \#X^n$ in $\-{\ang{\@L' \mid \sigma_\@N}}^B_{\omega_1}$;
\item[(ii$'$)]  an equivalence relation $E_\@I$ on $D_\@I$;
\item[(iii$'$)]  an interpretation $F_\@I : (\@L, \sigma_\@M) -> (\@L', \sigma_\@N)$ (in our sense), i.e., model of $\sigma_\@M$ in $\-{\ang{\@L' \mid \sigma_\@N}}^B_{\omega_1}$, with underlying object $F_\@I(\#X) = D_\@I/E_\@I$;
\item[(iv$'$)]  an isomorphism $g_\@I : F_\@I^*(\@N) \cong \@M$ in $\MOD(\@L, \sigma_\@M)$.
\end{enumerate}

\begin{remark}
Note that $D_\@I, E_\@I, g_\@I$ are in some sense irrelevant.  Indeed, for any imaginary sort $A = (\bigsqcup_i \alpha_i)/(\bigsqcup_{i,j} \epsilon_{ij}) \in \-{\ang{\@L \mid \@T}}^B_{\omega_1}$ (in any theory $(\@L, \@T)$), where $\alpha_i$ has $n_i$ variables, we may express $A$ as a quotient of a subsort of $\bigsqcup_{n \in \#N} \#X^n$, by picking $n_0' < n_1' < \dotsb$ with $n_i' \ge n_i$ and then embedding $\alpha_i \subseteq \#X^{n_i}$ into $\#X^{n_i'}$ diagonally, so that $\bigsqcup_i \alpha_i \subseteq \bigsqcup_i \#X^{n_i'} \subseteq \bigsqcup_n \#X^n$.  Thus, given $F_\@I : (\@L, \sigma_\@M) -> (\@L', \sigma_\@N)$, we can always find $D_\@I$ and $E_\@I$ as in (i$'$,ii$'$) such that $F_\@I(\#X) \cong D_\@I/E_\@I$.  And since all countable models of $\sigma_\@M$ are isomorphic to $\@M$, we can always find $g_\@I$ as in (iv$'$).
\end{remark}

Given an interpretation $\@I$ of $\@M$ in $\@N$, \cite{HMM} defines the induced functor $\MOD(\@L', \sigma_\@N) -> \MOD(\@L, \sigma_\@M)$ to take an isomorphic copy of $\@N$ to the isomorphic copy of $\@M$ given by $\@I$, with domain replaced by (an initial segment of) $\#N$ via some canonical coding of quotients of subsets of $\#N^{<\omega}$.  This is also how we defined $F_\@I^* : \MOD(\@L', \sigma_\@N) -> \MOD(\@L, \sigma_\@M)$ in \cref{thm:isogpd-psfunc}, with the coding given by \cref{thm:count-pretop-borel}.  Note that in accordance with the above remark, $D_\@I, E_\@I, g_\@I$ are not used here.

The Borel version of \cite[Theorem~9]{HMM} states that every Borel functor $\MOD(\@L', \sigma_\@N) -> \MOD(\@L, \sigma_\@M)$ is induced by some interpretation of $\@M$ in $\@N$.  By the above, this is equivalent to \cref{thm:2interp-eso} in the case where $\@T, \@T'$ are both (equivalent to) Scott sentences, i.e., when they both have a single countable model up to isomorphism.

% In order to similarly generalize the continuous (or, worse, computable) version of \cite[Theorem~9]{HMM}, it seems natural to try to find a Stone duality formulation of \cref{thm:etale-interp-equiv} analogous to \cref{thm:borel-duality}.  This would perhaps be based on regarding $\!{Count}$ as having the commuting structures of $\omega_1$-pretopos and (not Polish but) ``Polish-indexed'' groupoid (see e.g., \cite[B1]{Jeleph}).  We have not made any serious attempt at working out the details.

\section{$\kappa$-coherent frames and locales}
\label{sec:kloc}

In the rest of this paper, we sketch a proof of a generalization of \cref{thm:etale-coherent-interp-equiv} (itself a generalization of \cref{thm:etale-interp-equiv}) using the Joyal--Tierney representation theorem for Grothendieck toposes.  In this section, we review some concepts from locale theory; see \cite{Jstone}, \cite[C1]{Jeleph}, or \cite{JT}.

In this and the following sections, let $\kappa$ be a regular \emph{uncountable} cardinal or the symbol $\infty$ (bigger than all cardinals).  By \defn{$\kappa$-ary} we mean of size less than $\kappa$.

A \defn{frame} is a poset with finite meets and arbitrary joins, the former distributing over the latter.  A \defn{locale} $X$ is the same thing as a frame $\@O(X)$, except that we think of $X$ as a generalized topological space whose \defn{frame of opens} is $\@O(X)$.  A \defn{continuous map} or \defn{locale morphism} $f : X -> Y$ between locales is a frame homomorphism $f^* : \@O(Y) -> \@O(X)$.  Thus, the category $\!{Loc}$ of locales is the opposite of the category $\!{Frm}$ of frames.  A topological space $X$ is regarded as the locale with $\@O(X) = \{\text{open sets in $X$}\}$; thus we have a forgetful functor $\!{Top} -> \!{Loc}$.  A locale $X$ is \defn{spatial} if it is isomorphic to a topological space.  The \defn{spatialization} $\Sp(X)$ of a locale $X$ is the space of all locale morphisms $1 -> X$ or \defn{points} (where $\@O(1) = \{0 < 1\}$), with topology consisting of
\begin{align*}
[U] := \{x \in \Sp(X) \mid x^*(U) = 1\}
\end{align*}
for $U \in \@O(X)$; we have a canonical locale morphism $\epsilon : \Sp(X) -> X$ given by $\epsilon^* = [-] : \@O(X) -> \@O(\Sp(X))$, which is an isomorphism iff $X$ is spatial.

% We denote meets/joins in a $\kappa$-frame interchangeably by $\wedge/\vee$ or $\cap/\cup$, depending on whether we are adopting a lattice-theoretic or topological point of view.

A \defn{sublocale} $Y \subseteq X$ is given by a quotient frame $\@O(X) ->> \@O(Y)$.  The \defn{image sublocale} of a locale morphism $f : X -> Y$ is given by the image of the corresponding frame homomorphism $f^*$.  Every open $U \in \@O(X)$ gives rise to the \defn{open sublocale} $U \subseteq X$ corresponding to the quotient frame $U \wedge (-) : \@O(X) ->> \down U =: \@O(U)$.  A locale morphism $f : X -> Y$ is \defn{open} if the image of every open $U \subseteq X$ is an open sublocale $f_+(U) \subseteq Y$, and \defn{étalé} if $X$ (i.e., the top element $\top \in \@O(X)$) is the union of open sublocales $U \subseteq X$ to which the restriction $f|U : U -> Y$ is an open sublocale inclusion (with image $f_+(U)$; we call such $U$ an \defn{open section over $f_+(U)$}).

A \defn{sheaf} on a frame $L$ is a functor $L^\op -> \!{Set}$ preserving limits (i.e., colimits in $L$) of the form $\bigvee A$ where $A \subseteq L$ is downward-closed; the category of sheaves on $L$ is denoted $\Sh(L)$.  A \defn{sheaf} on a locale $X$ is a sheaf on $\@O(X)$; we also put $\Sh(X) := \Sh(\@O(X))$.  By standard sheaf theory (see \cite[C1.3]{Jeleph}), a sheaf on $X$ is equivalently given by an étalé locale over $X$.

A \defn{$\kappa$-frame} is a poset with finite meets and $\kappa$-ary joins, the former distributing over the latter.  For a $\kappa$-frame $K$, the \defn{$\kappa$-ideal completion} $\Idl_\kappa(K)$ is a frame; a frame $L$ (or locale $X$) is \defn{$\kappa$-coherent} if $L$ ($\@O(X)$) is isomorphic to $\Idl_\kappa(K)$ for some $\kappa$-frame $K$.  The \defn{$\kappa$-compact} elements of $L$, denoted $L_\kappa \subseteq L$, are those $u \in L$ such that whenever $u \le \bigvee_i v_i$ then $u \le \bigvee_j v_{i_j}$ for a $\kappa$-ary subfamily $\{v_{i_j}\}_j$; for a $\kappa$-frame $K$ we have $\Idl_\kappa(K)_\kappa \cong K$, hence for a $\kappa$-coherent frame $L$ we have $L \cong \Idl_\kappa(L_\kappa)$.  For a locale $X$, put $\@O_\kappa(X) := \@O(X)_\kappa$.  A locale morphism $f : X -> Y$ between $\kappa$-coherent $X, Y$ is \defn{$\kappa$-coherent} if $f^*(\@O_\kappa(Y)) \subseteq \@O_\kappa(X)$, i.e., $f^*$ is the join-preserving extension of a $\kappa$-frame homomorphism $\@O_\kappa(Y) -> \@O_\kappa(X)$.  Thus, the category $\!{\kappa Loc}$ of $\kappa$-coherent locales and $\kappa$-coherent locale morphisms is equivalent to the opposite of the category $\!{\kappa Frm}$ of $\kappa$-frames.  Because of this, we also refer to a $\kappa$-coherent locale (morphism) simply as a \defn{$\kappa$-locale (morphism)}.

Let $X, Y$ be $\kappa$-locales.  A locale morphism $f : X -> Y$ is \defn{$\kappa$-étalé} if $X$ is a $\kappa$-ary union of $\kappa$-compact open sections.  It is easy to see that a $\kappa$-étalé morphism is automatically $\kappa$-coherent.  Note that when $\kappa = \omega_1$, the notion of $\omega_1$-étalé locale morphism is not quite analogous to the notion of countable étalé map from \cref{sec:groupoid}: the present notion requires the open sections to be $\omega_1$-compact.  (By \cref{thm:qpol-sloc} below, the two notions agree when restricted to quasi-Polish spaces.)  Nonetheless, we have analogs of the basic properties in \cref{thm:etale} (with ``$\kappa$-étalé'' in place of ``countable étalé''), proved in exactly the same way.  For a $\kappa$-locale $Y$, we write $\Sh_\kappa(Y) \subseteq \Sh(Y)$ for the subcategory of $\kappa$-étalé locales over $Y$ (identified with sheaves).

A ($\kappa$-)frame $L$ is \defn{$\kappa$-presented} if it has a $\kappa$-ary presentation, i.e., there are $<\kappa$-many $u_i \in L$ and $<\kappa$-many equations between ($\kappa$-)frame terms involving the $u_i$ such that $L$ is the free ($\kappa$-)frame generated by the $u_i$ subject to these equations.  Let $\!{Frm}_\kappa \subseteq \!{Frm}$ (resp., $\!{\kappa Frm}_\kappa \subseteq \!{\kappa Frm}$) denote the full subcategory of $\kappa$-presented ($\kappa$-)frames.  The following is straightforward:

\begin{lemma}
\label{thm:kfrmk}
For a $\kappa$-presented $\kappa$-frame $K$, the principal ideal embedding $\down : K -> \Idl_\kappa(K)$ is an isomorphism; and $\Idl_\kappa : \!{\kappa Frm} -> \!{Frm}$ restricts to an equivalence of categories $\!{\kappa Frm}_\kappa \cong \!{Frm}_\kappa$.  \qed
\end{lemma}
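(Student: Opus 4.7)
The plan is to exploit the adjunction $\Idl_\kappa \dashv U$ where $U : \!{Frm} -> \!{\kappa Frm}$ is the forgetful functor, with unit $\down : K -> \Idl_\kappa(K)$. As already noted, this unit identifies $K$ with the sub-$\kappa$-frame $\Idl_\kappa(K)_\kappa$ of $\kappa$-compact elements, so $\down$ is an isomorphism iff every element of $\Idl_\kappa(K)$ is $\kappa$-compact, iff every $\kappa$-ideal of $K$ is principal. Granting this for $\kappa$-presented $K$, the equivalence $\!{\kappa Frm}_\kappa \simeq \!{Frm}_\kappa$ falls out formally: $\Idl_\kappa$ preserves colimits and $U$ preserves $\kappa$-filtered colimits (computed on underlying sets in both varieties), so $\Idl_\kappa$ restricts to $\!{\kappa Frm}_\kappa -> \!{Frm}_\kappa$; fullness and faithfulness follow from $\operatorname{Hom}_{\!{Frm}}(\Idl_\kappa K, \Idl_\kappa K') \cong \operatorname{Hom}_{\!{\kappa Frm}}(K, U \Idl_\kappa K') \cong \operatorname{Hom}_{\!{\kappa Frm}}(K, K')$ via the adjunction and the iso $\down$ for $K'$; and essential surjectivity from the fact that every $\kappa$-presented frame arises as $\Idl_\kappa(F/R)$ for the corresponding free-$\kappa$-frame-modulo-relations presentation $F/R$.

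To show $\down$ is an iso for $K$ $\kappa$-presented, I would first handle the free case: the free $\kappa$-frame $F$ on $\lambda < \kappa$ generators is concretely the lattice of $\subseteq$-upward-closed subsets of $P_{\mathrm{fin}}(\lambda)$, with meets as intersections and joins as unions. Since $\kappa$ is regular and uncountable, $|P_{\mathrm{fin}}(\lambda)| = \max(\lambda, \aleph_0) < \kappa$, so every up-set has cardinality ${<}\kappa$. This yields three properties at once: (a) $F$ is closed under arbitrary unions and hence is already a frame, coinciding with the free frame on $\lambda$ generators; (b) every element of $F$ is $\kappa$-compact, since any cover can be thinned to the ${<}\kappa$-many minimal elements of the covered up-set; and (c) $F$ enjoys a ``join-localization'' property, namely any $\bigvee_i a_i$ equals $\bigvee_{i \in J} a_i$ for some $\kappa$-ary $J$, by picking a single witness $i$ for each of the ${<}\kappa$ points of $\bigcup_i a_i$ (this also follows from (b)). Properties (a) and (b) together force every $\kappa$-ideal of $F$ to be principal: its join in $F$ exists by (a) and is $\kappa$-compact by (b), so lies in the ideal by $\kappa$-ary closure.

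For a general $\kappa$-presented $K = F/R$ with $|R| < \kappa$, I would use (c) to show the $\kappa$-frame congruence on $F$ generated by $R$ already equals the frame congruence generated by $R$: the only nontrivial additional closure needed is under arbitrary joins, where given $a_i \sim b_i$ for $i$ in an arbitrary index set, (c) applied to $\bigvee_i a_i$ and $\bigvee_i b_i$ together with the union of the two resulting $\kappa$-ary cofinal index sets reduces the claim to the $\kappa$-ary join closure already in hand. Hence $F/R$ computed in $\!{\kappa Frm}$ and in $\!{Frm}$ agree; since $\Idl_\kappa$ preserves coequalizers and $\Idl_\kappa(F) = F$ by the previous paragraph, $\Idl_\kappa(K) = \Idl_\kappa(F)/R = F/R = K$, giving $\down$ iso. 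The one delicate step is this congruence analysis, where I must rule out the possibility that the frame quotient collapses strictly more than the $\kappa$-frame quotient; the rest is routine adjoint-functor bookkeeping, the explicit description of the free $\kappa$-frame, and standard regular-cardinal arithmetic.
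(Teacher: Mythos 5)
The paper offers no proof of this lemma (it is dismissed as ``straightforward''), so there is nothing to compare against; your proposal supplies a correct and essentially complete argument. The two facts you isolate really are the entire content. First, for $\lambda < \kappa$ the free $\kappa$-frame on $\lambda$ generators, realized as the up-set lattice of $P_{\mathrm{fin}}(\lambda)$, coincides with the free frame and has every element $\kappa$-compact, because the underlying poset has size $\max(\lambda, \aleph_0) < \kappa$. Second, the $\kappa$-frame congruence generated by a $\kappa$-ary relation set is already closed under arbitrary joins---your refinement to $J = J_1 \cup J_2$ with the sandwich $\bigvee_{J_1} a_i \le \bigvee_J a_i \le \bigvee_I a_i$ is exactly the point---hence coincides with the frame congruence, and there is no extra collapse. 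From these two facts one can finish the first assertion slightly more directly than you do: the quotient $K = F/\theta$ is itself a frame in which every element is $\kappa$-compact (given $q(a) \le \bigvee_i q(b_i)$, rewrite $q(a) = \bigvee_i q(a \wedge b_i)$ and localize the join upstairs in $F$), so every $\kappa$-ideal of $K$ contains its own join and is principal, i.e., $\down$ is surjective and hence an isomorphism. This avoids your detour through $\Idl_\kappa(F)/R \cong F/R \cong K$, which is also fine but leaves you the small extra task of checking that the composite isomorphism is actually the unit $\down$. The remaining bookkeeping (full faithfulness from the adjunction plus the unit being invertible at $K'$; essential surjectivity because, by the first fact, every frame term in $<\kappa$ generators equals a $\kappa$-frame term, so a $\kappa$-ary frame presentation converts to a $\kappa$-frame presentation) is as you describe.

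One parenthetical you should drop: the claim that the forgetful functor $\mathrm{Frm} \to \kappa\mathrm{Frm}$ preserves $\kappa$-filtered colimits ``computed on underlying sets in both varieties'' is not obvious---frames have join operations of unbounded arity, so filtered colimits of frames are not computed on underlying sets---and it also conflates the paper's generators-and-relations notion of $\kappa$-presented with categorical $\kappa$-presentability. It is unnecessary in any case: that $\Idl_\kappa$ carries $\kappa$-presented $\kappa$-frames to $\kappa$-presented frames follows directly from $\Idl_\kappa(F_\kappa(S)/R) \cong F(S)/R$, i.e., from your two key facts together with preservation of free objects and coequalizers by the left adjoint $\Idl_\kappa$.
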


We call a locale $X$ \defn{$\kappa$-copresented} if $\@O(X)$ is $\kappa$-presented as a frame, or equivalently as a $\kappa$-frame.  By \cref{thm:kfrmk}, a $\kappa$-copresented locale is $\kappa$-coherent, with $\@O_\kappa(X) = \@O(X)$.

In the next lemma (a generalization of \cref{thm:etale-qpol}, by \cref{thm:qpol-sloc}), we adopt the point of view featured prominently in \cite{JT}, where a ($\kappa$-)frame is viewed as analogous to a commutative ring; thus, a $\kappa$-frame homomorphism $f : K -> L$ exhibits $L$ as a ``$K$-algebra''.

\begin{lemma}
\label{thm:etale-kloc}
Let $f : X -> Y$ be a $\kappa$-étalé locale morphism, and let $\@U \subseteq \@O(X)$ be $<\kappa$-many $\kappa$-compact open sections covering $X$ and closed under binary meets.  Then $f^* : \@O_\kappa(Y) -> \@O_\kappa(X)$ exhibits $\@O_\kappa(X)$ as the $\@O_\kappa(Y)$-algebra presented by the generators $U \in \@U$ and the relations
\begin{align*}
\begin{aligned}
U &= f^*(f_+(U)) \wedge V &&\text{for $U \le V \in \@U$}, \\
U \wedge V &= W &&\text{for $U, V \in \@U$ and $W = U \wedge V$}, \\
\top &= \bigvee_{U \in \@U} U.
\end{aligned}
\end{align*}
Thus, if $Y$ is $\kappa$-copresented, then so is $X$.
\end{lemma}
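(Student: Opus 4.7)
The plan is to verify the universal property of a $\kappa$-coherent $\@O_\kappa(Y)$-algebra presentation directly. The key structural fact is that since each $U \in \@U$ is an open section, the restriction $f|U : U \to f_+(U)$ is an isomorphism of locales, inducing a $\kappa$-frame isomorphism $\down U \cong \down f_+(U) \subseteq \@O_\kappa(Y)$ sending $W \le U$ to $f_+(W)$ with inverse $V \mapsto f^*(V) \wedge U$. From this, the three stated relations are immediate in $\@O_\kappa(X)$: the first expresses that $f^*(f_+(U))$ restricted to $V$ recovers $U$ (i.e., the embedding $\down U \hookrightarrow \down V$ agrees with pullback along $f$), the second is closure under meets, and the third is the cover assumption.

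Next, I would establish a canonical decomposition: for any $W \in \@O_\kappa(X)$, setting $V_U^W := f_+(W \wedge U) \in \@O_\kappa(Y)$ (which is $\kappa$-compact because $W \wedge U$ is, and $f|U$ is an iso), we have
\begin{align*}
W = W \wedge \top = \bigvee_{U \in \@U} (W \wedge U) = \bigvee_{U \in \@U} (U \wedge f^*(V_U^W)).
\end{align*}
This is a $<\kappa$-ary join, and expresses $W$ in terms of the generators $U$, the scalars $f^*(\@O_\kappa(Y))$, and the $\kappa$-frame operations. Uniqueness of any $\@O_\kappa(Y)$-algebra map $\phi_L : \@O_\kappa(X) \to L$ sending $U \mapsto a_U$ is then immediate. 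For existence, given any $\@O_\kappa(Y)$-algebra $L$ with homomorphism $h : \@O_\kappa(Y) \to L$ and elements $a_U \in L$ satisfying the three relations, I would define
\begin{align*}
\phi_L(W) := \bigvee_{U \in \@U} (a_U \wedge h(V_U^W)).
\end{align*}

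The main obstacle is verifying that $\phi_L$ is a $\kappa$-frame homomorphism. Preservation of $\kappa$-ary joins follows straightforwardly: $V_U^{\bigvee_i W_i} = \bigvee_i V_U^{W_i}$ because $f_+$ restricted to opens $\le U$ is a frame isomorphism onto $\down f_+(U)$, and then one interchanges the two joins. Preservation of finite meets is the delicate part. One computes, using that $f|U$ is an iso and that $\@U$ is closed under binary meets, the identity $V_U^{W_1 \wedge W_2} \wedge U = W_1 \wedge W_2 \wedge U$, and for $U, U' \in \@U$ with $U'' := U \wedge U' \in \@U$, one must show
\begin{align*}
(a_U \wedge h(V_U^{W_1})) \wedge (a_{U'} \wedge h(V_{U'}^{W_2})) = a_{U''} \wedge h(V_{U''}^{W_1 \wedge W_2}).
\end{align*}
Here the second relation gives $a_U \wedge a_{U'} = a_{U''}$, and the first relation, applied to $U'' \le U$ and $U'' \le U'$, lets one rewrite $a_{U''} = a_U \wedge h(f_+(U'')) = a_{U'} \wedge h(f_+(U''))$, which together with the definition of $V_U^W$ reduces both sides to $a_{U''} \wedge h(f_+(W_1 \wedge W_2 \wedge U''))$. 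Compatibility with the $\@O_\kappa(Y)$-algebra structure ($\phi_L \circ f^* = h$) follows by applying the definition to $W = f^*(V)$, where $V_U^{f^*(V)} = V \wedge f_+(U)$, and using the first relation again.

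The second assertion (that $X$ is $\kappa$-copresented when $Y$ is) then drops out: the presentation above adds $|\@U| < \kappa$ generators and at most $|\@U|^2 + |\@U|^2 + 1 < \kappa$ relations (using regularity of $\kappa$) to the $\kappa$-presentation of $\@O_\kappa(Y)$, giving a $\kappa$-ary presentation of $\@O_\kappa(X)$ as a $\kappa$-frame, which by \cref{thm:kfrmk} is the same as a $\kappa$-ary presentation as a frame.
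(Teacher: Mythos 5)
Your proposal is correct and follows essentially the same route as the paper: the paper's proof consists of writing down the very same candidate homomorphism $W \mapsto \bigvee_{U \in \@U} (g(f_+(U \wedge W)) \wedge h(U))$ (your $\phi_L$, with the roles of $g$ and $h$ notationally swapped) and asserting that the verification is straightforward. You have simply carried out the checks the paper omits, and they are the right ones.
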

\begin{proof}
Given another $\@O_\kappa(Y)$-algebra (i.e., $\kappa$-frame homomorphism) $g : \@O_\kappa(Y) -> L$, and a map $h : \@U -> L$ such that the above relations (with $f^*$ replaced by $g$) hold after applying $h$ to $\@U$, the unique $\kappa$-frame homomorphism $h' : \@O_\kappa(X) -> L$ extending $h$ such that $h' \circ f^* = g$ is given by
\begin{align*}
h'(W) := \bigvee_{U \in \@U} (g(f_+(U \wedge W)) \wedge h(U)).
\end{align*}
It is straightforward to check that this works.
\end{proof}

A basic intuition regarding ($\kappa$-)locales is that they are (quasi-)Polish spaces generalized by removing countability requirements.  This is made precise by the following.  It can be found in \cite{Hec}, who states that similar results have been proved before by various authors.

\begin{proposition}
\label{thm:qpol-sloc}
The forgetful functor $\!{Top} -> \!{Loc}$ restricts to an equivalence $\!{QPol} -> \!{Loc}_{\omega_1}$ between the category of quasi-Polish spaces and the category of $\omega_1$-copresented locales.
\end{proposition}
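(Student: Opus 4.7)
The plan is to verify that the forgetful functor $U : \!{QPol} \to \!{Loc}$ (i) is fully faithful, (ii) lands in $\!{Loc}_{\omega_1}$, and (iii) is essentially surjective onto $\!{Loc}_{\omega_1}$. Point (i) is immediate: every quasi-Polish space is sober \cite{deB}, and the forgetful functor $\!{Top} \to \!{Loc}$ is fully faithful on sober spaces by the classical equivalence between sober spaces and spatial locales.

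For (ii), given a quasi-Polish $X$, fix an embedding $X \subseteq \#S^\#N$ realizing $X = \bigcap_i (U_i \cup F_i)$ with $U_i$ open and $F_i$ closed in $\#S^\#N$. Since $\@O(\#S)$ is the three-element chain $\{\bot < u < \top\}$, which is the free frame on one generator, and countable products of sober spaces agree with countable products of locales (corresponding to countable coproducts of frames), $\@O(\#S^\#N)$ is the free frame on countably many generators $(u_n)_{n \in \#N}$. Setting $V_i := \#S^\#N \setminus F_i$ (open), the spatial subset $U_i \cup F_i = \{x : x \in V_i \implies x \in U_i\}$ is cut out as a sublocale of $\#S^\#N$ by the single frame relation $V_i \le U_i$, i.e.\ $V_i = V_i \wedge U_i$. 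Since $X$ is sober, its frame of opens coincides with this sublocale quotient; hence $\@O(X)$ is the quotient of the free frame on countably many generators by countably many such relations, and so is $\omega_1$-presented.

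For (iii), conversely, let $L$ be an $\omega_1$-presented frame, with a presentation by countably many generators $(u_n)_n$ and relations $(s_m = t_m)_m$. These generators furnish a frame surjection $\@O(\#S^\#N) \twoheadrightarrow L$, exhibiting the locale $Y$ with $\@O(Y) = L$ as a sublocale of $\#S^\#N$. Each equality $s_m = t_m$ decomposes into the two inequalities $s_m \le t_m$ and $t_m \le s_m$, each of which defines, on points, a $\*\Pi^0_2$ subset of $\#S^\#N$ (of the form $\{x : s_m(x) \implies t_m(x)\}$). Hence $\Sp(Y)$ is a $\*\Pi^0_2$ subset of $\#S^\#N$, and thus quasi-Polish; and $U(\Sp(Y))$ is the spatial sublocale of $\#S^\#N$ determined by $Y$. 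To finish one must show that $Y$ itself is spatial, so that $Y \cong \Sp(Y)$ as locales; this is the localic content of de Brecht's characterization of quasi-Polish spaces, as spelled out in \cite{Hec}.

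The main obstacle is precisely the spatiality of $Y$ in step (iii): one must rule out that imposing countably many relations $V \le U$ on the free frame $\@O(\#S^\#N)$ yields a frame strictly smaller than the open-set frame of the resulting $\*\Pi^0_2$ subset. Verifying this is the substantive localic content of the proposition and is what is established in \cite{Hec}.
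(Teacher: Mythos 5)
Your reduction is essentially the paper's: the paper offers no argument for this proposition at all, deferring entirely to \cite{Hec}, and the real substance is Heckmann's theorem that countably presented frames are spatial, which is exactly where you place it. Two corrections to the writeup, though. First, in step (ii) the inference ``since $X$ is sober, its frame of opens coincides with this sublocale quotient'' is a non sequitur. What comes for free is a frame \emph{surjection} $L := \@O(\#S^\#N)/\langle V_i \le U_i \rangle_i \twoheadrightarrow \@O(X)$ (the relations hold among restricted opens, and every open of $X$ is a restriction of an open of $\#S^\#N$); injectivity of that surjection is precisely the spatiality of the countably presented quotient $L$, which sobriety of $X$ does not provide. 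So \cite{Hec} is needed in step (ii) just as much as in step (iii): a surjection from a countably presented frame does not make $\@O(X)$ countably presented (quotients by uncontrolled congruences need not preserve presentability), so without the spatiality of $L$ you have not shown that the functor lands in $\!{Loc}_{\omega_1}$ at all. Your closing paragraph, which localizes the ``main obstacle'' to step (iii) only, should be amended: the single statement $L \cong \@O(\Sp(L)) = \@O(X)$ for countably presented $L$ discharges (ii) and (iii) simultaneously. Second, and more minor: ``countable products of sober spaces agree with countable products of locales'' is not a fact you may invoke in general; the instance you actually need --- that the free frame on countably many generators is $\@O(\#S^\#N)$ --- is better justified by the direct computation that the free frame on a set $I$ is the frame of upward-closed families of finite subsets of $I$, which is visibly isomorphic to $\@O(\#S^I)$.
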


\section{Locally $\kappa$-presentable categories}
\label{sec:lkpcat}

This section collects some basic facts we will need on locally $\kappa$-presentable categories; see \cite{ARlpac}.

Let $\!C$ be a category with (small) $\kappa$-filtered colimits.  An object $X \in \!C$ is \defn{$\kappa$-presentable} if the representable functor $\!C(X, -) : \!C -> \!{Set}$ preserves $\kappa$-filtered colimits.  Let $\!C_\kappa \subseteq \!C$ denote the full subcategory of $\kappa$-presentable objects.  $\!C$ is \defn{$\kappa$-accessible} if $\!C_\kappa$ is essentially small and generates $\!C$ under $\kappa$-filtered colimits, and \defn{locally $\kappa$-presentable} if it is furthermore cocomplete (equivalently, $\!C_\kappa$ has $\kappa$-ary colimits).

An arbitrary category $\!C$ has a \defn{$\kappa$-ind-completion} $\Ind_\kappa(\!C)$, which is the free cocompletion of $\!C$ under (small) $\kappa$-filtered colimits; see \cite[2.26]{ARlpac}.  When $\!C$ is small and has $\kappa$-ary colimits, $\Ind_\kappa(\!C)$ can be constructed as the full subcategory of $\!{Set}^{\!C^\op}$ on the functors preserving $\kappa$-ary limits.  For a small category $\!K$, $\Ind_\kappa(\!K)$ is $\kappa$-accessible, with $\Ind_\kappa(\!K)_\kappa$ equivalent to the Cauchy completion of $\!K$; conversely, for a $\kappa$-accessible category $\!C$, we have $\!C \cong \Ind_\kappa(\!C_\kappa)$.

\begin{lemma}
\label{thm:lkpcat-product}
A $\kappa$-ary product $\prod_i \!C_i$ of locally $\kappa$-presentable categories $\!C_i$ is locally $\kappa$-presentable, with $(\prod_i \!C_i)_\kappa = \prod_i (\!C_i)_\kappa$.
\end{lemma}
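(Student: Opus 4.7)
The plan is to verify the two halves of local $\kappa$-presentability for $\prod_i \!C_i$ (recall that ``$\kappa$-ary'' means the index set $I$ satisfies $\abs{I} < \kappa$), and simultaneously to identify its $\kappa$-presentable objects as the componentwise $\kappa$-presentables.  Cocompleteness is automatic, since all (co)limits in a product category are computed componentwise.

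The crux is the identification $(\prod_i \!C_i)_\kappa = \prod_i (\!C_i)_\kappa$.  For the inclusion $\supseteq$: given a $\kappa$-filtered diagram $j \mapsto (Y_i^j)_i$ in $\prod_i \!C_i$, componentwise computation of colimits together with the decomposition $\mathrm{Hom}_{\prod_i \!C_i}((X_i)_i, (Y_i)_i) = \prod_i \mathrm{Hom}_{\!C_i}(X_i, Y_i)$ reduces the claim to the fact that $\kappa$-small products commute with $\kappa$-filtered colimits in $\!{Set}$ --- one of the defining features of $\kappa$-filteredness.  For the converse inclusion: the projection $\pi_j : \prod_i \!C_i \to \!C_j$ has a right adjoint $R_j$ placing its argument at coordinate $j$ and the terminal object of each $\!C_i$ elsewhere.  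Since a $\kappa$-filtered colimit of constant terminals is again terminal (the index category being nonempty and connected), $R_j$ preserves $\kappa$-filtered colimits, and hence its left adjoint $\pi_j$ sends $\kappa$-presentables to $\kappa$-presentables.  Thus every component $X_j = \pi_j((X_i)_i)$ of a $\kappa$-presentable tuple $(X_i)_i$ lies in $(\!C_j)_\kappa$.

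For $\kappa$-accessibility, given any $(X_i)_i \in \prod_i \!C_i$, express each $X_i = \mathrm{colim}_{J_i} P_i$ as a small $\kappa$-filtered colimit of objects $P_i(j_i) \in (\!C_i)_\kappa$.  Since $\abs{I} < \kappa$, the product $\prod_i J_i$ of small $\kappa$-filtered categories is itself small and $\kappa$-filtered, and the diagram $(j_i)_i \mapsto (P_i(j_i))_i$ takes values in $\prod_i (\!C_i)_\kappa$; its colimit is computed componentwise, and cofinality of each projection $\prod_i J_i \to J_{i_0}$ (using nonemptiness of the other factors) yields $X_{i_0}$ in the $i_0$-th coordinate.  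So $(X_i)_i$ is a $\kappa$-filtered colimit of objects in $\prod_i (\!C_i)_\kappa$, completing the proof.  I do not anticipate any real obstacle: the whole argument is driven by two closely related interchange properties --- that $\kappa$-small limits commute with $\kappa$-filtered colimits in $\!{Set}$, and that $<\kappa$-indexed products of small $\kappa$-filtered categories are again $\kappa$-filtered.
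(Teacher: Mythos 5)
Your proof is correct. The paper itself gives no argument here; it simply cites \cite[2.67]{ARlpac}, noting that the statement there omits the hypothesis that the product be $\kappa$-ary. Your direct proof is therefore a genuine alternative, and it has the virtue of making visible exactly where $\kappa$-arity of the index set is used: once in the interchange of $\kappa$-small products with $\kappa$-filtered colimits in $\!{Set}$ (for the inclusion $\prod_i (\!C_i)_\kappa \subseteq (\prod_i \!C_i)_\kappa$), and once in the fact that a $<\kappa$-indexed product of $\kappa$-filtered categories is again $\kappa$-filtered (for accessibility). The converse inclusion via the right adjoints $R_j$ is clean and standard, and the finality of the projections $\prod_i J_i \to J_{i_0}$ holds as you say (nonemptiness of the comma categories comes from nonemptiness of the factors, connectedness from their filteredness). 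The only point left implicit is that $\prod_i (\!C_i)_\kappa$ is essentially small, which is immediate from essential smallness of each $(\!C_i)_\kappa$ and smallness of $I$; with that remark added, your argument is a complete, self-contained substitute for the citation.
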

\begin{proof}
See \cite[2.67]{ARlpac} (which is missing the hypothesis that the product must be $\kappa$-ary).
\end{proof}

% The next two lemmas depend essentially on the uncountability of $\kappa$.

\begin{lemma}
\label{thm:lkpcat-equifier}
Let $F, G : \!C -> \!D$ be cocontinuous functors between locally $\kappa$-presentable categories such that $F(\!C_\kappa), G(\!C_\kappa) \subseteq \!D_\kappa$, and let $\alpha, \beta : F -> G$ be natural transformations.  Then the \defn{equifier} of $\alpha, \beta$, i.e., the full subcategory $\Eq(\alpha, \beta) \subseteq \!C$ of those $X \in \!C$ for which $\alpha_X = \beta_X$, is locally $\kappa$-presentable, with $\Eq(\alpha, \beta)_\kappa = \Eq(\alpha, \beta) \cap \!C_\kappa$.
\end{lemma}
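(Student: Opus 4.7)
The plan is to verify the three axioms of local $\kappa$-presentability for $\Eq(\alpha, \beta)$---cocompleteness, essential smallness of the $\kappa$-presentable objects, and their generation of the whole category under $\kappa$-filtered colimits---while simultaneously identifying $\Eq(\alpha, \beta)_\kappa = \Eq(\alpha, \beta) \cap \!C_\kappa$.  The easy first step is that $\Eq(\alpha, \beta) \subseteq \!C$ is closed under all colimits: if $X = \operatorname{colim}_i X_i$ in $\!C$ with $X_i \in \Eq(\alpha, \beta)$, then cocontinuity of $F, G$ gives $F(X) = \operatorname{colim} F(X_i)$ and $G(X) = \operatorname{colim} G(X_i)$, and $\alpha_X, \beta_X$ are induced componentwise by the agreeing $\alpha_{X_i}, \beta_{X_i}$.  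This already yields cocompleteness of $\Eq(\alpha, \beta)$, closure of $\Eq(\alpha, \beta) \cap \!C_\kappa$ under $\kappa$-ary colimits in $\!C_\kappa$, and the fact that every object of $\Eq(\alpha, \beta) \cap \!C_\kappa$ is $\kappa$-presentable in $\Eq(\alpha, \beta)$ (since $\kappa$-filtered colimits in the latter are computed in $\!C$).

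The main work is to show every $X \in \Eq(\alpha, \beta)$ is a $\kappa$-filtered colimit of objects of $\Eq(\alpha, \beta) \cap \!C_\kappa$; it suffices to prove that for any $Y \to X$ with $Y \in \!C_\kappa$, there is a factorization $Y \to Z \to X$ with $Z \in \Eq(\alpha, \beta) \cap \!C_\kappa$, which will make the slice $(\Eq(\alpha, \beta) \cap \!C_\kappa) \downarrow X$ cofinal in the canonical $\kappa$-filtered diagram $\!C_\kappa \downarrow X$ with colimit $X$.  I will construct $Z$ by an $\omega$-chain argument.  Write $X = \operatorname{colim}_{i \in I} X_i$ as a $\kappa$-filtered colimit with $X_i \in \!C_\kappa$, and let $Y \to X$ factor through some $X_{i_0}$.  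The hypothesis $\alpha_X = \beta_X$ and naturality give $G(\eta_i) \circ \alpha_{X_i} = G(\eta_i) \circ \beta_{X_i}$ for each cocone leg $\eta_i : X_i \to X$; since $F(X_i) \in \!D_\kappa$ (using $F(\!C_\kappa) \subseteq \!D_\kappa$) and $G(X) = \operatorname{colim}_j G(X_j)$ is $\kappa$-filtered, there exists $j \ge i$ with $G(X_i \to X_j) \circ \alpha_{X_i} = G(X_i \to X_j) \circ \beta_{X_i}$ already in $\!D(F(X_i), G(X_j))$.  Iterating, I build an $\omega$-chain $i_0 \le i_1 \le \dotsb$ in $I$ satisfying this equation at each step $(i_n, i_{n+1})$, and set $Z := \operatorname{colim}_{n < \omega} X_{i_n}$, which lies in $\!C_\kappa$ since $\omega < \kappa$.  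Cocontinuity of $F$ gives $F(Z) = \operatorname{colim}_n F(X_{i_n})$, and for each $n$ the composites $F(X_{i_n}) \to F(Z) \rightrightarrows G(Z)$ through $\alpha_Z$ and $\beta_Z$ agree, since by naturality each equals $G(X_{i_n} \to Z) \circ \alpha_{X_{i_n}}$ (resp.\ $\beta_{X_{i_n}}$), and these become equal after factoring $G(X_{i_n} \to Z)$ through $G(X_{i_{n+1}} \to Z)$ and applying the $i_{n+1}$-step equation.  Hence $\alpha_Z = \beta_Z$, so $Y \to X_{i_0} \to Z \to X$ is the desired factorization.

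This establishes local $\kappa$-presentability with $\Eq(\alpha, \beta) \cap \!C_\kappa \subseteq \Eq(\alpha, \beta)_\kappa$.  The reverse inclusion is the standard retract argument: any $Y \in \Eq(\alpha, \beta)_\kappa$, presented as a $\kappa$-filtered colimit of a diagram in $\Eq(\alpha, \beta) \cap \!C_\kappa$, has its identity factoring through some object of the diagram, exhibiting $Y$ as a retract of an object of $\!C_\kappa$ and hence, since $\!C_\kappa$ is closed under retracts, itself in $\!C_\kappa$.  The main obstacle is the middle paragraph's $\omega$-chain construction; the delicate point is that the stepwise relations only assert $\alpha_{X_{i_n}}$ and $\beta_{X_{i_n}}$ become equalized \emph{after} passing through the next stage, yet cocontinuity of $F$ combined with each $G(X_{i_n} \to Z)$ factoring through $G(X_{i_{n+1}} \to Z)$ conspires to promote these stepwise equalizations into the genuine equality $\alpha_Z = \beta_Z$; this in turn relies essentially on $\kappa$ being uncountable (so that $\omega$-colimits of $\kappa$-presentable objects remain $\kappa$-presentable) and on $F$ preserving $\kappa$-presentables (so that the pointwise equations can be extracted from the colimit condition).
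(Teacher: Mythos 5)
Your proof is correct and follows essentially the same route as the paper's: reduce to showing that every morphism from a $\kappa$-presentable object into an object of $\Eq(\alpha,\beta)$ factors through an object of $\Eq(\alpha,\beta)\cap\!C_\kappa$, build that factorization by an $\omega$-chain of stages at which $\alpha$ and $\beta$ are successively coequalized (using $F(\!C_\kappa)\subseteq\!D_\kappa$ and $\kappa$-filteredness), and take the colimit of the chain, which stays $\kappa$-presentable because $\kappa$ is uncountable. The only cosmetic difference is that you run the chain inside a chosen $\kappa$-filtered presentation of $X$ while the paper uses the canonical diagram $\!C_\kappa\downarrow X$, and you spell out the retract argument for $\Eq(\alpha,\beta)_\kappa\subseteq\!C_\kappa$ that the paper leaves implicit.
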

\begin{proof}
See \cite[2.76]{ARlpac}.  Alternatively, here is a direct proof.
Since $F, G$ are cocontinuous, $\Eq(\alpha, \beta) \subseteq \!C$ is closed under colimits.  Since $\Eq(\alpha, \beta) \subseteq \!C$ is full, $\Eq(\alpha, \beta) \cap \!C_\kappa \subseteq \Eq(\alpha, \beta)_\kappa$.  So it suffices to show that every $X \in \Eq(\alpha, \beta)$ is a $\kappa$-filtered colimit of objects in $\Eq(\alpha, \beta) \cap \!C_\kappa$; for this, it suffices to show that every morphism $f : Y -> X$ with $Y \in \!C_\kappa$ factors through some $g : Z -> X$ with $Z \in \Eq(\alpha, \beta) \cap \!C_\kappa$.  We have $G(f) \circ \alpha_Y = \alpha_X \circ F(f) = \beta_X \circ F(f) = G(f) \circ \beta_Y : F(Y) -> G(X) = \injlim_{\!C_\kappa \ni Z -> X} G(Z)$, so since $F(Y) \in \!D_\kappa$, $f$ factors as $Y =: Z_0 --->{g_0} Z_1 --->{h_1} X$ with $Z_1 \in \!C_\kappa$ such that $G(g_0) \circ \alpha_{Z_0} = G(g_0) \circ \beta_{Z_0}$.  Similarly, $h_1$ factors as $Z_1 --->{g_1} Z_2 --->{h_2} X$ with $Z_2 \in \!C_\kappa$ such that $G(g_1) \circ \alpha_{Z_1} = G(g_1) \circ \beta_{Z_1}$.  Continue finding $Z_0 --->{g_0} Z_1 --->{g_1} Z_2 --->{g_2} \dotsb --->{h_i} X$ in this way, then put $Z := \injlim_i Z_i$, to get $\alpha_Z = \beta_Z$.  Since $\kappa$ is uncountable, $Z \in \!C_\kappa$.
\end{proof}

\begin{lemma}
\label{thm:lkpcat-inserter}
Let $F, G : \!C -> \!D$ be cocontinuous functors between locally $\kappa$-presentable categories such that $F(\!C_\kappa), G(\!C_\kappa) \subseteq \!D_\kappa$.  Then the \defn{inserter} category $\Ins(F, G)$, whose objects are pairs $(X, \alpha)$ where $X \in \!C$ and $\alpha : F(X) -> G(X)$, is locally $\kappa$-presentable, with $\Ins(F, G)_\kappa$ consisting of those $(X, \alpha)$ with $X \in \!C_\kappa$.
\end{lemma}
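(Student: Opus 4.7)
The plan is to follow the pattern of the proof of \cref{thm:lkpcat-equifier}.  First I would observe that $\Ins(F, G)$ is cocomplete, with colimits computed via the forgetful functor $U : \Ins(F, G) -> \!C$ sending $(X, \alpha) |-> X$: given a diagram $((X_i, \alpha_i))_i$ with structure-preserving morphisms, let $X := \injlim_i X_i$ in $\!C$ with colimit cocone $\iota_i : X_i -> X$; cocontinuity of $F$ gives $F(X) = \injlim_i F(X_i)$, while the morphisms $G(\iota_i) \circ \alpha_i : F(X_i) -> G(X)$ form a cocone (since each diagram map $f : (X_i, \alpha_i) -> (X_j, \alpha_j)$ satisfies $G(f) \circ \alpha_i = \alpha_j \circ F(f)$), inducing the required $\alpha : F(X) -> G(X)$.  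Thus $(X, \alpha)$ is the colimit, and $U$ is cocontinuous.

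Next, to show that $(X, \alpha)$ is $\kappa$-presentable in $\Ins(F, G)$ whenever $X \in \!C_\kappa$, I would examine $\Ins(F, G)((X, \alpha), -)$ on a $\kappa$-filtered colimit $(Y, \beta) = \injlim_j (Y_j, \beta_j)$.  For surjectivity of the comparison $\injlim_j \Ins(F, G)((X, \alpha), (Y_j, \beta_j)) -> \Ins(F, G)((X, \alpha), (Y, \beta))$: any morphism $f : X -> Y$ with $G(f) \circ \alpha = \beta \circ F(f)$ factors as $X --->{f_j} Y_j -> Y$ for some $j$ (as $X \in \!C_\kappa$); the relation $G(f_j) \circ \alpha = \beta_j \circ F(f_j)$ may fail at stage $j$, but after postcomposing with a further transition $Y_j -> Y_k$ it will hold, since $F(X) \in \!D_\kappa$ and the two parallel maps already agree in $G(Y) = \injlim_j G(Y_j)$.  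Injectivity of the comparison is the same style of argument.

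Finally I would show that every $(X, \alpha) \in \Ins(F, G)$ is a $\kappa$-filtered colimit of $(Z, \gamma)$ with $Z \in \!C_\kappa$, by proving that any morphism $f : Y -> X$ in $\!C$ with $Y \in \!C_\kappa$ factors through some morphism $(Z, \gamma) -> (X, \alpha)$ in $\Ins(F, G)$ with $Z \in \!C_\kappa$.  The construction is an $\omega$-iteration modeled on the equifier proof: set $Z_0 := Y$, $f_0 := f$; given $f_n : Z_n -> X$ with $Z_n \in \!C_\kappa$, factor $\alpha \circ F(f_n) : F(Z_n) -> G(X)$ through some $G(Z_n') -> G(X)$ with $Z_n' \in \!C_\kappa / X$ (possible since $F(Z_n) \in \!D_\kappa$ and $G(X) = \injlim_{Z' \in \!C_\kappa/X} G(Z')$), then set $Z_{n+1} := Z_n \sqcup Z_n'$ with the induced map $f_{n+1} : Z_{n+1} -> X$.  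Putting $Z := \injlim_n Z_n$ (which lies in $\!C_\kappa$ since $\kappa$ is uncountable) with $g := \injlim_n f_n : Z -> X$, the partial lifts should assemble into a cocone $F(Z_n) -> G(Z_{n+1}) -> G(Z)$, yielding the desired $\gamma : F(Z) = \injlim_n F(Z_n) -> G(Z)$ with $G(g) \circ \gamma = \alpha \circ F(g)$.

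The main obstacle will be coordinating the compatibility of the partial lifts $F(Z_n) -> G(Z_{n+1})$ across the tower so that they genuinely assemble into a single morphism $F(Z) -> G(Z)$ in the colimit: the factorizations at each stage are not canonical, and (as in the inner induction of \cref{thm:lkpcat-equifier}) it may be necessary to interleave additional transitions in $\!C_\kappa / X$ to force the $G$-side triangles to commute, while simultaneously arranging for $\gamma$ to be compatible with $\alpha$ via $g$.
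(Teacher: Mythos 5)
Your steps 1 and 2 (cocompleteness of $\Ins(F, G)$ with colimits created by the forgetful functor to $\!C$, and $\kappa$-presentability of the objects $(X, \alpha)$ with $X \in \!C_\kappa$) are correct, and your overall strategy is genuinely different from the paper's: the paper does not run a direct density argument for the inserter at all, but instead quotes the fact that the comma category $(1_\!C, F) \down (1_\!C, G)$ is locally $\kappa$-presentable \cite[2.43]{ARlpac} and realizes $\Ins(F, G)$ as an \emph{inverter}, i.e., a \emph{full} subcategory of that comma category, to which the $\omega$-step argument of \cref{thm:lkpcat-equifier} applies verbatim.

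The gap is in your step 3, and it is exactly the point the paper's detour is designed to avoid. In the equifier proof, reducing density to ``every $f : Y -> X$ with $Y \in \!C_\kappa$ factors through some object of $\Eq(\alpha, \beta) \cap \!C_\kappa$'' is legitimate because $\Eq(\alpha, \beta)$ is a \emph{full}, colimit-closed subcategory of $\!C$: for a full subcategory of a $\kappa$-filtered category, nonemptiness of the relevant comma categories already gives cofinality, since their connectedness comes for free from fullness. The forgetful functor $\Ins(F, G) -> \!C$ is faithful but \emph{not} full, so the same reduction fails: to conclude that $(X, \alpha)$ is the colimit of the canonical diagram of morphisms $(Z, \gamma) -> (X, \alpha)$ with $Z \in \!C_\kappa$, you need the induced functor to $\!C_\kappa \down X$ to be cofinal, which requires not only that each $f : Y -> X$ admit \emph{some} factorization through an inserter morphism, but also that any two such factorizations $Y -> Z_1 -> X$ and $Y -> Z_2 -> X$ be connectable by a zigzag of inserter morphisms over $(X, \alpha)$ compatible with the maps from $Y$; proving this means coequalizing the two composites $Y \rightrightarrows Z_1 \sqcup Z_2$ by a further inserter morphism, which is yet another interleaved $\omega$-iteration of the kind you describe, and your sketch does not address it. (The obstacle you do flag --- assembling the partial lifts $F(Z_n) -> G(Z_{n+1})$ into a cocone --- is real but fixable exactly as you suggest: after each stage insert a further transition in $\!C_\kappa \down X$ equalizing the two maps $F(Z_n) \rightrightarrows G(Z_{n+2})$, which exists because they agree in $G(X) = \injlim_{\!C_\kappa \down X} G$ and $F(Z_n) \in \!D_\kappa$.) So the approach can be completed, but as written the density claim is not proved, and the missing connectedness step is precisely where the inserter differs from the equifier.
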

\begin{proof}
This result is almost certainly well-known, although we could not find a precise reference for it.  Here is a proof sketch.

First, one shows that under the hypotheses of \cref{thm:lkpcat-equifier}, the \defn{inverter} $\Inv(\alpha) \subseteq \!C$, i.e., the full subcategory of $X$ such that $\alpha_X$ is invertible, is locally $\kappa$-presentable, with $\Inv(\alpha)_\kappa = \Inv(\alpha) \cap \!C_\kappa$.  This is done by an $\omega$-step construction as in the proof of \cref{thm:lkpcat-equifier} (see also \cite[B3.4.9]{Jeleph}).

Now consider $(1_\!C, F), (1_\!C, G) : \!C -> \!C \times \!D$.  By \cite[2.43]{ARlpac}, the comma category $\!(1_\!C, F) \down (1_\!C, G) = \{(X, Y,\, f : X -> Y,\, g : F(X) -> G(Y))\}$ is locally $\kappa$-presentable, with an object $(X, Y, f, g)$ locally presentable iff $X, Y \in \!C_\kappa$.  Clearly, $\Ins(F, G)$ is equivalent to the inverter of the natural transformation $\phi$ between the two projections $(1_\!C, F) \down (1_\!C, G) -> \!C$ given by $\phi_{(X, Y, f, g)} := f$.
\end{proof}

\section{$\kappa$-coherent theories}
\label{sec:kcohthy}

In this section, we briefly define the $\kappa$-ary analogs of the concepts from \cref{sec:imag,sec:isogpd,sec:den}.

Let $\@L$ be a first-order (relational) language.  Recall that $\@L_{\kappa\omega}$ is the extension of finitary first-order logic with $\kappa$-ary conjunctions $\bigwedge$ and disjunctions $\bigvee$.  A proof system for $\@L_{\kappa\omega}$ may be found in \cite[D1.3]{Jeleph}.  Note that this proof system is \emph{not} complete with respect to set-theoretic models.

The notions of \defn{$\kappa$-coherent formula}, \defn{$\kappa$-coherent axiom}, \defn{$\kappa$-coherent theory}, \defn{$\kappa$-coherent imaginary sort}, and \defn{$\kappa$-coherent definable function} are defined as in \cref{sec:imag}, with $\kappa$-ary disjunctions/disjoint unions replacing countable ones throughout.  The $\kappa$-coherent imaginary sorts and definable functions of a $\kappa$-coherent theory $(\@L, \@T)$ form the \defn{syntactic $\kappa$-pretopos}, denoted
\begin{align*}
\-{\ang{\@L \mid \@T}}_\kappa,
\end{align*}
which is the free \defn{$\kappa$-pretopos} (defined as in \cref{defn:spretop} but with $\kappa$-ary disjoint unions; functors preserving the $\kappa$-pretopos structure are called \defn{$\kappa$-coherent}) containing a \defn{model of $\@T$} (defined as in \cref{defn:struct-pretop}).  An \defn{interpretation} between two $\kappa$-coherent theories $(\@L, \@T), (\@L', \@T')$ is a $\kappa$-coherent functor $\-{\ang{\@L \mid \@T}}_\kappa -> \-{\ang{\@L' \mid \@T'}}_\kappa$; the theories are \defn{($\kappa$-coherently) Morita equivalent} if their syntactic $\kappa$-pretoposes are equivalent.

In the case $\kappa = \infty$, ``$\infty$-coherent'' is better known as \defn{geometric}, while the syntactic $\infty$-pretopos is better known as the \defn{classifying topos} (and usually denoted by $\!{Set}[\@T]$ instead of $\-{\ang{\@L \mid \@T}}_\infty$; see e.g., \cite[D3]{Jeleph}).  Note that when we speak of an $\infty$-coherent (i.e., geometric) theory $(\@L, \@T)$, we still mean that $\@L, \@T$ form sets (and not proper classes).

A $\kappa$-coherent theory $(\@L, \@T)$ may also be regarded as a geometric theory.  The link between the syntactic $\kappa$-pretopos and the classifying topos is provided by

\begin{lemma}
\label{thm:synptop-clstop}
For a $\kappa$-coherent theory $(\@L, \@T)$, we have $\-{\ang{\@L \mid \@T}}_\infty \cong \Ind_\kappa(\-{\ang{\@L \mid \@T}}_\kappa)$ (more precisely, the inclusion $\-{\ang{\@L \mid \@T}}_\kappa \subseteq \-{\ang{\@L \mid \@T}}_\infty$ exhibits the latter as the $\kappa$-ind-completion of the former).
\end{lemma}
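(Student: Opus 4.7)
The plan is to compare universal properties. By construction, $\-{\ang{\@L \mid \@T}}_\infty$ is the free geometric (i.e., $\infty$-coherent) pretopos containing a model of $\@T$, so it suffices to show that the $\kappa$-ind-completion $\!E := \Ind_\kappa(\-{\ang{\@L \mid \@T}}_\kappa)$ shares this same universal property, with the generating model given by pushing forward the universal model $\@X$ along the canonical embedding $j : \-{\ang{\@L \mid \@T}}_\kappa \hookrightarrow \!E$; the resulting equivalence $\!E \simeq \-{\ang{\@L \mid \@T}}_\infty$ will then identify $j$ with the syntactic inclusion $\-{\ang{\@L \mid \@T}}_\kappa \subseteq \-{\ang{\@L \mid \@T}}_\infty$.

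First I would show that $\!E$ is a geometric pretopos and that $j$ is $\kappa$-coherent. The category $\!E$ is locally $\kappa$-presentable with $\!E_\kappa \simeq \-{\ang{\@L \mid \@T}}_\kappa$ (this subcategory is already Cauchy-complete, since it has all finite limits), and $j$ preserves finite limits and $\kappa$-small colimits by general properties of Ind-completion. To get the pretopos exactness axioms for arbitrary coproducts and quotients --- disjointness, effectiveness, and pullback-stability --- I would write any set-indexed family of objects or equivalence relation in $\!E$ as a $\kappa$-filtered colimit of $\kappa$-small pieces from $\-{\ang{\@L \mid \@T}}_\kappa$, then use that finite limits commute with $\kappa$-filtered colimits in the locally $\kappa$-presentable category $\!E$ to reduce each axiom to the corresponding one in $\-{\ang{\@L \mid \@T}}_\kappa$.

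Next, given any geometric pretopos $\!F$ and a model $\@M \in \MOD_\!F(\@L, \@T)$, the $\kappa$-ary analog of \cref{thm:synptop-free} yields a $\kappa$-coherent functor $F : \-{\ang{\@L \mid \@T}}_\kappa -> \!F$ with $F_*(\@X) \cong \@M$, which by the universal property of the $\kappa$-ind-completion extends uniquely (up to canonical isomorphism) to a $\kappa$-filtered-colimit-preserving functor $\tilde F : \!E -> \!F$. This $\tilde F$ is in fact geometric: it preserves all small colimits (combining $\kappa$-small colimit preservation inherited from $F$ with $\kappa$-filtered colimit preservation) and all finite limits (because every object of $\!E$ is a $\kappa$-filtered colimit of $\kappa$-presentable objects, while finite limits commute with $\kappa$-filtered colimits in both $\!E$ and the geometric pretopos $\!F$). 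The resulting equivalence of groupoids of models of $\@T$ in arbitrary geometric pretoposes, combined with the analogous equivalence for $\-{\ang{\@L \mid \@T}}_\infty$, yields the desired $\!E \simeq \-{\ang{\@L \mid \@T}}_\infty$.

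The main obstacle I anticipate is the verification, in step one, that $\!E$ is a genuine geometric pretopos --- specifically, that the exactness axioms for arbitrary (not merely $\kappa$-ary) coproducts and quotients survive passage through the Ind-completion. This reduces to standard commutation arguments between finite limits and $\kappa$-filtered colimits in locally $\kappa$-presentable categories (in the spirit of \cref{thm:lkpcat-product,thm:lkpcat-equifier,thm:lkpcat-inserter}), and is a $\kappa$-ary version of the classical fact, treated in \cite[D3.3]{Jeleph}, that the Ind-completion of a coherent pretopos is a Grothendieck topos.
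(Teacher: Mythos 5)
Your proposal is correct in outline but takes a genuinely different route from the paper. The paper's proof is a direct comparison inside the presheaf category: by the theory of syntactic sites, $\-{\ang{\@L \mid \@T}}_\infty$ is the category of functors $\-{\ang{\@L \mid \@T}}_\kappa^\op \to \!{Set}$ turning colimits of the form $(\bigsqcup_i A_i)/(\bigsqcup_{i,j} E_{ij})$ into limits, while $\Ind_\kappa(\-{\ang{\@L \mid \@T}}_\kappa)$ is the category of those turning \emph{all} $\kappa$-ary colimits into limits; these two conditions coincide because, $\kappa$ being uncountable, every $\kappa$-ary colimit in a $\kappa$-pretopos reduces to a disjoint union followed by a quotient of a generated equivalence relation. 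Thus the paper never has to verify any exactness axioms for the ind-completion. Your argument instead matches universal properties, so it must establish that $\Ind_\kappa(\-{\ang{\@L \mid \@T}}_\kappa)$ is an $\infty$-pretopos and that the extended functor $\tilde F$ is geometric --- the $\kappa$-ary analogue of ``the ind-completion of a pretopos is its sheaf topos'' --- which is where the real work lies, exactly as you anticipate. Two points there deserve to be spelled out: an arbitrary equivalence relation in the ind-completion must be exhibited as a $\kappa$-filtered colimit of genuine equivalence relations on $\kappa$-presentable objects, which needs an image-and-generation argument (using uncountability of $\kappa$) on top of the raw accessibility of the relevant inserter/equifier categories; and the final identification of $j$ with the syntactic inclusion should be justified by the uniqueness clause of the universal property applied to the universal model. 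What your route buys is independence from the explicit site presentation of the classifying topos; what the paper's buys is brevity, since the whole proof collapses to a single reduction of colimits.
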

\begin{proof}
Recall that $\Ind_\kappa(\-{\ang{\@L \mid \@T}}_\kappa)$ can be taken as the $\kappa$-ary limit-preserving functors $\-{\ang{\@L \mid \@T}}_\kappa^\op -> \!{Set}$.  On the other hand, by the theory of syntactic sites (see \cite[D3.1]{Jeleph}), $\-{\ang{\@L \mid \@T}}_\infty$ can be taken as the functors $\-{\ang{\@L \mid \@T}}_\kappa^\op -> \!{Set}$ preserving limits (i.e., colimits in $\-{\ang{\@L \mid \@T}}_\kappa$) of the form $(\bigsqcup_i A_i)/(\bigsqcup_{i,j} E_{ij})$ where $A_i \in \-{\ang{\@L \mid \@T}}_\kappa$ are $<\kappa$-many objects and $\bigsqcup_{i,j} E_{ij} \subseteq (\bigsqcup_i A_i)^2$ is an equivalence relation.  Clearly this includes $\kappa$-ary coproducts in $\-{\ang{\@L \mid \@T}}_\kappa$.  Since $\kappa$ is uncountable, we may compute general coequalizers in $\-{\ang{\@L \mid \@T}}_\kappa$ by imitating the usual procedure in $\!{Set}$ (to compute the coequalizer of $f, g : A -> B$, take the quotient of the equivalence relation generated $(f, g) : A -> B^2$; see \cite[A1.4.19]{Jeleph} for details).  This reduces coequalizers in $\-{\ang{\@L \mid \@T}}_\kappa$ to colimits of the form $(\bigsqcup_i A_i)/(\bigsqcup_{i,j} E_{ij})$.
\end{proof}

% As in \cref{sec:imag}, the \defn{syntactic category} $\ang{\@L \mid \@T}_\kappa \subseteq \-{\ang{\@L \mid \@T}}_\kappa$ is the full subcategory on the imaginary sorts given by a single formula; the full syntactic $\kappa$-pretopos is the free completion of $\ang{\@L \mid \@T}_\kappa$ under $\kappa$-ary disjoint unions and quotients by equivalence relations.

There is a more general notion of a \defn{multi-sorted $\kappa$-coherent theory} $(\@S, \@L, \@T)$, where $\@S$ is a set of sorts, $\@L$ consists of $\@S$-sorted relation symbols, and formulas have $\@S$-sorted variables and quantifiers; see \cite[D1.1]{Jeleph}.  So far we have been considering the case of a single-sorted theory, where $\@S = \{\#X\}$.  The definitions of syntactic $\omega_1$-pretopos, etc., have obvious multi-sorted generalizations.
Other than single-sorted theories, we will consider $0$-sorted or \defn{propositional $\kappa$-coherent theories} $(\@L, \@T)$, where $\@L$ is a set of proposition symbols (i.e., $0$-ary relation symbols) and $\@T$ is a set of implications between \defn{propositional $\kappa$-coherent $\@L$-formulas} (i.e., $\@L$-formulas built with finite $\wedge$ and $\kappa$-ary $\bigvee$).  The \defn{Lindenbaum--Tarski algebra} $\ang{\@L \mid \@T}_\kappa$ of a propositional theory $(\@L, \@T)$ is the $\kappa$-frame presented by $(\@L, \@T)$; it is also the \defn{syntactic category} of $(\@L, \@T)$, as defined in \cref{sec:imag}.  Recall from there that the syntactic $\kappa$-pretopos is a certain completion of the syntactic category; for propositional theories, this takes the form %(see \cite[D3.1]{Jeleph})
\begin{align*}
\-{\ang{\@L \mid \@T}}_\kappa \cong \Sh_\kappa(\ang{\@L \mid \@T}_\kappa).
\end{align*}
% For arbitrary $\kappa$, we also have (analogously to \cref{thm:synptop-clstop})
% \begin{align*}
% \ang{\@L \mid \@T}_\infty \cong \Idl_\kappa(\ang{\@L \mid \@T}_\kappa).
% \end{align*}

Let $(\@L, \@T)$ be a (single-sorted) geometric theory.  We define the \defn{locale of countable $\@L$-structures} $\Mod(\@L)$ by formally imitating the definition in \cref{sec:isogpd}.  That is, we take the frame $\@O(\Mod(\@L))$ to be freely generated by the symbols
\begin{align*}
\den{\abs{\#X} \ge n} \quad\text{for $n \in \#N$}, &&
\den{R(\vec{a})} \quad\text{for $n$-ary $R \in \@L$ and $\vec{a} \in \#N^n$},
\end{align*}
subject to the relations (compare with the proof in \cref{sec:isogpd} that $\Mod(\@L)$ is quasi-Polish)
\begin{align*}
\top \le \den{\abs{\#X} \ge 0}, &&
\den{\abs{\#X} \ge n+1} \le \den{\abs{\#X} \ge n}, &&
\den{R(\vec{a})} \le \den{\abs{\#X} \ge \max_i (a_i+1)}.
\end{align*}
Clearly, a point of $\Mod(\@L)$ (i.e., a frame homomorphism $\@O(\Mod(\@L)) -> 2$) is the same thing as an $\@L$-structure on an initial segment of $\#N$; thus the spatialization $\Sp(\Mod(\@L))$ is just the space of countable $\@L$-structures, as defined in \cref{sec:isogpd}.  For each geometric $\@L$-formula $\phi$ with $n$ variables and $\vec{a} \in \#N^n$, we define $\den{\phi(\vec{a})} \in \@O(\Mod(\@L))$ by induction on $\phi$ in the obvious manner:
\begin{align*}
\den{\top} &:= \den{\abs{\#X} \ge \max_i a_i}, \\
\den{a_i = a_j} &:= \text{$\den{\abs{\#X} \ge \max_i a_i}$ if $a_i = a_j$, else $\bot$}, \\
\den{\phi(\vec{a}) \wedge \psi(\vec{a})} &:= \den{\phi(\vec{a})} \wedge \den{\psi(\vec{a})}, \\
\den{\bigvee_i \phi_i(\vec{a})} &:= \bigvee_i \den{\phi_i(\vec{a})}, \\
\den{\exists x\, \phi(\vec{a}, x)} &:= \bigvee_{b \in \#N} \den{\phi(\vec{a}, b)}.
\end{align*}
We define the \defn{locale of countable models of $\@T$} to be the sublocale $\Mod(\@L, \@T) \subseteq \Mod(\@L)$ determined by the relations (i.e., $\@O(\Mod(\@L, \@T))$ is the quotient of $\@O(\Mod(\@L))$ by these relations)
\begin{align*}
\den{\phi(\vec{a})} \le \den{\psi(\vec{a})} \quad\text{for an axiom $\forall \vec{x}\, (\phi(\vec{x}) => \psi(\vec{x}))$ in $\@T$, where $n := \abs{\vec{x}}$, and $\vec{a} \in \#N^n$}.
\end{align*}

We similarly define the locale $\Iso(\@L)$ ``of pairs $(g, \@M)$ where $\@M \in \Mod(\@L)$ and $g \in S_M$'' by imitating \cref{sec:isogpd}: $\@O(\Iso(\@L))$ is the frame generated by the symbols
\begin{align*}
\partial_1^*(U) \quad\text{for (a generator) $U \in \@O(\Mod(\@L))$}, &&
\den{a |-> b} \quad\text{for $a, b \in \#N$},
\end{align*}
subject to the relations in $\@O(\Mod(\@L))$ between generators of the first kind, and the relations
\begin{gather*}
\den{a |-> b} \le \partial_1^*(\den{\abs{\#X} \ge a+1}) \wedge \partial_1^*(\den{\abs{\#X} \ge b+1}), \\
(\den{a |-> b} \wedge \den{a |-> c}) \vee (\den{b |-> a} \wedge \den{c |-> a}) \le \bot \quad\text{for $b \ne c$}, \\
\partial_1^*(\den{\abs{\#X} \ge a+1}) \le (\bigvee_b \den{a |-> b}) \wedge (\bigvee_b \den{b |-> a})
\end{gather*}
which say that ``$g \in S_M$''.  Clearly, $\Sp(\Iso(\@L))$ is the space of isomorphisms as defined in \cref{sec:isogpd}.  We clearly have a locale morphism $\partial_1 : \Iso(\@L) -> \Mod(\@L)$; we also have $\partial_0 : \Iso(\@L) -> \Mod(\@L)$, $\iota : \Mod(\@L) -> \Iso(\@L)$, $\mu : \Iso(\@L) \times_{\Mod(\@L)} \Iso(\@L) -> \Iso(\@L)$, and $\nu : \Iso(\@L) -> \Iso(\@L)$, given by
\begin{align*}
\partial_0^*(\den{\abs{\#X} \ge n}) &:= \den{\abs{\#X} \ge n}, &
\iota^*(\partial_1^*(U)) &:= U, \\
\partial_0^*(\den{R(\vec{a})}) &:= \bigvee_{\vec{b}} (\bigwedge_i \den{b_i |-> a_i} \wedge \partial_1^*(\den{R(\vec{b})}), &
\iota^*(\den{a |-> b}) &:= \text{$\den{\abs{\#X} \ge a+1}$ if $a = b$, else $\bot$}, \\[1.2ex]
\nu^*(\partial_1^*(U)) &:= \partial_0^*(U), &
\mu^*(\partial_1^*(U)) &:= \pi_1^*(\partial_1^*(U)), \\
\nu^*(\den{a |-> b}) &:= \den{b |-> a}, &
\mu^*(\den{a |-> b}) &:= \bigvee_c (\pi_1^*(\den{a |-> c}) \wedge \pi_0^*(\den{c |-> b})),
\end{align*}
where $\pi_0, \pi_1 : \Iso(\@L) \times_{\Mod(\@L)} \Iso(\@L) -> \Iso(\@L)$ are the two projections.
It is straightforward to check that these are well-defined and form the structure maps of a localic groupoid $\MOD(\@L)$, the \defn{localic groupoid of countable $\@L$-structures}.  For a geometric $\@L$-theory $\@T$, the \defn{localic groupoid of countable models of $\@T$}, $\MOD(\@L, \@T)$, is the full subgroupoid on $\Mod(\@L, \@T) \subseteq \Mod(\@L)$.

Note that when $(\@L, \@T)$ is a $\kappa$-coherent theory, $\Mod(\@L, \@T)$, $\Iso(\@L, \@T)$, and the structure maps are $\kappa$-coherent.  If furthermore $(\@L, \@T)$ is $\kappa$-ary (i.e., $\abs{\@L}, \abs{\@T} < \kappa$), then $\Mod(\@L, \@T)$ and $\Iso(\@L, \@T)$ are $\kappa$-presented.  When $\kappa = \omega_1$, for a countable $\omega_1$-coherent theory $(\@L, \@T)$, it is easily verified that the definition of $\MOD(\@L, \@T)$ given here corresponds under \cref{thm:qpol-sloc} to that given in \cref{sec:isogpd}.

For $\kappa$-coherent $(\@L, \@T)$, we let $\Act_\kappa(\MOD(\@L, \@T))$ denote the \defn{category of $\kappa$-étalé actions of $\MOD(\@L, \@T)$}, i.e., $\kappa$-étalé locales $X -> \Mod(\@L, \@T)$ over $\Mod(\@L, \@T)$ equipped with an action $\Iso(\@L, \@T) \times_{\Mod(\@L, \@T)} X -> X$.  Note that $\Act_\kappa(\MOD(\@L, \@T)) \subseteq \Act_\infty(\MOD(\@L, \@T))$.

Finally, for a geometric theory $(\@L, \@T)$, we define the \defn{interpretation} $\den{A}$ of an imaginary sort $A \in \-{\ang{\@L \mid \@T}}_\infty$ by imitating \cref{sec:den}.  That is, for a geometric formula $\alpha$ with $n$ variables, we define $\den{\alpha}$ to be the disjoint union of open sublocales $\den{\alpha}_{\vec{a}} \subseteq \den{\alpha}$ for $\vec{a} \in \#N^n$, where each $\den{\alpha}_{\vec{a}}$ is an isomorphic copy of $\den{\alpha(\vec{a})} \subseteq \Mod(\@L, \@T)$, equipped with the étalé morphism $\pi : \den{\alpha} = \bigsqcup_{\vec{a}} \den{\alpha}_{\vec{a}} -> \Mod(\@L, \@T)$ induced by the inclusions $\den{\alpha}_{\vec{a}} \cong \den{\alpha(\vec{a})} \subseteq \Mod(\@L, \@T)$.
Thus, $\@O(\den{\alpha})$ is generated (as a frame) by
\begin{align*}
\den{\alpha}_{\vec{a}} \quad\text{for $\vec{a} \in \#N^n$}, &&
\pi^*(U) \quad\text{for $U \in \@O(\Mod(\@L, \@T))$ (a generator)}.
\end{align*}
We let $\MOD(\@L, \@T)$ act on $\den{\alpha}$ via $\rho_\alpha : \Iso(\@L, \@T) \times_{\Mod(\@L, \@T)} \den{\alpha} -> \den{\alpha}$, given by
\begin{align*}
\rho_\alpha^*(\den{\alpha}_{\vec{a}}) &:= \bigvee_{\vec{b}} (\pi_0^*(\bigwedge_i \den{b_i |-> a_i}) \wedge \pi_1^*(\den{\alpha}_{\vec{b}}))
\end{align*}
where $\pi_0 : \Iso(\@L, \@T) \times_{\Mod(\@L, \@T)} \den{\alpha} -> \Iso(\@L, \@T)$ and $\pi_1 : \Iso(\@L, \@T) \times_{\Mod(\@L, \@T)} \den{\alpha} -> \den{\alpha}$ are the projections.  We then extend this definition to $\den{A}$ for an arbitrary imaginary sort $A \in \-{\ang{\@L \mid \@T}}_\infty$, as well as $\den{f} : \den{A} -> \den{B}$ for a definable function $f : A -> B$, exactly as in \cref{sec:den}.  This defines a geometric functor
\begin{align*}
\den{-} : \-{\ang{\@L \mid \@T}}_\infty --> \Act_\infty(\MOD(\@L, \@T)).
\end{align*}
% We then define $\den{\bigsqcup_i \alpha_i} := \bigsqcup_i \den{\alpha_i}$, and for a general imaginary sort $A = (\bigsqcup_i \alpha_i)/(\bigsqcup_{i,j} \epsilon_{ij})$, $\den{A} := \den{\bigsqcup_i \alpha_i}/\den{\bigsqcup_{i,j} \epsilon_{ij}}$ (here using the \defn{soundness} theorem that if $\@T |- \alpha => \beta$ ($\alpha, \beta$ with $n$ variables) then $\den{\alpha} \subseteq \den{\beta}$ as sublocales of $\den{\#X^n}$, which is proved straightforwardly by induction).

When $(\@L, \@T)$ is $\kappa$-coherent, $\den{A}$ is $\kappa$-étalé (over $\MOD(\@L, \@T)$) for $\kappa$-coherent $A \in \-{\ang{\@L \mid \@T}}_\kappa \subseteq \-{\ang{\@L \mid \@T}}_\infty$; thus $\den{-}$ restricts to a $\kappa$-coherent functor
\begin{align*}
\den{-} : \-{\ang{\@L \mid \@T}}_\kappa --> \Act_\kappa(\MOD(\@L, \@T)).
\end{align*}
For $\kappa = \omega_1$ and $(\@L, \@T)$ countable, we recover via \cref{thm:qpol-sloc} the definition in \cref{sec:den}.

\section{The Joyal--Tierney theorem for decidable theories}
\label{sec:joyal-tierney}

In this section, we sketch a proof of the following generalization of \cref{thm:etale-coherent-interp-equiv} using the Joyal--Tierney representation theorem.

As in \cref{sec:imag}, we call a $\kappa$-coherent theory $(\@L, \@T)$ \defn{decidable} if there is a $\kappa$-coherent $\@L$-formula with two variables (denoted $x \ne y$) which $\@T$ proves is the negation of equality.

\begin{theorem}
Let $\@L$ be a relational language and $\@T$ be a decidable $\kappa$-coherent $\@L$-theory.  Then
\begin{align*}
\den{-} : \-{\ang{\@L \mid \@T}}_\kappa --> \Act_\kappa(\MOD(\@L, \@T))
\end{align*}
is an equivalence of categories.
\end{theorem}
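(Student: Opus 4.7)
The plan is to deduce the theorem from the Joyal--Tierney representation theorem for Grothendieck toposes, which asserts that every Grothendieck topos is equivalent to the topos of equivariant sheaves on some open localic groupoid. The first order of business is to establish the geometric version ($\kappa = \infty$), namely that $\den{-}$ induces an equivalence $\-{\ang{\@L \mid \@T}}_\infty \simeq \Act_\infty(\MOD(\@L, \@T))$; the general $\kappa$-coherent case will then fall out by restricting to $\kappa$-presentable objects on both sides.

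In the geometric setting, the classifying topos $\-{\ang{\@L \mid \@T}}_\infty$ carries a universal model, while the locale $\Mod(\@L, \@T)$ was defined (in \cref{sec:kcohthy}) precisely to parameterize the ``generic model on an initial segment of $\#N$''. This yields a canonical geometric morphism $p : \Sh(\Mod(\@L, \@T)) \to \-{\ang{\@L \mid \@T}}_\infty$ whose inverse image part sends each imaginary sort $A$ to the sheaf $\den{A} \to \Mod(\@L, \@T)$. The central step is to verify that $p$ is a \emph{surjection} of toposes, i.e., that $p^*$ is conservative; this is the point at which decidability enters. Using the $\kappa$-coherent formula $x \ne y$, one internalizes Vaught's Henkin-style proof from \cref{sec:lopez-escobar} to show that every nonzero geometric formula is realized by some model on an initial segment of $\#N$ (viewed as a point of $\Mod(\@L, \@T)$ in the appropriate generalized sense, living inside $\Sh(\Mod(\@L, \@T))$). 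Once $p$ is known to be a surjection, Joyal--Tierney applied to its kernel pair (which is computed directly, from the presentations in \cref{sec:kcohthy}, to be $\Iso(\@L, \@T) \rightrightarrows \Mod(\@L, \@T)$) produces an equivalence $\-{\ang{\@L \mid \@T}}_\infty \simeq \Act_\infty(\MOD(\@L, \@T))$, and tracing through the construction identifies this equivalence with $\den{-}$.

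To pass from the geometric case to the $\kappa$-coherent one, I would invoke \cref{thm:synptop-clstop}, which identifies $\-{\ang{\@L \mid \@T}}_\infty$ with $\Ind_\kappa(\-{\ang{\@L \mid \@T}}_\kappa)$ and hence $\-{\ang{\@L \mid \@T}}_\kappa$ with its full subcategory of $\kappa$-presentable objects. On the other side, one needs the parallel statement $\Act_\infty(\MOD(\@L, \@T)) \simeq \Ind_\kappa(\Act_\kappa(\MOD(\@L, \@T)))$: the idea is that a $\kappa$-étalé $\MOD(\@L, \@T)$-space is precisely a $\kappa$-presentable object of $\Act_\infty(\MOD(\@L, \@T))$, and conversely every action locale is a $\kappa$-filtered colimit of $\kappa$-étalé ones. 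The forward direction follows from \cref{thm:etale-kloc}, which presents $\kappa$-étalé locales by $\kappa$-ary generators and relations; the reverse uses the local presentability machinery of \cref{sec:lkpcat}, in particular \cref{thm:lkpcat-equifier} and \cref{thm:lkpcat-inserter} to exhibit the equivariance axioms as equifiers/inserters on top of the (locally $\kappa$-presentable) category $\Sh_\kappa(\Mod(\@L, \@T))$. Since $\den{-}$ is cocontinuous and sends $\kappa$-coherent imaginary sorts to $\kappa$-étalé actions, its restriction to $\kappa$-presentables on both sides is the desired equivalence.

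The main obstacle will be proving that the Joyal--Tierney cover $p$ is a surjection. The locale $\Mod(\@L, \@T)$ is generated by symbols $\den{R(\vec{a})}$ with $\vec{a} \in \#N^n$, so its spatial points are structures whose underlying set is an initial segment of $\#N$; it is a priori unclear that such ``countable'' data suffice to test inequality between geometric formulas in a theory of arbitrary cardinality. The decidability hypothesis is indispensable here, since it ensures that the generic model inside $\Sh(\Mod(\@L, \@T))$ has decidable equality on its home sort, which is exactly what is needed to carry out an internal Henkin-style argument producing enough ``countable models'' inside the locale. A secondary technical subtlety is checking that the localic groupoid produced by the kernel pair of $p$ agrees on the nose with the explicit presentation of $\MOD(\@L, \@T)$ from \cref{sec:kcohthy}, and not merely up to Morita equivalence; this is verified by matching the generators $\partial_1^*(U)$ and $\den{a \mapsto b}$ of $\@O(\Iso(\@L, \@T))$ with the analogous generators of the kernel pair frame.
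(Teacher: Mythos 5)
Your overall architecture matches the paper's: prove the $\kappa=\infty$ case by Joyal--Tierney descent along the canonical morphism $\Sh(\Mod(\@L,\@T)) \to \-{\ang{\@L \mid \@T}}_\infty$, identify its kernel pair with $\Iso(\@L,\@T) \rightrightarrows \Mod(\@L,\@T)$ from the explicit presentations, and then cut down to $\kappa$-presentable objects on both sides via \cref{thm:synptop-clstop}, \cref{thm:etale-kloc}, and the inserter/equifier lemmas --- that last reduction is essentially the paper's argument (note you only need one direction, that $\kappa$-\'etal\'e actions are $\kappa$-presentable in $\Act_\infty(\MOD(\@L,\@T))$, since the $\infty$-level equivalence then transports $\kappa$-presentability back to $\-{\ang{\@L \mid \@T}}_\infty$). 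The gap is in the central step. First, the descent theorem requires the covering morphism to be an \emph{open} surjection, and your proposal only ever discusses surjectivity; openness is nowhere verified and is not a formality. Second, and more seriously, the proposed proof of surjectivity --- internalizing the Henkin/Vaught argument of \cref{sec:lopez-escobar} to realize every nonzero geometric formula in a model on an initial segment of $\#N$ --- cannot work as stated. The arguments of \cref{sec:lopez-escobar} are Baire-category arguments for \emph{countable} theories, the proof system for $\@L_{\kappa\omega}$ is explicitly not complete for set-based models (\cref{sec:kcohthy}), and for uncountable $\@T$ the locale $\Mod(\@L,\@T)$ need not have enough points; so no argument producing actual countable models (i.e., points) can establish conservativity of the inverse image. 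The phrase ``points in the appropriate generalized sense living inside $\Sh(\Mod(\@L,\@T))$'' is where all the content is hiding, and it is not supplied.

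The paper sidesteps this entirely with a formal stability argument, and this is also where decidability actually enters. Decidability yields an interpretation $F_d$ of the theory of decidable sets (classified by $\!{Set}^{\!{Set}_{fm}}$) into $(\@L,\@T)$, and the morphism $E : \-{\ang{\@L \mid \@T}}_\infty \to \Sh(\Mod(\@L,\@T))$ is exhibited as the pushout along $F_d$ of the interpretation $F_i$ of decidable sets into the propositional theory of initial segments of $\#N$ (classified by $\!{Set}^{\#N}$). The only concrete verification needed is that $\!{Set}^{\#N} \to \!{Set}^{\!{Set}_{fm}}$ is an open surjection, which is elementary combinatorics of finite sets and injections checked against the site criteria of the Elephant; open surjections are stable under pushout of theories (pullback of toposes), so $E$ is one. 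To salvage your route you would have to (a) prove openness of $E$ point-freely and (b) replace the Henkin argument by a point-free conservativity proof; at that stage you would essentially be reconstructing the paper's pushout argument.
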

\begin{proof}
We begin with the case $\kappa = \infty$, which is a straightforward variant of the usual proof of the Joyal--Tierney theorem; see \cite{JT} or \cite[C5.2]{Jeleph}.

Let $(\@L_d, \@T_d)$ be the (single-sorted) \defn{theory of decidable sets}, where $\@L_d$ consists of a single binary relation symbol $\ne$ and $\@T_d$ consists of the ($\omega$-coherent) axioms
\begin{align*}
\forall x\, (x \ne x => \bot), &&
\forall x, y\, (\top => (x = y) \vee (x \ne y)).
\end{align*}
That $(\@L, \@T)$ is decidable means that we have an interpretation
\begin{align*}
F_d : (\@L_d, \@T_d) -> (\@L, \@T)
\end{align*}
given by $F_d(\#X) := \#X$ and $F_d({\ne}) := ({\ne})$ (where $({\ne}) \subseteq \#X^2 \in \-{\ang{\@L \mid \@T}}_\infty$ is the geometric formula with $2$ variables witnessing decidability).  The classifying topos $\-{\ang{\@L_d \mid \@T_d}}_\infty$ is computed in \cite[D3.2.7]{Jeleph}: it is the presheaf topos $\!{Set}^{\!{Set}_{fm}}$ where $\!{Set}_{fm}$ is the category of finite sets and injections, with the home sort $\#X \in \!{Set}^{\!{Set}_{fm}}$ given by the inclusion.
%and $({\ne}) \subseteq \#X^2$ given by the functor mapping $X \in \!{Set}_{fm}$ to the antidiagonal $\{(a, b) \in X^2 \mid a \ne b\}$.

Let $(\@L_i, \@T_i)$ be the \defn{propositional theory of initial segments of $\#N$}, where $\@L_i$ consists of the proposition symbols $(\abs{\#X} \ge n)$ for each $n \in \#N$ (here $\#X$ is merely part of the notation, and does \emph{not} denote a home sort, as the theory is propositional), and $\@T_i$ consists of the axioms
\begin{align*}
\top => (\abs{\#X} \ge 0), &&
(\abs{\#X} \ge n+1) => (\abs{\#X} \ge n).
\end{align*}
The Lindenbaum--Tarski algebra $\ang{\@L_i \mid \@T_i}_\infty$ is the frame
\begin{align*}
\ang{\@L_i \mid \@T_i}_\infty = \{\bot < \dotsb < [\abs{\#X} \ge 2] < [\abs{\#X} \ge 1] < [\abs{\#X} \ge 0] = \top\}.
\end{align*}
% which is the frame of opens of the spatial locale $\#N \cup \{\infty\}$ with the Scott topology (where $(\abs{\#X} \ge n)$ is identified with $\up n \subseteq \#N \cup \{\infty\}$).
Thus the classifying topos $\-{\ang{\@L_i \mid \@T_i}}_\infty = \Sh(\ang{\@L_i \mid \@T_i}_\infty)$ is the presheaf topos $\!{Set}^\#N$, where $[\abs{\#X} \ge n] \in \-{\ang{\@L_i \mid \@T_i}}_\infty$ is identified with the functor $\#N -> \!{Set}$ which is $1$ on $m \ge n$ and $0$ on $m < n$.

We have a geometric functor $\!{Set}^{\!{Set}_{fm}} -> \!{Set}^\#N$ induced by the inclusion $\#N -> \!{Set}_{fm}$ (mapping $m \le n$ to the inclusion $m \subseteq n$).  Using \cite[A4.2.7(b), C3.1.2]{Jeleph}, this geometric functor is easily seen to be the inverse image part of a surjective open geometric morphism; surjectivity follows from essential surjectivity of the inclusion $\#N -> \!{Set}_{fm}$, while for openness, given $U \in \#N$, $V \in \!{Set}_{fm}$, and $b : U `-> V$ as in \cite[C3.1.2]{Jeleph}, let $U' := \abs{V}$, $r : U' \cong V$ be any bijection such that $r|U = b$, and $i := r^{-1}$, so that $r \circ i = 1_V$ and $i \circ b$ is the inclusion $U \subseteq U'$, as required by \cite[C3.1.2]{Jeleph}.  This functor is the interpretation
\begin{align*}
F_i : (\@L_d, \@T_d) &--> (\@L_i, \@T_i) \\
\#X &|--> \bigsqcup_{n \in \#N} (\abs{\#X} \ge n+1).
% ({\ne}) &|--> (\bigsqcup_{m \ne n} (\abs{\#X} \ge m+1) \times (\abs{\#X} \ge n+1)) \sqcup (\bigsqcup_{m = n} \bot) \\
% &\hphantom{|-->\;} \subseteq \bigsqcup_{m, n} (\abs{\#X} \ge m+1) \times (\abs{\#X} \ge n+1) \cong (\bigsqcup_n (\abs{\#X} \ge n+1))^2.
\end{align*}
In terms of models (in arbitrary Grothendieck toposes), this interpretation takes a model of $\@T_i$, i.e., an initial segment of $\#N$, to the model of $\@T_d$, i.e., decidable set, given by that initial segment.

We now compute the ``(2-)pushout of theories'' $(\@L', \@T')$ of the two interpretations $F_d, F_i$ (in the $2$-category of $\infty$-pretoposes):
\begin{equation*}
\begin{tikzcd}
(\@L', \@T') & (\@L_i, \@T_i) \lar \\
(\@L, \@T) \uar["E"] & (\@L_d, \@T_d) \lar["F_d"] \uar["F_i"']
\end{tikzcd}
\end{equation*}
By definition, this is a geometric theory $(\@L', \@T')$ such that a model of $\@T'$ (in an arbitrary Grothendieck topos) is the same thing as a model of $\@T$, a model of $\@T_i$, and an isomorphism between the two models of $\@T_d$ given by the interpretations $F_d, F_i$; this is equivalently an initial segment of $\#N$ together with a model of $\@T$ on that initial segment.  Thus, we may take $(\@L', \@T')$ to be the propositional theory presenting the frame of opens $\@O(\Mod(\@L, \@T))$ given in \cref{sec:kcohthy}, i.e., $\@L'$ consists of the proposition symbols $(\abs{\#X} \ge n)$ and $R(\vec{a})$ for $n$-ary $R \in \@L$ and $\vec{a} \in \#N$, and $\@T'$ consists of the axioms in $\@T_i$ together with the axioms
\begin{align*}
R(\vec{a}) => (\abs{\#X} \ge \max_i (a_i+1)), &&
\phi(\vec{a}) => \psi(\vec{a}),
\end{align*}
where $\forall \vec{x}\, (\phi(\vec{x}) => \psi(\vec{x}))$ is an axiom in $\@T$ and $\phi(\vec{a}), \psi(\vec{a})$ are the propositional $\@L'$-formulas defined by induction on $\phi, \psi$ in the obvious way.  The classifying topos is
\begin{align*}
\-{\ang{\@L' \mid \@T'}}_\infty = \Sh(\ang{\@L' \mid \@T'}_\infty) = \Sh(\@O(\Mod(\@L, \@T))) = \Sh(\Mod(\@L, \@T)),
\end{align*}
and the interpretation $E : \-{\ang{\@L \mid \@T}}_\infty -> \Sh(\Mod(\@L, \@T))$ in the diagram above is simply the functor $\den{-}$ (identifying sheaves on $\Mod(\@L, \@T)$ with étalé locales over $\Mod(\@L, \@T)$, and forgetting the $\MOD(\@L, \@T)$-action for now).

Similarly, the pushout $(\@L'', \@T'')$ of $E$ with itself
\begin{equation*}
\begin{tikzcd}
(\@L'', \@T'') & (\@L', \@T') \lar["D_1"'] \\
(\@L', \@T') \uar["D_0"] & (\@L, \@T) \lar["E"] \uar["E"']
\end{tikzcd}
\end{equation*}
is the theory of pairs of models of $\@T'$ together with an isomorphism between them, or equivalently of pairs $(\@M, g)$ of a model $\@M$ of $\@T'$ together with a permutation $g$ of its underlying initial segment of $\#N$; so we may take $(\@L'', \@T'')$ to be the propositional theory presenting $\Iso(\@L, \@T)$ from \cref{sec:kcohthy}, and $D_0, D_1$ to be induced by the frame homomorphisms $\partial_0^*, \partial_1^* : \@O(\Mod(\@L, \@T)) -> \@O(\Iso(\@L, \@T))$.  We have an interpretation
\begin{align*}
I : (\@L'', \@T'') -> (\@L', \@T')
\end{align*}
induced (via the universal property of the pushout) by the identity $1_{\@T'} : (\@L', \@T') -> (\@L', \@T')$ (so $I \circ D_0 \cong 1 \cong I \circ D_1$), which takes a model $\@M$ of $\@T'$ to the model $(\@M, 1_M)$ of $\@T''$; so $I$ is induced by the frame homomorphism $\iota^* : \@O(\Iso(\@L, \@T)) -> \@O(\Mod(\@L, \@T))$.

We also have the triple pushout $(\@L''', \@T''')$ of three copies of $E$ (equivalently of $D_0, D_1$), the theory of triples of models $\@M_1, \@M_2, \@M_3$ with isomorphisms $\@M_1 \cong \@M_2 \cong \@M_3$, which presents the frame $\@O(\Iso(\@L, \@T) \times_{\Mod(\@L, \@T)} \Iso(\@L, \@T))$.  The three injections
\begin{align*}
P_0, M, P_1 : (\@L'', \@T'') -> (\@L''', \@T''')
\end{align*}
take a model $(\@M_1 \cong \@M_2 \cong \@M_3)$ of $\@T'''$ to (respectively) the first isomorphism $\@M_1 \cong \@M_2$, the composite isomorphism $\@M_1 \cong \@M_3$, and the second isomophism $\@M_2 \cong \@M_3$, so are induced by $\pi_0^*, \mu^*, \pi_1^* : \@O(\Iso(\@L, \@T)) -> \@O(\Iso(\@L, \@T) \times_{\Mod(\@L, \@T)} \Iso(\@L, \@T))$.  All these data form a diagram
\begin{equation*}
\begin{tikzcd}[column sep=4em]
(\@L''', \@T''') &
(\@L'', \@T'') \lar[shift right=3,"P_0"'] \lar["M"{fill=white,anchor=center}] \lar[shift left=3,"P_1"] \rar["I"{fill=white,anchor=center}] &
(\@L', \@T') \lar[shift right=2,"D_0"'] \lar[shift left=2,"D_1"] &
(\@L, \@T) \lar["E"']
\end{tikzcd}
\end{equation*}
which is an augmented \defn{$2$-truncated simplicial topos}, in the sense of \cite[B3.4, C5.1]{Jeleph}.

\defn{Descent data} (see \cite[VIII~\S1]{JT} or \cite[B3.4, C5.1]{Jeleph}) on an object $A \in \-{\ang{\@L' \mid \@T'}}_\infty$ consists of a morphism $\theta : D_1(A) -> D_0(A)$ obeying the \defn{unit condition} that
\begin{align*}
A \cong I(D_1(A)) --->{I(\theta)} I(D_0(A)) \cong A
\end{align*}
is the identity $1_A$, and the \defn{cocycle condition} that the following two morphisms are equal:
\begin{gather*}
P_1(D_1(A)) \cong M(D_1(A)) --->{M(\theta)} M(D_0(A)) \cong P_0(D_0(A)), \\
P_1(D_1(A)) --->{P_1(\theta)} P_1(D_0(A)) \cong P_0(D_1(A)) --->{P_0(\theta)} P_0(D_0(A)).
\end{gather*}
Let $\Desc(\@L^{(-)}, \@T^{(-)})$ denote the category of objects in $\-{\ang{\@L' \mid \@T'}}_\infty$ equipped with descent data (and morphisms which commute with the descent data in the obvious sense).  For every $A \in \-{\ang{\@L \mid \@T}}_\infty$, we have an isomorphism $D_1(E(A)) \cong D_0(E(A))$ by definition of $(\@L'', \@T'')$, which is easily verified to be descent data on $E(A)$; this defines a lift of $E : \-{\ang{\@L \mid \@T}}_\infty -> \-{\ang{\@L' \mid \@T'}}_\infty$ to a geometric functor
\begin{align*}
E' : \-{\ang{\@L \mid \@T}}_\infty --> \Desc(\@L^{(-)}, \@T^{(-)}).
\end{align*}
Since $E$ is the pushout of $F_i$ which is the inverse image part of a surjective open geometric morphism, so is $E$ (see \cite[VII~1.3]{JT} or \cite[C3.1.26]{Jeleph}).  Thus by the \defn{Joyal--Tierney descent theorem} (see \cite[VIII~2.1]{JT} or \cite[C5.1.6]{Jeleph}), $E'$ is an equivalence of categories.

Under the above identifications $\-{\ang{\@L' \mid \@T'}}_\infty \cong \Sh(\Mod(\@L, \@T))$ and $\-{\ang{\@L'' \mid \@T''}}_\infty \cong \Sh(\Iso(\@L, \@T))$ as well as the identification between sheaves and étalé locales, $\Desc(\@L^{(-)}, \@T^{(-)})$ is equivalently the category of étalé locales $p : X -> \Mod(\@L, \@T)$ equipped with a morphism $t : \Iso(\@L, \@T) \times_{\Mod(\@L, \@T)} X -> X \times_{\Mod(\@L, \@T)} \Iso(\@L, \@T)$ commuting with the projections to $\Iso(\@L, \@T)$ and satisfying the obvious analogs of the unit and cocycle conditions.  By an easy calculation, via composition with the projection $X \times_{\Mod(\@L, \@T)} \Iso(\@L, \@T) -> X$, such $t$ are in bijection with $\MOD(\@L, \@T)$-actions $\Iso(\@L, \@T) \times_{\Mod(\@L, \@T)} X -> X$.  Also morphisms preserving the descent data correspond to equivariant morphisms; so we have
\begin{align*}
\Desc(\@L^{(-)}, \@T^{(-)}) \cong \Act_\infty(\MOD(\@L, \@T)).
\end{align*}
Finally, it is straightforward to verify that under this equivalence, the functor $E'$ above is just $\den{-}$, which completes the proof of the theorem in the case $\kappa = \infty$.

Now consider general $\kappa$.  Let $\@T$ be a decidable $\kappa$-coherent $\@L$-theory.  We know that
\begin{align*}
\den{-} : \-{\ang{\@L \mid \@T}}_\infty --> \Act_\infty(\MOD(\@L, \@T))
\end{align*}
is an equivalence, and restricts to
\begin{align*}
\den{-} : \-{\ang{\@L \mid \@T}}_\kappa --> \Act_\kappa(\MOD(\@L, \@T)).
\end{align*}
By \cref{thm:synptop-clstop}, $\-{\ang{\@L \mid \@T}}_\kappa$ consists of the $\kappa$-presentable objects in $\-{\ang{\@L \mid \@T}}_\infty$; so it suffices to verify that every $X \in \Act_\kappa(\MOD(\@L, \@T))$ is $\kappa$-presentable in $\Act_\infty(\MOD(\@L, \@T))$.  For $X \in \Act_\kappa(\MOD(\@L, \@T))$, we have $X \in \Sh_\kappa(\Mod(\@L, \@T)) \cong \-{\ang{\@L' \mid \@T'}}_\kappa$, whence by \cref{thm:synptop-clstop} again, $X$ is $\kappa$-presentable in $\-{\ang{\@L' \mid \@T'}}_\infty \cong \Sh(\Mod(\@L, \@T))$.  This implies that $X$ is $\kappa$-presentable in $\Act_\infty(\MOD(\@L, \@T))$ by \cref{thm:lkpcat-inserter,thm:lkpcat-equifier}, since the definition of $\Desc(\@L^{(-)}, \@T^{(-)})$ above can be rephrased as first taking the inserter $\Ins(D_1, D_0)$ and then taking two equifiers to enforce the unit and cocycle conditions, where the functors involved, namely $D_i, P_i, I, M$, clearly preserve colimits and $\kappa$-presentable objects.
% 
% Clearly the theories $\@T', \@T'', \@T'''$ and interpretations $D_i, P_i, I, M$ defined above are all $\kappa$-coherent if $\@T$ is.  The definition of $\Desc(\@L^{(-)}, \@T^{(-)})$ above can be rephrased as first taking the inserter $\Ins(D_1, D_0)$ and then taking two equifiers to enforce the unit and cocycle conditions; thus by \cref{thm:lkpcat-inserter,thm:lkpcat-equifier}, an object $(A, \theta) \in \Desc(\@L^{(-)}, \@T^{(-)})$ is $\kappa$-presentable iff $A \in \-{\ang{\@L' \mid \@T'}}_\infty$ is.  In other words, an étalé $\MOD(\@L, \@T)$-locale $X \in \Act_\infty(\MOD(\@L, \@T))$ is $\kappa$-presentable iff $X \in \Sh(\Mod(\@L, \@T))$ is.
% 
% So, it suffices to show that every $\kappa$-étalé $p : X -> \Mod(\@L, \@T)$ is $\kappa$-presentable in $\Sh(\Mod(\@L, \@T))$.  Let $\@U \subseteq \@O_\kappa(X)$ be a $\kappa$-ary family of $\kappa$-compact open sections covering $X$, closed under binary intersections.  Then $X$ is the $\kappa$-ary colimit, over $U \in \@U$, of the $\kappa$-compact open inclusions $U \subseteq \Mod(\@L, \@T)$, each of which is clearly $\kappa$-presentable in $\Sh(\Mod(\@L, \@T))$.
\end{proof}

\bigskip\noindent
Department of Mathematics \\
University of Illinois at Urbana--Champaign \\
Urbana, IL 61801 \\
\medskip
\nolinkurl{ruiyuan@illinois.edu}

\end{document}